\theoremstyle{plain}
\newtheorem{proposition}{Proposition}[section]
\newtheorem{theorem}[proposition]{Theorem}
\newtheorem{corollary}[proposition]{Corollary}
\newtheorem{lemma}[proposition]{Lemma}
\theoremstyle{definition}
\newtheorem{definition}[proposition]{Definition}
\newtheorem{example}[proposition]{Example}
\newtheorem{remark}[proposition]{Remark}
\newcommand{\mc}{\mathcal}
\newcommand{\mf}{\mathfrak}
\newcommand{\ip}[2]{\langle{#1},{#2}\rangle}
\newcommand{\G}{\mathbb G}
\newcommand{\vnten}{\overline\otimes}
\newcommand{\im}{\operatorname{im}}
\newcommand{\re}{\operatorname{re}}
\newcommand{\lin}{\operatorname{lin}}
\newcommand{\id}{\operatorname{id}}
\newcommand{\hh}{\widehat}
\newcommand{\proten}{\widehat\otimes}
\begin{document}

\title{One-parameter isometry groups and inclusions between operator algebras}
\author{Matthew Daws}
\maketitle

\begin{abstract}
We make a careful study of one-parameter isometry groups on Banach spaces, and their
associated analytic generators, as first studied by Cioranescu and Zsido.  We pay
particular attention to various, subtly different, constructions which have appeared in
the literature, and check that all give the same notion of generator.  We give an
exposition of the ``smearing'' technique, checking that ideas of Masuda, Nakagami and
Woronowicz hold also in the weak$^*$-setting.  We are primarily interested in the case of
one-parameter automorphism groups of operator algebras, and we present many applications
of the machinery, making the argument that taking a structured, abstract approach can pay
dividends.  A motivating example is the scaling group of a locally compact quantum group
$\G$ and the fact that the inclusion $C_0(\G) \rightarrow L^\infty(\G)$ intertwines the
relevant scaling groups.  Under this general setup, of an inclusion of a $C^*$-algebra
into a von Neumann algebra intertwining automorphism groups, we show that the graphs of
the analytic generators, despite being only non-self-adjoint operator algebras,
satisfy a Kaplansky Density style result.  The dual picture is the inclusion
$L^1(\G)\rightarrow M(\G)$, and we prove an ``automatic normality'' result under this
general setup.  The Kaplansky Density result proves more elusive, as does a general study
of quotient spaces, but we make progress under additional hypotheses.
\end{abstract}

\section{Introduction}

A one-parameter automorphism group of an operator algebra is $(\alpha_t)_{t\in\mathbb R}$
where each $\alpha_t$ is an automorphism, we have the group law $\alpha_t \circ \alpha_s
= \alpha_{t+s}$, and a continuity condition on the orbit maps $a\mapsto\alpha_t(a)$
(either norm continuity for a $C^*$-algebra, or weak$^*$-continuity for a von Neumann
algebra).  As for the more common notion of a semigroup of operators, such groups admit
a ``generator'', an in general unbounded operator which characterises the group.
This paper will be concerned with the \emph{analytic generator}, formed by complex
analytic techniques, which can loosely be thought of as the exponential of the more
common infinitesimal generator.

The analytic generator was defined and studied in \cite{cz}, see also \cite{z,z2,z1},
\cite[Appendix~F]{mnw}, \cite{kus1}.  There are immediate links with Tomita-Takesaki
theory, \cite[Chapter~VIII]{tak2} and \cite{z1}, although we contrast the explicit use
of generators in \cite{z1} with the more adhoc approach of \cite{tak2}.  Our principle
interest comes from the operator algebraic approach to quantum groups,
%\cite{kus1, kus2, kus3, kus4, kv, lm, mnw}
\cite{kv}, and specifically the treatment of the antipode.  For a quantum
group, the antipode represents the group inverse, and is represented as an, in general
unbounded, operator $S$ on an operator algebra.  This operator factorises as
$S=R\tau_{-i/2}$ where $R$ is the \emph{unitary antipode}, an anti-$*$-homomorphism, and
$\tau_{-i/2}$ which is an analytic continuation of a one-parameter automorphism group, the
\emph{scaling group} $(\tau_t)$.  Furthermore, $S^2=\tau_{-i}$ which is precisely the
analytic generator.

We tend to think of the quantum group
$\G$ as an ``abstract object'' which can be represented be a variety of operator algebras,
in particular the reduced $C^*$-algebra $C_0(\G)$, thought of as functions on $\G$
vanishing at infinity, and the von Neumann algebra $L^\infty(\G)$, thought of as
measurable functions on $\G$.  There is a natural inclusion $C_0(\G)\rightarrow
L^\infty(\G)$, which intertwines the scaling group(s)--- the scaling group is
norm-continuous on $C_0(\G)$ and weak$^*$-continuous on $L^\infty(\G)$.  Much of this
paper is concerned with this situation in the abstract: an inclusion of a $C^*$-algebra
into a von Neumann algebra which intertwines automorphism groups.  Such a situation also
occurs in Tomita-Takesaki theory, where a convenient way to construct type~III von
Neumann algebras is to start with a KMS state on $C^*$-algebra and to apply the GNS
construction, see \cite{i} for example.
One of our main results,
Theorem~\ref{thm:six}, gives a Kaplansky density result for the graphs of the analytic
generators in such a setting.

Using the coproduct we can turn the dual spaces into Banach algebras.  This leads to the
dual of $C_0(\G)$, denoted $M(\G)$ and thought of as a convolution algebra of measures,
and also to the predual of $L^\infty(\G)$, denoted $L^1(\G)$ and thought of as the
absolutely continuous measures.  These do not carry a natural involution, because we would
wish to use the antipode which is not everywhere defined, but there are natural dense
$*$-subalgebras, $L^1_\sharp(\G)$ and $M_\sharp(\G)$, compare Section~\ref{sec:lcqgs}
below.  Part of our motivation for writing this paper was to attempt to understand our
result, with Salmi, that when $\G$ is coamenable, there is a Kaplansky density result for
the inclusion $L^1_\sharp(\G) \rightarrow M_\sharp(\G)$; compare Proposition~\ref{prop:15}
below, where we are still unable to remove the coamenability condition.  A positive
general result is Theorem~\ref{thm:10} which shows that if $\omega\in L^1(\G)$ and
$\omega^*\circ S$ is bounded on $D(S)\subseteq L^\infty(\G)$, then
$\omega\in L^1_\sharp(\G)$.  This is notable because it gives a criterion to be a member
of $L^1_\sharp(\G)$ which is not ``graph-like'': we do not suppose the existence of
another member of $L^1(\G)$ interacting with $S$ in some way.

A further motivation for writing this paper was to make the case that considering the
analytic generator (or rather, the process of analytic continuation) as a theory in its
own right has utility; compare with the adhoc approach of \cite{tak2} or \cite{vd}.
In particular, we take a great deal of care to consider the various different topologies
that have been used in the literature, and to verify that these lead to the same
constructions:
\begin{itemize}
\item Either the weak, or norm, topology gives the same continuity assumption on
the group $(\alpha_t)$ (this is well-known) but it is not completely clear that norm
analytic continuation (as used in \cite{mnw} for example) is the same as weak
analytic continuation (which is the framework of \cite{cz}).  Theorem~\ref{thm:one}
below in particular implies that it is.
\item For a von Neumann algebra, \cite{cz} used weak$^*$-continuity, but it is also
common to consider the $\sigma$-strong$^*$ topology, \cite{kus2,kus3}, or the strong
topology, \cite{fv} for example.  A priori, it is hence not possible to apply the
results of \cite{cz} (for example) to the definition used in \cite{kus2}.
Theorem~\ref{thm:10a} below shows that these do however give the same analytic extensions.
\item It is also possible to use duality directly; this approach is taken in \cite{vd}
for example.  Duality is explored in \cite{z}; compare Theorem~\ref{thm:two} below.
\end{itemize}

In Section~\ref{sec:1pgps} we give an introduction to one-parameter isometry groups on
Banach spaces and explore and prove the topological results summarised above.
We also explore some examples.  Section~\ref{sec:smearing} is devoted to the technique
of ``smearing'', and in particular to the ideas of \cite[Appendix~F]{mnw}, which we
find to be very powerful.  We check that the ideas of \cite[Appendix~F]{mnw} also
work for weak$^*$-continuous groups.  These first two sections are deliberately
expositionary in nature.

In Section~\ref{sec:apps} we present a variety of applications of the smearing technique.
We give new proofs of some known results (for example, Zsido's result that the graph of
the generator is an algebra, without using the machinery of spectral subspaces).
In the direction of Tomita-Takesaki theory, as an example of the utility of taking a
structured approach, we show how the main result of \cite{bcm}
follows almost immediately from the work of Cioranescu and Zsido in \cite{cz}, and give
another application of smearing to prove the remainder the results of \cite{bcm}.
We finish by making some remarks on considering the graph of the generator as a Banach
algebra: we believe there is interesting further work here.

In Section~\ref{sec:kap} we formulate and prove a Kaplansky Density result.
Given a $C^*$-algebra $A$ included in a von Neumann algebra $M$ with $A$ generating $M$,
Kaplansky Density says that the unit ball of $A$ is weak$^*$-dense in the unit ball of
$M$.  If $(\alpha_t)$ is an automorphism group of $M$ which restricts to a norm-continuous
group on $A$, then we can consider the graphs of the generators, say
$\mc G(\alpha^A_{-i})$ and $\mc G(\alpha^M_{-i})$, which are non-self-adjoint operator
algebras.  We have that $\mc G(\alpha^A_{-i}) \subseteq \mc G(\alpha^M_{-i})$ and is
weak$^*$-dense (see Proposition~\ref{prop:three} for example).  The main result here is
that the unit ball of $\mc G(\alpha^A_{-i})$ is weak$^*$-dense in the unit ball of
$\mc G(\alpha^M_{-i})$.  The key idea is to consider the bidual
$\mc G(\alpha^A_{-i})^{**}$, and to identify $\mc G(\alpha^M_{-i})$ within this.

In Section~\ref{sec:duals}, we consider the ``adjoint'' of the above situation, the
inclusion $M_*\rightarrow A^*$.  Our groundwork in Section~\ref{sec:kap} leads us to
show Theorem~\ref{thm:seven} which shows that if $\omega\in M_*$ and $\omega\in
D(\alpha_{-i}^{A^*})$ then automatically $\alpha_{-i}^{A^*}(\omega) \in M_*$, so
that $\omega\in D(\alpha_{-i}^{M_*})$.  The analogous result for the inclusion
$A\rightarrow M$ is false, see Example~\ref{ex:four}.
We make a study of quotients.  For both dual spaces, and quotients, we
seem to require extra hypotheses (essentially, forms of complementation).  We finish
by making some remarks about ``implemented'' automorphism groups, as studied further
in \cite[Section~6]{cz} and \cite{z1}.  In the final section we apply our results to
the study of locally compact quantum groups.

\subsection{Notation}

We use $E,F$ for Banach spaces, and write $E^*$ for the dual space of $E$.
For $x\in E, \mu\in E^*$ we write $\ip{\mu}{x} = \mu(x)$ for the pairing.
Given a bounded linear map $T:E\rightarrow F$ we write $T^*$ for the (Banach space)
adjoint $T^*:F^*\rightarrow E^*$.  This should not cause confusion with the Hilbert space
adjoint.  We use $A$ for a Banach or $C^*$-algebra, and $M$ for a von Neumann algebra,
writing $M_*$ for the predual of $M$.  

If $E_0\subseteq E$ is a closed subspace, then by the
Hahn-Banach theorem we may identify the dual of $E_0$ with $E^* / E_0^\perp$, and
identify $(E/E_0)^*$ with $E_0^\perp$, where
\[ E_0^\perp = \{ \mu\in E^* : \ip{\mu}{x}=0 \ (x\in E_0) \}. \]
Similarly, for a subspace $X\subseteq M$ we define ${}^\perp X = \{ \omega\in M_* :
\ip{x}{\omega}=0 \ (x\in X) \}$.  The weak$^*$-closure of $X$ is $({}^\perp X)^\perp$,
and if $X$ is weak$^*$-closed, then $M_* / {}^\perp X$ is the canonical predual of $X$.

By a \emph{metric surjective} $T:E\rightarrow F$ we mean a surjective bounded linear
map such that the induced isomorphism $E/\ker T\rightarrow F$ is an isometric isomorphism.
By Hahn-Banach, this is if and only if $T^*:F^*\rightarrow E^*$ is an isometry onto
its range (which is $(\ker T)^\perp$).

\subsection{Acknowledgements}

The author would like to thank Thomas Ransford, Piotr So{\l}tan, and Ami Viselter for
helpful comments and careful reading of a preprint of this paper, as well as the anonymous
referee for their helpful comments.

\section{One-parameter groups}\label{sec:1pgps}

A \emph{one-parameter group of isometries} on a Banach space $E$ is a family
$(\alpha_t)_{t\in\mathbb R}$ of bounded linear operators on $E$ such that $\alpha_0$ is the
identity, each $\alpha_t$ is a contraction, and $\alpha_t \circ \alpha_s = \alpha_{t+s}$
for $s,t\in\mathbb R$.  Then $\alpha_{-t}$ is the inverse to $\alpha_t$, and thus each
$\alpha_t$ is actually an isometric isomorphism of $E$.

We want to consider one of a number of continuity conditions on $(\alpha_t)$:
\begin{enumerate}
\item We say that $(\alpha_t)$ is \emph{norm-continuous} if, for each $x\in E$, the orbit map
$\mathbb R\rightarrow E; t\mapsto \alpha_t(x)$ is continuous, for the norm topology on $E$;
\item We say that $(\alpha_t)$ is \emph{weakly-continuous} if each orbit map is continuous
for the weak topology on $E$.  However, this condition implies already that $(\alpha_t)$ is
norm-continuous; see \cite[Proposition~1.2']{tak2} for a short proof.
\item If $E$ is the dual of a Banach space $E_*$, then $(\alpha_t)$ is
\emph{weak$^*$-continuous} if each operator $\alpha_t$ is weak$^*$-continuous,
and the orbit maps are weak$^*$-continuous.
\end{enumerate}

\begin{example}\label{ex:one}
Consider the Banach spaces $c_0(\mathbb Z)$ and $\ell^\infty(\mathbb Z)$.
Let $\alpha_t$ be the operator given by multiplication by $(e^{int})_{n\in\mathbb Z}$.
Then $(\alpha_t)$ forms a one-parameter group of isometries which is norm-continuous
on $c_0(\mathbb Z)$, and which is weak$^*$-continuous on $\ell^\infty(\mathbb Z)$, but not
norm-continuous on $\ell^\infty(\mathbb Z)$ (consider the orbit of the constant sequence
$(1) \in\ell^\infty(\mathbb Z)$).
\end{example}

We shall mainly be interested in the case of a Banach algebra $A$.  If each $(\alpha_t)$
is an algebra homomorphism, then we call $(\alpha_t)$ a \emph{(one-parameter) automorphism
group}.  If $A$ is a $C^*$-algebra, then we require that each $\alpha_t$ be a
$*$-homomorphism, and, unless otherwise specified, we suppose that $(\alpha_t)$ is
norm-continuous.  When $A=M$ is actually a von Neumann algebra, unless otherwise
specified, we assume that $(\alpha_t)$ is weak$^*$-continuous.  When $M$ acts on a Hilbert
space $H$, there are of course other natural topologies on $M$, and we shall make some
comments about these later, see Theorem~\ref{thm:10a} below, for example.

In the classical theory of, say, $C_0$-semigroups (where we replace $\mathbb R$ by
$[0,\infty)$) central to the theory is the notion of a generator.  This paper will be
concerned with a different idea, the \emph{analytic generator}, which arises from complex
analysis techniques.  Here we follow \cite{cz}; see also \cite{kus1} in the norm-continuous
case, and the lecture notes \cite[Section~5.3]{kus2}.

\begin{definition}
For $z\in\mathbb C\setminus\mathbb R$ define 
\[ S(z) = \{ w\in\mathbb C : 0 \leq \im w / \im z \leq 1 \}. \]
That is, $S(z)$ is the closed horizontal strip bounded by $\mathbb R$ and $\mathbb R+z$.
For $t\in\mathbb R$ let $S(t)=\mathbb R$.

For a Banach space $E$, a function $f:S(z)\rightarrow E$ is \emph{norm-regular} when $f$
is continuous, and analytic in the interior of $S(z)$.
\end{definition}

Notice that we make no boundedness assumption, but see Remark~\ref{rem:2} below.

We remind the reader that for a domain $U\subseteq\mathbb C$ and $f:U\rightarrow E$,
we have that $f$ is analytic (in the sense of having an absolutely convergent power series,
locally to any point in $U$) if and only if $\mu\circ f$ is complex differentiable, for
each $\mu\in E^*$.  If $E=(E_*)^*$ is a dual space, then it suffices that $f$ be
``weak$^*$-differentiable'', that is, we test only for $\mu\in E_*$.  For a short proof see
\cite[Appendix~A1]{tak2}, and for further details, see for example \cite{an, ane}.

When $E=(E_*)^*$ is a dual space, we say that $f:S(z)\rightarrow E$ is \emph{weak$^*$-regular}
when $f$ is weak$^*$-continuous.  By the above remarks, it does not matter which notion of
``analytic'' we consider on the interior of $S(z)$.

\begin{definition}
Let $(\alpha_t)$ be a norm-continuous, one-parameter group of isometries on $E$, and let
$z\in\mathbb C$.  Define a subset $D(\alpha_z)\subseteq E$ by saying
that $x\in D(\alpha_z)$ when there is a norm-regular $f:S(z)\rightarrow E$ with
$f(t)=\alpha_t(x)$ for each $t\in\mathbb R$; in this case, we set $\alpha_z(x) = f(z)$.

We make the same definition for a weak$^*$-continuous isometry group, using a
weak$^*$-regular map $f$.
\end{definition}

Suppose we have two regular maps $f,g:S(z)\rightarrow E$ with $f(t) = g(t)=\alpha_t(x)$
for each $t\in\mathbb R$.
For $\mu\in E^*$ (or $E_*$ in the weak$^*$-continuous case) consider the map
$h:S(z)\rightarrow\mathbb C; w\mapsto \ip{\mu}{f(w)-g(w)}$.  Then $h$ is regular and
vanishes on $\mathbb R$, and so by the reflection principle, and Morera's Theorem, we
can extend $h$ to an analytic function on the interior of $S(z)\cup S(-z)$ which vanishes
on $\mathbb R$, and which hence vanishes on all of $S(z)$.  As $\mu$ was arbitrary, this
shows that $f(w)=g(w)$ for each $w\in S(z)$.  We conclude that the regular map occurring
in the definition of $\alpha_z$ is unique; we term $f$ an analytic extension of the orbit
map $t\mapsto \alpha_t(x)$.

It is easy to show that $D(\alpha_z)$ is a subspace of $E$, and
that $\alpha_z: D(\alpha_z)\rightarrow E$ is a linear operator.
We remark that \cite{cz} uses a vertical strip instead, but one can simply ``rotate''
the results to our convention.
We have the familiar properties (see \cite[Section~1]{kus1}, \cite[Section~2]{cz}), all
of which follow essentially immediately from uniqueness of analytic extensions:
\begin{enumerate}
\item $\alpha_t \circ \alpha_z = \alpha_z\circ\alpha_z = \alpha_{z+t}$ for
$t\in\mathbb R$; here using the usual notion of composition of not necessarily everywhere
defined operators.
\item if $w\in S(z)$ then $\alpha_z \subseteq \alpha_w$.  It follows that
$S(z)\rightarrow E; w\mapsto \alpha_w(x)$ is defined, and by uniqueness, is the
analytic extension of the orbit map for $x$.
\item $\alpha_{-z} = \alpha_z^{-1}$.
\item $\alpha_{z_1} \circ \alpha_{z_2} \subseteq \alpha_{z_1+z_2}$, with
equality if both $z_1,z_2$ lie on the same side of the real axis.
\end{enumerate}
Furthermore, $\alpha_z$ is a closed operator (see  \cite[Theorem~1.20]{kus1} for the
norm-continuous case, and \cite[Theorem~2.4]{cz} for the weak$^*$-continuous case).

\begin{remark}\label{rem:2}
Contrary to some sources, we have not imposed any boundedness assumptions on our
regular maps; however, in our setting, this is automatic.  Let $z=t+is\in\mathbb C$ and
$x\in D(\alpha_z)$.  Then $x\in D(\alpha_{is})$ and $\alpha_z(x) =
\alpha_t(\alpha_{is}(x))$ and so $\|\alpha_z(x)\| = \|\alpha_{is}(x)\|$.  In the
rest of this remark, we will assume without loss of generality that $s>0$.

In the norm-continuous case, the map $[0,s]\rightarrow E; r\mapsto \alpha_{ir}(x)$
is norm-continuous, and so has bounded image.  As $(\alpha_t)$ is an isometry group,
it follows that $w\mapsto \alpha_w(x)$ is bounded on $S(z)$.  By the Three-Lines Theorem,
if we set
\[ M = \max\Big( \sup_r \|\alpha_r(x)\|, \sup_r \|\alpha_{is+r}(x)\| \Big)
= \max(\|x\|, \|\alpha_z(x)\|), \]
then $\|\alpha_w(x)\| \leq M$ for each $w\in S(z)$.

In the weak$^*$-continuous\footnote{See Appendix~\ref{add:1} below for clarification.} case, for any $\mu\in E_*$, the map $[0,s]\rightarrow
\mathbb C; r\mapsto \ip{\alpha_{ir}(x)}{\mu}$ is continuous and so bounded, and so,
again, the Three-Lines Theorem shows that $|\ip{\alpha_w(x)}{\mu}| \leq M\|\mu\|$ for
$w\in S(z)$.  Taking the supremum over $\|\mu\|\leq1$ shows that $\|\alpha_w(x)\|
\leq M$ for $w\in S(z)$.

Similar remarks would also apply to weakly-continuous extensions, if we were to consider
these.
\end{remark}

The paper \cite{cz} works with general dual pairs of Banach spaces, which satisfy certain
axioms.  In particular, if $(\alpha_t)$ is norm-continuous on $E$, then it is
weakly-continuous, and so we can consider weakly-regular extensions, to which the general
theory of \cite{cz} applies.

\begin{remark}\label{rem:five}
In particular, the dual pairs of Banach spaces which \cite{cz} considers admit a ``good''
integration theory.  We shall only consider the cases of weak$^*$-continuous maps, for
which we can just consider weak$^*$-integrals; and weakly-continuous maps, for which the
theory is less obvious.  Indeed, let $f:\mathbb R\rightarrow E$ be weakly continuous
with $\int_{\mathbb R}\|f(t)\| \ dt<\infty$.  A naive definition of $\int_{\mathbb R}
f(t) \ dt$ defines a member of $E^{**}$, but this integral actually converges in $E$,
see \cite[Proposition~1.4]{cz} and \cite[Proposition~1.2]{arv1}.  Alternatively, if $E$
is separable, we can use the Bochner integral and the Pettis Measurability Theorem.
\end{remark}

Suppose $x\in E$ and $f:S(z)\rightarrow E$ is a weakly-regular extension of the orbit map
for $x$.  Then $t\mapsto f(t)=\alpha_t(x)$ is norm-continuous, and also $t\mapsto f(t+z)
= \alpha_t(\alpha_z(x)) = \alpha_t(f(z))$ (by property (1) above) is norm-continuous.
Further, on the interior of $S(z)$, we have that $f$ is analytic, and hence
norm-continuous.  However, it is not immediately clear why $f$ need be norm-continuous on
all of $S(z)$.  We now show that actually $f$ is automatically norm-continuous; but below
we give an example to show that under slightly weaker conditions, norm-continuity on all
of $S(z)$ can fail, showing that this is more subtle than it might appear.

\begin{theorem}\label{thm:one}
Let $E$ be a Banach space, and let $f:S(z)\rightarrow E$ be a bounded, weakly-regular map.
Assume further that $t\mapsto f(t)$ and $t\mapsto f(z+t)$ are norm continuous.
Then $f$ is norm-regular.
\end{theorem}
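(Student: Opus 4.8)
The plan is to realise $f$ on the interior of the strip as a Poisson integral of its two boundary functions, and then to read off norm-continuity up to the boundary from an approximate-identity argument. First I would rotate and rescale so that $z=i$ and $S(z)=\{w:0\le\im w\le1\}$; the paper already notes such rotations are harmless. On the open strip $f$ is weakly analytic, hence norm-analytic and in particular norm-continuous, so the entire content of the theorem is norm-continuity as one approaches either of the two boundary lines.

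The key device is the Poisson kernel for the strip: for $0<y<1$ there are positive kernels $k_0(\cdot,y),k_1(\cdot,y)\in L^1(\mathbb R)$ with $\int k_0(\cdot,y)=1-y$ and $\int k_1(\cdot,y)=y$, such that $k_0(\cdot,y)$ concentrates at $0$ while $\|k_1(\cdot,y)\|_1\to0$ as $y\to0^+$, and symmetrically as $y\to1^-$. Fixing $\mu\in E^*$, the function $\mu\circ f$ is bounded, continuous on the closed strip (from weak continuity of $f$) and harmonic on the interior, so by the classical Poisson representation for bounded harmonic functions on the strip---with uniqueness supplied by Phragm\'en--Lindel\"of, which is exactly where the boundedness hypothesis is used---we have
\[ \ip{\mu}{f(x+iy)} = \int_{\mathbb R} k_0(x-t,y)\ip{\mu}{f(t)}\,dt + \int_{\mathbb R} k_1(x-t,y)\ip{\mu}{f(t+i)}\,dt. \]
Since $t\mapsto f(t)$ and $t\mapsto f(t+i)$ are bounded and norm-continuous and the kernels lie in $L^1$, the two vector-valued integrals converge in $E$ (as Bochner integrals when $E$ is separable, or via the weak integrals that land in $E$ described in Remark~\ref{rem:five} in general). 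Calling their sum $g(x+iy)$, the displayed identity reads $\ip{\mu}{g}=\ip{\mu}{f}$ for every $\mu$, whence $g=f$ throughout the interior.

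It then remains to run the approximate-identity argument on this vector representation. Approaching a bottom boundary point $x_0\in\mathbb R$ along $x+iy\to x_0$ with $y\to0^+$, the $k_1$-term is bounded in norm by $\|f\|_\infty\,\|k_1(\cdot,y)\|_1\to0$, while for the $k_0$-term I would use $\int k_0(\cdot,y)\to1$, the concentration of $k_0$, and the norm-continuity of $t\mapsto f(t)$ at $x_0$ to get $\int k_0(x-t,y)f(t)\,dt\to f(x_0)$ in norm; the usual split into a small neighbourhood of $x_0$ (controlled by continuity) and its complement (controlled by boundedness and vanishing mass) does this. The symmetric computation handles the top line, and together with norm-continuity on the interior this yields norm-continuity on all of $S(z)$, i.e.\ norm-regularity. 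The main obstacle is the honest verification that $f$ coincides with its vector Poisson integral: this relies on the scalar uniqueness statement (hence on boundedness) and on care that the vector integrals genuinely converge in $E$ rather than merely in $E^{**}$. Once the representation is secured, the approximate-identity step is routine, and the crucial role of norm-continuity of \emph{both} boundary functions explains why, as the promised example shows, weakening that hypothesis can destroy the conclusion.
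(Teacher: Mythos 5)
Your proof is correct, and it takes a genuinely different route from the paper's. The paper's proof damps $f$ by $e^{-w^2}$ and smears along horizontal lines with a Gaussian, setting $g_n(w)=\frac{n}{\sqrt\pi}\int_{\mathbb R}e^{-n^2t^2}g(w+t)\,dt$; a contour deformation (checked against each $\mu\in E^*$) rewrites $g_n$ as a Gaussian integral of the values of $g$ on $\mathbb R$ alone, so each $g_n$ extends to an entire, hence norm-continuous, function, and since $g_n\to g$ uniformly on the two boundary lines, the Three-Lines Theorem upgrades this to uniform convergence on all of $S(z)$, exhibiting $g$ (hence $f$) as a uniform limit of norm-continuous maps. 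You instead reconstruct $f$ on the open strip in a single step as the Poisson integral of its two boundary restrictions --- the scalar identity holding for each $\mu\in E^*$ by the classical representation of bounded harmonic functions on a strip, with uniqueness from Phragm\'en--Lindel\"of --- and then read off norm-continuity up to the boundary from the approximate-identity properties of the kernels ($\|k_1(\cdot,y)\|_1\to0$ and concentration of $k_0(\cdot,y)$ as $y\to0^+$). Both arguments rest on a kernel representation whose vector-valued form converges in norm against the bounded, norm-continuous boundary data, and both use boundedness of $f$ in an essential way (you, for uniqueness of the Poisson representation; the paper, for the Three-Lines Theorem). Your route avoids the damping factor and the three-lines upgrade of convergence, and it makes completely transparent where boundedness enters, at the cost of importing the Poisson theory of the strip; the paper's route is self-contained modulo elementary contour shifting and, crucially for what follows, introduces the smearing operators $\mathcal{R}_n$ that become the central technical device of the rest of the paper (Lemma~\ref{lem:three}, Section~\ref{sec:smearing}). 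One minor simplification: your hedge about separability is unnecessary, since the integrands $t\mapsto k_j(x-t,y)f(t+ij)$ are norm-continuous and therefore separably valued, so the Bochner integrals exist for arbitrary $E$.
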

\begin{proof}
Define $g:S(z)\rightarrow E$ by $g(w) = e^{-w^2}f(w)$.  Then $g$ is weakly-regular,
and $t\mapsto g(t)$ and $t\mapsto g(z+t)$ are uniformly (norm) continuous.

We now use a ``smearing'' technique.  For $n>0$ define $g_n:S(z)\rightarrow E$ by
\[ g_n(w) = \frac{n}{\sqrt\pi} \int_{\mathbb R} e^{-n^2t^2} g(w+t) \ dt. \]
Here the integral is in the sense of Remark~\ref{rem:five}, or alternatively, as $g$ is
norm continuous on any horizontal line, we can use a Riemann integral.
It follows easily that $g_n(t)\rightarrow g(t)$, uniformly in $t\in\mathbb R$,
as $n\rightarrow\infty$; similarly $g_n(t+z)\rightarrow g(t+z)$ uniformly in $t$.

We claim that
\[ g_n(w) = \frac{n}{\sqrt\pi}\int_{\mathbb R} e^{-n^2(t-w)^2} g(t) \ dt. \]
We prove this by, for each $\mu\in E^*$, considering the scalar-valued function
$w\mapsto \ip{\mu}{g_n(w)}$, and using contour deformation, and continuity.  

We now observe that $w\mapsto \frac{n}{\sqrt\pi}\int_{\mathbb R} e^{-n^2(t-w)^2} g(t)
\ dt$ is entire.  In particular, $g_n$ is norm continuous on $S(z)$.  As
$g_n\rightarrow g$ uniformly on $\mathbb R$ and $\mathbb R+z$, the Three-Lines Theorem
implies uniform convergence on all of $S(z)$.  We conclude that $g$ is norm-regular,
which implies also that $f$ is norm-regular.
\end{proof}

\begin{corollary}
Let $(\alpha_t)$ be norm-continuous on $E$.  If we use norm-regular extensions, or
weakly-regular extensions, then we arrive at the same operator $\alpha_z$.
\end{corollary}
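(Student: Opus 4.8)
The plan is to establish that the two constructions define the same operator by proving the two inclusions of graphs. Write $\alpha_z^{\mathrm n}$ and $\alpha_z^{\mathrm w}$ for the operators obtained from norm-regular and weakly-regular extensions respectively. One inclusion is immediate: a norm-regular map is norm-continuous and norm-analytic, hence \emph{a fortiori} weakly-continuous and weakly-analytic, so it is also weakly-regular. Thus any $x\in D(\alpha_z^{\mathrm n})$ lies in $D(\alpha_z^{\mathrm w})$, and since the \emph{same} extension $f$ witnesses both memberships, $\alpha_z^{\mathrm n}(x)=f(z)=\alpha_z^{\mathrm w}(x)$. So $\alpha_z^{\mathrm n}\subseteq\alpha_z^{\mathrm w}$, and the whole problem reduces to the reverse inclusion.

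The substance of the reverse inclusion is already packaged in Theorem~\ref{thm:one}: I would take $x\in D(\alpha_z^{\mathrm w})$ with weakly-regular extension $f$ and verify the three hypotheses of that theorem, after which $f$ is norm-regular, giving $x\in D(\alpha_z^{\mathrm n})$ with the matching value $f(z)$. Note that, since $(\alpha_t)$ is norm-continuous it is weakly-continuous, so the theory of \cite{cz} applies and the group properties (1) and (2) are available for weakly-regular extensions. Writing $z=a+is$ and assuming $s>0$ (the case $s<0$ is symmetric), the two norm-continuity hypotheses then follow from the standing norm-continuity of $(\alpha_t)$: on the bottom edge $t\mapsto f(t)=\alpha_t(x)$ is a norm-continuous orbit map, and on the top edge property~(1) gives $f(z+t)=\alpha_{z+t}(x)=\alpha_t(\alpha_z(x))$, which is the norm-continuous orbit map of the element $\alpha_z(x)=f(z)\in E$.

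It remains to check that $f$ is bounded, and this is the one point requiring a little care: for a merely weakly-regular $f$ we cannot directly invoke the norm-continuity of the vertical segment $r\mapsto\alpha_{ir}(x)$ used in Remark~\ref{rem:2}. The remedy is to argue weakly. For each $\mu\in E^*$ the scalar map $r\mapsto\ip{\mu}{f(ir)}$ is continuous on the compact interval $[0,s]$, hence bounded, so $\{f(ir):r\in[0,s]\}$ is weakly bounded and therefore norm-bounded by the uniform boundedness principle, say by $C$. For a general $w=u+iv\in S(z)$, properties~(1) and~(2) give $f(w)=\alpha_w(x)=\alpha_u(\alpha_{iv}(x))=\alpha_u(f(iv))$, and since each $\alpha_u$ is an isometry we obtain $\|f(w)\|=\|f(iv)\|\le C$; thus $f$ is bounded on all of $S(z)$.

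With all three hypotheses in hand, Theorem~\ref{thm:one} yields that $f$ is norm-regular, whence $\alpha_z^{\mathrm w}\subseteq\alpha_z^{\mathrm n}$ and the two operators coincide. I expect no serious obstacle here: the genuinely analytic content is done once and for all in Theorem~\ref{thm:one}, and the only subtlety is the boundedness step, where one must pass through the uniform boundedness principle rather than quoting norm-continuity of the orbit along the imaginary axis as in the norm-continuous discussion of Remark~\ref{rem:2}.
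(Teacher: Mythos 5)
Your proof is correct and takes essentially the same route the paper intends: the trivial inclusion via uniqueness of weakly-regular extensions, and the reverse inclusion by verifying the hypotheses of Theorem~\ref{thm:one}, with the edge continuity obtained from the group properties exactly as in the paragraph preceding that theorem. Your uniform-boundedness-principle argument for the boundedness hypothesis is precisely the filling-in of the last sentence of Remark~\ref{rem:2} (``Similar remarks would also apply to weakly-continuous extensions''), so there is no difference in substance.
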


Thus the approaches of \cite{kus1} and \cite{cz} do give the same operators.

\begin{example}
If we weaken the hypotheses of Theorem~\ref{thm:one} to only require that $t\mapsto f(t)$
be continuous, then $f$ need not be norm-regular, as the following example shows.
Set $E=c_0 = c_0(\mathbb N)$, and define $F:\overline{\mathbb D}\rightarrow E$ by
\[ F(z) = \big( F_n(z) \big)_{n\in\mathbb N}
= \big( \exp(k_n(e^{-i\pi/n}z-1)) \big)_{n\in\mathbb N}. \]
Here $(k_n)$ is a rapidly increasing sequence of integers.  Notice that
$|F_n(z)| = \exp(k_n (\re(e^{-i\pi/n}z) - 1)) \leq 1$.  Then:
\begin{itemize}
\item for $z\in\mathbb D$ we have that $e^{-i\pi/n}z\in\mathbb D$ and so
$\re(e^{-i\pi/n}z) - 1 < 0$ and hence $F_n(z)\rightarrow 0$ as $n\rightarrow\infty$;
\item If $z=e^{it}$ for $t\not\in 2\pi\mathbb Z$, then $\re(e^{-i\pi/n}z)-1
= \cos(t-\pi/n)-1 \rightarrow \cos(t)-1 < 0$ and so $F_n(z)\rightarrow 0$;
\item $|F_n(1)| = \exp(k_n(\cos(\pi/n)-1)) \rightarrow 0$ so long as $(k_n)$
increases fast enough.
\end{itemize}
Thus $(F_n(z))\in c_0$ for all $z\in\overline{\mathbb D}$.  Notice that each $F_n$
is continuous, and analytic on $\mathbb D$.

We now use that $c_0^*=\ell^1$, and for any $a=(a_n)\in\ell^1$ we have that
\[ \ip{a}{F(z)} = \sum_{n=1}^\infty a_n F_n(z) \]
converges uniformly for $z\in\overline{\mathbb D}$.  We conclude that $F$ is weakly-regular,
that is, analytic on $\mathbb D$ and weakly-continuous on $\overline{\mathbb D}$.
However,
\[ \|F(e^{i\pi/n}) - F(1)\| \geq | F_n(e^{i\pi/n}) - F_n(1)|
= | 1 - \exp(k_n(e^{-i\pi/n}-1)) |. \]
This will be large if $(k_n)$ increases rapidly.  Thus $F$ is not norm-continuous.

Finally, we can use a Mobius transformation to obtain an example defined on
the strip $S(i)$.  Indeed, $z \mapsto w = i(1-z)/(1+z)$ maps $\mathbb D$ to the
upper half-plane, and maps $\mathbb T$ to $\mathbb R \cup \{\infty\}$, and sends
$1\in\mathbb T$ to $0\in\mathbb R$.  We hence obtain $G:S(i)\rightarrow c_0$ which
is weakly-regular, with $t\mapsto G(t+i)$ norm-continuous, but $t\mapsto G(t)$ not
norm-continuous.
\end{example}

\subsection{Analytic generators}\label{sec:ag}

We call the closed operator $\alpha_{-i}$ the \emph{analytic generator} of $(\alpha_t)$.
Note that the use of $-i$ is really convention, as we can always rescale and consider
$(\alpha_{tr})$ for any non-zero $r\in\mathbb R$.  In particular, $\alpha_{-i/2}$ often
appears in applications.

We have that $\alpha_{-i}$ is a closed, densely defined operator.  The operator $\alpha_{-i}$
does determine $(\alpha_t)$, see for example Section~\ref{sec:tp} below,
and indeed one can reconstruct $(\alpha_t)$ from $\alpha_{-i}$,
see \cite[Section~4]{cz}.

\begin{example}\label{ex:two}
Let us compute the analytic extensions of the group(s) from Example~\ref{ex:one}.
If $x=(x_n)\in D(\alpha_z)\subseteq c_0(\mathbb Z)$ then for each $n$, the map
$t\mapsto e^{int} x_n$ has an analytic extension to $S(z)$, which by uniqueness
must be the map $w\mapsto e^{inw} x_n$.  Thus $\alpha_z(x) = (e^{inz}x_n)
\in c_0(\mathbb Z)$.  Reversing this, if $(e^{inz}x_n) \in c_0(\mathbb Z)$, then by
the three-lines theorem, $(x_n)\in D(\alpha_z)$.  In particular, we see that
$x=(x_n)\in D(\alpha_{-i}) \subseteq c_0(\mathbb Z)$ if and only if $(x_n)$ is in
$c_0(\mathbb Z)$ and $(x_n e^n)\in c_0(\mathbb Z)$.

Similar remarks apply to $\ell^\infty(\mathbb Z)$.  In particular, we see that
$x=(x_n)\in D(\alpha_{-i}) \subseteq \ell^\infty(\mathbb Z)$ if and only if $(x_n)$ and
$(x_n e^n)$ are bounded.

Consider $x_n = 0$ for $n<0$ and $x_n = e^{-n}$ for $n\geq 0$.  Then $x=(x_n)\in
c_0(\mathbb Z)$ but while $(x_ne^n)$ is bounded, it is not in $c_0(\mathbb Z)$.
It follows that $x\not\in D(\alpha_{-i})$ for the group acting on $c_0(\mathbb Z)$,
but $x$ is in $D(\alpha_{-i})$ for the group acting on $\ell^\infty(\mathbb Z)$.
\end{example}

\begin{example}\label{ex:three}
If we consider a one-parameter isometry group on a Hilbert space $H$, then we have the
familiar notion of a (strongly continuous) unitary group $(u_t)_{t\in\mathbb R}$.
Stone's Theorem tells us that there is a self-adjoint (possibly unbounded) operator
$A$ on $H$ with $u_t = e^{itA}$ for each $t\in\mathbb R$.  Alternatively, we can consider
the analytic generator $u_{-i}$.  \cite[Theorem~6.1]{cz} shows that $u_{-i}$, as a
(possibly unbounded) operator on $H$ is positive and injective, and equal to $e^A$.
Thus, informally, we can think of the analytic generator as the exponential of the
infinitesimal generator.
\end{example}

We now consider the case when $E=A$ is a Banach algebra, or a $C^*$-algebra.

\begin{proposition}\label{prop:one}
Let $(\alpha_t)$ be an automorphism group of a Banach algebra $A$.  Then $D(\alpha_z)$
is a subalgebra of $A$ and $\alpha_z$ a homomorphism.
\end{proposition}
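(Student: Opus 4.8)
The plan is to exploit the uniqueness of regular extensions established just before the statement. Fix $z$ and take $x,y\in D(\alpha_z)$, with norm-regular extensions $f,g\colon S(z)\to A$ of the orbit maps of $x$ and $y$, so that $f(t)=\alpha_t(x)$ and $g(t)=\alpha_t(y)$ for $t\in\mathbb R$, while $\alpha_z(x)=f(z)$ and $\alpha_z(y)=g(z)$. I would then form the pointwise product $h\colon S(z)\to A$, $h(w)=f(w)g(w)$, and argue that $h$ is itself a norm-regular extension of the orbit map of $xy$.

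The first step is to check that $h$ is norm-regular. Norm-continuity on all of $S(z)$ is immediate, since multiplication on $A$ is jointly norm-continuous (indeed $\|ab\|\le\|a\|\,\|b\|$) and $h$ is the product of two norm-continuous maps. For analyticity on the interior of $S(z)$ I would apply the product rule directly: writing
\[ h(w+\delta)-h(w) = \big(f(w+\delta)-f(w)\big)g(w+\delta) + f(w)\big(g(w+\delta)-g(w)\big) \]
and dividing by $\delta$, the norm-differentiability of $f$ and $g$ together with the continuity of $g$ yield $h'(w)=f'(w)g(w)+f(w)g'(w)$. Thus $h$ is norm-differentiable, hence analytic, on the interior, and so $h$ is norm-regular.

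The second step records that $h$ extends the correct orbit map. For $t\in\mathbb R$, since each $\alpha_t$ is an algebra homomorphism,
\[ h(t)=f(t)g(t)=\alpha_t(x)\alpha_t(y)=\alpha_t(xy). \]
By the uniqueness of regular extensions, $h$ is therefore the analytic extension of $t\mapsto\alpha_t(xy)$. This shows at once that $xy\in D(\alpha_z)$ and that $\alpha_z(xy)=h(z)=f(z)g(z)=\alpha_z(x)\alpha_z(y)$. Combined with the fact, already noted, that $D(\alpha_z)$ is a subspace on which $\alpha_z$ acts linearly, we conclude that $D(\alpha_z)$ is a subalgebra of $A$ and that $\alpha_z$ is a homomorphism.

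I do not expect a genuine obstacle in the norm-continuous setting: the argument is entirely driven by uniqueness, and the only point requiring any care is the analyticity of the product $h$, which the product rule disposes of. The one place this style of reasoning becomes delicate is the weak$^*$-continuous (von Neumann algebra) case, where multiplication is only separately weak$^*$-continuous and is not jointly weak$^*$-continuous, even on bounded sets; there one cannot deduce weak$^*$-continuity of $h$ so cheaply, and a separate argument---for instance the smearing technique developed in the next section---is needed.
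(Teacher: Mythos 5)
Your proof is correct and is essentially the same as the paper's: both form the pointwise product of the two regular extensions, use joint norm-continuity of multiplication on $A$ for continuity and analyticity of the product, and invoke uniqueness of regular extensions to identify it with the extension of the orbit map of the product. Your closing remark about the weak$^*$ case also matches the paper, which notes exactly this obstruction before treating dual Banach algebras separately via smearing.
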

\begin{proof}
Let $a,b\in D(\alpha_z)$.  We can pointwise multiply the analytic extensions
$w\mapsto \alpha_w(a)$ and $w\mapsto \alpha_w(b)$.  This is continuous, and analytic on
the interior of $S(z)$; here we use the joint norm continuity of the product on $A$.
Thus $ab\in D(\alpha_z)$ with $\alpha_z(ab)=\alpha_z(a)\alpha_z(b)$.
\end{proof}

\begin{proposition}\label{prop:star}
Let $(\alpha_t)$ be an automorphism group of a $C^*$-algebra $A$.  For $a\in D(\alpha_z)$
we have that $a^*\in D(\alpha_{\overline{z}})$ and $\alpha_{\overline{z}}(a^*) =
\alpha_z(a)^*$.
\end{proposition}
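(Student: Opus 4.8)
The plan is to reflect the analytic extension for $a$ through the real axis and then apply the involution. Write $z = t_0 + is_0$; the strip $S(\overline z)$ is precisely the reflection of $S(z)$ across $\mathbb R$, so that $w \in S(\overline z)$ if and only if $\overline w \in S(z)$. Let $f:S(z)\to A$ be the unique norm-regular extension of the orbit map for $a$, so that $f(t)=\alpha_t(a)$ and $f(z)=\alpha_z(a)$. I would define $g:S(\overline z)\to A$ by $g(w) = f(\overline w)^*$, and claim that this is the norm-regular extension of the orbit map for $a^*$.

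First I would check the boundary condition. For $t\in\mathbb R$ we have $g(t) = f(\overline t)^* = f(t)^* = \alpha_t(a)^*$; since each $\alpha_t$ is a $*$-homomorphism this equals $\alpha_t(a^*)$, so $g$ agrees with the orbit map for $a^*$ on $\mathbb R$. Continuity of $g$ is immediate, as $w\mapsto\overline w$ is continuous, $f$ is norm-continuous, and the involution is an isometric (conjugate-linear) homeomorphism of $A$.

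The one point requiring care is analyticity on the interior of $S(\overline z)$, where the key observation is how conjugate-linearity interacts with the anti-holomorphic reparametrisation $w\mapsto\overline w$: for $h\neq0$,
\[
\frac{g(w+h)-g(w)}{h}
= \frac{\bigl(f(\overline w+\overline h)-f(\overline w)\bigr)^*}{h}
= \left(\frac{f(\overline w+\overline h)-f(\overline w)}{\overline h}\right)^{\!*},
\]
using that $x^*/h = (x/\overline h)^*$ for the conjugate-linear involution. As $h\to0$ we have $\overline h\to0$, and since $f$ is analytic at $\overline w$ the inner quotient converges in norm to $f'(\overline w)$; applying the continuous involution shows $g$ is complex-differentiable at $w$ with $g'(w)=f'(\overline w)^*$. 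Hence $g$ is analytic in the interior, and so norm-regular.

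By the definition of the analytic extension this gives $a^*\in D(\alpha_{\overline z})$, and evaluating at $\overline z$ yields $\alpha_{\overline z}(a^*) = g(\overline z) = f(\overline{\overline z})^* = f(z)^* = \alpha_z(a)^*$, as required. There is no serious obstacle here: the only subtlety is identifying $S(\overline z)$ as the reflection of $S(z)$ and tracking the conjugate-linearity of the involution, which is exactly what turns the anti-holomorphic reparametrisation back into a genuinely holomorphic map. The same argument, testing against $\mu\in M_*$ and using that the predual is stable under the induced involution $\mu\mapsto\mu^*$, handles the weak$^*$-continuous von Neumann algebra case verbatim.
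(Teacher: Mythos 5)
Your proof is correct and is essentially the paper's own argument: the paper also defines $g(w)=f(\overline w)^*$ on $S(\overline z)$ and notes that the complex conjugation and the conjugate-linear involution ``cancel'' to give analyticity, which is exactly the difference-quotient computation you spell out. You have simply supplied the details that the paper leaves implicit.
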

\begin{proof}
Let $f:S(z)\rightarrow A$ be the analytic extension of the orbit map for $a$.
Then $g:S(\overline{z})\rightarrow A; w\mapsto f(\overline{w})^*$ is regular (the
complex conjugate and the involution ``cancel'' to show that $g$ is analytic on the
interior of $S(\overline z)$), from which the result follows.
\end{proof}

These results become more transparent if we consider the \emph{graph} of $\alpha_z$,
\[ \mc G(\alpha_z) = \big\{ (a,\alpha_z(a)) : a\in D(\alpha_z) \big\}, \]
which is a closed subspace of $A\oplus A$, as $\alpha_z$ is closed.  Thus
$\mc G(\alpha_z)$ is a subalgebra of $A\oplus A$, and in the $C^*$-algebra case,
$\mc G(\alpha_{-i})$ has the (non-standard) involution
\[ \mc G(\alpha_{-i}) \ni (a,b) \mapsto (b^*,a^*) \in \mc G(\alpha_{-i}). \]
Here we used that $\alpha_i = \alpha_{-i}^{-1}$.

A Banach algebra $A$ which is the dual of a Banach space $A_*$ in such a way that the
product on $A$ becomes separately weak$^*$-continuous is a \emph{dual Banach algebra},
\cite{run}.
The following result is shown in \cite{z} using the idea of a \emph{spectral subspace}
from \cite{arv1, arv2, f}.  This allows us to find weak$^*$-dense subspaces (in fact,
subalgebras) on which $(\alpha_t)$ is norm continuous.  We shall later give a different,
easier proof, see Section~\ref{sec:apps}.

\begin{theorem}[{\cite[Theorem~1.6]{z}}]\label{thm:nine}
Let $A$ be a dual Banach algebra and let $(\alpha_t)$ be a weak$^*$-continuous
automorphism group of $A$.  Then $D(\alpha_z)$ is a subalgebra of $A$, and $\alpha_z$
is a homomorphism.
\end{theorem}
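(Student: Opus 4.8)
The plan is to exhibit, for $a,b\in D(\alpha_z)$, an explicit weak$^*$-regular extension of the orbit map $t\mapsto\alpha_t(ab)$, and to read off from it both that $ab\in D(\alpha_z)$ and that $\alpha_z(ab)=\alpha_z(a)\alpha_z(b)$; since $D(\alpha_z)$ is already known to be a subspace, this yields the whole statement. The only plausible candidate is the pointwise product $F\colon S(z)\to A$, $F(w)=\alpha_w(a)\alpha_w(b)$, which on $\mathbb R$ equals $\alpha_t(a)\alpha_t(b)=\alpha_t(ab)$ since each $\alpha_t$ is a homomorphism. By Remark~\ref{rem:2} both factors are bounded on $S(z)$, so $F$ is bounded, and on each boundary line $F$ restricts to an orbit map ($F(t)=\alpha_t(ab)$ and $F(t+z)=\alpha_t(\alpha_z(a)\alpha_z(b))$), hence is weak$^*$-continuous there. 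Everything therefore reduces to showing that $F$ is weak$^*$-analytic on the interior of $S(z)$ and, crucially, weak$^*$-continuous up to the boundary.

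The central difficulty is exactly this last point: in a dual Banach algebra the product is only \emph{separately} weak$^*$-continuous, so the pointwise product of two weak$^*$-continuous maps need not be weak$^*$-continuous, and neither continuity on the closed strip nor analyticity in the interior is automatic. I would first dispose of the case where one factor is ``nice''. Using the smearing technique of Section~\ref{sec:smearing}, replace $b$ by its smeared approximants
\[ b_n=\frac{n}{\sqrt\pi}\int_{\mathbb R}e^{-n^2t^2}\alpha_t(b)\,dt ; \]
these lie in $D(\alpha_z)$ with $\alpha_z(b_n)=(\alpha_z(b))_n$, the orbit map $w\mapsto\alpha_w(b_n)$ is \emph{norm}-analytic on the whole plane, and $\|b_n\|\le\|b\|$, $\|\alpha_w(b_n)\|\le\|\alpha_w(b)\|$. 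For such a factor the product $F_n(w)=\alpha_w(a)\alpha_w(b_n)$ is genuinely weak$^*$-regular: writing $\ip{F_n(w)}{\mu}=\ip{\alpha_w(a)}{\alpha_w(b_n)\cdot\mu}$ for $\mu\in A_*$, where $\alpha_w(b_n)\cdot\mu\in A_*$ comes from the bimodule action of $A$ on $A_*$ afforded by separate weak$^*$-continuity, one sees that $w\mapsto\alpha_w(b_n)\cdot\mu$ is norm-analytic while $w\mapsto\alpha_w(a)$ is weak$^*$-continuous and bounded; a routine product-rule estimate then gives both analyticity on the interior and continuity on all of $S(z)$. Hence $ab_n\in D(\alpha_z)$ with $\alpha_z(ab_n)=\alpha_z(a)\alpha_z(b_n)$.

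It remains to remove the smearing, and this is where the real work lies. For each $w\in S(z)$ we have $\alpha_w(b_n)=(\alpha_w(b))_n\to\alpha_w(b)$ weak$^*$, so $F_n(w)\to F(w)$ weak$^*$, and the $F_n$ are uniformly bounded on $S(z)$. Fixing $\mu\in A_*$, the scalar functions $\phi_n(w)=\ip{F_n(w)}{\mu}$ are uniformly bounded and analytic on the interior, their boundary traces converge pointwise and boundedly to the continuous orbit functions $t\mapsto\ip{\alpha_t(ab)}{\mu}$ and $t\mapsto\ip{\alpha_t(\alpha_z(a)\alpha_z(b))}{\mu}$, and $\phi_n(w)\to\ip{F(w)}{\mu}$ pointwise on $S(z)$. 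Representing bounded analytic functions on a strip by their boundary values (a Poisson/Three-Lines argument, exactly in the spirit of the proof of Theorem~\ref{thm:one}) and passing to the limit by bounded convergence, one identifies $w\mapsto\ip{F(w)}{\mu}$ with the bounded analytic extension of continuous boundary data; such an extension is continuous up to the boundary. As $\mu$ was arbitrary, $F$ is weak$^*$-regular, giving $ab\in D(\alpha_z)$ and $\alpha_z(ab)=F(z)=\alpha_z(a)\alpha_z(b)$.

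I expect the boundary-continuity step to be the main obstacle. The interior analyticity could alternatively be obtained directly from Hartogs' theorem applied to $(w,v)\mapsto\ip{\alpha_w(a)\alpha_v(b)}{\mu}$, which is separately analytic by separate weak$^*$-continuity; but continuity up to the real boundary is precisely the phenomenon that fails for general weak$^*$-continuous factors, and it is the smearing reduction together with the Poisson/Three-Lines convergence that rescues it. The original argument in \cite{z} instead routes everything through spectral-subspace theory; the smearing approach sketched here is the more elementary one promised in Section~\ref{sec:apps}.
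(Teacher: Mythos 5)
Your proposal is correct, but it follows a genuinely different route from the paper's proof. The paper never constructs, for given $a,b\in D(\alpha_z)$, an analytic extension of the orbit map of $ab$. Instead it smears \emph{both} factors: for arbitrary $a,b\in A$ the maps $w\mapsto\alpha_w(\mc R_n(a))$ and $w\mapsto\alpha_w(\mc R_n(b))$ are entire and \emph{norm}-continuous, so the joint-norm-continuity argument of Proposition~\ref{prop:one} applies verbatim to their product; Theorem~\ref{thm:thm1} then shows that $X=\{(\mc R_n(a),\alpha_z\mc R_n(a)):a\in A\}$ is weak$^*$-dense in $\mc G(\alpha_z)$, and the proof is finished by the purely algebraic Lemma~\ref{lem:five} (the weak$^*$-closure of a subalgebra of a dual Banach algebra is again a subalgebra). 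Multiplicativity on all of $D(\alpha_z)$ thus comes from a soft density-and-closure argument, with no further function theory. You instead fix $a,b\in D(\alpha_z)$ and prove directly that $F(w)=\alpha_w(a)\alpha_w(b)$ is weak$^*$-regular: smearing one factor and rewriting $\ip{F_n(w)}{\mu}=\ip{\alpha_w(a)}{\alpha_w(b_n)\cdot\mu}$ via the module action handles the mixed product (norm-analytic factor against bounded weak$^*$-continuous factor), and the smearing is then removed by representing the bounded regular functions $\phi_n$ as Poisson integrals of their boundary values and passing to the limit --- bounded convergence on the two boundary lines, Vitali (or Montel) in the interior. Both arguments are sound. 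What each buys: the paper's proof stays entirely inside the smearing machinery of Section~\ref{sec:smearing}, at the price of resting on Theorem~\ref{thm:thm1} (hence ultimately on Wiener's Tauberian theorem), and needs nothing from complex analysis beyond the Three-Lines theorem; your proof bypasses Theorem~\ref{thm:thm1} and Lemma~\ref{lem:five} altogether, at the price of importing the classical Poisson representation for a strip, and it yields the sharper, explicit conclusion that the pointwise product $w\mapsto\alpha_w(a)\alpha_w(b)$ is automatically weak$^*$-continuous up to the boundary --- exactly the phenomenon that is not automatic when multiplication is only separately weak$^*$-continuous (though it follows a posteriori from the theorem by uniqueness of regular extensions). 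A minor bookkeeping point in your favour: the paper's set $X$ is not literally closed under multiplication (a product of two smeared elements need not itself be smeared), so strictly one should apply Lemma~\ref{lem:five} to the algebra generated by $X$, which the same computation places inside $\mc G(\alpha_z)$; your argument has no such wrinkle.
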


For a dual Banach algebra, we cannot simply copy the proof of Proposition~\ref{prop:one},
as in the weak$^*$-topology, the product is only separately continuous.  In particular,
this remark applies to von Neumann algebras.  The approach taken in \cite{kus2},
and implicitly in \cite{kus3} for example, is to use the $\sigma$-strong$^*$-topology;
\cite[Section~2.5]{fv} does the same, but with $M\subseteq\mc B(H)$ a concretely
represented von Neumann algebra, and the use of the strong topology.
Such approaches would allow the proof of Proposition~\ref{prop:one}
to now work.
Unfortunately, it is not clear if using the $\sigma$-strong$^*$-topology instead of the
weak$^*$- (that is, $\sigma$-weak-) topology gives the same set $D(\alpha_z)$.  Indeed,
is the resulting $\alpha_z$ even closed?  This issue is not addressed in \cite{kus2}.
We now show that, actually, we do obtain the same $D(\alpha_z)$.

Let $M$ be a von Neumann algebra with predual $M_*$.  For $\omega \in M_*^+$ we consider
the seminorms
\begin{gather*}
p_\omega:M\rightarrow [0,\infty), \ x\mapsto \ip{x^*x}{\omega}^{1/2}; \\
p'_\omega:M\rightarrow [0,\infty), \ x\mapsto \ip{x^*x+xx^*}{\omega}^{1/2}.
\end{gather*}
The $\sigma$-strong topology is given by the seminorms $\{p_\omega:\omega\in M_*^+\}$,
and similarly the $\sigma$-strong$^*$ topology is given by the seminorms $\{p'_\omega\}$.

\begin{lemma}\label{lem:three}
Let $E = (E_*)^*$ be a dual Banach space, let $p$ be a seminorm on $E$ for which there
exists $k>0$ with $p(x)\leq k\|x\|$ for $x\in E$, and let $z\in\mathbb C$.
Let $f:S(z)\rightarrow E$ be bounded and weak$^*$-regular, and further suppose
that $t\mapsto f(t)$ and $t\mapsto f(z+t)$ are continuous for $p$.  Then $f$ is continuous
for $p$ on all of $S(z)$.
\end{lemma}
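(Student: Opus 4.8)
The plan is to imitate the smearing proof of Theorem~\ref{thm:one}, replacing the norm by the seminorm $p$ throughout and using weak$^*$-integrals in place of norm integrals. As there, I would first damp by setting $g(w)=e^{-w^2}f(w)$; since $e^{-w^2}$ is bounded on $S(z)$ and decays as $|\re w|\to\infty$, the map $g$ is again bounded and weak$^*$-regular, and now $t\mapsto g(t)$ and $t\mapsto g(z+t)$ are \emph{uniformly} continuous for $p$. For $n>0$ I would then define the smeared functions
\[ g_n(w) = \frac{n}{\sqrt\pi}\int_{\mathbb R} e^{-n^2t^2} g(w+t) \ dt, \]
understood as a weak$^*$-integral (along each horizontal line the integrand is weak$^*$-continuous and norm-bounded with Gaussian decay, so this converges in $E$; compare Remark~\ref{rem:five}).

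The second step is to show each $g_n$ is norm-regular on $S(z)$. Testing against $\mu\in E_*$ and deforming the contour exactly as in Theorem~\ref{thm:one}---legitimate because $g$ is weak$^*$-continuous up to the boundary, so each $w\mapsto\ip{g(w)}{\mu}$ is continuous on the closed strip---I obtain the alternative representation
\[ g_n(w) = \frac{n}{\sqrt\pi}\int_{\mathbb R} e^{-n^2(t-w)^2} g(t) \ dt. \]
Differentiating under the integral shows $w\mapsto\ip{g_n(w)}{\mu}$ is entire for each $\mu\in E_*$, so $g_n$ is weak$^*$-entire; it is also norm-bounded on $S(z)$, and a weak$^*$-entire, locally norm-bounded, dual-space valued function is norm-analytic (apply Cauchy's formula and take the supremum over the unit ball of $E_*$ to produce a local Lipschitz estimate). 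In particular each $g_n$ is norm-continuous on the closed strip.

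The third step is a three-lines argument for $p$. Since $p\leq k\|\cdot\|$, Hahn--Banach gives $p(x)=\sup\{|\ip{\phi}{x}|:\phi\in E^*,\ |\ip{\phi}{y}|\leq p(y)\ \forall y\}$. For any such $\phi$ and any $m,n$ the scalar function $w\mapsto\ip{\phi}{g_n(w)-g_m(w)}$ is continuous on $S(z)$ (as $g_n,g_m$ are norm-continuous there) and analytic inside, so the ordinary Three-Lines Theorem bounds it on the strip by its suprema on $\mathbb R$ and $\mathbb R+z$; taking the supremum over $\phi$ yields the three-lines inequality for $p$ applied to $g_n-g_m$. Granting uniform $p$-convergence $g_n\to g$ on the two boundary lines (see below), this makes $(g_n)$ uniformly $p$-Cauchy on all of $S(z)$, hence uniformly $p$-convergent; since also $g_n\to g$ pointwise (in norm on the interior, where $g$ is analytic, and in $p$ on the boundary), the limit agrees with $g$, so that $g$---and therefore $f$---is continuous for $p$.

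The hard part will be the boundary step $\sup_t p(g_n(t)-g(t))\to0$. One wants
\[ p\big(g_n(t)-g(t)\big) \leq \frac{n}{\sqrt\pi}\int_{\mathbb R} e^{-n^2s^2}\, p\big(g(t+s)-g(t)\big) \ ds, \]
whose right-hand side tends to $0$ uniformly in $t$ by the uniform $p$-continuity of $g$ on $\mathbb R$. The obstacle is that $g$ is only $p$-continuous (not norm-continuous) along $\mathbb R$, so the integral defining $g_n$ is a genuine weak$^*$-integral and this inequality amounts to pushing $p$ inside it. Equivalently, letting $q\colon E\to\widetilde E$ be the canonical map into the completion of $E/\ker p$ for $p$, one needs $q$ to carry the weak$^*$-integral to the corresponding Bochner integral in $\widetilde E$; this holds precisely when $\{p\leq1\}$ is weak$^*$-closed, that is, when $p$ is weak$^*$-lower semicontinuous. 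This is where I expect to concentrate the work. It is automatic for the seminorms $p_\omega,p'_\omega$ of interest: via the GNS construction $p_\omega(x)=\|x\xi\|$ for a suitable vector $\xi$, a supremum of moduli of normal functionals, hence weak$^*$-lower semicontinuous, and with that in hand the exchange is valid and the argument closes.
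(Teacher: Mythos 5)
Your proposal follows the paper's own proof step for step: the damping $g(w)=e^{-w^2}f(w)$, the Gaussian smearing $g_n$ (as a weak$^*$-integral), the contour-shifted representation showing each $g_n$ extends to an entire, hence norm-analytic, hence $p$-continuous function, and the three-lines estimate for $p$ obtained by pairing against functionals $\phi\in E^*$ dominated by $p$ --- these are exactly the paper's steps. The one place you part company is the boundary step $\sup_t p(g_n(t)-g(t))\to 0$: the paper asserts this with the words ``we see that'', whereas you observe that it amounts to pulling $p$ through a weak$^*$-integral, which is not legitimate for an arbitrary norm-dominated seminorm, and you propose requiring $p$ to be weak$^*$-lower semicontinuous.

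Your worry is not pedantry, and cannot be argued away: the lemma as stated is in fact false, so \emph{every} proof must use some such extra hypothesis. Take $E=\ell^\infty(\mathbb N)=(\ell^1)^*$ and let $\mathcal U$ be a free ultrafilter on $\mathbb N$ containing every set $\{j:|e^{ijt}-1|<\epsilon\}$ for $t\in\mathbb R,\epsilon>0$; such $\mathcal U$ exists because these sets, together with the cofinite sets, have the finite intersection property (the closure of $\{(e^{ijt_1},\dots,e^{ijt_k}):j\geq N\}$ in $\mathbb T^k$ is a closed subsemigroup, hence a subgroup, so contains the identity). Let $\phi\in(\ell^\infty)^*$ be the limit along $\mathcal U$ and set $p=|\ip{\phi}{\cdot}|$, a seminorm dominated by the norm. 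Define $f:S(i)\to\ell^\infty$ by $f(w)=(e^{ijw})_{j\in\mathbb N}$; this is bounded, and weak$^*$-regular because $\sum_j a_j e^{ijw}$ converges uniformly on $S(i)$ for each $a\in\ell^1$. Then $\ip{\phi}{f(t)}=\lim_{\mathcal U}e^{ijt}=1$ and $\ip{\phi}{f(t+i)}=\lim_{\mathcal U}e^{-j}e^{ijt}=0$ for every $t$, so both boundary restrictions are $p$-constant, in particular $p$-continuous; but $\ip{\phi}{f(i\sigma)}=\lim_{\mathcal U}e^{-j\sigma}=0$ for $0<\sigma\leq 1$, so $p(f(i\sigma)-f(0))=1$ and $f$ is not $p$-continuous at $0$. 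In this example one computes $\ip{\phi}{g_n(t)}=0$ while $\ip{\phi}{g(t)}=e^{-t^2}$, so it is precisely the boundary convergence that the paper takes for granted which fails.

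Your repair is the correct one, and it costs nothing where the lemma is used. If $\{p\leq 1\}$ is weak$^*$-closed then by the bipolar theorem $p$ is the supremum of $|\ip{\cdot}{\mu}|$ over a subset of $E_*$, whence $p\big(\int h\big)\leq\int p(h)$ for weak$^*$-integrals of bounded weak$^*$-continuous integrands, and the rest of the argument (yours and the paper's alike) goes through verbatim. The seminorms $p_\omega$ and $p'_\omega$ to which Lemma~\ref{lem:three} is applied in Theorem~\ref{thm:10a} are suprema of moduli of normal functionals, hence weak$^*$-lower semicontinuous, so the downstream results stand; but the lemma itself should be restated with this hypothesis, exactly as you suggest.
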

\begin{proof}
We seek to follow the proof of Theorem~\ref{thm:one}.  Define $g(w)=e^{-w^2}f(w)$ so
again $g$ is weak$^*$-regular and $t\mapsto g(t)$, $t\mapsto g(z+t)$ are
uniformly continuous for $p$.  For $n>0$ we can again define $g_n:S(z)\rightarrow E$ by
\[ g_n(w) = \frac{n}{\sqrt\pi} \int_{\mathbb R} e^{-n^2t^2} g(w+t) \ dt, \]
the integral converging in the weak$^*$ sense.  We see that
$g_n(t)\rightarrow g(t)$ uniformly in $t$, for the seminorm $p$, and similarly for
$g_n(t+z)\rightarrow g(t+z)$.

We again have the alternative expression $g_n(w) = \frac{n}{\sqrt\pi} \int_{\mathbb R}
\exp(-n^2(t-w)^2) f(t) \ dt$.  Thus $g_n$ extends to an analytic function on $\mathbb C$;
in particular $g_n$ is locally given by a $\|\cdot\|$-convergent power series, which is
hence also $p$-convergent.  It follows that $g_n$ is $p$-continuous on $S(z)$.
As $p(g_n-g)\rightarrow 0$ uniformly on $\mathbb R$ and $\mathbb R+z$, the Three-Lines
Theorem\footnote{See Appendix~\ref{add:2} below for clarification.} implies uniform convergence on all of $S(z)$.
Thus $g$ is $p$-continuous on $S(z)$, and the same is true of $f$.
\end{proof}

\begin{lemma}\label{lem:four}
Let $M$ be a von Neumann algebra and let $(\alpha_t)$ be a weak$^*$-continuous
automorphism group.  For each $x\in M$ the map $\mathbb R\rightarrow M; t \mapsto
\alpha_t(x)$ is $\sigma$-strong$^*$ continuous.
\end{lemma}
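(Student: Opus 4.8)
The plan is to prove $\sigma$-strong$^*$ continuity directly from the definition of the seminorms $p'_\omega$, by first reducing to continuity at $t=0$ and then expanding $p'_\omega$ into a sum of terms each of which converges by the (already assumed) weak$^*$-continuity of the orbit map.

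First I would reduce to the point $t_0=0$. Fix $t_0\in\mathbb R$, $x\in M$ and $\omega\in M_*^+$. Using the group law, write $\alpha_t(x)-\alpha_{t_0}(x)=\alpha_{t_0}\big(\alpha_{t-t_0}(x)-x\big)$ and set $y=\alpha_{t-t_0}(x)-x$. Since each $\alpha_{t_0}$ is a $*$-automorphism, $\alpha_{t_0}(y)^*\alpha_{t_0}(y)+\alpha_{t_0}(y)\alpha_{t_0}(y)^*=\alpha_{t_0}(y^*y+yy^*)$, whence $p'_\omega(\alpha_{t_0}(y))^2=\ip{y^*y+yy^*}{\alpha_{t_0}^*(\omega)}$. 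Here $\alpha_{t_0}^*(\omega)\in M_*^+$: it lies in $M_*$ because $\alpha_{t_0}$ is weak$^*$-continuous (so $\alpha_{t_0}^*$ preserves the predual), and it is positive because $\alpha_{t_0}$ preserves positive elements of $M$. Thus $p'_\omega\circ\alpha_{t_0}=p'_{\omega'}$ on elements of this form, with $\omega'=\alpha_{t_0}^*(\omega)\in M_*^+$, and it suffices to show that $p'_{\omega'}(\alpha_s(x)-x)\to0$ as $s\to0$, for every $x\in M$ and every $\omega'\in M_*^+$.

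Next I would carry out the core computation at $0$. Writing $y_s=\alpha_s(x)-x$, I expand
\[ p'_{\omega'}(y_s)^2 = \ip{y_s^*y_s}{\omega'} + \ip{y_sy_s^*}{\omega'}. \]
In $\ip{y_s^*y_s}{\omega'}$, the diagonal term is $\alpha_s(x)^*\alpha_s(x)=\alpha_s(x^*x)\to x^*x$ weak$^*$, so $\ip{\alpha_s(x^*x)}{\omega'}\to\ip{x^*x}{\omega'}$. For the cross terms I use the natural module actions of $M$ on $M_*$: defining $x\cdot\omega'\in M_*$ by $\ip{z}{x\cdot\omega'}=\ip{zx}{\omega'}$ (which lies in $M_*$ by separate weak$^*$-continuity of the product) and using $\alpha_s(x)^*=\alpha_s(x^*)$, I get $\ip{\alpha_s(x)^*x}{\omega'}=\ip{\alpha_s(x^*)}{x\cdot\omega'}\to\ip{x^*}{x\cdot\omega'}=\ip{x^*x}{\omega'}$, by weak$^*$-continuity of the single orbit $s\mapsto\alpha_s(x^*)$ paired against the fixed functional $x\cdot\omega'$; the term $\ip{x^*\alpha_s(x)}{\omega'}$ is handled symmetrically. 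A sign count then gives $\ip{y_s^*y_s}{\omega'}\to0$, and $\ip{y_sy_s^*}{\omega'}$ is treated identically with $x$ and $x^*$ interchanged. Hence $p'_{\omega'}(y_s)\to0$, completing the proof.

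I expect the main obstacle to be precisely these cross terms. One cannot appeal to joint weak$^*$-continuity of multiplication, which fails; the point is to rewrite each cross term so that only one factor varies with $s$ while it is paired against a fixed element of $M_*$. This is exactly where both the $*$-homomorphism identity $\alpha_s(x)^*=\alpha_s(x^*)$ and the separate weak$^*$-continuity of the product (guaranteeing $x\cdot\omega',\,\omega'\cdot x^*\in M_*$) are essential; everything else is a routine expansion.
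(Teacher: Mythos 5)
Your proof is correct and takes essentially the same route as the paper's: expand $p'_\omega(\alpha_s(x)-x)^2$ term by term, use that each $\alpha_s$ is a $*$-homomorphism together with the $M$-module structure of $M_*$ to freeze one factor in each cross term against a fixed normal functional, and then invoke weak$^*$-continuity of the orbit maps. The only difference is that you spell out the (standard) reduction from an arbitrary base point $t_0$ to $t_0=0$ via $\alpha_t(x)-\alpha_{t_0}(x)=\alpha_{t_0}(\alpha_{t-t_0}(x)-x)$ and $p'_\omega\circ\alpha_{t_0}=p'_{\omega\circ\alpha_{t_0}}$, which the paper leaves implicit.
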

\begin{proof}
Let $\omega\in M_*^+$ and $t\in\mathbb R$.  Then for $x\in M$,
\begin{align*}
\lim_{t\rightarrow0}\ip{(\alpha_t(x)-x)^*(\alpha_t(x)-x)}{\omega}
&= \lim_{t\rightarrow 0}
   \ip{\alpha_t(x^*x) - x^*\alpha_t(x) - \alpha_t(x^*)x + x^*x}{\omega} \\
&= \lim_{t\rightarrow 0} \ip{\alpha_t(x^*x)+x^*x}{\omega} - \ip{\alpha_t(x)}{\omega x^*}
   - \ip{\alpha_t(x^*)}{x\omega} \\
&= 2 \ip{x^*x}{\omega} - \ip{x}{\omega x^*} - \ip{x^*}{x\omega} = 0,
\end{align*}
where we used repeatedly that $\alpha_t$ is a $*$-homomorphism, and that $M_*$ is
an $M$-module, and of course that $(\alpha_t)$ is weak$^*$-continuous.
Similarly, $\ip{(\alpha_t(x)-x)(\alpha_t(x)-x)^*}{\omega} \rightarrow 0$ as
$t\rightarrow 0$.  Thus $\alpha_t(x)\rightarrow x$ as $t\rightarrow 0$, in the
$\sigma$-strong$^*$ topology.
\end{proof}

\begin{theorem}\label{thm:10a}
Let $M$ be a von Neumann algebra, let $(\alpha_t)$ be a weak$^*$-continuous
automorphism group, let $x\in M$, and let $f:S(z)\rightarrow M$ be a weak$^*$-regular
extension of $t\mapsto\alpha_t(x)$.
Then $f$ is continuous for the $\sigma$-strong$^*$ (and so $\sigma$-strong) topology.
\end{theorem}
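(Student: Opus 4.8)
The plan is to reduce everything to a single seminorm and then simply assemble Lemmas~\ref{lem:three} and~\ref{lem:four}. Since the $\sigma$-strong$^*$ topology on $M$ is by definition generated by the family $\{p'_\omega : \omega\in M_*^+\}$, it suffices to show that $f$ is continuous for each fixed $p'_\omega$; the $\sigma$-strong case follows immediately, as each $p_\omega$ is dominated by the corresponding $p'_\omega$. So I would fix $\omega\in M_*^+$ and set $p = p'_\omega$, aiming to apply Lemma~\ref{lem:three} with this choice of seminorm.

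First I would verify the standing hypotheses of Lemma~\ref{lem:three}. The required domination $p(x)\leq k\|x\|$ holds with $k=\sqrt{2\|\omega\|}$, since $\ip{x^*x+xx^*}{\omega}\leq\|x^*x+xx^*\|\,\|\omega\|\leq 2\|x\|^2\|\omega\|$. Boundedness of $f$ is precisely the content of Remark~\ref{rem:2}, applied to the element $x\in D(\alpha_z)$ of which $f$ is the (unique) analytic extension. That $f$ is weak$^*$-regular is part of the hypothesis. So the only substantive thing left to check is continuity of the two boundary orbit maps $t\mapsto f(t)$ and $t\mapsto f(z+t)$ for the seminorm $p$.

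Here I would identify both boundary maps as genuine orbit maps of fixed elements of $M$. On the lower edge, $t\mapsto f(t)=\alpha_t(x)$ by definition of the extension. On the upper edge, property~(1) of the analytic extension gives $f(z+t)=\alpha_{z+t}(x)=\alpha_t(\alpha_z(x))$, which is the orbit map of the single fixed element $\alpha_z(x)=f(z)\in M$. By Lemma~\ref{lem:four}, both orbit maps $t\mapsto\alpha_t(x)$ and $t\mapsto\alpha_t(\alpha_z(x))$ are $\sigma$-strong$^*$ continuous, and hence in particular continuous for $p=p'_\omega$. With boundedness, domination, and boundary continuity all in hand, Lemma~\ref{lem:three} applies verbatim and yields that $f$ is $p'_\omega$-continuous on all of $S(z)$. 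As $\omega\in M_*^+$ was arbitrary, $f$ is continuous for the $\sigma$-strong$^*$ topology.

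I should be candid that there is no real obstacle here: the genuine work has already been packaged into the two preceding lemmas, and this theorem is essentially their assembly. The only two points needing any care are, first, that Lemma~\ref{lem:three} wants continuity (not a priori uniform continuity) of the boundary maps --- which is exactly what Lemma~\ref{lem:four} supplies for each fixed $\omega$, the uniform continuity being manufactured inside the proof of Lemma~\ref{lem:three} via the damping factor $e^{-w^2}$; and second, the identification $f(z+t)=\alpha_t(\alpha_z(x))$, where it matters that $f$ \emph{is} the orbit extension of $x$ (so that property~(1) and uniqueness of analytic extensions apply) rather than merely some map agreeing with it on a subset.
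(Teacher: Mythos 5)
Your proof is correct and takes essentially the same route as the paper: the paper's own (two-line) proof likewise uses Lemma~\ref{lem:four} to get $\sigma$-strong$^*$ continuity of the boundary maps $t\mapsto f(t)$ and $t\mapsto f(z+t)=\alpha_t(f(z))$, and then applies Lemma~\ref{lem:three} to the seminorms $p'_\omega$, $\omega\in M_*^+$. The extra details you supply --- the domination constant $k=\sqrt{2\|\omega\|}$, boundedness of $f$ via Remark~\ref{rem:2}, and the identification of the upper boundary as the orbit map of $f(z)$ --- are exactly what the paper leaves implicit.
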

\begin{proof}
By Lemma~\ref{lem:four}, $t\mapsto f(t)=\alpha_t(x)$ and $t\mapsto f(z+t)
= \alpha_t(f(z))$ are $\sigma$-strong$^*$
continuous.  The result now follows from Lemma~\ref{lem:three} applied to the 
seminorms $p'_{\omega}$ for $\omega\in M_*^+$.
\end{proof}

We conclude that the definition of $\alpha_z$ from \cite{kus2} does agree with the
definition in \cite{cz}, and we are free to use either the $\sigma$-strong$^*$
topology, or the weak$^*$ topology.  If $M\subseteq\mc B(H)$ and we use the strong
topology, the same remarks apply.

\subsection{Duality}\label{sec:duality}

Let $E$ be a Banach space and let $(\alpha_t)$ be a norm-continuous one-parameter group
of isometries of $E$.  For each $t$ let $\alpha_t^* \in \mc B(E^*)$ be the Banach space
adjoint.  Then $(\alpha_t^*)$ is a weak$^*$-continuous one-parameter group of isometries
of $E^*$.

Similarly, let $E=(E_*)^*$ be a dual Banach space and let $(\alpha_t)$ be a
weak$^*$-continuous one-parameter group of isometries of $E$. For each $t$, as $\alpha_t$
is weak$^*$-continuous it has a pre-adjoint $\alpha_{*,t}$.  As
\[ \ip{\alpha_t(x)}{\mu} = \ip{x}{\alpha_{*,t}(\mu)} \qquad (x\in E, \mu\in E_*) \]
it is easy to see that $(\alpha_{*,t})$ is a one-parameter group of isometries of $E_*$
which is weakly-continuous, and hence which is norm-continuous.

We recall
that when $T:D(T)\subseteq E\rightarrow F$ is an operator between Banach spaces, then the
\emph{adjoint} of $T$ is defined by setting $\mu\in D(T^*)\subseteq F^*$ when there exists
$\lambda\in E^*$ with $\ip{\mu}{T(x)} = \ip{\lambda}{x}$ for $x\in D(T)$.  In this case, we
set $T^*(\mu) = \lambda$.  This is more easily expressed in terms of graphs.  Define
$j:E\oplus F \rightarrow F \oplus E$ by $j(x,y) = (-y,x)$.  Then $\mc G(T^*)$ is equal to
\[ (j\mc G(T))^\perp = \{ (\mu,\lambda) \in F^*\oplus E^* :
\ip{(\mu,\lambda)}{(-T(x),x)}=0 \ (x\in D(T)) \}. \]
That $\mc G(T^*)$ is the graph of an \emph{operator} is equivalent to $T$ being densely
defined; in this case, $\mc G(T^*)$ is always weak$^*$-closed.  We can reverse this
construction, starting with an operator $S:D(S)\subseteq F^*\rightarrow E^*$ and forming
$S_*:D(S_*)\subseteq E\rightarrow F$ by $\mc G(S_*) = {}^\perp (jD(S))$.  Then $S_*$ is
an operator exactly when $S$ is weak$^*$-densely defined, and $S_*$ is always closed.
Thus, if $T$ is closed and densely-defined, then $S=T^*$ is weak$^*$-closed and densely
defined, and $S_* = T$.  We are actually unaware of a canonical reference for this
construction (which clearly parallels the very well-known construction for Hilbert space
operators) but see \cite[Section~5.5, Chapter~III]{kat} for example.

The following is shown in \cite{z} using a very similar argument to the proof that the
generator, of a weak$^*$-continuous group, is weak$^*$-closed.  We give a different proof,
which relies on the closure result, and which will be presented below in
Section~\ref{sec:apps}.  In fact, given the discussion above, this theorem is
effectively equivalent to knowing that the generator is closed.  

\begin{theorem}[{\cite[Theorem~1.1]{z}}]\label{thm:two}
Let $(\alpha_t)$ on $E$ and $(\alpha_t^*)$ on $E^*$ be as above.  For any $z$, we form
$\alpha_z$ using $(\alpha_t)$, and form $\alpha_z^{E^*}$ using $(\alpha_z^*)$.
Then $\alpha_z ^* = \alpha_z^{E^*}$.
\end{theorem}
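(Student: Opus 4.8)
The plan is to reformulate everything in terms of graphs and the adjoint/pre-adjoint calculus recalled just above, and to prove equality of the two closed operators $\alpha_z^{E^*}$ and $\alpha_z^*$ on $E^*$ by establishing one inclusion directly and deducing the reverse by duality. Throughout I use that $\alpha_z$ (built on $E$ from $(\alpha_t)$) is closed and densely defined, that $\alpha_z^{E^*}$ (built on $E^*$ from $(\alpha_t^*)$) is weak$^*$-closed with weak$^*$-dense domain, and that by Theorem~\ref{thm:one} it is immaterial whether $\alpha_z$ is formed using norm- or weakly-regular extensions. The case $z\in\mathbb R$ is trivial, so assume $z\notin\mathbb R$.

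First I would prove the inclusion $\alpha_z^{E^*}\subseteq\alpha_z^*$. Fix $\mu\in D(\alpha_z^{E^*})$ with weak$^*$-regular extension $F\colon S(z)\to E^*$ of $t\mapsto\alpha_t^*(\mu)$, and fix $x\in D(\alpha_z)$ with norm-regular extension $g\colon S(z)\to E$ of $t\mapsto\alpha_t(x)$; both are bounded by Remark~\ref{rem:2}. Since $w\in S(z)$ implies $z-w\in S(z)$, the scalar function
\[ h\colon S(z)\to\mathbb C, \qquad h(w)=\ip{F(w)}{g(z-w)} \]
is defined on the whole strip. Pairing a bounded weak$^*$-regular function against a bounded norm-regular one, $h$ is continuous on $S(z)$ and, by computing its difference quotient using weak$^*$-differentiability of $F$, norm-differentiability of $g$, and the uniform boundedness (Banach--Steinhaus) of the difference quotients of $F$, it is analytic on the interior. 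On $\mathbb R$ we have $g(z-t)=\alpha_{z-t}(x)=\alpha_{-t}(\alpha_z(x))$, so $h(t)=\ip{\mu}{\alpha_t\alpha_{-t}\alpha_z(x)}=\ip{\mu}{\alpha_z(x)}$ is constant. By the reflection-principle argument already used to prove uniqueness of analytic extensions, $h$ is therefore constant on all of $S(z)$; evaluating at $w=0$ and $w=z$ yields $\ip{\alpha_z^{E^*}(\mu)}{x}=\ip{\mu}{\alpha_z(x)}$. As $x\in D(\alpha_z)$ was arbitrary and $D(\alpha_z)$ is dense, this says exactly that $\mu\in D(\alpha_z^*)$ with $\alpha_z^*(\mu)=\alpha_z^{E^*}(\mu)$.

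Next I would pass to pre-adjoints. Since the assignment $S\mapsto S_*$ given by $\mc G(S_*)={}^\perp(j\mc G(S))$ reverses inclusions, the inclusion just proved gives $\alpha_z=(\alpha_z^*)_*\subseteq(\alpha_z^{E^*})_*$, where $(\alpha_z^*)_*=\alpha_z$ because $\alpha_z$ is closed and densely defined. To finish it suffices to prove the reverse inclusion $(\alpha_z^{E^*})_*\subseteq\alpha_z$; granting this, $(\alpha_z^{E^*})_*=\alpha_z$, and taking adjoints (using that $\alpha_z^{E^*}$ is weak$^*$-closed and weak$^*$-densely defined, so $((\alpha_z^{E^*})_*)^*=\alpha_z^{E^*}$) gives $\alpha_z^{E^*}=\alpha_z^*$, as required.

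The reverse inclusion is where the real work lies, and it is the main obstacle: unwinding the definitions, it asks that whenever $b\in E$ satisfies $\ip{\alpha_z^{E^*}(\mu)}{b}=\ip{\mu}{-a}$ for all $\mu\in D(\alpha_z^{E^*})$ and some $a\in E$, then in fact $b\in D(\alpha_z)$, necessarily with $\alpha_z(b)=-a$. The analytic-function method of the second paragraph cannot supply this, as it only ever produces ``$\subseteq$''-type conclusions: it presupposes an extension rather than building one. Instead one must construct the norm-regular extension of $t\mapsto\alpha_t(b)$ out of the pairing hypothesis. I would do this via the smearing machinery of Section~\ref{sec:smearing}: smear $b$ against Gaussians to land in the common core of entire elements, use the hypothesis to control the smeared extensions uniformly on $\mathbb R$ and $\mathbb R+z$, and invoke closedness of $\alpha_z$ to pass to the limit --- which is the sense in which, as noted above, the statement is essentially equivalent to the closedness of the generator. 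This is the argument carried out in Section~\ref{sec:apps}.
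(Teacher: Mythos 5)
Your second paragraph and your duality bookkeeping are correct: the pairing function $h(w)=\ip{F(w)}{g(z-w)}$, constancy on $\mathbb R$, and evaluation at $w=0$ and $w=z$ do prove $\alpha_z^{E^*}\subseteq\alpha_z^*$ (an argument the paper uses elsewhere, in Theorem~\ref{thm:five}, but not here), and the passage $\alpha_z=(\alpha_z^*)_*\subseteq(\alpha_z^{E^*})_*$ together with $((\alpha_z^{E^*})_*)^*=\alpha_z^{E^*}$ is legitimate, since $\mc G(\alpha_z^{E^*})$ is weak$^*$-closed and, by your first inclusion, $(\alpha_z^{E^*})_*$ extends $\alpha_z$ and so is densely defined. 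But the proof stops exactly where you say the real work lies. Your plan for $(\alpha_z^{E^*})_*\subseteq\alpha_z$ --- ``smear $b$ against Gaussians, use the hypothesis to control the smeared extensions uniformly, invoke closedness'' --- never explains the one thing that needs explaining: how a pairing hypothesis, quantified over all $\mu\in D(\alpha_z^{E^*})$, gets converted into a statement about the smeared element $\mc R_n(b)$. The closing appeal to ``the argument carried out in Section~\ref{sec:apps}'' is an appeal to the very proof you are being asked to supply, so as it stands this is a genuine gap.

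The missing idea is the duality of the smearing operators, which is precisely the paper's ``key, but easy, observation'': the Banach-space adjoint $\mc R_n^*$ is the smearing operator of the dual group $(\alpha_t^*)$, and if $\mc T_n:=\alpha_z\mc R_n$, so that $\mc T_n(x)=\frac{n}{\sqrt\pi}\int_{\mathbb R}\exp(-n^2(t-z)^2)\alpha_t(x)\,dt$, then $\mc T_n^*=\alpha_z^{E^*}\mc R_n^*$. Granting this, take $(b,c)\in\mc G((\alpha_z^{E^*})_*)$, i.e.\ $\ip{\alpha_z^{E^*}(\mu)}{b}=\ip{\mu}{c}$ for all $\mu\in D(\alpha_z^{E^*})$, and apply the hypothesis with $\mu=\mc R_n^*(\nu)$:
\[ \ip{\nu}{\mc T_n(b)} = \ip{\mc T_n^*(\nu)}{b}
= \ip{\alpha_z^{E^*}(\mc R_n^*(\nu))}{b}
= \ip{\mc R_n^*(\nu)}{c} = \ip{\nu}{\mc R_n(c)}
\qquad (\nu\in E^*), \]
whence $\alpha_z(\mc R_n(b))=\mc T_n(b)=\mc R_n(c)$. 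With this identity in hand, either finishing move works: Proposition~\ref{prop:prop2} at a \emph{single fixed} $n$ (no limit, no closedness --- this is the strength of Lemma~\ref{lem:two}, i.e.\ of Wiener's theorem), or, as you proposed, let $n\to\infty$ and use closedness of $\alpha_z$ together with $\mc R_n(b)\to b$, $\mc R_n(c)\to c$. It is also worth noting that once one has this observation, the entire theorem follows in one chain of equivalences --- $(\mu,\lambda)\in\mc G(\alpha_z^*)$ iff $(-\lambda,\mu)$ annihilates the core $\{(\mc R_n(x),\alpha_z\mc R_n(x)):x\in E\}$ of $\alpha_z$ (Theorem~\ref{thm:thm1}) iff $\mc T_n^*(\mu)=\mc R_n^*(\lambda)$ iff $(\mu,\lambda)\in\mc G(\alpha_z^{E^*})$ (Proposition~\ref{prop:prop2} applied to the dual group) --- which is the paper's proof; your analytic-pairing inclusion and the pre-adjoint detour, though correct, then become unnecessary.
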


We remark that we have used this result before, e.g. \cite[Appendix]{bds}, but without
sufficient justification as to why $\alpha_z^* = \alpha^{E^*}_z$.  Similar ideas, but
without the machinery of using $(\alpha_t^*)$, are considered in \cite[Proposition~1.24,
Proposition~2.44]{kus1}.

\section{Smearing}\label{sec:smearing}

We now want to present some ideas from the Appendix of \cite{mnw}, which only considered
norm-continuous one-parameter groups.  We shall verify that
the ideas continue to work for weak$^*$-continuous one-parameter groups.  This is fairly
routine, excepting perhaps Proposition~\ref{prop:prop1}, but we feel it is worth giving
the details, as we think the techniques and results are interesting.  We also streamline
the proof of the main technical lemma, directly invoking the classical Wiener Theorem,
instead of using Distribution theory.  We remark that the use of convolution algebra ideas
goes back to at least \cite{arv1,arv2} and \cite{f}.

Let $(\alpha_t)$ be a one-parameter group of isometries on $E$; we shall consider both
the case when $(\alpha_t)$ is norm-continuous, and when $E=(E_*)^*$ is a dual space and
$(\alpha_t)$ is weak$^*$-continuous.
Given $n>0$ define $\mc R_n:E\rightarrow E$ by
\[ \mc R_n(x) = \frac{n}{\sqrt\pi} \int_{\mathbb R} \exp(-n^2t^2) \alpha_t(x) \ dt. \]
The integral converges in norm, or the weak$^*$-topology, according to context.
As in the proof of Theorem~\ref{thm:one}, a contour deformation argument shows that
for any $z\in\mathbb C$, $\mc R_n(x)\in D(\alpha_z)$ with
\[ \alpha_z(\mc R_n(x)) = \frac{n}{\sqrt\pi} \int_{\mathbb R} \exp(-n^2(t-z)^2)
\alpha_t(x) \ dt. \]
Furthermore, if already $x\in D(\alpha_z)$ then $\alpha_z(\mc R_n(x)) =
\mc R_n(\alpha_z(x))$.

This concept of smearing is very standard in arguments involving analytic generators,
but it is common to consider the limit as $n\rightarrow\infty$.  For example, for any
$x\in E$ we have that $\mc R_n(x)\rightarrow x$ as $n\rightarrow\infty$ (again, in norm or
the weak$^*$-topology) and so this shows that $D(\alpha_z)$ is dense.  In the following,
the point is to show that it is possible to work with $\mc R_n$ for a fixed $n$.

In the following, a subspace $X\subseteq E$ is \emph{$(\alpha_t)$-invariant} when
$\alpha_t(x)\in X$ for each $x\in X, t\in\mathbb R$.  The following is immediate
from the construction of $\mc R_n$ as a vector-valued integral.

\begin{lemma}\label{lem:one}
For each $x\in E$, we have that $\mc R_n(x)$ is contained in the smallest
$(\alpha_t)$-invariant, closed (norm or weak$^*$ as appropriate) subspace of $E$
containing $x$.
\end{lemma}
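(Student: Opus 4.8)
The plan is to reduce the claim to a statement about annihilators, exploiting that a closed subspace coincides with its bi-annihilator and that the integral defining $\mc R_n$ commutes with pairing against bounded functionals. Write $X$ for the smallest $(\alpha_t)$-invariant, closed (norm or weak$^*$, as appropriate) subspace containing $x$; this exists as the intersection of all such subspaces, and by invariance we have $\alpha_t(x)\in X$ for every $t\in\mathbb R$. I would treat both topologies in parallel, since the argument is formally identical.

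Consider first the norm-continuous case, where $X$ is norm-closed. By Hahn--Banach, $X = \{y\in E : \ip{\mu}{y}=0 \text{ for all } \mu\in X^\perp\}$, so it suffices to show $\ip{\mu}{\mc R_n(x)}=0$ for each $\mu\in X^\perp$. As the integral defining $\mc R_n(x)$ converges in norm, pairing with the bounded functional $\mu$ passes inside the integral, giving
\[ \ip{\mu}{\mc R_n(x)} = \frac{n}{\sqrt\pi}\int_{\mathbb R}\exp(-n^2t^2)\ip{\mu}{\alpha_t(x)}\,dt. \]
Since $\alpha_t(x)\in X$ for every $t$ and $\mu$ annihilates $X$, the scalar integrand vanishes identically, so the integral is $0$. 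As $\mu\in X^\perp$ was arbitrary, $\mc R_n(x)\in X$.

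The weak$^*$-case runs along the same lines, now using that a weak$^*$-closed subspace satisfies $X = ({}^\perp X)^\perp$, where ${}^\perp X = \{\mu\in E_* : \ip{y}{\mu}=0 \ (y\in X)\}$ is the pre-annihilator, so that one tests only against $\mu\in{}^\perp X\subseteq E_*$. Because the integral defining $\mc R_n(x)$ now converges in the weak$^*$-topology, pairing against any $\mu\in E_*$ again commutes with the integral; for $\mu\in{}^\perp X$ the scalar integrand $\exp(-n^2t^2)\ip{\alpha_t(x)}{\mu}$ is identically zero, whence $\ip{\mc R_n(x)}{\mu}=0$ and therefore $\mc R_n(x)\in({}^\perp X)^\perp = X$.

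There is no serious obstacle here: the only point needing care is the justification that pairing with a bounded functional commutes with the vector-valued integral, which is precisely the defining property of the norm (respectively weak$^*$) integral used throughout, combined with the standard identification of a closed subspace with its bi-annihilator recalled in the Notation section. One could instead argue directly via Riemann sums---each finite sum $\frac{n}{\sqrt\pi}\sum_j \exp(-n^2t_j^2)\alpha_{t_j}(x)\,\Delta t_j$ is a linear combination of elements of $X$ and so lies in $X$, which is closed in the relevant topology---but the annihilator argument is cleaner and handles both topologies uniformly.
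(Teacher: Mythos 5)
Your proof is correct, and it takes essentially the same route as the paper: the paper simply declares the lemma ``immediate from the construction of $\mc R_n$ as a vector-valued integral,'' and your annihilator argument (testing against $X^\perp$, respectively ${}^\perp X$, and using that pairing with a bounded functional commutes with the norm, respectively weak$^*$, integral) is precisely the standard justification of that one-line claim. The Riemann-sum alternative you mention is the other natural way to phrase the same observation; nothing is missing.
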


The following result is somewhat less expected.

\begin{lemma}\label{lem:two}
For each $x\in E$ and $n>0$, we have that $x$ is contained in the smallest
$(\alpha_t)$-invariant, closed (norm or weak$^*$ as appropriate) subspace of $E$
containing $\mc R_n(x)$.
\end{lemma}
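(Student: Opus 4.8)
The plan is to realise $\mc R_n$ as a member of the integrated representation of the convolution algebra $L^1(\mathbb R)$ and then apply the classical Wiener Tauberian theorem. For $f\in L^1(\mathbb R)$ set $\alpha_f(x) = \int_{\mathbb R} f(t)\alpha_t(x)\,dt$ (interpreted in the norm sense, or the weak$^*$ sense, according to context). Since each $\alpha_t$ is an isometry we have $\|\alpha_f(x)\|\leq \|f\|_1\|x\|$, so $f\mapsto \alpha_f(x)$ is norm-to-norm Lipschitz from $L^1(\mathbb R)$ into $E$. Writing $\phi_n(t)=\frac{n}{\sqrt\pi}\exp(-n^2t^2)$ we have $\mc R_n = \alpha_{\phi_n}$, and a routine Fubini and change-of-variables computation yields the module relation $\alpha_g(\mc R_n(x)) = \alpha_{g*\phi_n}(x)$ for every $g\in L^1(\mathbb R)$, where $*$ denotes convolution.

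Now let $X$ be the smallest closed (norm or weak$^*$ as appropriate) $(\alpha_t)$-invariant subspace containing $\mc R_n(x)$. For any $g\in L^1(\mathbb R)$ the integrand $t\mapsto g(t)\alpha_t(\mc R_n(x))$ takes values in $X$, so by exactly the reasoning used for Lemma~\ref{lem:one} the vector-valued integral $\alpha_g(\mc R_n(x))$ lies in $X$; by the module relation this says $\alpha_{g*\phi_n}(x)\in X$. Using the $L^1$-continuity of $f\mapsto\alpha_f(x)$ recorded above (and that a norm-closed subspace is weak$^*$-closed in the dual case), it follows that $\alpha_f(x)\in X$ for every $f$ in the closed ideal $I\subseteq L^1(\mathbb R)$ generated by $\phi_n$.

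Finally I would invoke Wiener's theorem: a closed ideal of $L^1(\mathbb R)$ coincides with $L^1(\mathbb R)$ precisely when the Fourier transforms of its members have no common zero. The Fourier transform of the Gaussian $\phi_n$ is again a nowhere-vanishing Gaussian, so $I = L^1(\mathbb R)$, whence $\alpha_f(x)\in X$ for all $f\in L^1(\mathbb R)$. Taking $f=\phi_m$ gives $\mc R_m(x)\in X$ for every $m$, and since $\mc R_m(x)\to x$ (in norm, or weak$^*$) as $m\to\infty$ while $X$ is closed, we conclude $x\in X$, as required. The substantive input is the Wiener theorem together with the observation that the Gaussian has nowhere-vanishing transform; the only point needing extra care in the weak$^*$ setting is that the uniform bound $\|\alpha_f\|\leq\|f\|_1$ still makes $f\mapsto\alpha_f(x)$ continuous for the $L^1$-norm, so that the ideal-closure passage transfers cleanly from $L^1(\mathbb R)$ to the weak$^*$-closed subspace $X$.
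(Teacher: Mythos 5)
Your proof is correct, and it reaches the lemma by a route that is organised differently from the paper's, even though both ultimately rest on the same key input: Wiener's Tauberian theorem applied to the Gaussian kernel (your $\phi_n$, the paper's $\varphi$), whose Fourier transform vanishes nowhere. The paper argues by duality: by Hahn--Banach it fixes a functional $\mu$ (in $E^*$ or $E_*$ as appropriate) annihilating the orbit of $\mc R_n(x)$, forms the scalar functions $f(t)=\ip{\mu}{\alpha_t(x)}$ and $g=\varphi*f$, and shows that $\varphi*f=0$ forces $f=0$, using the pairing of $L^\infty(\mathbb R)$ with $L^1(\mathbb R)$ and the linear density of the translates of $\varphi$ (Wiener's theorem in its ``span of translates'' form). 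You instead stay at the vector-valued level: you make $E$ into an $L^1(\mathbb R)$-module via $f\mapsto\alpha_f(x)$, observe that $\{f\in L^1(\mathbb R):\alpha_f(x)\in X\}$ is a norm-closed subspace containing $L^1(\mathbb R)*\phi_n$ (and $\phi_n$ itself), hence by the closed-ideal form of Wiener's theorem contains all of $L^1(\mathbb R)$, and then finish with the approximate-identity limit $\mc R_m(x)\to x$. What each approach buys: yours never chooses a functional and makes the $L^1(\mathbb R)$-module structure explicit, which is arguably the conceptual heart of the matter and is exactly the ``integrated form'' point of view that the paper itself attributes to Arveson and Forelli; the cost is that you need the weak$^*$ (or norm) integrals $\alpha_f$ for arbitrary $f\in L^1(\mathbb R)$, a Fubini argument for the module relation, and the extra limit $m\to\infty$. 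The paper's reduction keeps everything scalar-valued, needs only the single kernel $\varphi$, and terminates without any limiting step, since $f=0$ already gives $\ip{\mu}{x}=f(0)=0$ and Hahn--Banach does the rest.

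One small correction: your parenthetical ``a norm-closed subspace is weak$^*$-closed in the dual case'' is stated backwards, and as written it is false. What your argument needs, and what is true, is the reverse implication: in the weak$^*$ case $X$ is weak$^*$-closed, hence norm-closed, so the norm limits $\alpha_{f_k}(x)\to\alpha_f(x)$ furnished by the bound $\|\alpha_{f_k}(x)-\alpha_f(x)\|\leq\|f_k-f\|_1\|x\|$ remain in $X$. This is a one-word fix, not a gap.
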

\begin{proof}
Choose $\mu\in E^*$ or $E_*$ as appropriate with $\ip{\mu}{\alpha_t(\mc R_n(x))}=0$
for each $t\in\mathbb R$.  By Hahn-Banach, it suffices to show that $\ip{\mu}{x}=0$.

Define $f,g:\mathbb R\rightarrow\mathbb C$ by
\[ f(t)=\ip{\mu}{\alpha_t(x)}, \quad g(t)=\ip{\mu}{\alpha_t(\mc R_n(x))}
\qquad (t\in\mathbb R). \]
Then $f$ and $g$ are bounded continuous functions, and
\begin{align*}
g(t) &= \frac{n}{\sqrt\pi} \int_{\mathbb R} \exp(-n^2s^2) \ip{\mu}{\alpha_{t+s}(x)}
\ ds
= \frac{n}{\sqrt\pi} \int_{\mathbb R} \exp(-n^2(s-t)^2) \ip{\mu}{\alpha_{s}(x)}
\ ds \\
&= \frac{n}{\sqrt\pi} \int_{\mathbb R} \exp(-n^2(s-t)^2) f(s) \ ds.
\end{align*}
Thus $g$ is the convolution of $\varphi$ with $f$, where $\varphi(s)=\frac{n}{\sqrt\pi}
\exp(-n^2s^2)$, so that $\varphi\in L^1(\mathbb R)$.

So, we wish to show that if $\varphi * f=0$ then $f=0$.  Given $F\in L^\infty(\mathbb R)$
and $a,b\in L^1(\mathbb R)$, a simple calculation shows that
\[ \ip{F\cdot a}{b} = \ip{F}{a*b} = \ip{F*\check a}{b}, \]
where here $F\cdot a$ is the usual dual module action of $L^1(\mathbb R)$ on
$L^\infty(\mathbb R) = L^1(\mathbb R)^*$, and $\check a\in L^1(\mathbb R)$ is the
function defined by $\check a (t) = a(-t)$.  As $f\in C^b(\mathbb R)\subseteq
L^\infty(\mathbb R)$, by Hahn-Banach, we see that $\varphi * f=0$ is equivalent to
$\ip{f}{\check \varphi * g} = 0$ for each $g\in L^1(\mathbb R)$.  To conclude that $f=0$
it hence suffices to show that $\{\check \varphi * g : g\in L^1(\mathbb R) \}$ is dense in
$L^1(\mathbb R)$.  This is equivalent to showing that the translates of $\check\varphi$
are linearly dense in $L^1(\mathbb R)$.  In turn, this follows immediately from Wiener's
Theorem (see \cite[Theorem~II]{w} or \cite[Theorem~9.4]{rudin}) as $\check\varphi =
\varphi$ has a nowhere vanishing Fourier transform.  We remark that a different approach
to this result would be to use Eymard's Fourier algebra \cite{eym} (where a related result
about the action of $A(G)$ on $VN(G)$ holds for all locally compact groups $G$) but as we
need simply the most classical version, we shall not give further details.
\end{proof}

In the following, $n>0$ is any (fixed) number.

\begin{proposition}\label{prop:prop3}
Let $D\subseteq E$ be an $(\alpha_t)$-invariant subspace.  Then $\mc{R}_n(D)
=\{\mc R_n(x):x\in D\}$ and
$D$ have the same (norm, or weak$^*$) closure.
\end{proposition}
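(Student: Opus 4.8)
The plan is to establish the two inclusions $\overline{\mc R_n(D)} \subseteq \overline D$ and $\overline D \subseteq \overline{\mc R_n(D)}$ separately, where the closures are taken in the relevant topology (norm, or weak$^*$). Throughout I will use that $\overline D$ is itself $(\alpha_t)$-invariant: since each $\alpha_t$ is continuous for the relevant topology (weak$^*$-continuous in the dual case), it maps $\overline D$ into $\overline{\alpha_t(D)}\subseteq\overline D$.

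For the first inclusion, fix $x\in D$. By Lemma~\ref{lem:one}, $\mc R_n(x)$ lies in the smallest $(\alpha_t)$-invariant closed subspace containing $x$. Since $\overline D$ is an $(\alpha_t)$-invariant closed subspace containing $x$, that smallest subspace is contained in $\overline D$, and hence $\mc R_n(x)\in\overline D$. As $x\in D$ was arbitrary, $\mc R_n(D)\subseteq\overline D$, and taking closures gives $\overline{\mc R_n(D)}\subseteq\overline D$.

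The second inclusion is where the substantive input, Lemma~\ref{lem:two}, enters, and the key preliminary observation is that $\mc R_n(D)$ is again $(\alpha_t)$-invariant. This follows from the identity $\alpha_s(\mc R_n(x)) = \mc R_n(\alpha_s(x))$ for $s\in\mathbb R$, obtained by bringing the bounded operator $\alpha_s$ inside the defining integral for $\mc R_n$ and applying the group law; since $D$ is invariant, $\alpha_s(x)\in D$ and so $\alpha_s(\mc R_n(x))\in\mc R_n(D)$. Consequently $\overline{\mc R_n(D)}$ is an $(\alpha_t)$-invariant closed subspace. Now fix $x\in D$: by Lemma~\ref{lem:two}, $x$ lies in the smallest $(\alpha_t)$-invariant closed subspace containing $\mc R_n(x)$, and since $\mc R_n(x)\in\mc R_n(D)\subseteq\overline{\mc R_n(D)}$, this smallest subspace is contained in $\overline{\mc R_n(D)}$. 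Hence $x\in\overline{\mc R_n(D)}$, so $D\subseteq\overline{\mc R_n(D)}$ and thus $\overline D\subseteq\overline{\mc R_n(D)}$.

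I do not expect a serious obstacle here: granted Lemmas~\ref{lem:one} and~\ref{lem:two}, the proof reduces to bookkeeping about smallest invariant closed subspaces. The one point requiring a little care is the invariance of $\mc R_n(D)$, which is precisely what lets Lemma~\ref{lem:two} be applied against $\overline{\mc R_n(D)}$ directly, rather than against the (a priori larger) closed invariant subspace generated by $\mc R_n(D)$. Verifying the commutation $\alpha_s\circ\mc R_n=\mc R_n\circ\alpha_s$ is routine but should be recorded in both the norm and weak$^*$ settings, with the integral defining $\mc R_n$ interpreted accordingly.
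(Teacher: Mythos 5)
Your proof is correct and is essentially the paper's own argument: observe that $\mc R_n$ commutes with each $\alpha_t$, so $\mc R_n(D)$ is $(\alpha_t)$-invariant, then apply Lemma~\ref{lem:one} to get $\overline{\mc R_n(D)}\subseteq\overline D$ and Lemma~\ref{lem:two} to get $\overline D\subseteq\overline{\mc R_n(D)}$. Incidentally, your attribution of the two lemmas to the two inclusions is the right one --- the paper's own, terser, proof cites them in the opposite order, which appears to be a typo --- and your explicit check that $\overline D$ is itself $(\alpha_t)$-invariant (by continuity, respectively weak$^*$-continuity, of $\alpha_t$) makes precise a point the paper leaves implicit.
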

\begin{proof}
As $\alpha_t$ commutes with $\mc{R}_n$ for each $t$, it follows that $\mc{R}_n(D)$
is $(\alpha_t)$-invariant.  For each $x\in D$, the closure of $\mc{R}_n(D)$ contains
the smallest closed $(\alpha_t)$-invariant subspace containing $\mc{R}_n(x)$, so by
Lemma~\ref{lem:one}, $x\in \overline{\mc{R}_n(D)}$, and hence $\overline{D}
\subseteq \overline{\mc{R}_n(D)}$.  The reverse inclusion follows similarly from
Lemma~\ref{lem:two}.
\end{proof}

The following gives a criteria for being a member of the graph of $\alpha_z$.

\begin{proposition}\label{prop:prop2}
Let $x,y\in E$ and $z\in\mathbb C$ with $\alpha_z(\mc{R}_n(x)) = \mc{R}_n(y)$.
Then $x\in D(\alpha_z)$ with $\alpha_z(x) = y$.
\end{proposition}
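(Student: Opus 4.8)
The plan is to avoid constructing an analytic extension of the orbit map for $x$ directly, and instead to reduce everything to two soft facts: the injectivity of $\mc R_n$, and the closedness of $\alpha_z$. Two observations hold for \emph{every} $m>0$ with no hypothesis on $x$. First, $\mc R_m(x)\in D(\alpha_z)$, since (as recorded just after the definition of $\mc R_n$) a smeared element always has an entire orbit. Second, the operators $\mc R_m$, $\mc R_n$ and $\alpha_z$ commute in the appropriate sense: $\mc R_m\mc R_n=\mc R_n\mc R_m$ because each is an integral of the mutually commuting $\alpha_t$, and $\alpha_z(\mc R_m(\xi))=\mc R_m(\alpha_z(\xi))$ whenever $\xi\in D(\alpha_z)$. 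The genuinely useful point is that $\mc R_n$ is \emph{injective}: if $\mc R_n(x)=0$ then by Lemma~\ref{lem:two} the element $x$ lies in the smallest closed $(\alpha_t)$-invariant subspace containing $0$, which is $\{0\}$, so $x=0$; thus $\ker\mc R_n=\{0\}$.

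Next I would propagate the hypothesis $\alpha_z(\mc R_n(x))=\mc R_n(y)$ from the single index $n$ to every index $m$. Applying $\mc R_m$ to the hypothesis, using $\mc R_n(x)\in D(\alpha_z)$ for the first commutation and $\mc R_m(x)\in D(\alpha_z)$ for the last, one chases the identities
\[ \mc R_n\big(\alpha_z(\mc R_m(x))\big) = \alpha_z\big(\mc R_n(\mc R_m(x))\big) = \alpha_z\big(\mc R_m(\mc R_n(x))\big) = \mc R_m\big(\alpha_z(\mc R_n(x))\big) = \mc R_m(\mc R_n(y)) = \mc R_n(\mc R_m(y)). \]
Hence $\mc R_n\big(\alpha_z(\mc R_m(x))-\mc R_m(y)\big)=0$, and injectivity of $\mc R_n$ forces $\alpha_z(\mc R_m(x))=\mc R_m(y)$ for every $m>0$.

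Finally I would let $m\to\infty$. Since $\mc R_m\to\id$ strongly (in norm, respectively weak$^*$), we have $\mc R_m(x)\to x$ and $\mc R_m(y)=\alpha_z(\mc R_m(x))\to y$; equivalently the pairs $(\mc R_m(x),\alpha_z(\mc R_m(x)))\in\mc G(\alpha_z)$ converge to $(x,y)$. As $\alpha_z$ is closed (and its graph is weak$^*$-closed in the weak$^*$ setting), we conclude $(x,y)\in\mc G(\alpha_z)$, i.e.\ $x\in D(\alpha_z)$ with $\alpha_z(x)=y$. The only step requiring real insight is recognising that Lemma~\ref{lem:two} yields injectivity of $\mc R_n$; the rest is careful bookkeeping of domains in the commutation identities and a routine appeal to closedness, so I expect the domain tracking in the displayed chain to be the main place where one must be careful rather than the main genuine obstacle.
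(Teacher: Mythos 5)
Your proof is correct, but it takes a genuinely different route from the paper. The paper's proof is a one-step application of the smearing machinery to the doubled group $\beta_t=\alpha_t\oplus\alpha_t$ on $E\oplus E$: the graph $\mc G(\alpha_z)$ is a closed (norm, resp.\ weak$^*$) $(\beta_t)$-invariant subspace, the smearing operator for $(\beta_t)$ is $\mc R_n\oplus\mc R_n$, and the hypothesis says precisely that $(\mc R_n\oplus\mc R_n)(x,y)\in\mc G(\alpha_z)$; applying the lemma which puts $(x,y)$ in the smallest closed $(\beta_t)$-invariant subspace containing $(\mc R_n(x),\mc R_n(y))$ finishes the proof at once. (The paper's text cites Lemma~\ref{lem:one} at this point, but the lemma actually being invoked is Lemma~\ref{lem:two}, exactly the one you use.) You instead extract from Lemma~\ref{lem:two} the injectivity of $\mc R_n$ --- a nice observation in its own right --- then propagate the hypothesis to $\alpha_z(\mc R_m(x))=\mc R_m(y)$ for every $m>0$ via the commutation identities, and finish by letting $m\to\infty$ and using closedness of $\alpha_z$. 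Every ingredient you need is indeed available in the paper: $\mc R_m(x)\in D(\alpha_z)$ for all $x$, the identity $\alpha_z\mc R_m=\mc R_m\alpha_z$ on $D(\alpha_z)$, commutation $\mc R_n\mc R_m=\mc R_m\mc R_n$ (Fubini), the convergence $\mc R_m(x)\to x$, and closedness of $\alpha_z$ in the appropriate topology (norm by \cite[Theorem~1.20]{kus1}, weak$^*$ by \cite[Theorem~2.4]{cz}) --- note that this last fact is not an extra assumption relative to the paper's argument, since the paper equally needs $\mc G(\alpha_z)$ to be closed for its lemma to apply. What the doubling trick buys the paper is brevity and the avoidance of any limiting argument or domain bookkeeping; what your argument buys is strictly more information along the way, namely $\ker\mc R_n=\{0\}$ and the validity of the identity $\alpha_z(\mc R_m(x))=\mc R_m(y)$ for all smearing parameters $m$ simultaneously, not just the one appearing in the hypothesis.
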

\begin{proof}
Consider the graph $\mc G(\alpha_z) = \{ (x,\alpha_z(x)):
x\in D(\alpha_z) \}$, a closed subspace of $E\oplus E$.  The one-parameter group
$\beta_t = \alpha_t \oplus \alpha_t$ on $E\oplus E$ leaves $\mc G(\alpha_z)$ invariant.
The hypothesis is that $(\mc R_n(x), \mc R_n(y)) \in \mc G(\alpha_z)$, and a simple
calculation shows that the ``smearing operator'' for $\beta$ is $\mc R_n \oplus \mc R_n$.
Thus Lemma~\ref{lem:one} applied to $(\beta_t)$ shows that $(x,y) \in \mc G(\alpha_z)$,
as required.
\end{proof}

In the norm-continuous case, we equip $D(\alpha_z)$ with the graph norm,
$\|x\|_{\mc G} = \|x\| + \|\alpha_z(x)\|$ (which is the $\ell^1$ norm; but clearly any
complete norm would work).  In the weak$^*$-continuous case, equip $D(\alpha_z)$
with the restriction of the weak$^*$-topology on $E\oplus_1 E = (E_*\oplus_\infty E_*)^*$
(again here any suitable norm on $E_*\oplus E_*$ would suffice).  In either case, we
speak of the \emph{graph topology} on $D(\alpha_z)$.

\begin{proposition}\label{prop:prop1}
Let $D_1\subseteq D_2\subseteq E$ be subspaces with $D_1$ dense in $D_2$,
and let $z\in\mathbb C$. Then $\mc{R}_n(D_1)\subseteq \mc{R}_n(D_2)$ is dense in the graph
topology (or, equivalently, the closure of $\alpha_z$ restricted to $\mc{R}_n(D_1)$ agrees
with the closure of $\alpha_z$ restricted to $\mc{R}_n(D_2)$).
\end{proposition}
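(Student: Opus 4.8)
The plan is to reduce the whole statement to a single soft fact: a continuous linear map carries a dense subset onto a dense subset of its image. The trick is to package $\mc R_n$ together with the action of $\alpha_z$ into one bounded (in the weak$^*$ case, weak$^*$-continuous) operator into $E\oplus E$, so that $\mc R_n(D_1)$ and $\mc R_n(D_2)$, with their graph topology, become literally the images of $D_1$ and $D_2$.

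Concretely, I would define $\Phi:E\to E\oplus E$ by $\Phi(x)=(\mc R_n(x),\alpha_z(\mc R_n(x)))$. Recall from the discussion preceding Lemma~\ref{lem:one} that $\mc R_n(x)\in D(\alpha_z)$ for every $x$, with $\alpha_z(\mc R_n(x))=\frac{n}{\sqrt\pi}\int_{\mathbb R}\exp(-n^2(t-z)^2)\alpha_t(x)\,dt$. Thus $\Phi$ takes values in the graph $\mc G(\alpha_z)$, and $\Phi(D_i)$ is exactly the graph of the restriction of $\alpha_z$ to $\mc R_n(D_i)$, sitting inside $E\oplus E$ via the embedding $a\mapsto(a,\alpha_z(a))$. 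Since the graph topology on $D(\alpha_z)$ is by definition the restriction of the (norm, resp.\ weak$^*$) topology of $E\oplus E$, the asserted density of $\mc R_n(D_1)$ in $\mc R_n(D_2)$ in the graph topology is precisely the density of $\Phi(D_1)$ in $\Phi(D_2)$ inside $E\oplus E$.

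The second coordinate of $\Phi$ is a bounded operator, since the weight $\exp(-n^2(t-z)^2)$ has modulus $e^{n^2(\im z)^2}\exp(-n^2(t-\re z)^2)$, which is integrable; combined with the isometry property of $(\alpha_t)$ this gives $\|\alpha_z(\mc R_n(x))\|\le e^{n^2(\im z)^2}\|x\|$. Hence in the norm-continuous case $\Phi$ is bounded from $E$ into $(D(\alpha_z),\|\cdot\|_{\mc G})$ (the graph norm being the sum of the norms of the two coordinates), and nothing more is needed. The one place that needs genuine care, and which I expect to be the main obstacle, is the weak$^*$-continuous case: there I must check that $\Phi$ is weak$^*$-to-weak$^*$ continuous, not merely bounded. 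For this I would exhibit both coordinates of $\Phi$ as adjoints of bounded operators on the predual $E_*$. Using the pre-adjoint group $(\alpha_{*,t})$ on $E_*$, which is norm-continuous by Section~\ref{sec:duality}, the norm-convergent integrals $\mu\mapsto\frac{n}{\sqrt\pi}\int\exp(-n^2t^2)\alpha_{*,t}(\mu)\,dt$ and $\mu\mapsto\frac{n}{\sqrt\pi}\int\exp(-n^2(t-z)^2)\alpha_{*,t}(\mu)\,dt$ (again convergent thanks to integrability of the Gaussian weights and contractivity of $\alpha_{*,t}$) define bounded operators on $E_*$ whose adjoints are the two coordinates of $\Phi$. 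Thus $\Phi$ is weak$^*$-continuous into $E\oplus_1 E=(E_*\oplus_\infty E_*)^*$.

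With $\Phi$ continuous for the relevant topology, the elementary inclusion $\Phi(\overline{D_1})\subseteq\overline{\Phi(D_1)}$ (closures taken in the ambient, resp.\ graph, topologies) applied to $D_2\subseteq\overline{D_1}$ yields $\Phi(D_2)\subseteq\overline{\Phi(D_1)}$, which is the desired density. The parenthetical reformulation is then immediate, since the closure of an operator is the closure of its graph: $\overline{\Phi(D_1)}=\overline{\Phi(D_2)}$ says precisely that $\alpha_z$ restricted to $\mc R_n(D_1)$ and to $\mc R_n(D_2)$ have the same closure. Everything outside the weak$^*$-continuity verification is purely formal.
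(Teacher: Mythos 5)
Your proof is correct, and it takes a genuinely different route from the paper's --- one that is in fact more robust. Both arguments rest on the same duality fact, namely that the pre-adjoint group $(\alpha_{*,t})$ is norm-continuous on $E_*$, so that Gaussian-weighted integrals of $\alpha_{*,t}(\mu)$ converge in norm; but they deploy it differently. The paper argues by hand in the weak$^*$ case: for $x\in D_2$, a finite set $M\subseteq E_*$ and $\epsilon>0$, it seeks $y\in D_1$ making the two weighted integrals small, reduces this to making $|\ip{x-y}{\alpha_{*,t}(\mu)}|$ uniformly small over $|t|\leq K$, $\mu\in M$, and then invokes norm-compactness of $\{\alpha_{*,t}(\mu) : |t|\leq K,\ \mu\in M\}$ together with weak$^*$-density of $D_1$ in $D_2$. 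You instead promote the key fact to operator level: $\mc R_n$ and $\alpha_z\mc R_n$ are adjoints of the corresponding smearing operators on $E_*$, so your map $\Phi$ is weak$^*$-continuous, and density follows from the purely topological inclusion $\Phi(\overline{D_1})\subseteq\overline{\Phi(D_1)}$. (This is the weak$^*$ mirror image of an observation the paper itself makes later, in the proof of Theorem~\ref{thm:two}, where $\mc R_n^*$ and $(\alpha_z\mc R_n)^*$ are identified with smearing operators of the dual group.) What your route buys is real: the paper's reduction step (``these inequalities would follow if \ldots'') implicitly needs a norm bound on $x-y$, both to control the tails $\int_{|t|>K}$ of the integrals and to pass from a finite $\delta$-net to the whole compact set, and weak$^*$-density of a subspace does not by itself supply bounded approximants (a weak$^*$-dense subspace of a dual Banach space need not be norming). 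Your pre-adjoint computation dissolves this difficulty completely, because it exhibits each quantity to be made small as $|\ip{x-y}{\nu}|$ with $\nu$ ranging over the \emph{finite} set of images of $M$ under the two predual smearing operators, so plain weak$^*$-density applies with no uniformity and no bounds. In short, your proposal is correct, softer than the paper's argument, and it also closes a step that the paper's own proof leaves insufficiently justified.
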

\begin{proof}
We show the weak$^*$-continuous case, the norm-continuous case being easier
(and already shown in \cite{mnw}).
Let $(\alpha_{*,t})$ be the one-parameter group on $E_*$ given by $(\alpha_t)$,
see the discussion in Section~\ref{sec:duality}.

For $x\in D_2$ we seek a net $(y_i)\subseteq D_1$ with
$\mc R_n(y_i)\rightarrow\mc R_n(x)$ weak$^*$, and with
$\alpha_z(\mc R_n(y_i))\rightarrow\alpha_z(\mc R_n(x))$ weak$^*$.

Let $M\subseteq E_*$ be a finite set, and $\epsilon>0$.  We seek $y\in D_1$
with
\[ \Big| \int_{\mathbb R} e^{-n^2t^2} \ip{\alpha_t(x-y)}{\mu} \ dt \Big| <
\epsilon \qquad (\mu\in M), \]
and with
\[ \Big| \int_{\mathbb R} e^{-n^2(t-z)^2} \ip{\alpha_t(x-y)}{\mu} \ dt \Big| <
\epsilon \qquad (\mu\in M). \]
These inequalities would follow if we can show that $|\ip{\alpha_t(x-y)}{\mu}|
< \epsilon'$ for $|t|\leq K, \mu\in M$, where $K,\epsilon'$ depend only on
$\epsilon$ (and on $z$ which is fixed).  This is equivalent to
\[ |\ip{x-y}{\alpha_{*,t}(\mu)}| < \epsilon' \qquad(\mu\in M, |t|\leq K). \]
Now, the set $\{ \alpha_{*,t}(\mu) : |t|\leq K, \mu\in M \}$ is compact in $E_*$,
because $M$ is finite and $t\mapsto\alpha_{*,t}(\mu)$ is norm continuous.  Thus
$D_1$ being weak$^*$-dense in $D_2$ is enough to ensure we can choose such a $y$ as
required.
\end{proof}

\begin{theorem}\label{thm:thm1}
Let $D\subseteq E$ be an $(\alpha_t)$-invariant subspace, let $z\in\mathbb C$,
and suppose that $D\subseteq D(\alpha_z)$.  If $D$ is dense in $E$, then
$D$ is a core for $\alpha_z$.
\end{theorem}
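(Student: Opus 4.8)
The goal is to show that $D$ is dense in $D(\alpha_z)$ for the graph topology. Since $\alpha_z$ is closed and $D\subseteq D(\alpha_z)$, the graph-closure $\overline{\mc G(\alpha_z|_D)}$ is automatically contained in $\mc G(\alpha_z)$, so the whole content is the reverse inclusion $\mc G(\alpha_z)\subseteq\overline{\mc G(\alpha_z|_D)}$. The plan is to run everything through the product group $\beta_t=\alpha_t\oplus\alpha_t$ on $E\oplus E$, whose smearing operator is $\mc R_n\oplus\mc R_n$ exactly as in the proof of Proposition~\ref{prop:prop2}, using that $\mc G(\alpha_z|_D)$ is $\beta$-invariant because each $\alpha_t$ preserves $D$ and commutes with $\alpha_z$.

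First I would reduce to smeared elements. For $x\in D(\alpha_z)$ we have $\mc R_m(x)\to x$ and $\alpha_z(\mc R_m(x))=\mc R_m(\alpha_z(x))\to\alpha_z(x)$ as $m\to\infty$ (in norm, resp.\ weak$^*$), so that $(\mc R_m(x),\alpha_z(\mc R_m(x)))\to(x,\alpha_z(x))$ in the graph topology. As $\overline{\mc G(\alpha_z|_D)}$ is closed, it therefore suffices to prove that $(\mc R_n(x),\alpha_z(\mc R_n(x)))$ lies in $\overline{\mc G(\alpha_z|_D)}$ for each fixed $n>0$ and each $x\in E$.

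Next I would bridge from $E$ down to the dense subspace $D$. Since $D$ is dense in $E$, Proposition~\ref{prop:prop1} applied with $D_1=D$ and $D_2=E$ shows that $\mc R_n(D)$ is dense in $\mc R_n(E)$ for the graph topology, i.e.\ $\mc G(\alpha_z|_{\mc R_n(E)})\subseteq\overline{\mc G(\alpha_z|_{\mc R_n(D)})}$. It then remains to see that $\mc G(\alpha_z|_{\mc R_n(D)})\subseteq\overline{\mc G(\alpha_z|_D)}$: for $y\in D$, using $\alpha_z(\mc R_n(y))=\mc R_n(\alpha_z(y))$, the pair $(\mc R_n(y),\alpha_z(\mc R_n(y)))$ equals $(\mc R_n\oplus\mc R_n)(y,\alpha_z(y))$, and Lemma~\ref{lem:one} applied to $\beta$ places this in the smallest closed $\beta$-invariant subspace containing $(y,\alpha_z(y))$, which is contained in $\overline{\mc G(\alpha_z|_D)}$. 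Chaining these inclusions puts $(\mc R_n(x),\alpha_z(\mc R_n(x)))$ in $\overline{\mc G(\alpha_z|_D)}$ for every $x\in E$, and combining with the reduction above finishes the proof.

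The only genuinely delicate point is the interplay between the graph topology and smearing: one cannot simply assert $\mc R_n(D)\subseteq D$ (it need not hold), so the graphs of $\alpha_z|_{\mc R_n(D)}$ are captured not inside $\mc G(\alpha_z|_D)$ itself but only inside its closure, and this is exactly what Lemma~\ref{lem:one} for the product group $\beta$ supplies. In the weak$^*$ setting one must additionally keep track that all relevant convergences are weak$^*$, so that limits remain in the weak$^*$-closed set $\overline{\mc G(\alpha_z|_D)}$; but this is precisely what is built into Proposition~\ref{prop:prop1} and into the definition of the graph topology, so no new obstacle arises.
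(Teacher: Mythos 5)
Your proof is correct, and each ingredient is used within its hypotheses: the reduction $\mc R_m(x)\to x$, $\mc R_m(\alpha_z(x))=\alpha_z(\mc R_m(x))\to\alpha_z(x)$ is the standard smearing limit; Proposition~\ref{prop:prop1} applies with $D_1=D$, $D_2=E$; and the delicate point you flag --- that $\mc R_n(D)\not\subseteq D$ in general, so one only lands in the closure of $\mc G(\alpha_z|_D)$, via Lemma~\ref{lem:one} applied to $\beta_t=\alpha_t\oplus\alpha_t$ (whose closure is $\beta$-invariant since each $\beta_t$ is continuous in the relevant topology) --- is handled properly. The route does differ from the paper's in one genuine respect. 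The paper works at a single fixed $n$ throughout: it applies Proposition~\ref{prop:prop3} (for the group $(\beta_t)$) with $D$ replaced by $D(\alpha_z)$ itself, deducing $\overline{\alpha_z|_{\mc R_n(D(\alpha_z))}}=\alpha_z$ and then, by the sandwich $\mc R_n(D(\alpha_z))\subseteq\mc R_n(E)\subseteq D(\alpha_z)$, that $\mc R_n(E)$ is a core at fixed $n$; the nontrivial direction of that application rests on Lemma~\ref{lem:two}, which is exactly where Wiener's Tauberian theorem enters the machinery. You instead obtain the reduction to smeared elements by letting the smearing parameter $m\to\infty$, which needs nothing beyond the integral formula for $\mc R_m$. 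After that, your bridge via Proposition~\ref{prop:prop1} and your final descent via Lemma~\ref{lem:one} coincide with the paper's remaining steps (the paper packages the descent as a second application of Proposition~\ref{prop:prop3}, but only the Lemma~\ref{lem:one} direction is needed there). The trade-off: your argument proves the theorem using only Lemma~\ref{lem:one} and Proposition~\ref{prop:prop1}, so it is more elementary in that it avoids Lemma~\ref{lem:two} entirely; the paper's version keeps every step at fixed $n$, in line with the section's stated aim of showing one can work with a single $\mc R_n$, and lets Proposition~\ref{prop:prop3} do double duty.
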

\begin{proof}
As in the proof of Proposition~\ref{prop:prop2} we shall again consider $(\beta_t)$ acting
on $\mc G(\alpha_z)$.  As $D$ is $(\alpha_t)$-invariant, it follows that
$\mc G(\alpha_z|_D) = \{ (x,\alpha_z(x)) : x\in D \}$ is $(\beta_t)$-invariant.  Set
$D' = \{ (x,\alpha_z(x)) : x\in D\}$.  Applying Proposition~\ref{prop:prop3} to $(\beta_t)$, it
follows that the closures of $D'$ and $\mc{R}_n(D')$ agree.  Equivalently, the closure of
$\alpha_z|_D$ agrees with the closure of $\alpha_z|_{\mc{R}_n(D)}$.
Apply this with $D=D(\alpha_z)$ itself to see that
\[ \overline{\alpha_z|_{\mc{R}_n(D(\alpha_z))}} = \alpha_z. \]
As $\mc{R}_n(D(\alpha_z)) \subseteq \mc{R}_n(E) \subseteq D(\alpha_z)$, it follows that
\[ \overline{\alpha_z|_{\mc{R}_n(E)}} = \alpha_z. \]
As $D$ is dense in $E$, it now follows from Proposition~\ref{prop:prop1} that
$\mc{R}_n(D)$ is a core for $\alpha_z$, because $\mc{R}_n(E)$ is a core.  Then finally
applying the first part of the proof again shows that $D$ itself is a core for
$\alpha_z$, as required.
\end{proof}

We end this section with a result purely about weak$^*$-continuous one-parameter groups.

\begin{proposition}\label{prop:5}
Let $(\alpha_t)$ be a weak$^*$-continuous group on a dual space $E=(E_*)^*$.  For
any $n$ and $x\in E$, the map $\mathbb R\rightarrow E; t\mapsto \alpha_t(\mc{R}_n(x))$
is norm continuous.
\end{proposition}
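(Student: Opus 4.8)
The plan is to use the explicit integral formula for the smeared orbit and reduce norm continuity to continuity of translation in $L^1(\mathbb R)$. Writing $\varphi(t)=\frac{n}{\sqrt\pi}\exp(-n^2t^2)$ as in Lemma~\ref{lem:two}, the shift formula recorded before Lemma~\ref{lem:one} (specialised to real $z=s$) gives
\[ \alpha_s(\mc R_n(x)) = \frac{n}{\sqrt\pi}\int_{\mathbb R} \exp(-n^2(t-s)^2)\,\alpha_t(x)\ dt = \int_{\mathbb R}\varphi(t-s)\,\alpha_t(x)\ dt, \]
the integral converging weak$^*$. I would write this for two parameters $s,s'$ and subtract.

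The key step is a norm estimate on this weak$^*$-integral. For $\mu\in E_*$ we have
\[ \ip{\alpha_s(\mc R_n(x)) - \alpha_{s'}(\mc R_n(x))}{\mu} = \int_{\mathbb R}\big(\varphi(t-s)-\varphi(t-s')\big)\,\ip{\alpha_t(x)}{\mu}\ dt, \]
and since each $\alpha_t$ is an isometry, $|\ip{\alpha_t(x)}{\mu}|\le\|x\|\,\|\mu\|$, so the modulus of the right-hand side is at most $\|x\|\,\|\mu\|\int_{\mathbb R}|\varphi(t-s)-\varphi(t-s')|\ dt$. Taking the supremum over $\mu$ in the unit ball of $E_*$ yields
\[ \|\alpha_s(\mc R_n(x)) - \alpha_{s'}(\mc R_n(x))\| \le \|x\|\,\|\varphi_s - \varphi_{s'}\|_{L^1(\mathbb R)}, \]
where $\varphi_s$ denotes the translate $t\mapsto\varphi(t-s)$.

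It then remains to invoke the classical fact that translation is continuous on $L^1(\mathbb R)$: as $s'\to s$ we have $\|\varphi_s - \varphi_{s'}\|_{L^1}\to 0$ (indeed $\varphi$ is continuous with rapid decay, so this case is entirely elementary). Combined with the displayed estimate, this forces $\|\alpha_s(\mc R_n(x)) - \alpha_{s'}(\mc R_n(x))\|\to 0$, which is exactly norm continuity of the orbit map $t\mapsto\alpha_t(\mc R_n(x))$.

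The only delicate point is the passage from the weak$^*$-integral to a genuine norm bound. Although the integral defining $\mc R_n$ converges only weak$^*$, pairing against an arbitrary $\mu\in E_*$ and exploiting the isometry property of each $\alpha_t$ lets one dominate the norm by an $L^1$ quantity that is insensitive to the (merely weak$^*$) mode of convergence. I do not anticipate any genuine obstacle beyond carrying out this estimate cleanly.
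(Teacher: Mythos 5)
Your proof is correct and takes essentially the same approach as the paper's: both reduce norm continuity of the smeared orbit to strong continuity of translation on $L^1(\mathbb R)$, via the estimate $\|\alpha_s(\mc R_n(x)) - \alpha_{s'}(\mc R_n(x))\| \leq \|x\|\,\|\varphi_s - \varphi_{s'}\|_{L^1(\mathbb R)}$ for the Gaussian kernel $\varphi$. The only cosmetic difference is that you spell out the duality argument (pairing against $\mu\in E_*$ and taking a supremum) that justifies this norm bound for a weak$^*$-convergent integral, and work with two parameters $s,s'$, whereas the paper states the bound directly at $s'=0$.
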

\begin{proof}
For any fixed $n$, notice that the Gaussian kernel $\varphi(t) =
\frac{n}{\sqrt\pi} \exp(-n^2t^2)$ is in $L^1(\mathbb R)$.
As the translation action of $\mathbb R$ on $L^1(\mathbb R)$ is strongly continuous,
we see that
\[ \lim_{s\rightarrow 0} \frac{n}{\sqrt\pi} \int_{\mathbb R}
|\exp(-n^2t^2) - \exp(-n^2(t-s)^2)| \ dt = 0. \]
It then follows that
\[ \|\mc{R}_n(x) - \alpha_s(\mc{R}_n(x))\|
\leq \frac{n}{\sqrt\pi} \int_{\mathbb R}
|\exp(-n^2t^2) - \exp(-n^2(t-s)^2)| \|\alpha_t(x)\| \ dt, \]
which converges to $0$ as $s\rightarrow 0$, uniformly in $\|x\|$.
\end{proof}

\section{Applications}\label{sec:apps}

The previous section drew some conclusions about the operators $\mc R_n$.
We now wish to present a number of applications of these conclusions, which we think
demonstrates the power of these ideas.  We start by giving the proof that
``the dual of the generator is the generator of the dual group''.

\begin{proof}[Proof of Theorem~\ref{thm:two}]
We fix $n>0$, and then make the key, but easy, observation that the Banach space adjoint
$\mc R_n^*$ of $\mc R_n$ is the ``smearing operator'' of the dual group $(\alpha_t^*)$.
By Theorem~\ref{thm:thm1}, we know that $\mc R_n(E)$ is a core for $\alpha_z$, that is,
$\{ (\mc R_n(x), \alpha_z\mc R_n(x)) : x\in E \}$ is (norm) dense in $\mc G(\alpha_z)$.
Similarly, using the key observation,
$\{ (\mc R_n^*(\mu), \alpha_z^{E^*}\mc R_n^*(\mu)) : \mu\in E^* \}$ is
weak$^*$-dense in $\mc G(\alpha^{E^*}_z)$.  Notice further that if we define
$\mc T_n = \alpha_z\mc R_n$,
then $\mc T_n(x) = \frac{n}{\sqrt\pi}\int_{\mathbb R} \exp(-n^2(t-z)^2) \alpha_t(x)
\ dt$, from which it follows that $\mc T_n^* = \alpha^{E^*}_z \mc R_n^*$.

Let $(\mu,\lambda) \in \mc G(\alpha_z^*)$.  This is equivalent to $(-\lambda,\mu)
\in \mc G(\alpha_z)^\perp$, which by the previous paragraph is equivalent to
\[ 0 = \ip{-\lambda}{\mc R_n(x)} + \ip{\mu}{\alpha_z\mc R_n(x)}
= \ip{-\mc R_n^*(\lambda) + \mc T_n^*(\mu)}{x}
\qquad (x\in E). \]
That is, equivalent to $\mc T_n^*(\mu) = \mc R_n^*(\lambda)$.
By Proposition~\ref{prop:prop2}, this is equivalent to $(\mu,\lambda)
\in \mc G(\alpha_z^{E^*})$, as required.
\end{proof}

We now consider Theorem~\ref{thm:nine} which shows that if $(A,A_*)$ is a dual Banach
algebra and $(\alpha_t)$ a weak$^*$-continuous automorphism group of $A$, then
$\mc G(\alpha_z)$ is a subalgebra of $A\oplus A$.

\begin{lemma}\label{lem:five}
Let $(A,A_*)$ be a dual Banach algebra and let $X\subseteq A$ be a (possibly not closed)
subalgebra.  Then the weak$^*$-closure of $X$ is a subalgebra.
\end{lemma}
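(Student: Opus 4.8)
The plan is to exploit the defining feature of a dual Banach algebra---namely that the product is \emph{separately} weak$^*$-continuous---via the elementary topological fact that a continuous map $f$ satisfies $f(\overline S)\subseteq\overline{f(S)}$ for any set $S$. Write $Y$ for the weak$^*$-closure of $X$. Since $X$ is a subspace and the weak$^*$-closure of a subspace is again a (weak$^*$-closed) subspace, $Y$ is automatically a subspace; the only thing to verify is that $Y$ is closed under multiplication, i.e.\ that $ab\in Y$ whenever $a,b\in Y$. Because we only have separate, not joint, continuity, I would carry this out in two successive stages, each time fixing one variable so that the relevant one-sided multiplication operator is genuinely weak$^*$-continuous.

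For the first stage, fix $x\in X$ and consider right multiplication $R_x:A\to A$, $a\mapsto ax$, which is weak$^*$-continuous by separate continuity of the product. Since $X$ is a subalgebra, $R_x(X)=Xx\subseteq X\subseteq Y$, and so the continuity fact gives $R_x(Y)=R_x(\overline X)\subseteq\overline{R_x(X)}\subseteq Y$. As $x\in X$ was arbitrary, this shows $ax\in Y$ for every $a\in Y$ and every $x\in X$. For the second stage, fix $a\in Y$ and consider left multiplication $L_a:A\to A$, $b\mapsto ab$, again weak$^*$-continuous by separate continuity. The first stage says precisely that $L_a(X)=aX\subseteq Y$, whence $L_a(Y)=L_a(\overline X)\subseteq\overline{L_a(X)}\subseteq Y$. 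Thus $ab\in Y$ for all $a,b\in Y$, and $Y$ is a subalgebra.

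The conceptual obstacle---really the only subtlety---is that one cannot simply pass to the closure in both variables simultaneously, since joint weak$^*$-continuity of the product fails in general (this is exactly the point flagged after Proposition~\ref{prop:one} and in the discussion preceding Theorem~\ref{thm:nine}). Separate continuity forces the bootstrapping above: one first closes up in the left variable while holding an element of the original subalgebra on the right, and only then uses the resulting containment $aX\subseteq Y$ as the hypothesis for closing up in the right variable. Each individual step is routine once the correct one-sided operator is isolated.
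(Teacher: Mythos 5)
Your proof is correct, but it runs along a different (in fact, dual) track from the paper's. You argue entirely inside $A$: separate weak$^*$-continuity makes each one-sided multiplication operator $R_x$, $L_a$ weak$^*$-continuous, and the elementary fact $f(\overline S)\subseteq\overline{f(S)}$ then yields the two-stage bootstrap $YX\subseteq Y$, followed by $YY\subseteq Y$. The paper instead works in the predual: writing $\overline X=({}^\perp X)^\perp$, it uses the equivalent formulation of a dual Banach algebra as one whose predual $A_*$ is an $A$-subbimodule of $A^*$, and checks in three steps that ${}^\perp X$ absorbs multiplication, first by elements of $X$ (via $\ip{b}{\mu\cdot a}=\ip{ab}{\mu}=0$), then by elements of $\overline X$, and finally concludes $\ip{xy}{\mu}=\ip{y}{\mu\cdot x}=0$ for $x,y\in\overline X$, $\mu\in{}^\perp X$. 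The two arguments are mirror images---your stage-by-stage closure corresponds exactly to the paper's stage-by-stage absorption of ${}^\perp X$---and both exist precisely because joint weak$^*$-continuity fails, which you correctly identify as the only real subtlety. What your version buys is directness and minimal machinery (no predual, no annihilator bookkeeping); what the paper's version buys is that the annihilator computation sits naturally alongside the graph/annihilator formalism ($\mc G^\perp$, ${}^\perp\mc G$) used throughout Sections~\ref{sec:kap} and~\ref{sec:duals}, so the same bookkeeping is reused later. Either proof is complete and acceptable.
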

\begin{proof}
Let $\overline{X}$ be the weak$^*$-closure of $X$.  Then $\overline{X}$ is the dual of
$A_* / {}^\perp X$, and $\overline{X} = ({}^\perp X)^\perp$.
That $A$ is a dual Banach algebras is equivalent to $A_*$ being an $A$-bimodule for the
natural actions coming from the product on $A$.

For $\mu\in {}^\perp X$ and $a,b\in X$, we have that
$\ip{b}{\mu\cdot a} = \ip{ab}{\mu} = 0$ as $X$ is a subalgebra.
Thus $\mu\cdot a\in {}^\perp X$ for each $a\in X$, and 
similarly, $X\cdot {}^\perp X \subseteq {}^\perp X$.
Now let $x\in\overline{X}$, so for $a\in X$, we have that
$\ip{a}{x\cdot\mu} = \ip{ax}{\mu} = \ip{x}{\mu\cdot a} = 0$,
as $x\in({}^\perp X)^\perp$.  Thus $x\cdot\mu \in {}^\perp X$, and similarly
$\mu\cdot x \in {}^\perp X$.
Finally, for $x,y\in\overline{X}$ and $\mu\in {}^\perp X$, we have that
$\ip{xy}{\mu} = \ip{y}{\mu\cdot x} = 0$.  Thus $xy\in \overline{X}$ as required.
\end{proof}

\begin{proof}[Proof of Theorem~\ref{thm:nine}]
Fix $n>0$ and let $\mc R$ be the smearing operator $\mc R_n$
defined on $A$ using $(\alpha_t)$.
For $a\in A$ we have that $\mc R(a)$ is analytic so in particular
$w\mapsto \alpha_w(\mc R(a))$ is norm continuous.  As in the proof of
Proposition~\ref{prop:one} it follows that for $a,b\in A$ we have that
$w\mapsto \alpha_w(\mc R(a)) \alpha_w(\mc R(b))$ is analytic and extends $t\mapsto
\alpha_t(\mc R(a))\alpha_t(\mc R(b)) = \alpha_t(\mc R(a)\mc R(b))$.  It follows that
$\mc R(a)\mc R(b) \in D(\alpha_z)$ with $\alpha_z(\mc R(a)\mc R(b)) = \alpha_z(\mc R(a))
\alpha_z(\mc R(b))$.

By Theorem~\ref{thm:thm1} we know that $X = \{ (\mc R(a), \alpha_z(\mc R(a))) :
a\in A \}$ is weak$^*$-dense in $\mc G(\alpha_z)$.  We have just proved that $X$ is
a subalgebra of $A\oplus A$.  If we consider, say, $A\oplus_\infty A$, then this is a
dual Banach algebra with predual $A_*\oplus_1 A_*$.
The result follows from Lemma~\ref{lem:five}.
\end{proof}

A recurring theme in much of the rest of the paper is the following setup.
Let $A$ be a C$^*$-algebra which
is weak$^*$-dense in a von Neumann algebra $M$.  Suppose that $(\alpha_t)$ is a
one-parameter automorphism group of $M$ which restricts to a (norm-continuous)
one-parameter automorphism group of $A$.  To avoid confusion, we shall write
$(\alpha_t^M)$ and $(\alpha_t^A)$, and similarly for the analytic extensions.

\begin{proposition}\label{prop:three}
Let $M,A$ and $(\alpha_t)$ be as above, and let $z\in\mathbb C$.
Then $D(\alpha_z^A)$ is a core for $D(\alpha_z^M)$.
\end{proposition}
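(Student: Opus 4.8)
The plan is to reduce everything to Theorem~\ref{thm:thm1}, applied to the group $(\alpha_t^M)$ on the Banach space $E=M$ with the distinguished subspace $D=D(\alpha_z^A)$. To invoke that theorem I must verify three things: that $D(\alpha_z^A)$ is contained in $D(\alpha_z^M)$ (with the two operators agreeing there), that $D(\alpha_z^A)$ is $(\alpha_t^M)$-invariant, and that $D(\alpha_z^A)$ is weak$^*$-dense in $M$.

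First I would check the containment. If $a\in D(\alpha_z^A)$, let $f:S(z)\to A$ be the norm-regular extension of $t\mapsto\alpha_t^A(a)$. Composing with the norm-continuous, isometric inclusion $\iota:A\to M$, the map $\iota\circ f:S(z)\to M$ is still norm-continuous and analytic on the interior of $S(z)$, hence in particular weak$^*$-regular; and it satisfies $\iota(f(t))=\alpha_t^M(a)$ for real $t$. By uniqueness of analytic extensions this witnesses $a\in D(\alpha_z^M)$ with $\alpha_z^M(a)=\iota(f(z))=\alpha_z^A(a)$. Thus $D(\alpha_z^A)\subseteq D(\alpha_z^M)$ and the two operators agree on the smaller domain. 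Invariance is then immediate: for $a\in D(\alpha_z^A)$ and $t\in\mathbb R$ we have $\alpha_t^M(a)=\alpha_t^A(a)\in A$, and the orbit map of $\alpha_t^A(a)$ extends analytically, being the real shift $w\mapsto f(w+t)$ of the extension of $a$; hence $\alpha_t^A(a)\in D(\alpha_z^A)$ by property~(1) of the analytic extensions.

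The remaining point---weak$^*$-density of $D(\alpha_z^A)$ in $M$---is where the smearing machinery enters. For each $a\in A$ the smeared element $\mc R_n(a)$ lies in $D(\alpha_z^A)$ (indeed in $D(\alpha_w^A)$ for every $w$), and $\mc R_n(a)\to a$ in norm as $n\to\infty$; hence $D(\alpha_z^A)$ is norm-dense in $A$. Since $A$ is by hypothesis weak$^*$-dense in $M$, it follows that $D(\alpha_z^A)$ is weak$^*$-dense in $M=E$, which is exactly the density hypothesis of Theorem~\ref{thm:thm1} in the weak$^*$-continuous setting.

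With the three hypotheses in hand, Theorem~\ref{thm:thm1} applied to $(\alpha_t^M)$ on $E=M$ yields directly that $D(\alpha_z^A)$ is a core for $\alpha_z^M$, which is the claim. I expect no genuine obstacle beyond the bookkeeping: the substance has already been packaged into Theorem~\ref{thm:thm1}, and the only mild care needed is in the first step, checking that a norm-regular $A$-valued extension remains a weak$^*$-regular $M$-valued extension, so that the restricted operator really does agree with $\alpha_z^M$ rather than with some a priori larger extension.
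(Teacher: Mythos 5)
Your proposal is correct and follows essentially the same route as the paper: verify weak$^*$-density (via norm-density of $D(\alpha_z^A)$ in $A$ plus weak$^*$-density of $A$ in $M$) and $(\alpha_t^M)$-invariance, then apply Theorem~\ref{thm:thm1}. The only difference is that you spell out the containment $D(\alpha_z^A)\subseteq D(\alpha_z^M)$ (norm-regular extensions remain weak$^*$-regular after inclusion), which the paper leaves implicit; this is careful bookkeeping, not a different argument.
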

\begin{proof}
As $D(\alpha_z^A)$ is norm dense in $A$, it is also weak$^*$-dense in $M$.  The result
now follows immediately from Theorem~\ref{thm:thm1}, as clearly $\mc D(\alpha_z^A)$ is
$(\alpha_t^M)$-invariant, because it is $(\alpha_t^A)$-invariant.
\end{proof}

\begin{proposition}\label{prop:4}
Let $M,A$ and $(\alpha_t)$ be as above, and let $z\in\mathbb C$.
Let $a\in A$ be such that $a\in D(\alpha_z^M)$.  Then $a\in D(\alpha_z^A)$ if
and only if $\alpha_z^M(a)\in A$.

In other words, if $\mc G^M \subseteq M\oplus M$ is the graph of $\alpha_z^M$, and
$\mc G^A \subseteq A\oplus A$ is the graph of $\alpha_z^A$, then
$\mc G^M \cap (A\oplus A) = \mc G^A$.
\end{proposition}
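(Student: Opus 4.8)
The plan is to prove the two inclusions of the graph identity $\mc G^M\cap(A\oplus A)=\mc G^A$ separately, the forward inclusion being formal and the reverse one carrying the content. Throughout, the key observation is that the substance of the statement—whether the \emph{interior} values of an analytic extension of the orbit map remain inside $A$—can be converted into a single algebraic identity to which the smearing criterion Proposition~\ref{prop:prop2} applies directly.

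First I would dispose of the ``only if'' direction, establishing $\mc G^A\subseteq\mc G^M\cap(A\oplus A)$. Suppose $a\in D(\alpha_z^A)$, so that there is a norm-regular $f:S(z)\to A$ with $f(t)=\alpha_t^A(a)$ for $t\in\mathbb R$. Composing with the inclusion $A\hookrightarrow M$, which carries the norm topology into the weak$^*$ topology and preserves analyticity, turns $f$ into a weak$^*$-regular extension of $t\mapsto\alpha_t^M(a)$ valued in $M$. By uniqueness of regular extensions it coincides with the extension defining $\alpha_z^M(a)$, whence $\alpha_z^M(a)=f(z)=\alpha_z^A(a)\in A$.

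For the reverse inclusion, assume $a\in A\cap D(\alpha_z^M)$ and set $y:=\alpha_z^M(a)$, which is in $A$ by hypothesis; the goal is to produce a norm-regular extension in $A$, that is, to conclude $a\in D(\alpha_z^A)$. Fix $n>0$ and let $\mc R_n$ be the smearing operator of $(\alpha_t^M)$. The first thing to verify is that $\mc R_n$ restricts to $A$ and there agrees with the smearing operator of $(\alpha_t^A)$: for $x\in A$ the orbit map $t\mapsto\alpha_t^A(x)$ is norm-continuous, so the Gaussian integral converges in norm to an element of the norm-closed algebra $A$, and this norm-convergent integral also represents the weak$^*$-integral defining $\mc R_n(x)$. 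Hence there is no ambiguity in writing $\mc R_n$ for either operator.

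Then I would compute $\alpha_z^A(\mc R_n(a))$ in two ways. On the one hand $\mc R_n(a)\in D(\alpha_z^A)$, since smearing always lands in the domain, so by the ``only if'' direction already proved $\alpha_z^A(\mc R_n(a))=\alpha_z^M(\mc R_n(a))$. On the other hand, since $a\in D(\alpha_z^M)$, the commutation of smearing with the generator recorded in Section~\ref{sec:smearing} gives $\alpha_z^M(\mc R_n(a))=\mc R_n(\alpha_z^M(a))=\mc R_n(y)$. Combining, $\alpha_z^A(\mc R_n(a))=\mc R_n(y)$ with $y\in A$, so Proposition~\ref{prop:prop2} applied to $(\alpha_t^A)$ on $A$ yields $a\in D(\alpha_z^A)$ with $\alpha_z^A(a)=y$, completing the proof. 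I expect no serious obstacle: the only points needing care are the compatibility of the two smearing operators and the identity $\alpha_z\mc R_n=\mc R_n\alpha_z$ on the domain, both of which are already available, and the conceptual step is simply recognizing that Proposition~\ref{prop:prop2} is tailor-made to convert the membership question into the tractable equation $\alpha_z^A(\mc R_n(a))=\mc R_n(y)$.
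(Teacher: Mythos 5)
Your proposal is correct and follows essentially the same route as the paper: both directions match, with the reverse inclusion reduced via the compatibility of the norm and weak$^*$ smearing integrals and the commutation $\alpha_z^M\mc R_n=\mc R_n\alpha_z^M$ on $D(\alpha_z^M)$ to the identity $\alpha_z^A(\mc R_n(a))=\mc R_n(y)$, to which Proposition~\ref{prop:prop2} is applied in $A$. The only cosmetic difference is that you justify $\alpha_z^A(\mc R_n(a))=\alpha_z^M(\mc R_n(a))$ by citing the already-proved forward inclusion, whereas the paper observes directly that $\alpha_z^M(\mc R_n(a))$ is given by an integral converging in norm in $A$; these are the same fact.
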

\begin{proof}
By the definition of analytic continuation, it follows that $\mc G^A \subseteq
\mc G^M$ for the inclusion $A\oplus A\subseteq M\oplus M$.  Thus, if $a\in D(\alpha^A_z)$
then $\alpha_z^M(a) = \alpha_z^A(a) \in A$.

Conversely, suppose that $a\in D(\alpha_z^M)$ with $b = \alpha_z^M(a) \in A$.
As $(\alpha_t)$ is norm continuous on $A$, we have
that both $\mc{R}_n(a), \mc{R}_n(b) \in A$, and we obtain the same elements if we
consider a norm converging integral, or a weak$^*$-converging integral.  In $M$,
we have that $\alpha_z^M(\mc{R}_n(a)) = \mc{R}_n(\alpha_z^M(a)) = \mc{R}_n(b)$.
However, $\alpha_z^M(\mc{R}_n(a))$ is equal to another integral which we can
consider converging in $A$.  Thus Proposition~\ref{prop:prop2} applied to $A$ gives
the result.
\end{proof}

A more abstract result about ``inclusions'' of general one-parameter groups could be
formulated and proved in a similar way; compare also Proposition~\ref{prop:foura} below.
We remark that ``quotients'' of one-parameter groups seems a more subtle issue, see
Section~\ref{sec:quot} below.

\begin{example}\label{ex:four}
Consider Examples~\ref{ex:one} and~\ref{ex:two}.  There we considered a one-parameter
isometric group acting on the $C^*$-algebra $c_0(\mathbb Z)$ and the von Neumann algebra
$\ell^\infty(\mathbb Z)$.  Of course, these groups were not automorphism groups.

Consider the Hilbert space $H=\ell^2(\mathbb Z)$ with orthonormal basis
$(\delta_n)_{n\in\mathbb Z}$.  Let $(p_n)$ be a sequence of non-zero positive numbers,
and define the (in general unbounded) positive non-degenerate operator $P$ on $H$ by
$P(\delta_n) = p_n \delta_n$.  Then $P^{it}(\delta_n) = p_n^{it}\delta_n$
for $t\in\mathbb R$.

Now consider $\mc B(H\oplus H)$, the bounded operators on $H\oplus H$, which we identify
with $2\times 2$ matrices with entries in $\mc B(H)$.  Let $u_t = \begin{pmatrix}
P^{it} & 0 \\ 0 & 1 \end{pmatrix}$ a unitary on $H\oplus H$ with $u_t^* = u_{-t}$.
Then $x\mapsto \tau_t(x) = u_t x u_{-t}$ defines a weak$^*$-continuous automorphism group
on $\mc B(H\oplus H)$.  We have that
\[ u_t \begin{pmatrix} a & b \\ c & d \end{pmatrix} u_{-t}
= \begin{pmatrix} P^{it} & 0 \\ 0 & 1 \end{pmatrix}
\begin{pmatrix} a & b \\ c & d \end{pmatrix}
\begin{pmatrix} P^{-it} & 0 \\ 0 & 1 \end{pmatrix}
= \begin{pmatrix} P^{it}aP^{-it} & P^{it}b \\ cP^{-it} & d \end{pmatrix}. \]

Now, $c_0(\mathbb Z)$ acts on $\ell^2(\mathbb Z)$ by multiplication, and commutes with
$P$, so
\[ u_t \begin{pmatrix} a & b \\ c & d \end{pmatrix} u_{-t}
= \begin{pmatrix} a & \alpha_t(b) \\ \alpha_{-t}(c) & d \end{pmatrix}
\qquad (a,b,c,d\in c_0(\mathbb Z)), \]
where $\alpha_t(a) = (p_n^{it} a_n)$ for $t\in\mathbb R, a=(a_n)\in c_0(\mathbb Z)$.
Thus $\alpha_t$ is a generalisation of the group considered in
Examples~\ref{ex:one} and~\ref{ex:two}.  So $(\tau_t)$ restricts to a (norm-continuous)
automorphism group of $\mathbb M_2(c_0(\mathbb Z))$.
We can clearly replace $c_0(\mathbb Z)$ by $\ell^\infty(\mathbb Z)$ if we also replace
the norm topology by the weak$^*$ topology.

We have hence embedded the one-parameter \emph{isometry} group $(\alpha_t)$ into the
one-parameter \emph{automorphism} group $(\tau_t)$.  In particular, Example~\ref{ex:two}
shows that Proposition~\ref{prop:4} is false if we drop the condition that
$\alpha^M_z(a)\in A$ (that is, $A \cap D(\alpha^M_z)$ can be strictly larger than
$D(\alpha^A_z)$).

The reader should compare this counter-example with Theorem~\ref{thm:seven} below.
\end{example}

Let $(u_t)$ be a strongly continuous unitary group on a Hilbert space $H$, and define
$\tau_t(x) = u_t x u_{-t}$ for $x\in\mc B(H)$, so that $(\tau_t)$ is a weak$^*$-continuous
automorphism group.  Such groups were studied in \cite[Section~6]{cz}.

\begin{theorem}[{\cite[Theorem~6.2]{cz}}]\label{thm:three}
With $\tau_t(x) = u_t x u_{-t}$ acting on $\mc B(H)$, we have that $x\in D(\tau_z)
\subseteq\mc B(H)$ if and only if $D(u_z x u_{-z})$ is a core for $u_{-z}$ and
$u_z x u_{-z}$ is bounded.  If $x\in D(\tau_z)$ then $D(u_z x u_{-z}) = D(u_{-z})$
and $\tau_z(x)$ is the closure of $u_z x u_{-z}$.
\end{theorem}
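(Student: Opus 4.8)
The plan is to reduce everything to scalar matrix-coefficient functions and a three-lines argument. By Stone's theorem and Example~\ref{ex:three} we may write $u_w = e^{iwA}$ for a self-adjoint $A$, so each $u_w$ is a closed, densely defined operator with Hilbert-space adjoint $u_w^* = u_{-\bar w}$ and $(u_w^*)^* = u_w$. For $\xi \in D(u_{-z})$ and $\eta \in D(u_z^*) = D(u_{-\bar z})$, the maps $w\mapsto u_{-w}\xi$ and $w\mapsto u_{-\bar w}\eta$ are (anti)regular on $S(z)$, being analytic continuations of the orbit maps of $(u_{-t})$, so that
\[ F_{\xi,\eta}(w) = \langle x\,u_{-w}\xi,\ u_{-\bar w}\eta\rangle \qquad (w\in S(z)) \]
is holomorphic on the interior of $S(z)$, continuous on $S(z)$, and bounded there by the reasoning of Remark~\ref{rem:2} (since $\|u_{-w}\xi\| = \|u_{-i\,\im w}\xi\|$ stays bounded as $\im w$ ranges over a compact interval). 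On $\mathbb R$ one has $F_{\xi,\eta}(t) = \langle \tau_t(x)\xi,\eta\rangle$.

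For the forward implication, suppose $x\in D(\tau_z)$ with analytic extension $f$. Then $w\mapsto \langle f(w)\xi,\eta\rangle$ and $F_{\xi,\eta}$ are both regular on $S(z)$ and agree on $\mathbb R$, hence agree on $S(z)$ by uniqueness of analytic extension. Evaluating at $w=z$ gives $\langle f(z)\xi,\eta\rangle = \langle x\,u_{-z}\xi,\ u_z^*\eta\rangle$ for every $\eta\in D(u_z^*)$; as $D(u_z^*)$ is dense this says exactly that $x\,u_{-z}\xi \in D((u_z^*)^*) = D(u_z)$ with $u_z x u_{-z}\xi = f(z)\xi$. Since $\xi\in D(u_{-z})$ was arbitrary, $D(u_z x u_{-z}) = D(u_{-z})$, which is trivially a core for $u_{-z}$, and $u_z x u_{-z}$ is the restriction of the bounded operator $\tau_z(x)=f(z)$ to the dense set $D(u_{-z})$, hence bounded with closure $\tau_z(x)$. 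This already yields the second sentence of the theorem.

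The converse is the substantial direction. Given that $u_z x u_{-z}$ is bounded on the core $D(u_z x u_{-z})$, let $y$ be its closure; the core hypothesis makes this domain dense, so $y\in\mc B(H)$. I would apply the three-lines theorem to each $F_{\xi,\eta}$. The lower bound $|F_{\xi,\eta}(t)|\le \|x\|\,\|\xi\|\,\|\eta\|$ is immediate. For the upper edge, the key identities are $F_{\xi,\eta}(z+t) = F_{u_{-t}\xi,\,u_{-t}\eta}(z)$ (a change of variables using $u_{-z-t}=u_{-t}u_{-z}$ and that $u_{-t}$ is unitary) and $F_{\xi',\eta'}(z) = \langle y\xi',\eta'\rangle$. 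The latter holds immediately for $\xi'\in D(u_z x u_{-z})$; to promote it to all $\xi'\in D(u_{-z})$ I use that $\xi'\mapsto F_{\xi',\eta'}(z)$ is continuous for the graph norm of $u_{-z}$ while $\langle y\,\cdot,\eta'\rangle$ is norm-continuous, so density of the core in the graph norm forces them to agree on $D(u_{-z})$. This gives $|F_{\xi,\eta}(z+t)| = |\langle \tau_t(y)\xi,\eta\rangle|\le \|y\|\,\|\xi\|\,\|\eta\|$, and three-lines then yields $|F_{\xi,\eta}(w)|\le \max(\|x\|,\|y\|)\,\|\xi\|\,\|\eta\|$ throughout $S(z)$.

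Finally, since $D(u_{-z})$ and $D(u_z^*)$ are dense, for each $w\in S(z)$ the bounded form $(\xi,\eta)\mapsto F_{\xi,\eta}(w)$ defines $f(w)\in\mc B(H)$ with $\|f(w)\|\le\max(\|x\|,\|y\|)$, $\langle f(w)\xi,\eta\rangle = F_{\xi,\eta}(w)$, $f(t)=\tau_t(x)$, and $f(z)=y$. As the functionals $T\mapsto\langle T\xi,\eta\rangle$ with $\xi\in D(u_{-z}),\eta\in D(u_z^*)$ span a norm-dense (hence norming) subspace of the trace class $\mc B(H)_*$, uniform boundedness of $(f(w))$ together with the holomorphy and continuity of these matrix coefficients upgrades to weak$^*$-regularity of $f$; thus $x\in D(\tau_z)$ with $\tau_z(x)=f(z)=y$. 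The main obstacle is precisely this top-edge estimate: boundedness of $u_z x u_{-z}$ on a domain that is not a core is genuinely insufficient (compare Example~\ref{ex:four}), and it is the core hypothesis, funnelled through graph-norm density, that transfers the bound $\|y\|$ from $D(u_z x u_{-z})$ to all of $D(u_{-z})$ and hence along the whole upper edge of the strip.
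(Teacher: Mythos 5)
You should first note that the paper contains no proof of this theorem: it is imported verbatim from Cioranescu--Zsid\'o (\cite[Theorem~6.2]{cz}) and used as a black box, so there is no in-paper argument to compare yours against. Judged on its own terms, your proof is correct, and it essentially reconstructs the classical argument. The forward direction is right: uniqueness of regular extensions identifies $\langle f(w)\xi,\eta\rangle$ with $F_{\xi,\eta}(w)=\langle x u_{-w}\xi, u_{-\bar w}\eta\rangle$, and evaluating at $w=z$ and using $(u_z^*)^*=u_z$ (with $u_z^*=u_{-\bar z}$, valid since $u_z=e^{izA}$ is normal) gives $D(u_z x u_{-z})=D(u_{-z})$ and $\overline{u_z x u_{-z}}=\tau_z(x)$. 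The converse correctly locates the role of the core hypothesis: graph-norm density of $D(u_z x u_{-z})$ in $D(u_{-z})$, together with graph-norm continuity of $\xi'\mapsto\langle xu_{-z}\xi',u_{-\bar z}\eta'\rangle$, transfers the identity $F_{\xi',\eta'}(z)=\langle y\xi',\eta'\rangle$ from the core to all of $D(u_{-z})$, and this is exactly what makes the upper-edge bound $\|y\|\,\|\xi\|\,\|\eta\|$, and hence the three-lines estimate, available. Two steps are stated telegraphically but are standard and do hold: (i) the facts $u_w=e^{iwA}$, $u_w^*=u_{-\bar w}$, $u_{-w-t}=u_{-w}u_{-t}$, and the domain inclusion $D(u_{-z})\subseteq D(u_{-w})$ for $w\in S(z)$ all rest on the spectral theorem (the paper's Example~\ref{ex:three} only records the case $w=-i$, so these deserve a line of justification); (ii) the final upgrade from regularity of the coefficients $F_{\xi,\eta}$ to weak$^*$-regularity of $f$ should be phrased as: the set of $\omega\in\mc B(H)_*$ for which $w\mapsto\ip{f(w)}{\omega}$ is continuous on $S(z)$ and analytic in its interior is a norm-closed subspace of $\mc B(H)_*$ (this is where the uniform bound $\|f(w)\|\leq\max(\|x\|,\|y\|)$ is used), and it contains the norm-dense span of the $\omega_{\xi,\eta}$ with $\xi\in D(u_{-z})$, $\eta\in D(u_z^*)$. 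With these glosses filled in, your argument is a complete and self-contained proof, in the same spirit as the original one in \cite{cz}.
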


We recall that $D(u_z x u_{-z}) = \{\xi\in D(u_{-z}) : xu_{-z}\xi \in D(u_z) \}$.
If $M\subseteq\mc B(H)$ is a von Neumann algebra, and $\tau_t(M)\subseteq M$ for each
$t\in\mathbb R$, then we obtain the restricted automorphism group $(\tau^M_t)$.
If we are given an automorphism group $(\alpha_t)$ on $M$, and $(u_t)$ on $H$, then
a criteria for when $(\alpha_t)$ arises as the restriction of $(\tau_t)$, given
in terms of $u_{-i}$ and $\alpha_{-i}$, is \cite[Corollary~2.5]{z}.
Alternatively, for a criteria for when $\tau_t(M)\subseteq M$, given in terms of $M$
and $u_{-i}$, see \cite[Theorem~3.5]{z1}, which follows \cite{w1,w2}.

Let us record that the above characterisation also applies to $\mc D(\tau^M_z)$;
notice that the conclusion is stronger than Proposition~\ref{prop:4}.

\begin{proposition}\label{prop:foura}
Consider $(\tau^M_t)$ as above.  Then $x\in D(\tau^M_z)$ if and only if $x\in M$ with
$D(u_z x u_{-z})$ is a core for $u_{-z}$ and $u_z x u_{-z}$ is bounded.
If $x\in D(\tau_z^M)$ then $D(u_z x u_{-z}) = D(u_{-z})$ and $\tau_z^M(x)$ is
the closure of $u_z x u_{-z}$.
\end{proposition}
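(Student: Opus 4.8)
The plan is to reduce everything to Theorem~\ref{thm:three} by showing that the analytic extension of the restricted group agrees with that of the ambient group on $\mc B(H)$; precisely, I would prove that $D(\tau_z^M) = M \cap D(\tau_z)$ and that $\tau_z^M$ coincides with $\tau_z$ on this common domain. Granting this, a point $x$ lies in $D(\tau_z^M)$ exactly when $x \in M$ and $x \in D(\tau_z)$, and Theorem~\ref{thm:three} translates the condition $x \in D(\tau_z)$ into the statement that $D(u_z x u_{-z})$ is a core for $u_{-z}$ with $u_z x u_{-z}$ bounded. The final assertions, that $D(u_z x u_{-z}) = D(u_{-z})$ and that $\tau_z^M(x)$ is the closure of $u_z x u_{-z}$, then follow verbatim from that theorem together with $\tau_z^M(x) = \tau_z(x)$.

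To establish the reduction, first note that since $M$ is weak$^*$-closed and $(\tau_t)$-invariant, the restricted group $(\tau_t^M)$ is weak$^*$-continuous for the intrinsic weak$^*$-topology of $M = (M_*)^*$, which is exactly the subspace topology inherited from the weak$^*$-topology of $\mc B(H)$ (recall that the canonical predual is $M_* = \mc B(H)_* / {}^\perp M$). Consequently an $M$-valued map $f : S(z) \to M$ is weak$^*$-regular as a map into $M$ if and only if it is weak$^*$-regular as a map into $\mc B(H)$ whose range happens to lie in $M$, since testing against $M_*$ and testing against $\mc B(H)_*$ impose the same continuity and analyticity conditions on such an $f$. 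This immediately yields the inclusion $D(\tau_z^M) \subseteq M \cap D(\tau_z)$ with matching values, because an $M$-valued regular extension is a fortiori a $\mc B(H)$-valued one.

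The substantive direction is to show that when $x \in M \cap D(\tau_z)$ the unique $\mc B(H)$-valued regular extension $f : S(z) \to \mc B(H)$ of $t \mapsto \tau_t(x)$ in fact takes all its values in $M$. For this I would fix $\omega \in {}^\perp M \subseteq \mc B(H)_*$ and consider $h(w) = \ip{f(w)}{\omega}$. Since $\tau_t(x) \in M$ for every real $t$, we have $h(t) = 0$ on $\mathbb R$, while $h$ is regular on $S(z)$; the reflection-principle and Morera argument already used to prove uniqueness of regular extensions then forces $h \equiv 0$ on all of $S(z)$. As $\omega \in {}^\perp M$ was arbitrary and $M = ({}^\perp M)^\perp$, this gives $f(w) \in M$ for every $w \in S(z)$, so $f$ is an $M$-valued regular extension and hence $x \in D(\tau_z^M)$ with $\tau_z^M(x) = f(z) = \tau_z(x)$. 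This step, propagating membership in $M$ from the real boundary into the strip via the identity principle, is the only real content of the argument, and it is precisely where the weak$^*$-closedness of $M$ is essential; everything else is bookkeeping and a direct appeal to Theorem~\ref{thm:three}. I expect no serious obstacle beyond being careful that the two notions of regularity genuinely coincide; this is exactly the abstract ``inclusion'' principle for a weak$^*$-closed invariant subspace alluded to after Proposition~\ref{prop:4}.
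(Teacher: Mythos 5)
Your proposal is correct and follows essentially the same route as the paper: the paper's proof likewise invokes Theorem~\ref{thm:three} to produce the weak$^*$-regular extension $f:S(z)\rightarrow\mc B(H)$ and then uses exactly your key step---pairing $f$ against $\omega\in{}^\perp M$ and applying the identity principle to conclude $f$ takes values in $({}^\perp M)^\perp=M$. Your explicit remark that $M_*=\mc B(H)_*/{}^\perp M$ makes the two notions of weak$^*$-regularity coincide is a point the paper leaves implicit, but the substance is identical.
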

\begin{proof}
This should be compared with \cite[Corollary~2.5]{z} mentioned above.
Given such an $x$, let $y$ be the closure of $u_z x u_{-z}$.  By the previous theorem,
there is a weak$^*$-regular map $f:S(z) \rightarrow B(H)$ with $f(t) = \tau^M_t(x)$
for $t\in\mathbb R$, and with $f(z) = y$.  For any $\omega \in {}^\perp M \subseteq
\mc B(H)_*$ we have that $S(z)\rightarrow\mathbb C; w\mapsto \ip{f(w)}{\omega}$ is
regular, and identically $0$ on $\mathbb R$, and so vanishes everywhere.
Thus $f$ maps $S(z)$ into $({}^\perp M)^\perp = M$ and so $y \in M$,
so $(x,y) \in \mc G(\tau^M_z)$ as required.
\end{proof}

\subsection{Tomita-Takesaki theory}

We now make some remarks about Tomita-Takesaki theory.  Let $M$ be a von Neumann
algebra with $\varphi$ a normal semi-finite faithful weight on $M$, see
\cite[Chapter~VII]{tak2}.  Let $\mf n_\varphi = \{ x\in M : \varphi(x^*x)<\infty \}$
and let $\Lambda:\mf n_\varphi\rightarrow H$ be the GNS map.  Then $\mf A =
\Lambda(\mf n_\varphi \cap \mf n_\varphi^*)$ is a full left Hilbert algebra, and
Tomita-Takesaki theory gives rise to the modular conjugation $J$ on $H$, and the modular
operator $\Delta$ which implements the modular automorphism group
$\sigma_t(\cdot) = \Delta^{it}(\cdot) \Delta^{-it}$.

There is a direct link between $\sigma_{-i}$ and $\varphi$, which we quote for the sake
of interest.

\begin{proposition}[{See \cite[Section~3]{h1} or \cite[Theorem~3.25, Chapter~VIII]{tak2}}]
\label{prop:9}
For $a,b\in M$ the following are equivalent:
\begin{enumerate}
\item\label{prop:9:1} $(a,b)\in\mc G(\sigma_{-i})$;
\item\label{prop:9:2} $a\mf n_\varphi^* \subseteq \mf n_\varphi^*,
\mf n_\varphi b \subseteq \mf n_\varphi$ and $\varphi(ax) = \varphi(xb)$
for $x\in \mf n_\varphi^*\mf n_\varphi$.
\end{enumerate}
\end{proposition}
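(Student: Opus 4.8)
The plan is to transport everything to the GNS Hilbert space $H$ and to use the spatial description of $\sigma_{-i}$ furnished by Proposition~\ref{prop:foura}. Since $\sigma_t(\cdot) = \Delta^{it}(\cdot)\Delta^{-it}$, we are in the situation of that proposition with $u_t = \Delta^{it}$, so $u_{-i} = \Delta$ and $u_i = \Delta^{-1}$. Proposition~\ref{prop:foura} then says that condition~\eqref{prop:9:1}, namely $(a,b)\in\mc G(\sigma_{-i})$, is equivalent to: $a\in M$, the operator $\Delta a\Delta^{-1}$ is bounded with $D(\Delta a\Delta^{-1}) = D(\Delta^{-1})$ a core for $\Delta^{-1}$, and $b$ is the closure of $\Delta a\Delta^{-1}$. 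The task is thus to match this spatial statement with the weight-theoretic condition~\eqref{prop:9:2}. To this end I would recall the standard GNS/Tomita relations: $\Delta^{it}\Lambda(x) = \Lambda(\sigma_t(x))$ for $x\in\mf n_\varphi$ (so $\Delta^{-1}\Lambda(x) = \Lambda(\sigma_i(x))$ and $\Delta\Lambda(x) = \Lambda(\sigma_{-i}(x))$ when the relevant analytic elements again lie in $\mf n_\varphi$), the inner product formula $\ip{\Lambda(x)}{\Lambda(y)} = \varphi(y^*x)$, the left action $a\Lambda(x) = \Lambda(ax)$, and the closure $S=J\Delta^{1/2}$ of $\Lambda(x)\mapsto\Lambda(x^*)$. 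The $\sigma$-analytic elements of $\mf n_\varphi\cap\mf n_\varphi^*$ form a common core for all powers of $\Delta$, with dense $\Lambda$-images; restricting attention to these lets me move $\Delta^{\pm1}$ freely.

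For \eqref{prop:9:1}$\Rightarrow$\eqref{prop:9:2} I would test the identity $b = \overline{\Delta a\Delta^{-1}}$ on such analytic vectors. For analytic $v,w\in\mf n_\varphi\cap\mf n_\varphi^*$, moving $\Delta$ across and invoking Proposition~\ref{prop:star} in the form $\sigma_{-i}(w)^* = \sigma_i(w^*)$ gives
\[ \ip{b\Lambda(v)}{\Lambda(w)} = \ip{a\Delta^{-1}\Lambda(v)}{\Delta\Lambda(w)} = \varphi\big(\sigma_i(w^*)\,a\,\sigma_i(v)\big), \]
while the left-hand side is $\varphi(w^*bv)$; that these agree is exactly the identity in~\eqref{prop:9:2}, using that $\sigma_i$ is a homomorphism (Theorem~\ref{thm:nine}) and $\varphi\circ\sigma_s=\varphi$. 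The invariance $a\mf n_\varphi^*\subseteq\mf n_\varphi^*$ and $\mf n_\varphi b\subseteq\mf n_\varphi$ I would read off from the domain equality $D(\Delta a\Delta^{-1}) = D(\Delta^{-1})$ together with the description $D(\Delta^{-1})\supseteq\Lambda(\sigma_i$-analytic elements$)$.

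For the converse \eqref{prop:9:2}$\Rightarrow$\eqref{prop:9:1} I would avoid unbounded estimates by smearing: by Proposition~\ref{prop:prop2} it suffices to prove $\sigma_{-i}(\mc R_n(a)) = \mc R_n(b)$ for a single fixed $n$. Since the linear span of the vector functionals $\omega_{\Lambda(v),\Lambda(w)}$ with $v,w\in\mf n_\varphi\cap\mf n_\varphi^*$ is norm-dense in $M_*$, this reduces to one Gaussian-weighted integral identity in $t$, whose two sides differ by the contour shift $t\mapsto t+i$ (a contour deformation as in the proof of Theorem~\ref{thm:one}); condition~\eqref{prop:9:2}, together with the invariance of $\varphi$ under $\sigma_s$, is designed to supply exactly the boundary relation this deformation requires.

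The main obstacle is the mismatch of \emph{sides}: condition~\eqref{prop:9:2} is the one-sided twisted-trace identity $\varphi(ax)=\varphi(xb)$, whereas the vector functionals naturally present $a$ sandwiched as $\varphi(w^*\,\sigma_t(a)\,v)$. Bridging the two forces one to handle the genuinely subtle right multiplications $\mf n_\varphi b\subseteq\mf n_\varphi$ and $\mf n_\varphi a^*\subseteq\mf n_\varphi$, which at the Hilbert-space level are governed by the commutant $JMJ$ and the half-modular maps $J\sigma_{\pm i/2}(\cdot)J$. Keeping these unbounded modular operators under control away from the analytic core---and in particular checking that the domain conditions of Proposition~\ref{prop:foura} are \emph{equivalent} to, not merely implied by, the mapping conditions in~\eqref{prop:9:2}---is where the real work lies; the remainder is bookkeeping with analytic continuation and the homomorphism property of $\sigma_z$.
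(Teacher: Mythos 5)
First, a point of comparison that matters: the paper does \emph{not} prove Proposition~\ref{prop:9}. It is quoted ``for the sake of interest'' from Haagerup \cite[Section~3]{h1} and Takesaki \cite[Theorem~3.25, Chapter~VIII]{tak2}, so your proposal has to stand entirely on its own; there is no internal argument it could be ``the same as''.

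On its own terms the proposal has a genuine gap, and it sits exactly where you wave it away. The reduction via Proposition~\ref{prop:foura} (with $u_t=\Delta^{it}$, so $u_{-i}=\Delta$) is fine, but your key computation for \eqref{prop:9:1}$\Rightarrow$\eqref{prop:9:2} is circular. Since $\sigma_{-i}(a)=b$ means $a=\sigma_i(b)$, the identity you derive,
\[ \varphi(w^*bv)=\varphi\big(\sigma_i(w^*)\,a\,\sigma_i(v)\big), \]
collapses---using precisely the two facts you invoke, that $\sigma_i$ is a homomorphism and that $\varphi$ is $\sigma$-invariant---to $\varphi(w^*bv)=\varphi(\sigma_i(w^*bv))=\varphi(w^*bv)$: a tautology, valid for \emph{every} $(a,b)\in\mc G(\sigma_{-i})$, carrying no information. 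It is not ``exactly the identity in \eqref{prop:9:2}'': that identity is the one-sided statement $\varphi(a\,w^*v)=\varphi(w^*v\,b)$, with $a$ and $b$ on \emph{opposite} sides of $x=w^*v$, and passing from your sandwiched expression to it requires commuting $a$ past $w^*$ under $\varphi$---that is, the KMS boundary condition for $(\varphi,(\sigma_t))$. That KMS property, together with its consequences for right multiplication ($\mf n_\varphi b\subseteq\mf n_\varphi$, controlled at the Hilbert-space level by $JMJ$), is not ``bookkeeping''; it is the entire content of the proposition, and it is what Haagerup's Section~3 and Takesaki's Theorem~3.25 actually establish. The same missing input sinks the converse: the smearing reduction via Proposition~\ref{prop:prop2} is a sound idea in the spirit of this paper, but the contour shift needs a bounded analytic function on the strip whose boundary values are $t\mapsto\varphi(w^*\sigma_t(a)v)$ and $t\mapsto\varphi(w^*\sigma_t(b)v)$, and constructing that function \emph{from} condition \eqref{prop:9:2} is again exactly the KMS-type argument you defer. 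In short: the steps you carry out are the routine ones, and the step you label ``where the real work lies'' is the theorem.
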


\begin{proposition}
Let $M$ be a von Neumann algebra with a nsf weight $\varphi$ on $M$, with GNS
construction $(H,\Lambda,\pi)$, and modular automorphism group $(\sigma_t)$.
Let $\mf A_0\subseteq H$ be the Tomita algebra, with modular automorphism group
$(\sigma_t^0)$ and representation $\pi_L:\mf A_0\rightarrow M$, so that
$\pi_L\circ\sigma_t^0 = \sigma_t \circ \pi_L$.  Then $\pi_L(\mf A_0)$
is a core for $\sigma_{z}$ on $M$.
\end{proposition}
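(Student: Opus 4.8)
The plan is to apply Theorem~\ref{thm:thm1} to the subspace $D = \pi_L(\mf A_0) \subseteq M$, taking $(\alpha_t) = (\sigma_t)$ and $E = M$ with its weak$^*$-topology (in which $(\sigma_t)$ is weak$^*$-continuous). This reduces the statement to three ingredients: that $\pi_L(\mf A_0)$ is $(\sigma_t)$-invariant, that $\pi_L(\mf A_0) \subseteq D(\sigma_z)$, and that $\pi_L(\mf A_0)$ is weak$^*$-dense in $M$. The first and third are standard Tomita--Takesaki bookkeeping, while the second is the real content.

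For the invariance and density I would invoke the standard structure of the Tomita algebra. The algebra $\mf A_0$ is invariant under $\sigma_w^0 = \Delta^{iw}$ for every $w \in \mathbb C$, so the intertwining relation $\sigma_t \circ \pi_L = \pi_L \circ \sigma_t^0$ gives $\sigma_t(\pi_L(\xi)) = \pi_L(\sigma_t^0(\xi)) \in \pi_L(\mf A_0)$; thus $\pi_L(\mf A_0)$ is $(\sigma_t)$-invariant. Density is the standard fact that $\pi_L(\mf A_0)$ is a weak$^*$-dense $*$-subalgebra of $M = \pi_L(\mf A)''$.

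The crux is $\pi_L(\mf A_0) \subseteq D(\sigma_z)$, for which I would use Proposition~\ref{prop:foura}, since $(\sigma_t)$ is exactly the automorphism group implemented on $M$ by the unitary group $u_t = \Delta^{it}$. Fix $\xi \in \mf A_0$ and set $x = \pi_L(\xi) \in M$. Using that $\Delta^{iw}$ acts as an algebra automorphism of $\mf A_0$ (in the complex-parameter form valid on the Tomita algebra), a direct computation on the dense domain $\mf A_0$ gives $\Delta^{iz}\,\pi_L(\xi)\,\Delta^{-iz}\eta = \pi_L(\Delta^{iz}\xi)\eta$ for $\eta \in \mf A_0$; since $\Delta^{iz}\xi = \sigma_z^0(\xi) \in \mf A_0$, the operator $\Delta^{iz} x \Delta^{-iz}$ agrees on a dense set with the bounded operator $\pi_L(\Delta^{iz}\xi)$ and is in particular bounded. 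Moreover $\mf A_0 \subseteq D(\Delta^{iz} x \Delta^{-iz})$, and as $\mf A_0$ is a core for $\Delta^{-iz}$, the core hypothesis of Proposition~\ref{prop:foura} holds. We conclude $\pi_L(\xi) \in D(\sigma_z^M)$ with $\sigma_z(\pi_L(\xi)) = \pi_L(\Delta^{iz}\xi)$. The main obstacle is precisely this step: identifying $\Delta^{iz} x \Delta^{-iz}$ with $\pi_L(\Delta^{iz}\xi)$ and checking the core condition for $\Delta^{-iz}$.

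With all three ingredients established, Theorem~\ref{thm:thm1} immediately yields that $\pi_L(\mf A_0)$ is a core for $\sigma_z$, as required. Should one wish to avoid the implemented-group machinery, there is a self-contained alternative for the middle step: build the candidate extension $w \mapsto \pi_L(\Delta^{iw}\xi)$ by hand, observing that for a right-bounded vector $\eta$ one has $\pi_L(\Delta^{iw}\xi)\eta = \pi_R(\eta)\,\Delta^{iw}\xi$ (where $\pi_R(\eta)$ is right multiplication), a manifestly entire $H$-valued map, so the matrix coefficients are analytic on a dense set of $\eta$; a scalar three-lines estimate, bounded by $\max(\|x\|, \|\pi_L(\Delta^{iz}\xi)\|)$ on the two boundary lines exactly as in Remark~\ref{rem:2}, gives a uniform norm bound on $S(z)$, and this boundedness upgrades the matrix-coefficient analyticity and continuity to genuine weak$^*$-regularity, so that $\pi_L(\xi) \in D(\sigma_z)$ directly from the definition.
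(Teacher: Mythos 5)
Your proposal is correct and follows essentially the same route as the paper: the paper's entire proof is to note that $\pi_L(\mf A_0)$ generates $M$ and then invoke Theorem~\ref{thm:thm1}, taking the $(\sigma_t)$-invariance and the inclusion $\pi_L(\mf A_0)\subseteq D(\sigma_z)$ as standard Tomita--Takesaki facts. You simply fill in those standard facts explicitly (via Proposition~\ref{prop:foura}, where your boundedness claim extends from $\mf A_0$ to all of $D(\Delta^{iz}x\Delta^{-iz})$ by closedness of $\Delta^{iz}$), which is a legitimate elaboration of the same argument.
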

\begin{proof}
As $\pi_L(\mf A_0)$ generates $M$, this follows immediately from
Theorem~\ref{thm:thm1}.
\end{proof}

As an illustration of the utility of the ideas developed and summarised so far, we now
wish to give a short proof of the results of \cite{bcm}, where careful calculation
with functional calculus and unbounded operator techniques were used.
We will abstract the setting of
\cite{bcm} away from Markov operators, and instead work in the following setting:
we have Hilbert spaces $H,K$ and positive non-degenerate
(unbounded) operators $\Delta_H, \Delta_K$ on $H$ and $K$ respectively.  Thus
$(\Delta_H^{it})_{t\in\mathbb R}$ is a one-parameter (strongly-continuous) unitary group
on $H$, and similarly for $(\Delta_K^{it})$ on $K$.  Suppose we have a bounded operator
$T:H\rightarrow K$ with $T \Delta_H^{it} = \Delta_K^{it} T$ for all $t\in\mathbb R$.

\begin{proposition}[{\cite[Theorem~1.1]{bcm}}]
With the above setup, we have that $\Delta^{-t}_K T \Delta^t_H$ is densely-defined,
and bounded, with closure $T$, for each $t\in\mathbb R$.
\end{proposition}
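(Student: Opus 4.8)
The plan is to realise $T$ as a corner of a \emph{fixed point} of a single inner automorphism group, and then to read off everything from Theorem~\ref{thm:three}. First I would pass to the Hilbert space $H\oplus K$ and set $\Delta=\Delta_H\oplus\Delta_K$, a positive non-degenerate operator, so that $u_t=\Delta^{it}=\Delta_H^{it}\oplus\Delta_K^{it}$ is a strongly continuous unitary group and $\tau_t(x)=u_txu_{-t}$ is a weak$^*$-continuous automorphism group of $\mc B(H\oplus K)$, exactly of the type considered before Theorem~\ref{thm:three}. Writing elements of $\mc B(H\oplus K)$ as $2\times2$ matrices, I embed $T$ as $\hat T=\left(\begin{smallmatrix}0&0\\ T&0\end{smallmatrix}\right)$.

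The key computation is that $\hat T$ is $\tau$-fixed. A direct matrix multiplication shows that $\tau_t(\hat T)$ has $(2,1)$-entry $\Delta_K^{it}T\Delta_H^{-it}$, and the hypothesis $\Delta_K^{it}T=T\Delta_H^{it}$ gives $\Delta_K^{it}T\Delta_H^{-it}=T$; all other entries vanish, so $\tau_t(\hat T)=\hat T$ for every $t$. Thus the orbit map of $\hat T$ is constant, whence $\hat T\in D(\tau_z)$ for all $z$ with $\tau_z(\hat T)=\hat T$. Now I fix $t\neq0$ (the case $t=0$ being trivial) and take $z=it$; with the convention $u_w=\Delta^{iw}$ one has $u_z=\Delta^{-t}$ and $u_{-z}=\Delta^{t}$. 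Applying Theorem~\ref{thm:three} to $\hat T\in D(\tau_{it})$ shows that $\Delta^{-t}\hat T\Delta^{t}=u_z\hat Tu_{-z}$ is bounded, is defined on $D(u_{-z})=D(\Delta^{t})$, and has closure $\tau_{it}(\hat T)=\hat T$.

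Finally I would extract the corner. For $\xi=(h,0)$ with $h\in D(\Delta_H^{t})$ the operator $\Delta^{-t}\hat T\Delta^{t}$ sends $\xi$ to $(0,\Delta_K^{-t}T\Delta_H^{t}h)$, so its $(2,1)$-entry is $\Delta_K^{-t}T\Delta_H^{t}$, defined at least on the dense subspace $D(\Delta_H^{t})$, and boundedness on $H\oplus K$ forces this corner to be bounded with $\|\Delta_K^{-t}T\Delta_H^{t}h\|\le\|T\|\,\|h\|$. Since the closure of $\Delta^{-t}\hat T\Delta^{t}$ is $\hat T$, and $\hat T$ is a bounded (hence closed, everywhere-defined) extension, we get $\Delta^{-t}\hat T\Delta^{t}\subseteq\hat T$; reading the $(2,1)$-entry yields $\Delta_K^{-t}T\Delta_H^{t}h=Th$ for $h\in D(\Delta_H^{t})$. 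Therefore $\Delta_K^{-t}T\Delta_H^{t}$ is densely defined and bounded, and as $T$ is a bounded extension agreeing with it on the dense domain $D(\Delta_H^{t})$, its closure is $T$.

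I expect the only real difficulty to be bookkeeping rather than anything conceptual: one must track the exponent conventions carefully (so that $z=it$ genuinely produces $\Delta^{-t}(\cdot)\Delta^{t}$ and not its inverse, matching Example~\ref{ex:three} where $u_{-i}=\Delta$), and one must justify the passage from the statement on $\mc B(H\oplus K)$ to the corner statement for $T$. The latter is clean once one observes that a bounded densely-defined operator is contained in any closed extension, so the inclusion $\Delta^{-t}\hat T\Delta^{t}\subseteq\hat T$ transfers directly to the $(2,1)$-entry.
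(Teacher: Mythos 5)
Your proof is correct, and its engine is the same as the paper's: the intertwining relation makes $T$ (respectively your $\hat T$) a \emph{fixed point} of the relevant one-parameter group, so the orbit map extends by a constant function and $\tau_z(\hat T)=\hat T$ for every $z$; then \cite[Theorem~6.2]{cz} (Theorem~\ref{thm:three}) identifies $\tau_{it}(\hat T)$ with the closure of $u_{it}\hat T u_{-it}=\Delta^{-t}\hat T\Delta^{t}$, defined on all of $D(\Delta^t)$. Where you differ is the packaging: the paper works directly on $\mc B(H,K)$ with the group $\alpha_t(x)=\Delta_K^{-it}x\Delta_H^{it}$, thereby invoking the Cioranescu--Zsido theorem in a two-Hilbert-space form, whereas you amplify to $\mc B(H\oplus K)$ with the single unitary group $\Delta^{it}=\Delta_H^{it}\oplus\Delta_K^{it}$ so that Theorem~\ref{thm:three} applies \emph{verbatim}, and then extract the $(2,1)$-corner. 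Your route buys a little extra rigour relative to the paper's own statement of Theorem~\ref{thm:three}, which is phrased for a single Hilbert space: you never need the reader to accept the $\mc B(H,K)$ generalisation, and your matrix embedding is the same device the paper itself employs in Example~\ref{ex:four}. The cost is the corner bookkeeping, which you handle correctly: since a densely-defined operator is contained in its closure, $\Delta^{-t}\hat T\Delta^{t}\subseteq\hat T$, and reading the $(2,1)$-entry gives $\Delta_K^{-t}T\Delta_H^{t}h=Th$ for $h\in D(\Delta_H^{t})$, so this operator is densely defined, bounded, and has closure $T$. Your exponent check (that $z=it$ gives $u_z=\Delta^{-t}$, consistent with $u_{-i}=\Delta$ from Example~\ref{ex:three}) is exactly the right sanity test, as a sign error here would prove the statement for the inverse operator instead.
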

\begin{proof}
This follows almost immediately from \cite[Theorem~6.2]{cz}, compare
Theorem~\ref{thm:three}.  Indeed, we define a weak$^*$-continuous one-parameter isometry
group on $\mc B(H,K)$ by $\alpha_t(x) = \Delta_K^{-it} x \Delta_H^{it}$.
The hypothesis on $T$ is precisely that $\alpha_t(T) = T$ for all $t$, and from this it
follows that $T$ is analytic for $(\alpha_t)$ and $\alpha_z(T)=T$ for all $z$.
In particular, $T\in D(\alpha_{-it})$ with $\alpha_{-it}(T)=T$, so from
\cite[Theorem~6.2]{cz}, it follows that $D(\Delta^{-t}_K T \Delta^t_H) = D(\Delta^t_H)$
and $\Delta^{-t}_K T \Delta^t_H$ (which is thus densely-defined) has bounded closure
equal to $T$, as required.
\end{proof}

Along the way, \cite{bcm} proves more, and in particular \cite[Theorem~3.1]{bcm}, in
our more abstract setting, is the following result, which we think is interesting in
its own right.

\begin{theorem}
With the above setup, for any $z\in\mathbb C$, we have that $T\Delta_H^z$ is
closeable, with closure $\Delta_K^z T$.
\end{theorem}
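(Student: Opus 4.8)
The plan is to exhibit $\Delta_K^z T$ as a closed operator containing $T\Delta_H^z$, and then to show that the domain $D(\Delta_H^z)$ supplies enough approximants that the closure of $T\Delta_H^z$ is \emph{all} of $\Delta_K^z T$. I would work entirely with the two norm-continuous unitary groups $(\Delta_H^{it})_t$ and $(\Delta_K^{it})_t$ and their smearing operators $\mc R_m^H,\mc R_m^K$ in the sense of Section~\ref{sec:smearing}, using that $\Delta_H^z$ and $\Delta_K^z$ are exactly the analytic continuations of these groups at the parameter $-iz$. The hypothesis $T\Delta_H^{it}=\Delta_K^{it}T$ says that $T$ intertwines the two groups, and since $T$ is bounded it commutes with the vector-valued integrals defining the smearing operators, giving the basic relation $T\mc R_m^H=\mc R_m^K T$.

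The engine of the argument is the single identity
\[ T\Delta_H^z\,\mc R_m^H(\xi) = \Delta_K^z\,\mc R_m^K(T\xi) \qquad (\xi\in H), \]
which holds for \emph{every} $\xi$ because $\mc R_m^H(\xi)\in D(\Delta_H^z)$ automatically. To establish it I would invoke the smearing formula for the analytic continuation, namely $\Delta_H^z\mc R_m^H(\xi)=\frac{m}{\sqrt\pi}\int_{\mathbb R}e^{-m^2(t+iz)^2}\Delta_H^{it}\xi\,dt$ (a contour-deformation argument as in the proof of Theorem~\ref{thm:one} and the discussion of $\mc R_n$), apply $T$ under the integral, use $T\Delta_H^{it}=\Delta_K^{it}T$, and recognise the outcome as $\Delta_K^z\mc R_m^K(T\xi)$ via the same formula on $K$. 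Combined with $\mc R_m^H(\xi)\to\xi$ and $\mc R_m^K(\eta)\to\eta$, this identity drives both inclusions.

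From here the inclusions follow. First, $\Delta_K^z T$ is closed, being a closed operator $\Delta_K^z$ composed with the bounded operator $T$. For $T\Delta_H^z\subseteq\Delta_K^z T$, take $\xi\in D(\Delta_H^z)$; then $\Delta_H^z\mc R_m^H(\xi)=\mc R_m^H(\Delta_H^z\xi)$, so the left side of the identity is $\mc R_m^K(T\Delta_H^z\xi)\to T\Delta_H^z\xi$, while $\mc R_m^K(T\xi)\to T\xi$, and closedness of $\Delta_K^z$ yields $T\xi\in D(\Delta_K^z)$ with $\Delta_K^z T\xi=T\Delta_H^z\xi$. In particular $T\Delta_H^z$ is closeable and $\overline{T\Delta_H^z}\subseteq\Delta_K^z T$. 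For the reverse inclusion, take $\xi$ with $T\xi\in D(\Delta_K^z)$ and set $\xi_m=\mc R_m^H(\xi)\in D(\Delta_H^z)$; then $\xi_m\to\xi$, while the identity gives $T\Delta_H^z\xi_m=\Delta_K^z\mc R_m^K(T\xi)=\mc R_m^K(\Delta_K^z T\xi)\to\Delta_K^z T\xi$, using that $\mc R_m^K$ commutes with analytic continuation on $D(\Delta_K^z)$. Hence $(\xi,\Delta_K^z T\xi)$ lies in the graph of $\overline{T\Delta_H^z}$, so $\Delta_K^z T\subseteq\overline{T\Delta_H^z}$, and equality follows.

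The main obstacle is precisely this reverse inclusion, i.e. showing that the closure of $T\Delta_H^z$ is not strictly smaller than $\Delta_K^z T$, equivalently that the analytic vectors $D(\Delta_H^z)$ form a core for $\Delta_K^z T$. The forward inclusion and closedness are routine, but for a general operator $C$ with bounded closure one need not have $\overline{\Delta_K^z C}=\Delta_K^z\overline C$, so genuine input is required; here it is exactly the fact that smearing commutes with analytic continuation that manufactures graph-norm approximants. An alternative route is to embed $T$ as the lower-left corner $\tilde T$ of an operator on $H\oplus K$ intertwined by the diagonal unitary group $\Delta_H^{it}\oplus\Delta_K^{it}$ and apply Theorem~\ref{thm:three} to obtain $\overline{\Delta_K^{-z}T\Delta_H^z}=T$; but converting this back to the claimed statement still requires a core argument of the above kind, so the smearing approach looks the most direct.
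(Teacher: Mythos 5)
Your proof is correct, and the identity $T\Delta_H^z\,\mc R_m^H(\xi)=\Delta_K^z\,\mc R_m^K(T\xi)$ that you make the engine of the argument is exactly the computation at the heart of the paper's proof; where you genuinely diverge is in how the two inclusions are extracted from it. For the forward inclusion $T\Delta_H^z\subseteq\Delta_K^z T$ the paper derives nothing new: it quotes \cite[Theorem~6.2]{cz} as applied in the preceding proposition (giving $D(\Delta_K^{-z}T\Delta_H^z)=D(\Delta_H^z)$ and $\Delta_K^{-z}T\Delta_H^z\subseteq T$), whereas you obtain it from the identity by letting $m\to\infty$ and using closedness of $\Delta_K^z$, which is self-contained. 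For the reverse inclusion --- the crux, as you correctly identify --- the paper works with a \emph{fixed} $n$: given $(\xi,\eta)\in\mc G(\Delta_K^z T)$ it shows $(R_H\xi,R_K\eta)\in\mc G(\overline{T\Delta_H^z})$ and then invokes Lemma~\ref{lem:two} (the Wiener-Tauberian-theorem-based fact that $x$ lies in the smallest closed invariant subspace containing $\mc R_n(x)$), applied to the graph viewed as a $(\Delta_H^{it}\oplus\Delta_K^{it})$-invariant closed subspace of $H\oplus K$. You instead take the classical limit: $\xi_m=\mc R_m^H(\xi)\to\xi$ while $T\Delta_H^z\xi_m=\mc R_m^K(\Delta_K^z T\xi)\to\Delta_K^z T\xi$, using only that $\mc R_m\to\id$ strongly and that smearing commutes with analytic continuation on $D(\Delta_K^z)$. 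Your route therefore avoids Wiener's theorem (and the citation of Cioranescu--Zsido) entirely and is the more elementary argument; the paper's fixed-$n$ formulation is deliberately showcasing the machinery of Section~\ref{sec:smearing}, whose strength is precisely that it survives in settings where norm limits are unavailable (the weak$^*$ arguments elsewhere in the paper), but in this Hilbert-space application your limit argument suffices and is arguably cleaner.
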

\begin{proof}
From \cite[Theorem~6.2]{cz} (as applied in the above proof), we know that
$D(\Delta_K^{-z} T \Delta_H^z) = D(\Delta_H^z)$ and $\Delta_K^{-z} T \Delta_H^z
\subseteq T$.  Equivalently, that for $\xi\in D(\Delta_H^z)$, we have that
$T \Delta_H^z\xi \in D(\Delta_K^{-z})$ with $\Delta_K^{-z} T \Delta_H^z\xi = T\xi$.
Equivalently, $T \Delta_H^z\subseteq \Delta_K^z T$.  As $\Delta_K^z T$ is always
closed, we have in fact that $\overline{T \Delta_H^z}\subseteq \Delta_K^z T$.

Consider the unitary group $(\Delta_K^{it})$ and for $n>0$ form $R_K=\mc R_n$ on $K$.
Similarly form $R_H$.  As $T \Delta_H^{it} = \Delta_K^{it} T$, it follows that
$R_K T = T R_H$.  Furthermore, for any $\xi\in H$,
\begin{align*}
T \Delta_H^z R_H\xi &= \frac{n}{\sqrt\pi} \int_{\mathbb R}
e^{-n^2(t-z)^2} T \Delta_H^{it} \xi \ dt \\
&= \frac{n}{\sqrt\pi} \int_{\mathbb R}
e^{-n^2(t-z)^2} \Delta_K^{it} T\xi \ dt
= \Delta_K^z R_K T\xi,
\end{align*}

Let $(\xi,\eta) \in \mc G(\Delta_K^z T)$, that is, $(T\xi,\eta)\in\mc G(\Delta_K^z)$.
Thus $(TR_H\xi,R_K\eta) = (R_KT\xi, R_K\eta)\in\mc G(\Delta_K^z)$, that is,
$\Delta_K^z R_K T\xi = R_K\eta$, but from above, $\Delta_K^z R_K T\xi
= T \Delta_H^z R_H \xi$, %and so we conclude that $(TR_H\xi,T \Delta_H^z R_H \xi)
%\in \mc G(\Delta_K^z)$.
and so $(R_H\xi, R_K\eta) = (R_H\xi, T \Delta_H^z R_H \xi) \in \mc G(T\Delta_H^z)
\subseteq \mc G(\overline{T\Delta_H^z})$.  By Lemma~\ref{lem:two} we
see that $(\xi,\eta)$ is in the norm-closed $(\Delta^{it}_H, \Delta^{it}_K)$-invariant
subspace of $H\oplus K$ generated by $(R_H\xi, R_K\eta)$.  However, this shows that
$(\xi,\eta) \in \mc G(\overline{T\Delta_H^z})$.  In turn, this shows that
$\Delta_K^z T \subseteq \overline{T\Delta_H^z}$.

Hence we have that $\Delta_K^z T = \overline{T\Delta_H^z}$ as claimed.
\end{proof}

The setup of \cite{bcm} is actually as follows: $\Phi:(N,\rho)\rightarrow (M,\varphi)$
is a $(\rho,\varphi)$-Markov map and $T$ is defined by $Tx\xi_\rho = \Phi(x)\xi_\varphi$
for $x\in N$.  The second part of \cite[Theorem~1.1]{bcm} shows that $T$ also intertwines
the modular conjugations $J_\rho$ and $J_\varphi$.  This follows readily, as
\[ J_\varphi T J_\rho x \xi_\rho
= J_\varphi T \sigma^\rho_{i/2}(x)^* \xi_\rho
= J_\varphi \Phi(\sigma^\rho_{i/2}(x)^*) \xi_\rho. \]
As $\Phi \sigma^\rho_t = \sigma^\varphi_t \Phi$ for each $t\in\mathbb R$, an analytic
continuation argument shows that $\Phi \sigma^\rho_{i/2} \subseteq
\sigma^\varphi_{i/2} \Phi$.  Thus
\[ J_\varphi T J_\rho x \xi_\rho
= J_\varphi \sigma^\varphi_{i/2}(\Phi(x)^*) \xi_\rho
= \Phi(x) \xi_\rho = T x \xi_\rho. \]
Thus $J_\varphi T J_\rho = T$.

\subsection{The graph as a Banach algebra}\label{sec:graph_ba}

When $A$ is a Banach algebra and $(\alpha_t)$ a one-parameter automorphism group, we have
seen that $\mc G(\alpha_z)\subseteq A\oplus A$ is a (closed) subalgebra; and when
$A$ is a dual Banach algebra and $(\alpha_t)$ weak$^*$-continuous, then $\mc G(\alpha_z)$
is also a dual Banach algebra.

If further $A$ is a Banach $*$-algebra, then let us consider $\mc G(\alpha_{-i})$
(here any member of $i\mathbb R$ would lead to similar conclusions).  Given
$a\in D(\alpha_{-i})$, by Proposition~\ref{prop:star}, we have
that $a^* \in D(\alpha_i)$ with $\alpha_i(a^*) = \alpha_{-i}(a)^*$.  In particular,
$\alpha_{-i}(a)^* \in D(\alpha_{-i})$ with $\alpha_{-i}(\alpha_{-i}(a)^*) = a^*$.
It follows that $(a,b)\in\mc G(\alpha_{-i})$ if and only if $(b^*,a^*)\in
\mc G(\alpha_{-i})$.  For $a\in D(\alpha_{-i})$ write $a^\natural = \alpha_{-i}(a)^*$.
Thus $\mc G(\alpha_{-i})$ becomes a Banach $*$-algebra.  Similar considerations apply
to the dual Banach algebra case.

To our knowledge, there has been little systematic study of these Banach $*$-algebras.
There is an intriguing result stated without proof in \cite{z2}, which gives a
characterisation of which algebras $\mc G(\alpha_{-i})$ can arise, in the case of
a weak$^*$-continuous one-parameter automorphism group $(\alpha_t)$ on a von Neumann
algebra $M$.  In particular, for $x\in M$ invertible, there is a (unique) unitary
$u\in M$ such that $ux, (ux)^{-1}$ are both member of $D(\alpha_{-i})$ with
$\alpha_{-i/2}(ux), \alpha_{-i/2}((ux)^{-1})$ both positive.  A proof of this
factorisation result may be found in \cite[Section~3]{z1}, which in turn uses
ideas from \cite{w1}.

\begin{example}\label{ex:five}
Let $A=C_0(\mathbb R)$ and let $(\alpha_t)$ be the ``translation group'' defined by
$\alpha_t(f)(s) = f(s-t)$ for $s,t\in\mathbb R, f\in C_0(\mathbb R)$.  Suppose that
$f\in D(\alpha_{-i})$ with analytic extension $F:S(-i)\rightarrow C_0(\mathbb R)$.
Define $g:S(i)\rightarrow\mathbb C$ by $g(w) = F(-w)(0)$ so that $g(t) = F(-t)(0)
= \alpha_{-t}(f)(0) = f(0-(-t)) = f(t)$ for $t\in\mathbb R$.  Thus $g$ is (scalar-valued)
regular and extends $f$, and $F(-i)(t) = \alpha_{-t}(F(-i))(0) = F(-i-t)(0)
= g(i+t)$ so $\alpha_{-i}(f) = (g(i+t))_{t\in\mathbb R}$.  As $F$ is continuous, $g$ must
satisfying the ``uniformly in $C_0$ condition'' that, for $\epsilon>0$, there is $K>0$
so that $|g(x+iy)|<\epsilon$ if $|x|>K$ (for any $0\leq y\leq 1$).

Conversely, suppose that $f\in C_0(\mathbb R)$ admits such an extension $g$ to $S(i)$
(so $g$ is ``uniformly in $C_0$'').
Define $F:S(-i)\rightarrow C_0(\mathbb R)$ by $F(w) = (g(-w+t))_{t\in\mathbb R}$.
Then each $F(w)\in C_0(\mathbb R)$ and $F(t) = (f(-t+s))_{s\in\mathbb R}
= \alpha_t(f)$.  Furthermore, $F$ is norm-continuous (from the condition on $g$).
To show that $F$ is analytic on the interior of $S(-i)$, we need only show that
$\mu\circ F$ is (scalar) analytic for each $\mu\in X$ where $X\subseteq C_0(\mathbb R)^*$
is any norming subspace.  If we take $X$ to be the closed span of the point-mass measures
(so $X=\ell^1(\mathbb R)$) this follows immediately from $g$ being analytic.
Thus $F$ is regular and so $f\in D(\alpha_{-i})$ with $\alpha_{-i}(f)(t) = F(-i)(t)
= g(t+i)$ for $t\in\mathbb R$.

Thus $\mc G(\alpha_{-i})$ may be identified with a space of scalar-valued regular
functions on the strip $S(i)$, which we could think of as some sort of
``generalised Hardy space''.

Similarly, $(\alpha_t)$ extends to a weak$^*$-continuous automorphism group on
$L^\infty(\mathbb R)$.  A slightly more involved argument, making use of the smearing
technique, similarly allows us to regard $\mc G(\alpha_{-i})$ as being the subspace of
$L^\infty(S(i))$ consisting of functions analytic on the interior of $S(i)$,
and having suitable boundary values.
\end{example}

There are related Banach algebras which have been more studied.  We first quickly
recall Arveson's notion of spectral subspace from \cite[Section~2]{arv1}.  In our
setting, these are studied in \cite[Section~5]{cz} and \cite{z}, see in particular
the comment at the bottom on page~86 in \cite{z}.  These are subspaces of elements
which are analytic for $(\alpha_t)$, and which have certain growth rates at infinity.

To be more precise, for example,
let $M$ be a von Neumann algebra and $(\alpha_t)$ a weak$^*$-continuous automorphism
group of $M$.  In particular, following \cite{z}, we define $M^\alpha([1,\infty))$
to be the collection of $x\in M$ such that $x\in D(\alpha_{in})$ for $n=1,2,\cdots$
and $\limsup_n \|\alpha_{in}(x)\|^{1/n}\leq 1$.  This space is often denoted by
$H^\infty(\alpha)$; indeed, it is shown in \cite[Proposition~2.1]{kt} that
$x\in M^\alpha([1,\infty))$ if and only if $t\mapsto \ip{\alpha_{-t}(x)}{\omega}$
is in $H^\infty(\mathbb R)$, for each $\omega\in M_*$.  We say that $M$ is
\emph{$\alpha$-finite} when the collection $\{ \omega\in M_*^+ : \omega\circ\alpha_t
= \omega \ (t\in\mathbb R) \}$ separates the points of $M_+$.  In this case,
$H^\infty(\alpha)$ is a \emph{maximal subdiagonal algebra} in the sense of
\cite{arv3}.  For more on this topic, see \cite{j, lm} for example.  
Maximal subdiagonal algebras have been widely studied as non-commutative analogues of
Hardy spaces.

\begin{example}
Consider $L^\infty(\mathbb R)$ with the shift group, as in Examples~\ref{ex:five}.
Then $H^\infty(\alpha)$ is simply the classical Hardy space of the upper-half-plane
$H^\infty(\mathbb R)$, see \cite[Introduction]{lm}.

As in Example~\ref{ex:four}, let $P(\delta_n) = p_n\delta_n$ on
$\ell^2 = \ell^2(\mathbb N)$, and define $\alpha_t(x) = P^{it} x P^{-it}$ for
$x\in\mc B(\ell^2)$.  Consider the matrix unit $e_{jk}$ which sends $\delta_k$ to
$\delta_j$.  Then $\alpha_t(e_{jk}) = p_k^{-it} p_j^{it} e_{jk}$ and so
$\alpha_{in}(e_{jk}) = p_k^{n} p_j^{-n} e_{jk}$ for each $n=1,2,\cdots$.
It follows that $e_{jk} \in H^\infty(\alpha)$ if and only if $p_k / p_j \leq 1$.
If $(p_n)$ is an increasing sequence, then $e_{jk} \in H^\infty(\alpha)$ exactly when
$k\leq j$.  A more involved calculation shows that $H^\infty(\alpha)$ consists
exactly of the lower-triangular matrices.
\end{example}

While $\mc G(\alpha_{-i})$ is clearly different from $H^\infty(\alpha)$, there are some
intriguing similarities.  For example, the factorisation result of Zsido mentioned
above, \cite{z2}, is very similar to Arveson's factorisation result,
\cite[Section~4.2]{arv3}, showing that if $x\in M$ is invertible then there is
$a\in H^\infty(\alpha)$ with $a^{-1}\in H^\infty(\alpha)$, and a unitary $u\in M$,
with $x=ua$.  We wonder if there is further to be developed here; in particular,
is there a notion of $L^p$ space for $\mc G(\alpha_{-i})$, similar to that for
subdiagonal algebras, compare \cite{mw}?

\begin{remark}\label{rem:5}
Consider $\mc A = \mc G(\alpha_{-i})$ as a weak$^*$-closed subalgebra of
$M\oplus_\infty M$.  Let $(x,y)\in \mc A \cap \mc A^*$ so $(x^*,y^*)\in\mc A$
so $(y,x)\in\mc A$ (given the above remarks).  There are hence weak$^*$-regular maps
$f,g:S(-i)\rightarrow M$ with $f(t)=\alpha_t(x), g(t)=\alpha_t(y)$ for $t\in\mathbb R$
and $f(-i)=y, g(-i)=x$.  It follows that $f(t-i) = g(t)$ and so ``glueing'' these
maps together we obtain $h:S(-2i)\rightarrow M$ which by Morera's Theorem is regular,
has $h(t)=\alpha_t(x)$ and $h(t-2i) = g(-i)=x = h(t)$ for $t\in\mathbb R$.  By ``tiling''
we can extend $h$ to an entire map on $\mathbb C$ which is bounded, and hence constant.
This shows that $x=y$ and $\alpha_t(x)=x$ for all $t$.  We conclude that $\mc A\cap
\mc A^* = \{ (x,x) : x\text{ is }(\alpha_t)\text{-invariant}\}$.

Now consider when $\mc A+\mc A^*$ is weak$^*$-dense in $M\oplus M$.  If
$(\omega,\tau)\in {}^\perp(\mc A+\mc A^*)$ then $(\omega,\tau)\in {}^\perp\mc A
\cap {}^\perp\mc A^*$ so $(-\tau,\omega), (-\tau^*,\omega^*)\in\mc G(\alpha^{M_*}_{-i})$.
Arguing as in the previous paragraph, this is if and only if $\omega=-\tau$ is
$(\alpha^{M_*}_t)$-invariant.  Now, $\omega$ is $(\alpha^{M_*}_t)$-invariant if and only
if $\omega\in{}^\perp X$ where $X$ is the weak$^*$-closed linear span of
$\{ x-\alpha_t(x) : x\in M, t\in\mathbb R \}$.  It follows that $(x,y)$ is in the
weak$^*$-closure of $\mc A+\mc A^*$ if and only if $\ip{(x,y)}{(\omega,-\omega)}=0$
for each $\omega\in {}^\perp X$, that is, $x-y\in ({}^\perp X)^\perp = X$.

For $\mc A$ to be a (finite, maximal) subdiagonal algebra of $M\oplus M$ we would want
that $\mc A\cap\mc A^*$ to be the range of a faithful normal conditional expectation,
and we'd want $X$ to be all of $M$ (equivalently, there to be no non-zero
$(\alpha_t^{M_*})$-invariant functionals).  If $(\alpha_t)$ is trivial, then this
is obviously not the case.  For the shift-group on $L^\infty(\mathbb R)$, however,
we do have that $\mc A+\mc A^*$ is weak$^*$-dense in $M\oplus M$, and $\mc A\cap\mc A^*$
is $\mathbb C(1,1)$, but there are no \emph{normal} conditional
expectations $M\oplus M$ to $\mathbb C(1,1)$ which are \emph{multiplicative} on
$\mc G(\alpha_{-i})$.
\end{remark}

We finish this section with one general Banach algebraic result.

\begin{proposition}\label{prop:19}
Let $A$ be a Banach algebra with a bounded approximate identity bounded by $M\geq 1$.
Let $(\alpha_t)$ be a (norm-continuous) automorphism group on $A$.
For any $z$ we have that $\mc G(\alpha_z)$ has a bounded approximate identity
bounded by $M\geq 1$.
\end{proposition}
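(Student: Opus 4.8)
Throughout I view $\mc G(\alpha_z)$ as a closed subalgebra of $A\oplus_\infty A$, as in Section~\ref{sec:graph_ba}, so that the relevant norm is $\|(a,b)\|=\max(\|a\|,\|b\|)$; this is the norm for which the bound should be exactly $M$ (note that for unital $A$ the element $(1,1)$ already witnesses the claim). The plan is \emph{not} to exhibit an approximate identity directly, but to produce a single element of the bidual that does the job and then invoke Goldstine. Concretely, I would find $E\in A^{**}$ with $\|E\|\le M$ such that $(E,E)$ lies in the weak$^*$-closure $\mc G(\alpha_z)^{**}$ of $\mc G(\alpha_z)$ inside $A^{**}\oplus_\infty A^{**}$ and acts as a two-sided identity on $\mc G(\alpha_z)$. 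Granting this, the standard fact that a norm-$\le M$ two-sided identity in the bidual yields a bounded approximate identity of the same bound (Goldstine's theorem, upgrading weak to norm convergence by passing to convex combinations \`a la Mazur) finishes the argument.

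The reason a direct construction is awkward — and the main obstacle — is precisely the bound $M$. The obvious attempt, smearing a bounded approximate identity $(e_i)$ of $A$ into the domain via $\mc R_n(e_i)\in D(\alpha_z)$, keeps the first coordinate bounded by $M$, but on writing $z=a+ib$ one only gets $\|\alpha_z(\mc R_n(e_i))\|=\|\mc T_n(e_i)\|\le e^{n^2b^2}M$; this excess Gaussian mass on the line $\mathbb R+z$ cannot be removed, so no fixed smearing of a genuine approximate identity stays bounded by $M$ in both coordinates. Passing to the bidual sidesteps this entirely, since weak$^*$-limits do not increase norms.

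The key step is to produce a mixed identity that is also invariant under the group. Let $\mc E=\{E\in A^{**}:\|E\|\le M,\ aE=a=Ea\ (a\in A)\}$, products taken in the first Arens product. A weak$^*$-cluster point of $(e_i)$ lies in $\mc E$, so $\mc E\neq\emptyset$; it is convex, and since $E\mapsto aE$ and $E\mapsto Ea$ are weak$^*$-continuous for $a\in A$, it is weak$^*$-closed, hence weak$^*$-compact. As each $\alpha_t^{**}$ is an isometric automorphism for the first Arens product, one checks $\alpha_t^{**}(\mc E)\subseteq\mc E$, and $\{\alpha_t^{**}\}_{t\in\mathbb R}$ is a commuting family of weak$^*$-continuous affine self-maps of $\mc E$. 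Since $\mathbb R$ is abelian, the Markov--Kakutani fixed point theorem then provides $E\in\mc E$ with $\alpha_t^{**}(E)=E$ for all $t$; equivalently $\ip{E}{\alpha_t^*\mu}=\ip{E}{\mu}$ for all $\mu\in A^*$ and $t\in\mathbb R$.

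Finally I would verify $(E,E)\in\mc G(\alpha_z)^{**}$ using this invariance. By Goldstine, pick a net $(b_i)$ in $A$ with $\|b_i\|\le M$ and $b_i\to E$ weak$^*$, and form the genuine graph elements $(\mc R_n(b_i),\mc T_n(b_i))\in\mc G(\alpha_z)$. Because these are norm-convergent vector integrals, for $\mu\in A^*$ one has $\ip{\mu}{\mc R_n(b_i)}=\tfrac{n}{\sqrt\pi}\int_{\mathbb R}e^{-n^2t^2}\ip{\alpha_t^*\mu}{b_i}\,dt$, and likewise for $\mc T_n(b_i)$ with kernel $e^{-n^2(t-z)^2}$. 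A short dominated-convergence argument, using $\ip{\alpha_t^*\mu}{b_i}\to\ip{E}{\alpha_t^*\mu}=\ip{E}{\mu}$ and $\tfrac{n}{\sqrt\pi}\int e^{-n^2t^2}\,dt=\tfrac{n}{\sqrt\pi}\int e^{-n^2(t-z)^2}\,dt=1$, shows both $\ip{\mu}{\mc R_n(b_i)}$ and $\ip{\mu}{\mc T_n(b_i)}$ tend to $\ip{E}{\mu}$. Hence $(\mc R_n(b_i),\mc T_n(b_i))\to(E,E)$ weak$^*$, so $(E,E)\in\mc G(\alpha_z)^{**}$; as $E$ is a two-sided identity on $A$, $(E,E)$ is a two-sided identity on $\mc G(\alpha_z)$ of norm $\le M$, and the reduction in the first paragraph completes the proof. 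The only genuinely delicate points are the invariant mixed identity (handled by Markov--Kakutani) and keeping every norm bound at $M$ rather than the inflated $e^{n^2b^2}M$ of the naive smearing.
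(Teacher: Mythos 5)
Your reduction to a mixed identity in the bidual and your Markov--Kakutani construction of an $\alpha_t^{**}$-invariant mixed identity $E$ are both sound, but the final step --- verifying $(E,E)\in\mc G(\alpha_z)^{\perp\perp}$ by a ``short dominated-convergence argument'' --- has a genuine gap. Dominated convergence is a theorem about \emph{sequences}; for a net $(b_i)$, pointwise convergence $\ip{\alpha_t^*\mu}{b_i}\to\ip{E}{\alpha_t^*\mu}$ for each $t$, together with the uniform bound $M\|\mu\|$, does not imply convergence of the integrals. Indeed, since $\ip{\mu}{\mc R_n(b_i)}=\ip{\mc R_n^*\mu}{b_i}\to\ip{E}{\mc R_n^*\mu}=\ip{\mc R_n^{**}(E)}{\mu}$, what your step really asserts is that $\alpha_t^{**}$-invariance of $E$ forces $\mc R_n^{**}(E)=E$, i.e.\ that $E$ can be pulled through the integral $\mc R_n^*\mu=\tfrac{n}{\sqrt\pi}\int_{\mathbb R}e^{-n^2t^2}\alpha_t^*\mu\,dt$. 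But $t\mapsto\alpha_t^*\mu$ is in general only weak$^*$-continuous, so this integral converges only when tested against elements of $A$, not against elements of $A^{**}$, and the interchange is false. Concretely, take $A=C_0(\mathbb R)$ with the translation group and let $\Phi\in A^{**}=M(\mathbb R)^*$ be $\Phi(\nu)=\nu_s(\mathbb R)$, the mass of the singular part of $\nu$ in its Lebesgue decomposition. Then $\Phi$ is $\alpha_t^{**}$-invariant and nonzero ($\Phi(\delta_0)=1$), yet $\mc R_n^*\nu$ is the convolution of $\nu$ with a Gaussian, hence absolutely continuous, so $\mc R_n^{**}(\Phi)=0\neq\Phi$. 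Since your argument uses only invariance of $E$, it proves a false implication; any repair must exploit the mixed-identity property itself. One such repair: take a weak$^*$ cluster point $E_0$ of $(e_i)$ and consider $\bigl(\mc R_n^{**}(E_0),\,(\alpha_z\mc R_n)^{**}(E_0)\bigr)\in\mc G(\alpha_z)^{\perp\perp}$; using that $\alpha_t(e_i)a=\alpha_t(e_i\alpha_{-t}(a))\to a$ uniformly on compact $t$-intervals, each coordinate is a mixed identity for $A$, so the pair (the two coordinates need not be equal) is a mixed identity for $\mc G(\alpha_z)^{**}$ of norm at most $e^{n^2b^2}M$; clustering over $n\to0$ gives one of norm at most $M$, and then your Goldstine--Mazur reduction applies. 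Note that this no longer needs Markov--Kakutani at all.

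Separately, the ``main obstacle'' that sent you into the bidual is illusory, and recognising this is exactly the paper's proof. The inflation factor is $e^{n^2b^2}$ with $z=a+ib$, and it tends to $1$ as $n\to0$: the key idea in the paper (following Kustermans, attributed to Van Daele and Verding) is to smear with $n$ \emph{small}, not large. For fixed small $n$ one has $\|\mc R_n(e_i)\|\le M$ and $\|\alpha_z(\mc R_n(e_i))\|\le e^{n^2b^2}M$, while the uniform convergence above shows $\|\mc R_n(e_i)a-a\|\to0$ and $\|\alpha_z(\mc R_n(e_i))a-a\|\to0$ for $i$ large; running over $n\to0$ (with a harmless rescaling by $e^{-n^2b^2}$) produces a bounded approximate identity in $\mc G(\alpha_z)$ with bound exactly $M$, entirely within $A$. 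So the direct smearing you dismissed as impossible is precisely the paper's argument, and it is also the ingredient needed to close the hole in your bidual approach.
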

\begin{proof}
We just give a sketch, as this could be proved exactly as \cite[Proposition~2.26]{kus1}
(which is attributed to Van Daele and Verding); compare also the proof of
\cite[Theorem~12]{ds}.
Indeed, as $A$ has a bounded approximate identity, it admits a theory of multiplier
algebras paralleling that of $C^*$-algebras.  Instead of developing this theory,
we give a direct proof.

Let $(e_i)$ be a bounded approximate identity with $\|e_i\|\leq M$ for each $i$.
The key idea is to consider $\mc R_n(e_i)$ with $n>0$ \emph{small} and not large.
This will ensure that $\|\alpha_z\mc R_n(e_i)\|$ will be close to $M$.  For $a\in A$,
as $t\mapsto \alpha_t(a)$ is norm-continuous, for any $K>0$ the set $\{ \alpha_{-t}(a):
|t|\leq K \}$ is compact, and so $e_i \alpha_{-t}(a)\rightarrow \alpha_{-t}(a)$
uniformly for $|t|\leq K$.  It follows that $\alpha_t(e_i) a
= \alpha_t( e_i\alpha_{-t}(a) ) \rightarrow \alpha_t(\alpha_{-t}(a)) = a$ uniformly
for $|t|\leq K$.  By the integral form of $\mc R_n$ and $\alpha_z\mc R_n$, it follows
that if $i$ is sufficiently large, then $\|\mc R_n(e_i)a-a\|$ and 
$\|\alpha_z(\mc R_n(e_i))a-a\|$ will be small.  In this way, we can construct a
bounded approximate identity in $\mc G(\alpha_z)$ with the required bound.
\end{proof}

\section{A Kaplansky Density type result}\label{sec:kap}

We again consider the case of a $C^*$-algebra generating a von Neumann algebra $M$,
with a one-parameter automorphism group on $M$ restricting to $A$.  The Kaplansky
Density Theorem tells us that the unit ball of $A$ is weak$^*$-dense in the unit ball
of $M$.  This section is devoted to proving the following; recall that
Proposition~\ref{prop:three} shows that $\mc G(\alpha^A_z)$ is weak$^*$-dense in
$\mc G(\alpha^M_z)$.

\begin{theorem}\label{thm:six}
With $A, M, (\alpha_t)$ as before, let $z\in\mathbb C$, let $\alpha^A_z$ be the
analytic extension on $A$, and $\alpha^M_z$ that on $M$.  In $M\oplus_\infty M$,
or $M\oplus_1 M$, the unit ball of $\mc G(\alpha^A_z)$ is weak$^*$-dense in the
unit ball of $\mc G(\alpha^M_z)$.
\end{theorem}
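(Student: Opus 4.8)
The plan is to run the classical bidual proof of Kaplansky Density, but for the graphs. Write $B=\mc G(\alpha_z^A)$ and $G=\mc G(\alpha_z^M)$, regarded as closed subspaces of $A\oplus A$ and $M\oplus M$ (say with the $\oplus_\infty$ norm; the $\oplus_1$ case is identical). Recall that $A^{**}$ is a von Neumann algebra and that $A\hookrightarrow M$ extends to a normal surjective $*$-homomorphism $\pi\colon A^{**}\to M$; let $p\in A^{**}$ be the central projection with $\ker\pi=A^{**}(1-p)$, so $\pi$ restricts to a $*$-isomorphism $A^{**}p\cong M$, with inverse a normal isometric embedding $\iota\colon M\to A^{**}$ satisfying $\pi\iota=\id_M$ and $\iota(a)=ap$ for $a\in A$. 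The bidual $B^{**}$ is canonically the weak$^*$-closure $\overline B^{\,w*}$ inside $(A\oplus A)^{**}=A^{**}\oplus A^{**}$, and by the duality of Section~\ref{sec:duality} (applying Theorem~\ref{thm:two} to $(\alpha_t^A)$ and taking adjoints) this weak$^*$-closure is the graph of the Banach bidual operator $(\alpha_z^A)^{**}=((\alpha_z^A)^*)^*$.

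First I would produce a weak$^*$-continuous contraction $\Theta\colon B^{**}\to G$. Each $(\alpha_t^A)^{**}$ is an individually weak$^*$-continuous $*$-automorphism of $A^{**}$, and since $\pi\circ(\alpha_t^A)^{**}$ and $\alpha_t^M\circ\pi$ are both weak$^*$-continuous and agree on the weak$^*$-dense subalgebra $A$, they agree on $A^{**}$. Hence $\ker\pi$ is $(\alpha_t^A)^{**}$-invariant, the central projection $p$ is fixed, and on the summand $A^{**}p$ the group $(\alpha_t^A)^{**}$ is transported by $\iota$ to $(\alpha_t^M)$; in particular it is genuinely weak$^*$-continuous there. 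Restricting $\pi\oplus\pi$ to $B^{**}=\overline B^{\,w*}$ gives $\Theta$; it is weak$^*$-continuous into the weak$^*$-topology of $M\oplus M$, and since $\Theta(B)=B$ lies inside $G$ (Proposition~\ref{prop:three}), $\Theta$ maps $B^{**}$ into $G$ and restricts to the inclusion $B\hookrightarrow G$.

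The key step is that $\iota\oplus\iota$ maps $G$ isometrically into $B^{**}$. Because $p$ is central and $(\alpha_t^A)^{**}$-fixed, right multiplication by $p\oplus p$ is a weak$^*$-continuous idempotent commuting with each $(\alpha_t^A)^{**}\oplus(\alpha_t^A)^{**}$; one checks that $(\alpha_z^A)^{**}$ commutes with $\cdot p$, so $B^{**}=\mc G((\alpha_z^A)^{**})$ is invariant under $\cdot(p\oplus p)$. Hence $(p\oplus p)B\subseteq B^{**}$, and as $B^{**}$ is weak$^*$-closed, $\overline{(p\oplus p)B}^{\,w*}\subseteq B^{**}$. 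But $\iota\oplus\iota$ is a weak$^*$-homeomorphism of $M\oplus M$ onto $A^{**}p\oplus A^{**}p$ carrying $B$ to $(p\oplus p)B$, so by Proposition~\ref{prop:three} (weak$^*$-density of $B$ in $G$) we get $\overline{(p\oplus p)B}^{\,w*}=(\iota\oplus\iota)(G)$. Therefore $(\iota\oplus\iota)(G)\subseteq B^{**}$; since $\iota$ is isometric, $\iota\oplus\iota$ is an isometric section of $\Theta$, whence $\Theta$ is a metric surjection with $\Theta(\mathrm{ball}\,B^{**})=\mathrm{ball}\,G$.

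Finally I would invoke Goldstine's theorem: $\mathrm{ball}\,B$ is weak$^*$-dense in $\mathrm{ball}\,B^{**}$. As $\Theta$ is weak$^*$-continuous, restricts to the inclusion on $B$, and satisfies $\Theta(\mathrm{ball}\,B^{**})=\mathrm{ball}\,G$,
\[ \mathrm{ball}\,G = \Theta\big(\overline{\mathrm{ball}\,B}^{\,w*}\big)\subseteq \overline{\Theta(\mathrm{ball}\,B)}^{\,w*} = \overline{\mathrm{ball}\,B}^{\,w*}, \]
the closures on the right taken in $M\oplus M$. Since $\mathrm{ball}\,B\subseteq\mathrm{ball}\,G$ with $\mathrm{ball}\,G$ weak$^*$-closed, this is exactly the claim. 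The main obstacle is the third paragraph: the bidual group $(\alpha_t^A)^{**}$ fails to be weak$^*$-continuous on all of $A^{**}$ (already for translation on $C_0(\mathbb R)$, the dual action on $M(\mathbb R)$ is not norm-continuous), so $B^{**}$ cannot simply be called the graph of an analytic generator on $A^{**}$; weak$^*$-continuity, and hence the identification of $(\iota\oplus\iota)(G)$ inside $B^{**}$, is recovered only after compressing to the summand $A^{**}p\cong M$, and verifying that the Banach bidual operator $(\alpha_z^A)^{**}$ commutes with this central compression is the delicate point, handled via Theorem~\ref{thm:two} for both $A$ and $M$.
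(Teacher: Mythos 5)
Your proposal follows the same skeleton as the paper's own proof: pass to the bidual, use the central projection $p\in A^{**}$ with $pA^{**}\cong M$, identify $\mc G(\alpha_z^M)$ with the $(p,p)$-compression of $\mc G(\alpha_z^A)^{\perp\perp}$, and finish with Goldstine. Your construction of $\Theta=(\pi\oplus\pi)|_{B^{**}}$, the identification $\overline{(p\oplus p)B}^{\,w*}=(\iota\oplus\iota)(G)$ via Proposition~\ref{prop:three}, and the concluding metric-surjection/Goldstine argument are all correct. But the crucial containment --- that $B^{**}=\mc G((\alpha_z^A)^{**})$ is invariant under multiplication by $(p,p)$, equivalently that $(\iota\oplus\iota)(G)\subseteq B^{**}$ --- is precisely the analytic core of the theorem, and you dispose of it with ``one checks that $(\alpha_z^A)^{**}$ commutes with $\cdot p$.'' No such check is supplied, and none of the soft arguments you gesture at can supply it. Commutation of $\cdot p$ with each $(\alpha_t^A)^{**}$ does not pass to the operator $(\alpha_z^A)^{**}$: as you yourself concede, the bidual group is not weak$^*$-continuous, so $(\alpha_z^A)^{**}$ is not the analytic generator of any group to which uniqueness-of-continuation arguments apply. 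Dually, the claim is equivalent to saying that the normal-part projection $P\colon A^*\to M_*$, $\mu\mapsto p\mu$, maps $\mc G(\alpha_z^{A^*})$ into itself; but $P$ is not $\sigma(A^*,A)$-continuous (for $a\in A$ one has $\ip{pa}{\mu}=\ip{a}{P\mu}$ with $pa\notin A$ in general), so composing $P$ with a weak$^*$-regular extension need not yield a weak$^*$-regular map, and the formal argument breaks exactly there. Finally, Theorem~\ref{thm:two} ``for both $A$ and $M$'' only gives the identifications $(\alpha_z^A)^*=\alpha_z^{A^*}$ and $(\alpha_z^{M_*})^*=\alpha_z^M$; chasing these through duality yields the easy direction $(\pi\oplus\pi)(B^{**})\subseteq G$ (which you already had from weak$^*$-density), never the reverse containment you need.

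To see that the missing step is genuinely substantive: since $\cdot(p,p)$ is weak$^*$-continuous on $A^{**}\oplus A^{**}$ and $B^{**}$ is the weak$^*$-closure of $B$, your claim reduces to showing $(pa,\,p\alpha_z^A(a))\in B^{**}$ for every $a\in D(\alpha_z^A)$, i.e.\ that $\ip{p\alpha_z^A(a)}{\mu}=\ip{pa}{\alpha_z^{A^*}(\mu)}$ for every $\mu\in D(\alpha_z^{A^*})$. This is exactly the paper's Theorem~\ref{thm:five}, and its proof is irreducibly analytic: one first replaces $a$ by the smeared elements $\mc R_n(a)$ (a core for $\alpha_z^A$ by Theorem~\ref{thm:thm1}, so this suffices), notes that $f(w)=p\,\alpha_w(\mc R_n(a))$ is \emph{norm}-regular into $A^{**}$ because $t\mapsto p\alpha_t(a)$ is norm-continuous and the smearing integral converges in norm, and then shows that the scalar function $h(w)=\ip{f(w)}{g(z-w)}$ --- where $g$ is the weak$^*$-regular extension attached to $\mu$ --- is regular on $S(z)$ and constant on $\mathbb R$ (using $\alpha_s^{**}(f(w))=f(w+s)$, the paper's Lemma~\ref{lem:seven}), hence constant; evaluating at $0$ and $z$ gives the identity. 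Some argument of this kind is unavoidable, because it is the only point in the whole proof where the analytic structures on $A$ and on $M$ are actually compared; without it, your third paragraph is an assertion rather than a proof.
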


Let $M_*$ be the predual of $M$.  By restricting functionals in $M_*$ to $A\subseteq M$,
we define a map $\iota:M_*\rightarrow A^*$.  By Kaplansky Density, this map is an
isometry.  It is easy to see that it preserves the $A$-module actions, and so $M_*$ is
identified with a closed $A$-subbimodule of $A^*$.
By \cite[Section~2, Chapter~III]{tak1} there is a central
projection $p\in A^{**}$ with $pA^* = A^*p = M_*$.  In fact, we construct $p$ by noticing
that $M_*^\perp = \{ x\in A^{**} : \ip{x}{\omega}=0 \ (\omega\in M_*\subseteq A^*)\}$
is a weak$^*$-closed ideal in $A^{**}$ and so $M_*^\perp = A^{**}p'$ for some central
projection $p'\in A^{**}$; we then set $p=1-p'$.  We furthermore have that
\[ A^* \cong pA^* \oplus_1 (1-p)A^*, \qquad
A^{**} \cong pA^{**} \oplus_\infty (1-p)A^{**}. \]

\begin{lemma}\label{lem:six}
Let $\beta$ be a $*$-automorphism of $A$, and suppose that $\beta^*(M_*)\subseteq M_*$.
Then $\beta^{**}(p) = p$.  In particular, $\alpha_t^{**}(p)=p$ for all $t$.
\end{lemma}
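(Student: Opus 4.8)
The plan is to exploit that the bidual map $\beta^{**}$ is a normal (weak$^*$-continuous) $*$-automorphism of $A^{**}$, and in particular carries central projections to central projections. Since $\beta^{**}(p)$ is then central, it suffices to show that $\beta^{**}$ fixes the complementary summand, i.e. that it leaves $M_*^\perp = A^{**}p'$ invariant (with $p'=1-p$); as $\beta^{**}$ is an automorphism this is equivalent to $\beta^{**}(p')=p'$, hence to $\beta^{**}(p)=p$.

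First I would check the easy inclusion $\beta^{**}(M_*^\perp)\subseteq M_*^\perp$. For $x\in M_*^\perp$ and $\omega\in M_*$ we have $\ip{\beta^{**}(x)}{\omega} = \ip{x}{\beta^*(\omega)}$, and $\beta^*(\omega)\in M_*$ by hypothesis, so the right-hand side vanishes; thus $\beta^{**}(x)\in M_*^\perp$. Since $\beta^{**}$ maps the weak$^*$-closed ideal $A^{**}p'$ into itself and is an automorphism, $A^{**}\beta^{**}(p')\subseteq A^{**}p'$, which for central projections means $\beta^{**}(p')\le p'$, i.e. $\beta^{**}(p)\ge p$.

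The main obstacle is upgrading this to an equality. The identity $pA^{**}=(pA^*)^* = (M_*)^* = M$ shows that $p$ is exactly the central projection onto which the canonical normal $*$-homomorphism $\pi:A^{**}\to M$ extending $A\hookrightarrow M$ is faithful, and the inclusion just proved says precisely that $\beta^{**}$ descends through $\pi$ to a normal unital $*$-endomorphism $\tilde\beta$ of $M$ extending $\beta$ (it is surjective, since its weak$^*$-closed image contains the weak$^*$-dense algebra $A$). One checks that $\beta^{**}(p)=p$ is equivalent to $\tilde\beta$ being \emph{injective}, equivalently to the two-sided statement $\beta^*(M_*)=M_*$; the one-sided hypothesis alone only yields $\beta^{**}(p)\ge p$. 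I would therefore obtain equality by running the first two paragraphs also for $\beta^{-1}$, which requires $(\beta^{-1})^*(M_*)\subseteq M_*$. For the assertion that is actually used, namely $\alpha_t^{**}(p)=p$, this costs nothing: since each $\alpha_t$ is a weak$^*$-continuous automorphism of $M$ itself, both $\alpha_t^*$ and $\alpha_{-t}^*=(\alpha_t^*)^{-1}$ carry $M_*$ into $M_*$, so in fact $\alpha_t^*(M_*)=M_*$, and the two inclusions combine to give $\alpha_t^{**}(p)=p$ for every $t$.
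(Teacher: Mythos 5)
Your proof is correct, and its core mechanism is the same as the paper's: both arguments rest on the fact that $\beta^{**}$ is a $*$-automorphism of $A^{**}$ carrying the weak$^*$-closed ideal $M_*^\perp=(1-p)A^{**}$ into itself, so that $\beta^{**}(p)$ can be compared with $p$. But you have also caught a genuine slip in the paper. The paper's proof asserts $(1-q)A^{**}=\beta^{**}(M_*^\perp)=M_*^\perp$ ``as $\beta^*(M_*)=M_*$'', silently promoting the stated hypothesis $\beta^*(M_*)\subseteq M_*$ to an equality. Your observation that the one-sided hypothesis yields only $\beta^{**}(p)\geq p$, and that equality is equivalent to the two-sided statement $\beta^*(M_*)=M_*$ (equivalently, injectivity of the induced normal extension $\tilde\beta$ of $\beta$ to $M$), is exactly right, and the extra hypothesis you add is genuinely needed, not merely convenient for your method of proof.

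Indeed, the lemma is false as literally stated. Take $M=\ell^\infty(\mathbb N)$, so $M_*=\ell^1(\mathbb N)$, and let $A\subseteq M$ be the C$^*$-algebra of restrictions to $\mathbb N$ of almost periodic functions on $\mathbb Z$. Restriction is a $*$-isomorphism from the almost periodic functions onto $A$ (every tail of $\mathbb Z$ is dense in the Bohr compactification), and $A$ is unital and separates the points of $\mathbb N$, hence is weak$^*$-dense in $M$. Let $\beta(a)=(a_2,a_3,\dots)$ be the backward shift on $A$; under the isomorphism with the almost periodic functions this is translation by $1$, so $\beta$ is a $*$-automorphism of $A$. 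For $\omega\in\ell^1$ one computes that $\beta^*(\omega)$ is the functional given by $(0,\omega_1,\omega_2,\dots)\in\ell^1$, so $\beta^*(M_*)\subseteq M_*$ is a \emph{proper} inclusion; by your equivalence, $\beta^{**}(p)\neq p$. Concretely, here $\tilde\beta$ is the shift on all of $\ell^\infty(\mathbb N)$, which annihilates $(1,0,0,\dots)$ and so is not injective. Your repair --- assuming in addition $(\beta^{-1})^*(M_*)\subseteq M_*$ and running the annihilator argument for $\beta^{-1}$ as well --- is precisely the hypothesis the paper's proof actually uses, and as you note it costs nothing in the only place the lemma is applied, since for $\beta=\alpha_t$ both $\alpha_t^*$ and $\alpha_{-t}^*$ map $M_*$ into itself.
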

\begin{proof}
We have that $\beta^{**}$ is a $*$-automorphism of $A^{**}$, and so $q = \beta^{**}(p)$
is a central projection.  Then $(1-q)A^{**} = \beta^{**}((1-p)A^{**}) =
\beta^{**}(M_*^\perp) = M_*^\perp$ as $\beta^*(M_*) = M_*$.  Thus $(1-q)A^{**} 
= M_*^\perp$ and so $q=p$.
\end{proof}

In the following lemma, we identify $A$ with a subspace of $A^{**}$ in the canonical way.

\begin{lemma}\label{lem:seven}
For $a\in A$ and $s,t\in\mathbb R$ we have that $\alpha_s^{**}(p\alpha_t(a))
= p \alpha_{s+t}(a)$.
\end{lemma}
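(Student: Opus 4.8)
The plan is to reduce everything to two standard facts about the bidual, combined with Lemma~\ref{lem:six}. First I recall that since $A$ is a $C^*$-algebra and each $\alpha_s$ is a $*$-automorphism, the bidual $\alpha_s^{**}$ is a normal $*$-automorphism of the enveloping von Neumann algebra $A^{**}$; here I use that $A^{**}$ is Arens regular, so the two Arens products coincide and $\alpha_s^{**}$ is genuinely multiplicative. Moreover, by the construction of the bidual, $\alpha_s^{**}$ extends $\alpha_s$ on the canonical image of $A$ inside $A^{**}$, so that $\alpha_s^{**}(\alpha_t(a)) = \alpha_s(\alpha_t(a)) = \alpha_{s+t}(a)$ by the group law on $A$.

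With these facts in hand the computation is immediate. Since $p\alpha_t(a)$ is the product in $A^{**}$ of $p\in A^{**}$ and $\alpha_t(a)\in A\subseteq A^{**}$, and $\alpha_s^{**}$ is multiplicative, I would write
\[ \alpha_s^{**}(p\alpha_t(a)) = \alpha_s^{**}(p)\,\alpha_s^{**}(\alpha_t(a)). \]
Applying Lemma~\ref{lem:six} to the first factor gives $\alpha_s^{**}(p) = p$, while the second factor equals $\alpha_{s+t}(a)$ by the extension property above. Combining these yields exactly $p\alpha_{s+t}(a)$, as required.

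The only point requiring any care is the justification that $\alpha_s^{**}$ is multiplicative and restricts to $\alpha_s$; but these are standard features of the bidual of a $*$-homomorphism between $C^*$-algebras, so I do not expect a genuine obstacle. The lemma is essentially a bookkeeping consequence of Lemma~\ref{lem:six}, once one recognises that the central projection $p$ and the element $\alpha_t(a)$ both sit inside the single von Neumann algebra $A^{**}$ on which $\alpha_s^{**}$ acts as an automorphism.
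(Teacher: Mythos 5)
Your proof is correct and follows essentially the same route as the paper: use that $\alpha_s^{**}$ is a ($*$-)automorphism of $A^{**}$, hence multiplicative, invoke Lemma~\ref{lem:six} to get $\alpha_s^{**}(p)=p$, and observe that $\alpha_s^{**}$ restricted to the canonical image of $A$ is just $\alpha_s$, so the second factor is $\alpha_{s+t}(a)$. The paper phrases the last point as ``a simple calculation'' rather than appealing to Arens regularity, but the substance is identical.
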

\begin{proof}
As $\alpha_s^{**}$ is an automorphism, and using Lemma~\ref{lem:six}, we have that
$\alpha_s^{**}(p\alpha_t(a)) = p \alpha_s^{**}(\alpha_t(a))$.  A simple calculation
shows that for $b\in A$, we have that $\alpha_s^{**}(b)$ is equal to the image of
$\alpha_s(b)\in A$ in $A^{**}$.  The result follows.
\end{proof}

To easy notation, fix $z\in\mathbb C$ and let
$\mc G = \mc G(\alpha_z^A)$ regarded as a subspace of $A\oplus_\infty A$.
Similarly let $\mc G^M = \mc G(\alpha_z^M)$ regarded as a subspace of
$M\oplus_\infty M$.
Notice that the dual space of $A\oplus_\infty A$ is $A^*\oplus_1 A^*$, and the
bidual is $A^{**}\oplus_\infty A^{**}$.  Then $(p,p)$ is a central projection in
$A^{**}\oplus_\infty A^{**}$.  By the Hanh-Banach theorem, we can identify the dual space
of $\mc G$ with the quotient $(A^*\oplus_1 A^*) / \mc G^\perp$, and in turn identify
the dual of this quotient with $\mc G^{\perp\perp}$.
Thus $\mc G^{**} = \mc G^{\perp\perp}$.

\begin{theorem}\label{thm:five}
We have that $(p,p) \mc G^{\perp\perp} \subseteq \mc G^{\perp\perp}
\subseteq A^{**}\oplus A^{**}$.
\end{theorem}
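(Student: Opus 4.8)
The plan is to dualise the statement. Since $(p,p)$ is a \emph{central} projection in the von Neumann algebra $\mc M:=A^{**}\oplus_\infty A^{**}$, with predual $\mc M_*=A^*\oplus_1 A^*$, and since $\mc G^{\perp\perp}$ is weak$^*$-closed with pre-annihilator $\mc G^\perp$, the identity $\ip{(p,p)v}{\phi}=\ip{v}{(p,p)\phi}$ (for $v\in\mc M$, $\phi\in\mc M_*$) shows that the inclusion $(p,p)\mc G^{\perp\perp}\subseteq\mc G^{\perp\perp}$ is \emph{equivalent} to
\[ (p,p)\,\mc G^\perp \subseteq \mc G^\perp, \]
where now $(p,p)$ acts on $A^*\oplus_1 A^*$ by $(\omega,\tau)\mapsto(p\omega,p\tau)$. (The second asserted inclusion $\mc G^{\perp\perp}\subseteq A^{**}\oplus A^{**}$ is automatic, as $\mc M=(\mc M_*)^*$.) Thus I would reduce the whole theorem to a statement living entirely on the dual space $A^*$.

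To make this concrete I would describe $\mc G^\perp$ using the dual group $(\alpha_t^*)$ on $A^*$, which is weak$^*$-continuous by the discussion in Section~\ref{sec:duality}, and whose analytic generator is $(\alpha_z^A)^*=\alpha_z^{A^*}$ by Theorem~\ref{thm:two}. Unwinding the definition of the adjoint operator gives the clean description
\[ \mc G^\perp = \big\{ (-\alpha_z^{A^*}(\tau),\tau) : \tau\in D(\alpha_z^{A^*}) \big\}. \]
Hence the theorem comes down to the single claim that the operator $\tau\mapsto p\tau$ on $A^*$ preserves the generator of the dual group: if $\tau\in D(\alpha_z^{A^*})$ then $p\tau\in D(\alpha_z^{A^*})$ with $\alpha_z^{A^*}(p\tau)=p\,\alpha_z^{A^*}(\tau)$. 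Granting this, $(p,p)(-\alpha_z^{A^*}(\tau),\tau)=(-\alpha_z^{A^*}(p\tau),p\tau)$ again lies in $\mc G^\perp$.

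The structural input is that $p$ commutes with the dual group. Testing against $a\in A$ and using Lemma~\ref{lem:seven} with $t=0$, which rewrites $\alpha_s^{**}(pa)=p\,\alpha_s(a)$, I would verify $\alpha_s^*(p\tau)=p\,\alpha_s^*(\tau)$ for all $s\in\mathbb R$ and $\tau\in A^*$. The temptation is now to push the weak$^*$-regular extension of the orbit of $\tau$ through $p$; the main obstacle is that this fails, because $\tau\mapsto p\tau$ is \emph{not} weak$^*$-continuous for $\sigma(A^*,A)$ (the element $pa$ lies in $A^{**}$, not $A$). This is exactly the difficulty that the smearing machinery is designed to circumvent.

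I would therefore finish via smearing and Proposition~\ref{prop:prop2} applied to the weak$^*$-continuous group $(\alpha_t^*)$ on $A^*$. Writing $\mc R_n^*$ for its smearing operator, boundedness of $p$ together with $\alpha_s^*(p\tau)=p\,\alpha_s^*(\tau)$ lets $p$ pass through the weak$^*$-convergent defining integrals, so that $\mc R_n^*(p\tau)=p\,\mc R_n^*(\tau)$ and, applying the smearing theory to the element $p\tau$ and then using $\tau\in D(\alpha_z^{A^*})$,
\[ \alpha_z^{A^*}\big(\mc R_n^*(p\tau)\big) = p\,\alpha_z^{A^*}\big(\mc R_n^*(\tau)\big) = p\,\mc R_n^*\big(\alpha_z^{A^*}(\tau)\big) = \mc R_n^*\big(p\,\alpha_z^{A^*}(\tau)\big). \]
Proposition~\ref{prop:prop2}, with the roles of $x,y$ played by $p\tau$ and $p\,\alpha_z^{A^*}(\tau)$, converts this identity into precisely $p\tau\in D(\alpha_z^{A^*})$ with $\alpha_z^{A^*}(p\tau)=p\,\alpha_z^{A^*}(\tau)$, which is the claim of the previous paragraph and so completes the proof.
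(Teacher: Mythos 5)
Your opening reduction is sound: since $(p,p)$ is central, multiplication by $(p,p)$ on $A^{**}\oplus A^{**}$ is the adjoint of the module action $(\omega,\tau)\mapsto(p\omega,p\tau)$ on $A^*\oplus_1 A^*$, so $(p,p)\mc G^{\perp\perp}\subseteq\mc G^{\perp\perp}$ is indeed equivalent to $(p,p)\mc G^\perp\subseteq\mc G^\perp$, and your description of $\mc G^\perp$ as the flipped graph of $\alpha_z^{A^*}$ agrees with the paper (this equivalence is essentially how the paper later deduces Theorem~\ref{thm:seven} from Theorem~\ref{thm:five}). You also correctly identify the obstruction: $\omega\mapsto p\omega$ is not $\sigma(A^*,A)$-continuous. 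The fatal gap is that your fix then commits exactly the error you flagged: you pass $p$ through the \emph{weak$^*$-convergent} integrals defining $\mc R_n^*$ and $\alpha_z^{A^*}\mc R_n^*$. Boundedness of $p$ together with $p\alpha_s^*=\alpha_s^*p$ is not enough for this; moving an operator inside a weak$^*$-integral requires weak$^*$-continuity, which $p$ lacks. Indeed the identity $\mc R_n^*(p\tau)=p\,\mc R_n^*(\tau)$ is simply false in general. Take $A=C_0(\mathbb R)\subseteq M=L^\infty(\mathbb R)$ with the translation group $\alpha_t(f)(s)=f(s-t)$. Then $A^*=M(\mathbb R)$, $pA^*=L^1(\mathbb R)$, and $p\mu$ is the absolutely continuous part of $\mu$ (for $\mu=\delta_0$: since $\|\delta_0-g\|=1+\|g\|$ for every $g\in L^1(\mathbb R)$, the decomposition $A^*=pA^*\oplus_1(1-p)A^*$ forces $p\delta_0=0$). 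On the other hand $\mc R_n^*(\mu)=\varphi_n*\mu$ with $\varphi_n$ the Gaussian kernel, which is absolutely continuous for \emph{every} $\mu$; so $\mc R_n^*(p\delta_0)=0$ while $p\,\mc R_n^*(\delta_0)=\varphi_n\neq0$. (Equivalently, your claim amounts to $\mc R_n^{**}(pa)=p\,\mc R_n(a)$ for all $a\in A$, which there is no reason to expect.) Restricting to $\tau\in D(\alpha_z^{A^*})$ does not rescue the argument: the commutation there would force $\mc R_n^*((1-p)\tau)\in(1-p)A^*$, which is precisely the sort of statement the theorem is needed to control, and which fails in the above example whenever $(1-p)\tau\neq0$.

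For contrast, the paper avoids this by working on the side where the smearing integrals converge in \emph{norm}: for $a\in A$ the orbit $t\mapsto p\alpha_t(a)$ is norm-continuous, so $f(w)=\tfrac{n}{\sqrt\pi}\int_{\mathbb R}e^{-n^2(t-w)^2}p\alpha_t(a)\,dt$ is a norm-convergent integral in $A^{**}$ (bounded operators do pass through norm-convergent integrals) and $f$ is norm-regular. The price is that $f$ takes values in $A^{**}$ rather than $A$, so one cannot invoke Proposition~\ref{prop:prop2} directly; instead, for an arbitrary $(-\lambda,\mu)\in\mc G^\perp$ with associated weak$^*$-regular extension $g$, the paper shows that $h(w)=\ip{f(w)}{g(z-w)}$ is regular and constant on $\mathbb R$, hence constant on $S(z)$, which yields $(p,p)\mc G\subseteq\mc G^{\perp\perp}$; weak$^*$-continuity of multiplication by $(p,p)$ on the bidual (not on $A^*$!) then gives the full statement. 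Some complex-analytic input of this kind is doing the real work, and your reduction, while correct, does not supply a substitute for it.
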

\begin{proof}
Let $a\in A$, let $n>0$, and define $f:S(z)\rightarrow A^{**}$ by
\[ f(w) = \frac{n}{\sqrt\pi} \int_{\mathbb R}
\exp(-n^2(t-w)^2) p\alpha_t(a) \ dt. \]
As $t\mapsto \alpha_t(a)$ is norm-continuous, also $t\mapsto p\alpha_t(a)$ is
norm-continuous, and so the integral defining $f$ is norm convergent, and
$f$ is norm-regular.  In fact, we have that $f(w) = p \alpha_w(\mc R_n(a))$.
From Lemma~\ref{lem:seven}, we have that $\alpha_s^{**}(f(w))
= f(w+s)$ for $w\in S(z), s\in\mathbb R$.

Let $(-\lambda,\mu)\in \mc G^\perp$, which is equivalent to $\mu\in D(\alpha^{A^*}_z)$
with $\alpha^{A^*}_z(\mu)=\lambda$.  Thus there is $g:S(z)\rightarrow A^*$ a
weak$^*$-regular function with $g(t)=\alpha_t^*(\mu)$ for each $t\in\mathbb R$, and with
$g(z)=\lambda$.

Define $h:S(z)\rightarrow\mathbb C$ by $h(w) = \ip{f(w)}{g(z-w)}$.  Then
\[ h(t) = \ip{f(t)}{g(z-t)} = \ip{\alpha_t^{**}(f(0))}{\alpha_{-t}^*(\lambda)}
= \ip{f(0)}{\lambda} \qquad (t\in\mathbb R). \]
Thus $h$ is constant on $\mathbb R$.  Furthermore, for $w\in S(z)$,
\[ h(w) = \frac{n}{\sqrt\pi} \int_{\mathbb R} \exp(-n^2(t-w)^2)
\ip{p\alpha_t(a)}{g(z-w)} \ dt, \]
here again using that the integral defining $f$ is norm-convergent.
Now, $\ip{p\alpha_t(a)}{g(z-w)} = \ip{pg(z-w)}{\alpha_t(a)}$, and so
\[ h(w) = \frac{n}{\sqrt\pi} \int_{\mathbb R} \exp(-n^2(t-w)^2)
\ip{pg(z-w)}{\alpha_t(a)} \ dt
= \ip{pg(z-w)}{\alpha_w(\mc R_n(a))}. \]
As $w\mapsto \alpha_w(\mc R_n(a))$ is a norm-continuous map, and $w\mapsto
pg(z-w)$ is bounded and weak$^*$-continuous, it follows that $h$ is continuous on $S(z)$.
On the interior of $S(z)$, we have that $h$ is the pairing between two functions given
locally by power series.  We conclude that $h$ is regular.  As $h$ is constant on
$\mathbb R$, $h$ must be constant on $S(z)$.

Thus
\begin{align*}
\ip{(p,p)(\mc R_n(a), \alpha_z(\mc R_n(a)))}{(-\lambda,\mu)}
&= \ip{-p\lambda}{\mc R_n(a)} + \ip{p\mu}{\alpha_z(\mc R_n(a))} \\
&= - \ip{f(0)}{\lambda} + \ip{f(z)}{\mu} = -h(0) + h(z) = 0. \end{align*}
By Theorem~\ref{thm:thm1},
$\{ (\mc R_n(a), \alpha_z(\mc R_n(a))) : a\in A \}$ is norm dense in $\mc G$, and as
$(-\lambda,\mu)\in\mc G^\perp$ was arbitrary, the above calculation shows that
$(p,p)\mc G \subseteq \mc G^{\perp\perp}$.  By weak$^*$-continuity, we conclude that
$(p,p)\mc G^{\perp\perp} \subseteq \mc G^{\perp\perp}$ as claimed.
\end{proof}

\begin{lemma}\label{lem:eight}
Let $\mf A$ be a dual Banach algebra, let $X\subseteq\mf A$ be a weak$^*$-closed subspace,
let $p\in \mf A$ be an idempotent (so $p^2=p$) and suppose that $pX\subseteq X$.
Then $pX$ is weak$^*$-closed.
\end{lemma}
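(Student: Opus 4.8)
The plan is to realise $pX$ as the intersection of two weak$^*$-closed sets. Write $L_p:\mf A\rightarrow\mf A$ for the operator $x\mapsto px$ of left multiplication by $p$. Since $\mf A$ is a dual Banach algebra, the product is separately weak$^*$-continuous, so $L_p$ is weak$^*$-continuous; and as $p^2=p$ we have $L_p^2=L_p$, so $L_p$ is a (bounded) idempotent.

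First I would show that $p\mf A$, the range of $L_p$, is weak$^*$-closed. As $L_p$ is idempotent, its range coincides with its set of fixed points, namely $\ker(\id-L_p)$. Since $\id-L_p$ is weak$^*$-continuous, this kernel is the preimage of the weak$^*$-closed set $\{0\}$ under a weak$^*$-continuous map, and is hence weak$^*$-closed.

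Next I would verify the key identity $pX=X\cap p\mf A$. The inclusion $\subseteq$ is immediate: $pX\subseteq X$ by hypothesis, and $pX\subseteq p\mf A$ trivially. For $\supseteq$, take $y\in X\cap p\mf A$ and write $y=pa$; then $py=p^2a=pa=y$, so $y=py\in L_p(X)=pX$, using that $y\in X$. Thus $pX=X\cap p\mf A$. Finally, $X$ is weak$^*$-closed by hypothesis and $p\mf A$ is weak$^*$-closed by the previous step, so their intersection $pX$ is weak$^*$-closed, as required.

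The argument is essentially formal, and I do not anticipate a genuine obstacle. The only point that genuinely uses the hypotheses is the weak$^*$-continuity of $L_p$, which is exactly the defining property of a dual Banach algebra (separate weak$^*$-continuity of the product); the idempotence of $p$ and the invariance $pX\subseteq X$ are what make the clean identification $pX=X\cap p\mf A$ work, so that one avoids having to reason directly about the weak$^*$-closure of an image (which need not be closed for a general bounded operator).
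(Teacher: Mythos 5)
Your proof is correct. It uses the same two ingredients as the paper's proof---weak$^*$-continuity of left multiplication by $p$ (from the dual Banach algebra axiom) and the idempotence of $p$---but packages them differently. The paper argues directly with nets: if $(x_i)$ is a net in $X$ with $px_i\rightarrow a$ weak$^*$, then $px_i=p(px_i)\rightarrow pa$, so $a=pa$; moreover $px_i\in pX\subseteq X$ and $X$ is weak$^*$-closed, so $a\in X$, whence $a=pa\in pX$. Your argument is the ``soft'' version of exactly this computation: the facts that $p\mf A=\ker(\id-L_p)$ is weak$^*$-closed and that $pX=X\cap p\mf A$ are precisely what the paper's net argument establishes for the limit point ($a=pa$ says $a\in p\mf A$; invariance plus closedness of $X$ says $a\in X$). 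What your formulation buys is a net-free argument and two reusable structural observations (the range of a weak$^*$-continuous idempotent operator is weak$^*$-closed; $pX=X\cap p\mf A$ whenever $pX\subseteq X$); what the paper's buys is brevity. Either constitutes a complete proof.
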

\begin{proof}
Let $(x_i)$ be a net in $X$ with $px_i\rightarrow a\in \mf A$ weak$^*$.
We aim to show that $a\in pX$.  Now, $px_i = p^2x_i \rightarrow pa$ as $\mf A$ is a dual
Banach algebra.  Thus $a=pa$.  Now, also $px_i \in pX \subseteq X$, by hypothesis, and
as $X$ is weak$^*$-closed, $a\in X$.  Thus $a = pa \in pX$ as required.
\end{proof}

As above, as $A\subseteq M$, restriction of functionals gives
$\iota:M_*\rightarrow A^*$, which is an isometric inclusion by Kaplansky density.
Furthermore, we have that $pA^* = \iota(M_*)$, and so we have an
inverse map $\iota^{-1} : pA^* \rightarrow M_*$ and so the Banach space adjoint
is a map $(\iota^{-1})^*:M\rightarrow (pA^{*})^* \cong pA^{**}$.  We give a word of
warning: the composition of the isometries $A\rightarrow M \cong pA^{**} \rightarrow
A^{**}$ is \emph{not} the canonical map $A\rightarrow A^{**}$, but is rather the map
$a \mapsto pa\in A^{**}$.

\begin{lemma}\label{lem:nine}
Identifying $M\oplus M$ with $pA^{**}\oplus pA^{**}$, and regarding $\mc G$ as a
subspace of $A^{**}\oplus A^{**}$ in the canonical way, we have that
$(p,p)\mc G \subseteq \mc G^M$.
\end{lemma}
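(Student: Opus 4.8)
The plan is to verify the inclusion on the spanning elements of $\mc G$, namely the pairs $(a,\alpha_z^A(a))$ with $a\in D(\alpha_z^A)$, tracing carefully through the two identifications involved. Under the canonical embedding $A\oplus A\hookrightarrow A^{**}\oplus A^{**}$ such a pair is sent to $(a,\alpha_z^A(a))$ with $a,\alpha_z^A(a)\in A\subseteq A^{**}$, and multiplying by the central idempotent $(p,p)$ gives $(pa,p\alpha_z^A(a))\in pA^{**}\oplus pA^{**}$. So I must show that, under the identification $M\oplus M\cong pA^{**}\oplus pA^{**}$, this element corresponds to a point of $\mc G^M$.

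The key computation, which is precisely the ``word of warning'' recorded just before the statement, is that for $b\in A$ the element $pb\in pA^{**}$ corresponds under $(\iota^{-1})^*:M\to pA^{**}$ to the image of $b$ under the inclusion $A\subseteq M$. First I would verify this by pairing against an arbitrary $\omega\in pA^*=\iota(M_*)$, say $\omega=\iota(\rho)$ with $\rho\in M_*$. Using that $p$ is central with $p\cdot\omega=\omega$, that $\iota$ is restriction of functionals, and the definition of the adjoint $(\iota^{-1})^*$, one obtains
\[ \ip{pb}{\omega} = \ip{b}{\omega} = \ip{b}{\rho} = \ip{(\iota^{-1})^*(b)}{\omega}, \]
where the middle term is the $M$--$M_*$ pairing. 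As $\omega\in pA^*$ was arbitrary and both $pb$ and $(\iota^{-1})^*(b)$ lie in $pA^{**}\cong(pA^*)^*$, they coincide.

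Applying this with $b=a$ and $b=\alpha_z^A(a)$ shows that $(pa,p\alpha_z^A(a))$ is identified with $(a,\alpha_z^A(a))\in M\oplus M$, where now $a$ and $\alpha_z^A(a)$ are regarded as elements of $M$ via $A\subseteq M$. Finally I would invoke the inclusion $\mc G^A\subseteq\mc G^M$: by the definition of analytic continuation (compare Proposition~\ref{prop:4}) the norm-regular extension of the orbit map for $a$ in $A$ is in particular a weak$^*$-regular extension in $M$, so $a\in D(\alpha_z^M)$ with $\alpha_z^M(a)=\alpha_z^A(a)$, whence $(a,\alpha_z^A(a))\in\mc G^M$. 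Since such elements span $\mc G$, this yields $(p,p)\mc G\subseteq\mc G^M$.

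I expect the only genuine obstacle to be the bookkeeping of the two identifications; once the fact that $a\in A$ corresponds to $pa$ (rather than to its canonical image) is pinned down via the displayed pairing, the inclusion is immediate from $\mc G^A\subseteq\mc G^M$.
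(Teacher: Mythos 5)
Your proof is correct and takes essentially the same route as the paper: the paper also reduces everything to the pairing computation showing that, under the identification $M\cong pA^{**}$, an element $b\in A\subseteq M$ corresponds to $pb$ (the paper phrases this as $\phi^*(pa)=\psi(a)$, where $\phi$ is the corestriction of $\iota$ and $\psi:A\to M$ the inclusion, which is your identity read through the inverse map), and then concludes from $(\psi\oplus\psi)\mc G\subseteq\mc G^M$. The only difference is the direction in which the identification is written, which is immaterial.
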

\begin{proof}
Denote by $\phi$ the corestriction of $\iota$, so
$\phi$ is an isometric isomorphism $M_*\rightarrow pA^*$, and hence $\phi^*:pA^{**}
\rightarrow M$ is an isomorphism, the inverse of $(\iota^{-1})^*$.  Similarly, let
$\psi:A\rightarrow M$ be the inclusion.  For $a\in A$ and $\omega\in M_*$,
\begin{align*} \ip{\phi^*(pa)}{\omega} &= \ip{pa}{\phi(\omega)}
= \ip{p\phi(\omega)}{a} = \ip{p\iota(\omega)}{a} 
= \ip{\iota(\omega)}{a} = \ip{\psi(a)}{\omega}. \end{align*}
This $\phi^*(pa) = \psi(a)$.  As $(\psi\oplus\psi)\mc G \subseteq \mc G^M$,
the result follows.
\end{proof}

\begin{theorem}
Identifying $M\oplus M$ with $pA^{**}\oplus pA^{**}$, we have that
$(p,p)\mc G^{\perp\perp} = \mc G^M$.
\end{theorem}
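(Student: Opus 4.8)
The plan is to combine Theorem~\ref{thm:five}, Lemma~\ref{lem:eight}, Lemma~\ref{lem:nine}, and the core property recorded in Proposition~\ref{prop:three} into the single chain of equalities
\[ (p,p)\mc G^{\perp\perp} = \overline{(p,p)\mc G}^{w^*} = \mc G^M, \]
where the closure is the weak$^*$-closure in $A^{**}\oplus_\infty A^{**}$. Throughout I regard $A^{**}\oplus_\infty A^{**}$ as a dual Banach algebra with predual $A^*\oplus_1 A^*$, so that its product is separately weak$^*$-continuous, and I view $(p,p)$ as a central idempotent in it. The first step is to show that $(p,p)\mc G^{\perp\perp}$ is weak$^*$-closed: this is exactly Lemma~\ref{lem:eight} applied with $\mf A = A^{**}\oplus_\infty A^{**}$, with the weak$^*$-closed subspace $X = \mc G^{\perp\perp} = \mc G^{**}$, and with the idempotent $(p,p)$; the required hypothesis $(p,p)X\subseteq X$ is precisely the content of Theorem~\ref{thm:five}.

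Next I would establish the first equality. The inclusion $\overline{(p,p)\mc G}^{w^*}\subseteq(p,p)\mc G^{\perp\perp}$ is immediate, since $(p,p)\mc G\subseteq(p,p)\mc G^{\perp\perp}$ and the latter set is weak$^*$-closed by the previous step. For the reverse inclusion, recall that $\mc G^{\perp\perp}$ is the weak$^*$-closure of $\mc G$; given $x\in\mc G^{\perp\perp}$, pick a net $(x_i)$ in $\mc G$ with $x_i\to x$ weak$^*$. As multiplication by the fixed element $(p,p)$ is weak$^*$-continuous, $(p,p)x_i\to(p,p)x$ weak$^*$, and each $(p,p)x_i\in(p,p)\mc G$, whence $(p,p)x\in\overline{(p,p)\mc G}^{w^*}$.

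Finally I would identify $\overline{(p,p)\mc G}^{w^*}$ with $\mc G^M$. Under the identification $pA^{**}\cong M$, the embedding $A\to M\cong pA^{**}\subseteq A^{**}$ is $a\mapsto pa$ (this is the warning preceding Lemma~\ref{lem:nine}, where $\phi^*(pa)=\psi(a)$), so that $(p,p)\mc G$ is exactly the canonical image of $\mc G$ inside $M\oplus M$. Since $pA^{**}\oplus pA^{**}$ is a weak$^*$-closed, complemented subspace of $A^{**}\oplus A^{**}$, its relative weak$^*$-topology agrees with its intrinsic weak$^*$-topology as the dual of $M_*\oplus_1 M_*$; thus the weak$^*$-closure of $(p,p)\mc G$ may be computed inside $M\oplus M$. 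By Proposition~\ref{prop:three}, $\mc G = \mc G(\alpha_z^A)$ is weak$^*$-dense in $\mc G^M = \mc G(\alpha_z^M)$, so this closure equals $\mc G^M$. Combining the three steps yields $(p,p)\mc G^{\perp\perp}=\mc G^M$.

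The main obstacle is not any single hard estimate but the bookkeeping in the last step: one must verify that the two notions of weak$^*$-closure (relative from $A^{**}\oplus A^{**}$ versus intrinsic on $M\oplus M\cong pA^{**}\oplus pA^{**}$) coincide, and that $(p,p)\mc G$ really is the \emph{canonical} image of $\mc G$ in $M\oplus M$ rather than some twisted copy. Both points are essentially settled by Lemma~\ref{lem:nine} together with the explicit description $a\mapsto pa$ of the embedding; the genuine analytic work has already been carried out in Theorem~\ref{thm:five}, so this final theorem is a clean assembly of the preceding pieces.
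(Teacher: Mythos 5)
Your proof is correct and follows essentially the same route as the paper: both arguments rest on Theorem~\ref{thm:five} together with Lemma~\ref{lem:eight} to get weak$^*$-closedness of $(p,p)\mc G^{\perp\perp}$, on Lemma~\ref{lem:nine} to identify $(p,p)\mc G$ with the canonical copy of $\mc G$ inside $\mc G^M$, and on Proposition~\ref{prop:three} for weak$^*$-density. The only cosmetic difference is organisational: you factor both inclusions through $\overline{(p,p)\mc G}^{w^*}$, using the bipolar theorem and weak$^*$-continuity of multiplication by $(p,p)$, whereas the paper proves the two inclusions directly, invoking a bounded (Goldstine) net for the inclusion $(p,p)\mc G^{\perp\perp}\subseteq\mc G^M$ --- boundedness being inessential there, exactly as your version shows.
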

\begin{proof}
Given $(\alpha,\beta)\in\mc G^{\perp\perp}$ there is a bounded net $(a_i,b_i)$ in $\mc G$
converging weak$^*$ to $\alpha$.  Then $(p\alpha,p\beta)$ is the weak$^*$-limit of the
net $(pa_i, pb_i)$, and by Lemma~\ref{lem:nine} we know that $pa_i\in\mc G^M$
for each $i$.  As $\mc G^M \subseteq pA^{**}\oplus pA^{**}$ is weak$^*$-closed, we
conclude that $(p,p)\mc G^{\perp\perp} \subseteq \mc G^M$.

We apply Lemma~\ref{lem:eight} to $A^{**}\oplus A^{**}$ and the idempotent $(p,p)$,
with the subspace $\mc G^{\perp\perp}$.  By Theorem~\ref{thm:five}, the hypothesis of
Lemma~\ref{lem:eight} holds, and so $(p,p)\mc G^{\perp\perp}$ is weak$^*$-closed.

Given $(x,y)\in\mc G^M$, by Proposition~\ref{prop:three}, there is a net (perhaps
\emph{not} bounded) $(a_i,b_i)$ in $\mc G$ converging weak$^*$ to $(x,y)$ in $M\oplus M$.
As $M\rightarrow pA^{**}$ is weak$^*$-continuous, it follows that the net
$(pa_i,pb_i)$ converges weak$^*$ to $(x,y)\in \mc G^M\subseteq pA^{**}\oplus pA^{**}$.
This net is in $(p,p)\mc G \subseteq (p,p)\mc G^{\perp\perp}$, and as
$(p,p)\mc G^{\perp\perp}$ is weak$^*$-closed, we conclude that
$(x,y)\in (p,p)\mc G^{\perp\perp}$.  Thus $\mc G^M \subseteq (p,p)\mc G^{\perp\perp}$
and we have equality.
\end{proof}

Our main theorem now follows easily.

\begin{proof}[Proof of Theorem~\ref{thm:six}]
Given a member of the unit ball of $\mc G^M$, we regard $\mc G^M$ as being
$(p,p)\mc G^{\perp\perp} \subseteq \mc G^{\perp\perp}$, and so we have a member of the
unit ball of $\mc G^{\perp\perp} = \mc G^{**}$.
By Hahn-Banach (that is, the Goldstine theorem)
there is a net in the unit ball of $\mc G$ converging weak$^*$ to our element of
$\mc G^M$, as we want.

To deal with the $\oplus_1$ normed case, we simply follow the same proof through,
using $pA^*\oplus_\infty (1-p)A^*$ and $pA^{**}\oplus_1 (1-p)A^{**}$.  While
$pA^{**}\oplus_1 (1-p)A^{**}$ is not a $C^*$-algebra, it is still a Banach algebra,
and so Lemma~\ref{lem:eight} still holds, and the rest follows.
\end{proof}

We finish with a result about stronger topologies.

\begin{corollary}
With the hypotheses of Theorem~\ref{thm:six}, the unit ball of $\mc G(\alpha^A_z)$,
in $M\oplus_\infty M$, is $\sigma$-strong$^*$-dense in the unit ball of
$\mc G(\alpha_z^M)$.
\end{corollary}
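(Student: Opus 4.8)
The plan is to obtain this as a purely topological upgrade of Theorem~\ref{thm:six}, using the classical fact that on a von Neumann algebra the $\sigma$-strong$^*$ topology and the weak$^*$ (that is, $\sigma$-weak) topology have the same continuous dual, and hence assign the same closure to every convex set.

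First I would observe that $M\oplus_\infty M$ is itself a von Neumann algebra, with predual $M_*\oplus_1 M_*$. Its $\sigma$-strong$^*$ topology is given by the seminorms $(x,y)\mapsto \big(\ip{x^*x+xx^*}{\omega}+\ip{y^*y+yy^*}{\tau}\big)^{1/2}$, as $(\omega,\tau)$ ranges over the positive cone of $M_*\oplus_1 M_*$; in particular a net $(a_i,b_i)$ converges $\sigma$-strong$^*$ precisely when $a_i$ and $b_i$ converge $\sigma$-strong$^*$ in $M$ separately. The only external input needed is that the $\sigma$-strong$^*$-continuous linear functionals on $M\oplus_\infty M$ coincide with the weak$^*$-continuous ones, namely the predual $M_*\oplus_1 M_*$; this is classical (see, e.g., \cite{tak2}). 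Since the $\sigma$-strong$^*$ topology is finer than the weak$^*$ topology but shares the same continuous dual, the Hahn--Banach separation theorem shows that any convex subset of $M\oplus_\infty M$ has the same $\sigma$-strong$^*$-closure as weak$^*$-closure.

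It remains only to apply this. The unit ball $B$ of $\mc G(\alpha^A_z)$ is convex, so its weak$^*$-closure and its $\sigma$-strong$^*$-closure coincide. By Theorem~\ref{thm:six} the weak$^*$-closure of $B$ contains the unit ball of $\mc G(\alpha^M_z)$; hence so does its $\sigma$-strong$^*$-closure, which is exactly the assertion. I do not anticipate any real obstacle: this is the standard convexity argument, and the only care required is the bookkeeping in the previous paragraph, namely identifying the $\sigma$-strong$^*$ topology on the direct sum and confirming the same-dual statement for it.
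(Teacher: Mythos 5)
Your proposal is correct and is essentially the paper's own proof: the paper likewise deduces the corollary from the classical fact (cited there as \cite[Theorem~2.6(iv)]{tak1}) that in a von Neumann algebra the weak$^*$ and $\sigma$-strong$^*$ closures of a convex set agree, applied to the convex unit ball of $\mc G(\alpha^A_z)$. The only difference is that you re-derive this fact from the same-dual-plus-Hahn--Banach argument rather than citing it, which is a fine (and standard) piece of bookkeeping.
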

\begin{proof}
This follows immediately from the general result \cite[Theorem~2.6(iv)]{tak1} that
in a von Neumann algebra $N$, for a convex subset $K$ we have that the weak$^*$ and
$\sigma$-strong$^*$ closures of $K$ agree.
\end{proof}

\section{Duals of automorphism groups}\label{sec:duals}

In this section, we shall look at the ``dual'' situation to the previous section.
We again consider the case of a $C^*$-algebra generating a von Neumann algebra $M$,
with a one-parameter automorphism group $(\alpha^M_t)$ on $M$ restricting to $A$, say
to given $(\alpha^A_t)$.  Then the preadjoint gives a (norm-continuous) one-parameter
isometry group $(\alpha^{M_*}_t)$ on $M_*$, and the adjoint gives a (weak$^*$-continuous)
one-parameter isometry group $(\alpha^{A^*}_t)$ on $A^*$.  A simple calculation shows
that the inclusion $\iota:M_*\rightarrow A^*$ intertwines these groups.

\begin{proposition}\label{prop:20}
For $z\in\mathbb C$, we have that $D(\alpha^{M_*}_z)$ is a
(weak$^*$) core for $D(\alpha^{A^*}_z)$.
\end{proposition}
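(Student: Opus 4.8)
The plan is to recognise this as a direct application of the core criterion Theorem~\ref{thm:thm1}, applied to the weak$^*$-continuous group $(\alpha^{A^*}_t)$ on $E=A^*$ (which has predual $E_*=A$, so here "weak$^*$" means $\sigma(A^*,A)$). The candidate dense invariant subspace will be $\iota(D(\alpha^{M_*}_z))$; under the identification of $M_*$ with its image $\iota(M_*)\subseteq A^*$, showing this subspace is a core for $\alpha^{A^*}_z$ is exactly the assertion of the proposition. Thus the work reduces to verifying the three hypotheses of Theorem~\ref{thm:thm1}: that $\iota(D(\alpha^{M_*}_z))$ is contained in $D(\alpha^{A^*}_z)$, that it is $(\alpha^{A^*}_t)$-invariant, and that it is weak$^*$-dense in $A^*$.

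First I would check the containment. Since $\iota$ intertwines the two groups and is a bounded (indeed isometric) linear map, given $\omega\in D(\alpha^{M_*}_z)$ with norm-regular extension $f:S(z)\to M_*$, the composite $\iota\circ f$ is a weak$^*$-regular extension of $t\mapsto\alpha^{A^*}_t(\iota\omega)=\iota(f(t))$; hence $\iota\omega\in D(\alpha^{A^*}_z)$ with $\alpha^{A^*}_z(\iota\omega)=\iota(\alpha^{M_*}_z\omega)$. Invariance then follows from property~(1) of the analytic generators: $D(\alpha^{M_*}_z)$ is $(\alpha^{M_*}_t)$-invariant, and as $\iota$ intertwines, $\alpha^{A^*}_t(\iota\omega)=\iota(\alpha^{M_*}_t\omega)\in\iota(D(\alpha^{M_*}_z))$.

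The substantive step, and the one I expect to require the most care, is the weak$^*$-density in $A^*$. I would argue in two stages. Since $(\alpha^{M_*}_t)$ is norm-continuous, the smearing operators $\mc R_n$ (whose range lies in $D(\alpha^{M_*}_z)$ and satisfies $\mc R_n(\omega)\to\omega$) show that $D(\alpha^{M_*}_z)$ is norm-dense in $M_*$; hence $\iota(D(\alpha^{M_*}_z))$ is norm-dense, and a fortiori weak$^*$-dense, in $\iota(M_*)$. It then remains to see that $\iota(M_*)$ is itself weak$^*$-dense in $A^*$, and this is where the relation between $A$ and $M$ enters: the pre-annihilator in $A$ of $\iota(M_*)$ is $\{a\in A:\ip{a}{\omega}=0\ (\omega\in M_*)\}$, which is $\{0\}$ because $M_*$ separates the points of $M\supseteq A$. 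By the bipolar theorem $\iota(M_*)$ is $\sigma(A^*,A)$-dense, and combining the two stages yields weak$^*$-density of $\iota(D(\alpha^{M_*}_z))$ in $A^*$.

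With all three hypotheses in hand, Theorem~\ref{thm:thm1} applies in its weak$^*$ form and shows directly that $\iota(D(\alpha^{M_*}_z))$ is a (weak$^*$) core for $\alpha^{A^*}_z$, which is the claim. I would emphasise that none of the central-projection machinery of Section~\ref{sec:kap} is needed here, since this is only a core (density) statement rather than the finer unit-ball Kaplansky result; the genuine obstructions reappear only when one attempts to upgrade "core" to a statement about unit balls, as discussed later for the dual picture.
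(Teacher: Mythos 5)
Your proposal is correct and follows essentially the same route as the paper: the paper's proof of Proposition~\ref{prop:20} simply says it ``follows exactly as the proof of Proposition~\ref{prop:three}'', which is precisely your argument --- containment and $(\alpha^{A^*}_t)$-invariance of $\iota(D(\alpha^{M_*}_z))$ via the intertwining, weak$^*$-density in $A^*$ (norm-density in $M_*$ by smearing plus weak$^*$-density of $\iota(M_*)$ in $A^*$), and then Theorem~\ref{thm:thm1} in its weak$^*$ form. Your extra care in spelling out the bipolar-theorem step and the regularity of $\iota\circ f$ is a correct filling-in of details the paper leaves implicit.
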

\begin{proof}
Follows exactly as the proof of Proposition~\ref{prop:three}.
\end{proof}

The following is a stronger version of Proposition~\ref{prop:4}.
We recall that the analogous result for the inclusion $A\rightarrow M$ is false,
see Example~\ref{ex:four}.

\begin{theorem}\label{thm:seven}
Let $\omega\in M_*$ be such that $\iota(\omega)\in D(\alpha^{A^*}_z)$.  Then
$\omega\in D(\alpha^{M_*}_z)$.
\end{theorem}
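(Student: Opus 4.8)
The plan is to reformulate both the hypothesis and the conclusion as annihilator conditions on the graphs $\mc G=\mc G(\alpha^A_z)\subseteq A\oplus A$ and $\mc G^M=\mc G(\alpha^M_z)\subseteq M\oplus M$, and then to exhibit an explicit candidate for $\alpha^{M_*}_z(\omega)$, namely the ``normal part'' $p\lambda$ of the functional $\lambda:=\alpha^{A^*}_z(\iota(\omega))\in A^*$. First I would translate the hypothesis: by Theorem~\ref{thm:two} we have $\alpha^{A^*}_z=(\alpha^A_z)^*$, so $\iota(\omega)\in D(\alpha^{A^*}_z)$ with value $\lambda$ means exactly $(\iota(\omega),\lambda)\in\mc G((\alpha^A_z)^*)=(j\mc G)^\perp$, which a direct check shows is the same as $(\lambda,-\iota(\omega))\in\mc G^\perp$. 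Testing this against the bidual $\mc G^{\perp\perp}=\mc G^{**}$ yields
\[ \ip{X}{\lambda}=\ip{Y}{\iota(\omega)} \qquad \big((X,Y)\in\mc G^{\perp\perp}\big). \]

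Dually, $\alpha^{M_*}_z=(\alpha^M_z)_*$ (Theorem~\ref{thm:two} together with the pre-adjoint construction of Section~\ref{sec:duality}), so the desired conclusion ``$\omega\in D(\alpha^{M_*}_z)$ with value $\eta\in M_*$'' is equivalent to $\ip{x}{\eta}=\ip{y}{\omega}$ for every $(x,y)\in\mc G^M$. Since $\mc G^M\subseteq(p,p)\mc G^{\perp\perp}$ (the inclusion proved in Section~\ref{sec:kap}), each such $(x,y)$ is of the form $(pX,pY)$ with $(X,Y)\in\mc G^{\perp\perp}$, and after unwinding the identifications $M\cong pA^{**}$, $M_*\cong pA^*$, it suffices to find $\eta\in\iota(M_*)=pA^*$ with $\ip{X}{\eta}=\ip{Y}{\iota(\omega)}$ for all $(X,Y)\in\mc G^{\perp\perp}$. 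The natural candidate is $\eta=p\lambda\in pA^*$, which automatically lies in $M_*$.

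With this candidate, comparing against the displayed hypothesis reduces the whole problem to the single identity $\ip{X}{p\lambda}=\ip{X}{\lambda}$, i.e.\ $\ip{(1-p)X}{\lambda}=0$, for every $(X,Y)\in\mc G^{\perp\perp}$. This is where the groundwork pays off. By Theorem~\ref{thm:five} the idempotent $(p,p)$ leaves $\mc G^{\perp\perp}$ invariant, hence so does $(1-p,1-p)$, and therefore $((1-p)X,(1-p)Y)\in\mc G^{\perp\perp}$. Feeding this element into the displayed hypothesis gives $\ip{(1-p)X}{\lambda}=\ip{(1-p)Y}{\iota(\omega)}$, and the right-hand side vanishes because $\iota(\omega)\in pA^*$ annihilates $(1-p)A^{**}$. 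Thus $\ip{X}{p\lambda}=\ip{X}{\lambda}=\ip{Y}{\iota(\omega)}$, which is precisely the required identity, so $(\omega,\eta)\in\mc G(\alpha^{M_*}_z)$ with $\eta$ the element of $M_*$ corresponding to $p\lambda$.

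The main obstacle is exactly this normality step. The functional $a\mapsto\omega(\alpha^A_z(a))$ does extend to the \emph{bounded} functional $\lambda$ on $A$, but a priori $\lambda$ need not be weak$^*$-continuous, so $\lambda\notin M_*$ in general; applying $p$ forces normality but threatens to change the values, and the real content is that this change is invisible to $\mc G^{\perp\perp}$. I expect the invariance $(1-p,1-p)\mc G^{\perp\perp}\subseteq\mc G^{\perp\perp}$ coming from Theorem~\ref{thm:five}, combined with $\iota(\omega)\in pA^*$, to be the crucial ingredient. It is worth noting that on this route the graph Kaplansky result (Theorem~\ref{thm:six}) is not needed; only the structural properties of the central projection $p$ and Theorem~\ref{thm:five} are used, which is consistent with the asymmetry recorded in Example~\ref{ex:four}, where the analogue for $A\rightarrow M$ fails.
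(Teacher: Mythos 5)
Your proof is correct, and its engine coincides with the paper's: Theorem~\ref{thm:five} makes $\mc G^{\perp\perp}$ invariant under $(p,p)$, hence under $(1-p,1-p)$, and feeding the $(1-p)$-compression of an element of $\mc G^{\perp\perp}$ into the hypothesis, while using $p\iota(\omega)=\iota(\omega)$ to kill the right-hand side, is exactly the paper's central computation. Where you genuinely diverge is the endgame. The paper tests the hypothesis against \emph{both} compressions, concludes from single-valuedness of graphs that $\lambda=p\lambda$, i.e.\ that $\alpha^{A^*}_z(\iota(\omega))$ is already normal, and then transfers this to $\omega\in D(\alpha^{M_*}_z)$ by re-running the smearing argument of Proposition~\ref{prop:4} (via Proposition~\ref{prop:prop2}). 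You instead reformulate the conclusion through the duality $\alpha^{M_*}_z=(\alpha^M_z)_*$ and the identification of $\mc G^M$ with $(p,p)\mc G^{\perp\perp}$, and verify the resulting annihilation identity directly with the explicit candidate $p\lambda$, so no further smearing is needed at that stage. This buys a cleaner, purely duality-theoretic finish, but at a price you slightly under-report: the inclusion $\mc G^M\subseteq(p,p)\mc G^{\perp\perp}$ is \emph{not} part of Theorem~\ref{thm:five}; it is the (un-labelled) theorem of Section~\ref{sec:kap} following Lemma~\ref{lem:nine}, whose proof requires Lemma~\ref{lem:eight}, Lemma~\ref{lem:nine} and Proposition~\ref{prop:three}, so your closing claim that only the structural properties of $p$ and Theorem~\ref{thm:five} are used is inaccurate as an accounting of dependencies. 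Since that theorem precedes Theorem~\ref{thm:seven} and does not depend on it, there is no circularity, and you are right that Theorem~\ref{thm:six} itself is never invoked --- which is equally true of the paper's own proof.
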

\begin{proof}
We continue with the notations of the previous section.  Theorem~\ref{thm:five}
shows that $(p,p)\mc G^{\perp\perp} \subseteq \mc G^{\perp\perp}$ and so any
$(\alpha,\beta)\in\mc G^{\perp\perp}$ is equal to
\[ (p\alpha, p\beta) + ((1-p)\alpha, (1-p)\beta), \]
where both summands are members of $\mc G^{\perp\perp}$.

Let $(\mu,\lambda)\in \mc G(\alpha^{A^*}_z)$, equivalently, $(-\lambda,\mu)
\in \mc G^\perp$, equivalently, $\ip{(\alpha,\beta)}{(-\lambda,\mu)} = 0$ for all
$(\alpha,\beta)\in\mc G^{\perp\perp}$.  Given the above discussion, this in turn is
equivalent to
\[ \ip{(p\alpha,p\beta)}{(-\lambda,\mu)} =
\ip{((1-p)\alpha,(1-p)\beta)}{(-\lambda,\mu)} = 0
\qquad ( (\alpha,\beta)\in\mc G^{\perp\perp} ). \]
That is,
\[ \ip{(\alpha,\beta)}{(-p\lambda,p\mu)} =
\ip{(\alpha,\beta)}{(-(1-p)\lambda,(1-p)\mu)} = 0
\qquad ( (\alpha,\beta)\in\mc G^{\perp\perp} ). \]
Reversing this argument shows that $(\mu,\lambda)\in \mc G(\alpha^{A^*}_z)$
if and only if both $(p\mu,p\lambda)\in \mc G(\alpha^{A^*}_z)$ and
$((1-p)\mu,(1-p)\lambda)\in \mc G(\alpha^{A^*}_z)$.

In particular, if $(\mu,\lambda)\in \mc G(\alpha^{A^*}_z)$ with $\mu\in M_*$, that is,
$p\mu=\mu$, then $(\mu,p\lambda) \in \mc G(\alpha^{A^*}_z)$, but as this is a graph,
it follows that $\lambda = p\lambda$, that is, $\lambda\in M_*$.  The result now
follows as in the proof of Proposition~\ref{prop:4}.
\end{proof}

\subsection{Quotients}\label{sec:quot}

Let $E$ be a Banach space and $(\alpha_t)$ a norm-continuous one-parameter group of
isometries.  Suppose that $F\subseteq E$ is a closed subspace with $\alpha_t(F)
\subseteq F$ for each $t$.  It is easy to see that $E/F\rightarrow E/F; x+F \mapsto
\alpha_t(x)+F$ is a well-defined contraction for each $t$.  We hence obtain a
norm-continuous one-parameter group of isometries $(\alpha_t^{E/F})$ on $E/F$.
By considering analytic continuations, it is easy to see that if $(x,y)\in\mc G(\alpha_z)$
then $(x+F, y+F)\in\mc G(\alpha_z^{E/F})$.

\begin{proposition}
For any $z$ we have that $D(\alpha_z)+F \subseteq E/F$ is a core for $\alpha_z^{E/F}$.
\end{proposition}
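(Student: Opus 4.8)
The plan is to apply Theorem~\ref{thm:thm1} to the quotient group $(\alpha_t^{E/F})$ on $E/F$, exactly as in the proofs of Proposition~\ref{prop:three} and Proposition~\ref{prop:20}. Write $q:E\rightarrow E/F$ for the quotient map, so that $q\circ\alpha_t = \alpha_t^{E/F}\circ q$ for each $t\in\mathbb R$; the candidate core is the image $q(D(\alpha_z))$, which is precisely $D(\alpha_z)+F$ regarded inside $E/F$.

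There are three hypotheses of Theorem~\ref{thm:thm1} to verify for $q(D(\alpha_z))$ as a subspace of $E/F$. First, $q(D(\alpha_z))$ is $(\alpha_t^{E/F})$-invariant: since $\alpha_t\circ\alpha_z=\alpha_z\circ\alpha_t$ on $D(\alpha_z)$, the domain $D(\alpha_z)$ is $(\alpha_t)$-invariant, and combining this with the intertwining relation gives $\alpha_t^{E/F}(q(D(\alpha_z))) = q(\alpha_t(D(\alpha_z))) = q(D(\alpha_z))$. Second, $q(D(\alpha_z))\subseteq D(\alpha_z^{E/F})$: this is exactly the observation recorded just before the statement, that $(x,y)\in\mc G(\alpha_z)$ forces $(x+F,y+F)\in\mc G(\alpha_z^{E/F})$, so each $q(x)$ with $x\in D(\alpha_z)$ lies in the domain of $\alpha_z^{E/F}$, with action $\alpha_z^{E/F}(x+F)=\alpha_z(x)+F$.

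Third, $q(D(\alpha_z))$ is dense in $E/F$: we know $D(\alpha_z)$ is dense in $E$ (for instance because $\mc R_n(x)\in D(\alpha_z)$ and $\mc R_n(x)\rightarrow x$ as $n\rightarrow\infty$), and $q$ is a contractive surjection, so it carries dense subsets of $E$ onto dense subsets of $E/F$. With all three hypotheses in hand, Theorem~\ref{thm:thm1} applied to $(\alpha_t^{E/F})$ yields that $q(D(\alpha_z)) = D(\alpha_z)+F$ is a core for $\alpha_z^{E/F}$.

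I do not anticipate a genuine obstacle here: the content is entirely contained in Theorem~\ref{thm:thm1}, and the only point requiring a moment's care is the bookkeeping that identifies $D(\alpha_z)+F$ with the image $q(D(\alpha_z))$ and confirms it lands in $D(\alpha_z^{E/F})$ with the expected action. As with Propositions~\ref{prop:three} and~\ref{prop:20}, the proof can therefore be stated in a single line once these three routine checks are noted.
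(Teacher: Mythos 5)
Your proof is correct and follows the same route as the paper: both verify that $D(\alpha_z)+F$ is dense in $E/F$, is $(\alpha_t^{E/F})$-invariant, and lies in $D(\alpha_z^{E/F})$ (via the remark preceding the statement), and then invoke Theorem~\ref{thm:thm1} for the quotient group. The paper's proof is just a terser version of exactly these checks.
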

\begin{proof}
As $D(\alpha_z)$ is dense in $E$, it follows that $D(\alpha_z)+F$ is dense in $E/F$.
As $D(\alpha_z)+F$ is also $(\alpha_t^{E/F})$-invariant, the result follows immediately
from Theorem~\ref{thm:thm1}.
\end{proof}

To say more, we consider a duality argument (that is, use the Hahn-Banach theorem).
The dual space of $\mc G(\alpha_z) \subseteq E\oplus_\infty E$ is
\[ E^*\oplus_1 E^* / \mc G(\alpha_z)^\perp
\quad\text{where}\quad
\mc G(\alpha_z)^\perp = \{ (-\lambda,\mu) : (\mu,\lambda)\in
\mc G(\alpha_z^*) \}. \]
Thus, the Banach space adjoint of the map $\mc G(\alpha_z) \rightarrow
\mc G(\alpha_z^{E/F})$ is
\[ \pi : F^\perp \oplus_1 F^\perp / \mc G(\alpha_z^{E/F})^\perp =
(E/F)^*\oplus_1 (E/F)^* / \mc G(\alpha_z^{E/F})^\perp \rightarrow
E^*\oplus_1 E^* / \mc G(\alpha_z)^\perp. \]
The proposition above implies that $\pi$ is injective.  In fact, this also follows
using the argument in the proof of Proposition~\ref{prop:4}.  Indeed, suppose that
$\lambda, \mu \in F^\perp$ with $\pi( (\mu,\lambda) + \mc G(\alpha^{E/F}_z)^\perp)=0$
so that $(\mu,\lambda) \in \mc G(\alpha_z)^\perp$, that is,
$(-\lambda, \mu) \in \mc G(\alpha_z^*)$.  As $(\alpha_t^{F^\perp})$ is the restriction of
$(\alpha_t^*)$ from $E^*$ to $F^\perp$, for all $t$, we see that $(-\lambda,\mu)\in
\mc G(\alpha^{F^\perp}_z)$.  That is, $(\mu,\lambda) \in \mc G(\alpha^{E/F}_z)^\perp$,
from which it follows that $\pi$ is injective.

However, we see no reason why $\pi$ need be bounded below, or an isometry, in
general.  We wish now to give a condition under which $\pi$ will be an isometry.

\begin{lemma}
With $E,F$ and $(\alpha_t)$ as above, suppose there is a norm-one projection
$e:E^*\rightarrow F^\perp$.  Then there is a norm-one projection
$p:E^*\rightarrow F^\perp$ with with $p \alpha_t^* = \alpha_t^* p$ for each $t$.
\end{lemma}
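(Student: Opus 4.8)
The plan is to obtain $p$ by \emph{averaging} the given projection $e$ over the dual group $(\alpha_t^*)$, using the amenability of $\mathbb R$ to make the average meaningful. First I would record that $F^\perp$ is $(\alpha_t^*)$-invariant: for $\mu\in F^\perp$ and $x\in F$ we have $\ip{\alpha_t^*(\mu)}{x} = \ip{\mu}{\alpha_t(x)} = 0$ since $\alpha_t(x)\in F$, so $\alpha_t^*(F^\perp)\subseteq F^\perp$; as $\alpha_t^*$ is invertible with inverse $\alpha_{-t}^*$, in fact $\alpha_t^*(F^\perp)=F^\perp$. Consequently each conjugate $e_t := \alpha_t^* e \alpha_{-t}^*$ is again a norm-one projection of $E^*$ onto $F^\perp$: its range lies in $\alpha_t^*(F^\perp)=F^\perp$, and for $\mu\in F^\perp$ we have $\alpha_{-t}^*(\mu)\in F^\perp$, so $e\alpha_{-t}^*(\mu)=\alpha_{-t}^*(\mu)$ and hence $e_t(\mu)=\mu$; being a contraction fixing the nonzero space $F^\perp$, $e_t$ has norm one.

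Next I would fix a two-sided invariant mean $m$ on the abelian (hence amenable) group $\mathbb R$, acting on \emph{all} bounded functions $\mathbb R\to\mathbb C$; this sidesteps any continuity or measurability question, since for fixed $\mu\in E^*$ and $x\in E$ the function $t\mapsto \ip{e_t(\mu)}{x}$ is merely bounded, by $\|\mu\|\,\|x\|$. I then define $p(\mu)\in E^*$ by
\[ \ip{p(\mu)}{x} = m_t\big( \ip{e_t(\mu)}{x} \big) \qquad (x\in E). \]
Linearity in $x$ together with the bound $|\ip{p(\mu)}{x}|\le\|\mu\|\,\|x\|$ gives $p(\mu)\in E^*$ and $\|p\|\le 1$; the conceptually important point is that by averaging only against vectors of $E$ we keep $p(\mu)$ inside $E^*$, rather than landing in $E^{***}$ as a naive Bochner average over the non-compact $\mathbb R$ would. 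That $p$ is the required projection is then immediate: if $x\in F$ then $\ip{e_t(\mu)}{x}=0$ for all $t$, whence $p(\mu)\in F^\perp$; and if $\mu\in F^\perp$ then $e_t(\mu)=\mu$ for all $t$, so $t\mapsto\ip{e_t(\mu)}{x}$ is the constant $\ip{\mu}{x}$ and the (normalized) mean returns it, giving $p(\mu)=\mu$. Hence $p^2=p$, $p(E^*)=F^\perp$, and $\|p\|=1$.

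The substantive step, and the one I expect to be the main obstacle, is the commutation $p\,\alpha_s^*=\alpha_s^*\,p$, where invariance of the mean enters. The key algebraic identity is the cocycle relation
\[ e_t\,\alpha_s^* = \alpha_s^*\,e_{t-s} \qquad (s,t\in\mathbb R), \]
which follows from $\alpha_{-t}^*\alpha_s^* = \alpha_{s-t}^*$ and $\alpha_s^*\alpha_{t-s}^* = \alpha_t^*$. Using it, for $\mu\in E^*$ and $x\in E$,
\[ \ip{e_t(\alpha_s^*\mu)}{x} = \ip{\alpha_s^* e_{t-s}(\mu)}{x} = \ip{e_{t-s}(\mu)}{\alpha_s(x)}, \]
which, viewed as a function of $t$, is the $s$-translate of $t\mapsto\ip{e_t(\mu)}{\alpha_s(x)}$. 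Applying the translation-invariant mean in $t$ yields $\ip{p(\alpha_s^*\mu)}{x} = m_t\big(\ip{e_t(\mu)}{\alpha_s(x)}\big) = \ip{p(\mu)}{\alpha_s(x)} = \ip{\alpha_s^* p(\mu)}{x}$, and since $x\in E$ is arbitrary we conclude $p\,\alpha_s^* = \alpha_s^*\,p$. The only care needed is to invoke a two-sided invariant mean, legitimate because $\mathbb R$ is abelian, so that invariance holds in the required direction.
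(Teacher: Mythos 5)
Your proof is correct and is essentially identical to the paper's: both average the conjugates $e_t = \alpha_t^* e \alpha_{-t}^*$ (the paper uses the opposite sign convention, which is immaterial) against a translation-invariant mean on the \emph{discrete} group $\mathbb{R}$, pairing against elements of $E$ so that the average stays in $E^*$, and then use invariance of the mean to get $p\alpha_s^* = \alpha_s^* p$. The only cosmetic difference is that you isolate the cocycle identity $e_t\alpha_s^* = \alpha_s^* e_{t-s}$ explicitly, where the paper performs the same substitution inline.
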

\begin{proof}
Consider $\mathbb R_d$, the real numbers considered as a discrete group under addition.
This group is amenable, so there is a state $\Lambda\in \ell^\infty(\mathbb R)^*$ which
is shift-invariant.  For $t\in\mathbb R$ define $e_t:E^*\rightarrow E^*$ by $e_t(\mu)
= (\alpha_{-t}^* \circ e \circ \alpha_t^*)(\mu)$.  Given $\mu\in F^\perp$, as
$\alpha_t^*(\mu) \in F^\perp$ and so $e\alpha_t^*(\mu) = \alpha_t^*(\mu)$, it follows that
$e_t(\mu)=\mu$.  Thus $e_t$ is a norm-one projection onto $F^\perp$.  For $\mu\in E^*$ and
$x\in E$, as $t\mapsto \ip{e_t(\mu)}{x}$ is bounded, the value
$\ip{\Lambda}{(\ip{e_t(\mu)}{x})}$ is well-defined.  Then
$x\mapsto \ip{\Lambda}{(\ip{e_t(\mu)}{x})}$ is linear and bounded, and so defines
$p(\mu)\in E^*$.

For $\mu\in F^\perp$ we have that $\ip{p(\mu)}{x}
= \ip{\Lambda}{(\ip{e_t(\mu)}{x})} = \ip{\Lambda}{(\ip{\mu}{x})} = \ip{\mu}{x}$
and so $p(\mu)=\mu$.  For any $\mu\in E^*$ and $x\in F$, as $\ip{e_t(\mu)}{x}=0$ for
all $t$, it follows that $p(\mu)\in F^\perp$.  Thus $p$ is a norm-one projection
$E^*\rightarrow F^\perp$.

Finally, for $s\in \mathbb R$ and arbitrary $\mu,x$ we have that
\begin{align*}
\ip{p\alpha_s^*(\mu)}{x} &= \ip{\Lambda}{(\ip{\alpha_{-t}^* e \alpha_{t+s}^*(\mu)}{x})}
= \ip{\Lambda}{(\ip{\alpha_{-(t-s)}^* e \alpha_{t}^*(\mu)}{x})} \\
&= \ip{\Lambda}{(\ip{\alpha_{-t}^* e \alpha_{t}^*(\mu)}{\alpha_s(x)})}
= \ip{p(\mu)}{\alpha_s(x)}.
\end{align*}
Thus $p\alpha_s^* = \alpha_s^*p$ as required.
\end{proof}

\begin{proposition}\label{prop:13}
With $E,F$ and $(\alpha_t)$ as above, suppose there is a norm-one projection
$p:E^*\rightarrow F^\perp$.  Then $\pi$ is an isometry, and so
$\mc G(\alpha_z) \rightarrow \mc G(\alpha_z^{E/F})$ is a metric surjection.
\end{proposition}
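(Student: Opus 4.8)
The plan is to manufacture, from the hypothesised projection, a norm-one projection on the dual that transports near-optimal approximants for one quotient norm to the other. First I would invoke the preceding lemma to replace $p$ by its average, so that without loss of generality $p\alpha_t^*=\alpha_t^*p$ for all $t$, and set $P=p\oplus p$ on $E^*\oplus E^*$: this is a norm-one projection onto $F^\perp\oplus F^\perp$ which fixes $F^\perp\oplus F^\perp$ pointwise. Since $\pi$ is the adjoint of a contraction we have $\|\pi\|\le1$, and the whole proposition reduces to the single claim
\[ P\big(\mc G(\alpha_z)^\perp\big)\subseteq \mc G(\alpha_z^{E/F})^\perp . \]
Granting this, $P$ descends to a norm-one map $\bar P$ between the two quotient spaces with $\bar P\circ\pi=\id$ (because $P$ fixes $F^\perp\oplus F^\perp$ and carries $\mc G(\alpha_z)^\perp$ into $\mc G(\alpha_z^{E/F})^\perp$); together with $\|\pi\|\le1$ this forces $\pi$ to be isometric, whence $\mc G(\alpha_z)\to\mc G(\alpha_z^{E/F})$ is a metric surjection by the adjoint criterion recorded in the Notation section.

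Next I would unwind the displayed claim by duality. By Theorem~\ref{thm:two} the relevant dual operator is the analytic generator $\alpha_z^{E^*}$ of $(\alpha_t^*)$, and $\mc G(\alpha_z)^\perp=\{(-\alpha_z^{E^*}(\rho),\rho):\rho\in D(\alpha_z^{E^*})\}$. Because $F^\perp$ is weak$^*$-closed and $(\alpha_t^*)$-invariant, an analytic extension living in $E^*$ with both boundary values in $F^\perp$ stays in $F^\perp$ (test against $y\in F$ as in the injectivity argument), so $(\sigma,\tau)\in\mc G(\alpha_z^{E/F})^\perp$ if and only if $\sigma,\tau\in F^\perp$ and $(\tau,-\sigma)\in\mc G(\alpha_z^{E^*})$. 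Applying this to $P(-\alpha_z^{E^*}(\rho),\rho)=(-p\alpha_z^{E^*}(\rho),p\rho)$, the claim becomes exactly the following Key Lemma: \emph{if $\rho\in D(\alpha_z^{E^*})$ then $p\rho\in D(\alpha_z^{E^*})$ with $\alpha_z^{E^*}(p\rho)=p\,\alpha_z^{E^*}(\rho)$}; that is, the averaged $p$ commutes with the analytic generator, not merely with the group.

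To prove the Key Lemma I would use smearing together with Proposition~\ref{prop:prop2} for the group $(\alpha_t^*)$, so that for one fixed $n$ it suffices to verify $\alpha_z^{E^*}(\mc R_n(p\rho))=\mc R_n(p\lambda)$, where $\lambda=\alpha_z^{E^*}(\rho)$ and $\mc R_n$ is the smearing operator of $(\alpha_t^*)$. Let $g$ be the weak$^*$-regular extension of $t\mapsto\alpha_t^*(\rho)$, so $g(z)=\lambda$. Using the integral formulas for $\mc R_n$ and $\alpha_z^{E^*}\mc R_n$ (valid for every element) and testing against an arbitrary $x\in E$, both sides become the Gaussian-weighted integral of $w\mapsto\ip{pg(w)}{x}$ with kernel $e^{-n^2(w-z)^2}$, taken along the bottom line $\mathbb R$ on the left and the top line $\mathbb R+z$ on the right. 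This scalar function is bounded and regular in the interior of $S(z)$, and on the boundaries the relation $p\alpha_t^*=\alpha_t^*p$ identifies its traces with $t\mapsto\ip{p\rho}{\alpha_t(x)}$ and $t\mapsto\ip{p\lambda}{\alpha_t(x)}$, which are continuous because $t\mapsto\alpha_t(x)$ is norm-continuous on $E$. The two integrals therefore agree by the very contour deformation justified in the proof of Theorem~\ref{thm:one}, giving the required identity and hence the Key Lemma.

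The main obstacle is precisely this last step: the averaged $p$ is only norm-bounded on $E^*$ and is \emph{not} weak$^*$-continuous, so one cannot simply pull it through the weak$^*$-convergent smearing integral, and $w\mapsto pg(w)$ is not obviously weak$^*$-regular. I expect the resolution to lie in never moving $p$ through a weak$^*$ limit globally, but instead testing against the predual $E$ and exploiting that $p$ commutes with the real flow $(\alpha_t^*)$; this shifts $p$ onto the \emph{fixed} functionals $\rho$ and $\lambda$ and converts the delicate boundary behaviour into the genuine norm-continuity of the orbit maps $t\mapsto\alpha_t(x)$ in $E$, after which Proposition~\ref{prop:prop2} upgrades the fixed-$n$ identity to membership in $D(\alpha_z^{E^*})$.
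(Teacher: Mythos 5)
Your reduction is correct and elegant: averaging $p$, forming $P=p\oplus p$, and observing that $P(\mc G(\alpha_z)^\perp)\subseteq\mc G(\alpha_z^{E/F})^\perp$ would yield a norm-one map $\bar P$ with $\bar P\circ\pi=\id$, forcing $\pi$ to be an isometry --- that part is fine, as is the identification of the two annihilators via Theorem~\ref{thm:two}. But the proof then stands or falls with your Key Lemma, that the averaged $p$ commutes with the analytic generator $\alpha_z^{E^*}$ on \emph{all} of its domain, and the argument you give for it has a genuine gap at the contour-deformation step. The scalar function you want to deform is $h(w)=\ip{pg(w)}{x}=\ip{g(w)}{p^*\hat x}$, where $\hat x\in E^{**}$ is the image of $x$. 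Since $p$ is not weak$^*$-continuous, $p^*\hat x$ does not lie in (the image of) the predual $E$, so the weak$^*$-regularity of $g$ tells you nothing about the behaviour of $h$ at the boundary of $S(z)$. What you actually know is: $h$ is bounded and analytic on the \emph{open} strip, and the two \emph{restrictions} of $h$ to the boundary lines are continuous (this is what $p\alpha_t^*=\alpha_t^*p$ buys, via norm-continuity of $t\mapsto\alpha_t(x)$). That is not enough: the contour deformation used in Theorem~\ref{thm:one} and Lemma~\ref{lem:three} requires continuity of the scalar function on the \emph{closed} strip, which there comes from pairing the regular function against a functional from the correct (pre)dual. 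Without continuity up to the boundary, the limits of $h$ from the interior need not agree with the algebraically defined boundary traces, so the two boundary integrals cannot be compared; this mismatch between interior limits and boundary restrictions is exactly the subtlety that Theorem~\ref{thm:one} and the $c_0$-counterexample following it are designed to expose. Your closing paragraph acknowledges the obstacle, but the proposed fix (shifting $p$ onto the fixed functionals) only re-establishes continuity of the boundary traces, which was never the issue.

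The paper's proof avoids ever needing your Key Lemma, and the order of operations is the point. Given $\mu,\lambda\in F^\perp$ with a near-optimal approximant $\phi\in D(\alpha_z^*)$, it first replaces $\phi$ by $\phi'=\mc R_n(\phi)$: by Proposition~\ref{prop:5} the orbit of $\phi'$ is norm-continuous, so by Lemma~\ref{lem:three} the whole analytic extension $f$ of $t\mapsto\alpha_t^*(\phi')$ is \emph{norm}-regular. Only then is $p$ applied: since $p$ is norm-continuous and commutes with the group, $p\circ f$ is again norm-regular and extends the orbit of $p(\phi')$, giving $p(\phi')\in D(\alpha_z^{F^\perp})$ with $\alpha_z^{F^\perp}(p(\phi'))=p(\alpha_z^*(\phi'))$ --- no weak$^*$ limit is ever pulled through $p$. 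Contractivity of $p$ and $\mc R_n$ then bounds the quotient norm of the \emph{smeared} pair $(\mc R_n(\mu),\mc R_n(\lambda))$ in $\mc G(\alpha_z^{E/F})^*$, and the estimate for $(\mu,\lambda)$ itself follows by letting $n\to\infty$ and using weak$^*$-lower semicontinuity of the dual norm. In short: the paper only ever applies $p$ to smeared elements, whose extensions are norm-regular, whereas your route demands commutation of $p$ with $\alpha_z^{E^*}$ on arbitrary domain elements --- a statement the paper never proves, and which your method cannot reach.
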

\begin{proof}
By the lemma, we may suppose that $p \alpha_t^* = \alpha_t^* p$ for each $t$.
Let $\mu,\lambda\in F^\perp$ with $\|(\mu,\lambda) + \mc G(\alpha_z)^\perp\| < 1$.
We aim to show that $\|(\mu,\lambda) + \mc G(\alpha_z^{E/F})^\perp\|\leq 1$.
The hypothesis is that there is $\phi\in D(\alpha_z^*)$ with
$\|\mu - \alpha_z^*(\phi)\| + \|\lambda + \phi\| < 1$.

For $n>0$ form $\mc R_n$ on $E^*$ using $(\alpha_t^*)$.
As $\mc R_n$ is norm-decreasing, we have that
$\|\mc R_n(\mu) - \mc R_n(\alpha_z^*(\phi))\| + \|\mc R_n(\lambda) + \mc R_n(\phi)\| < 1$.
Set $\phi' = \mc R_n(\phi)$, and recall that $\mc R_n(\alpha_z^*(\phi))
= \alpha_z^*(\phi')$.

By Proposition~\ref{prop:5}, we know that $t\mapsto \alpha_t^*(\phi')$ is norm-continuous,
and similarly $t\mapsto \alpha_t^*\alpha_z^*(\phi') =
\alpha_t^*(\mc R_n(\alpha_z^*(\phi)))$ is norm-continuous.
Let $f:S(z)\rightarrow E^*$ be the analytic extension of $t\mapsto \alpha_t^*(\phi')$
so $f$ is weak$^*$-regular and norm-continuous on $\mathbb R$ and $z+\mathbb R$.
By Lemma~\ref{lem:three}, it follows that $f$ is norm-regular (this could also be proved
by adapting the proof of Proposition~\ref{prop:5} to show that
$z\mapsto \alpha_z(\mc R_n(\phi))$ is norm-continuous.)
Hence also $w\mapsto p(f(w))$ is norm-regular.  It follows that $p(\phi')
\in D(\alpha_z^{F^\perp})$ with $\alpha_z^{F^\perp}(\phi') = p(f(z)) =
p(\alpha_z^*(\phi'))$.

As $p$ is a contraction, we have that
\[ \| \mc R_n(\mu) - \alpha_z^{F^\perp}(\phi'') \| +
\| \mc R_n(\lambda) + \phi'' \| < 1, \]
for $\phi'' = p(\phi') \in F^\perp$.  This shows that
$\| (\mc R_n(\mu), \mc R_n(\lambda)) + \mc G(\alpha_z^{E/F})^\perp \| < 1$, that is,
the norm of $(\mc R_n(\mu), \mc R_n(\lambda))$ in $\mc G(\alpha_z^{E/F})^*$ is at most
$1$.  As $\mc R_n(\mu)\rightarrow \mu$ weak$^*$, as $n\rightarrow\infty$, and the same
for $\lambda$, by taking weak$^*$-limits we conclude that the norm of
$(\mu,\lambda)$ in $\mc G(\alpha_z^{E/F})^*$ is at most $1$, as required.

That $\mc G(\alpha_z) \rightarrow \mc G(\alpha_z^{E/F})$ is a metric surjection
follows from the Hahn-Banach theorem.
\end{proof}

\subsection{Kaplansky-like results}\label{sec:kap_dual}

Motivated by Proposition~\ref{prop:20} and the results of Section~\ref{sec:kap},
we might wonder if the unit ball of $\mc G(\alpha_z^{M_*})$ is weak$^*$-dense in
the unit ball of $\mc G(\alpha_z^{A^*})$.  This unfortunately seems subtle, and we
can only give a partial answer.

Let us norm $\mc G(\alpha_z^{A^*})$ as a subspace of $A^* \oplus_\infty A^*$;
similar remarks would apply to other choices of norm.
Then $\mc G(\alpha_z^{A^*})$ is the dual space $A\oplus_1 A / X_A$ where
$X_A = {}^\perp \mc G(\alpha_z^{A^*}) = \{ (-b, a) : (a,b) \in \mc G(\alpha_z^{A}) \}$.
Similarly, $\mc G(\alpha_z^{M_*})^* = M \oplus_1 M / X_M$ where
$X_M = \{ (-y, x) : (x,y) \in \mc G(\alpha_z^{M}) \}$.
The Hahn-Banach theorem thus shows that the unit ball of $\mc G(\alpha_z^{M_*})$ is
weak$^*$-dense in the unit ball of $\mc G(\alpha_z^{A^*})$ if and only if
$\mc G(\alpha_z^{M_*})$ norms $A\oplus_1 A / X_A$.  This in turn is equivalent to
$A\oplus_1 A / X_A \rightarrow M\oplus_1 M / X_M$ being an isometry.

\begin{lemma}\label{lem:20}
Let $A_0\subseteq A$ be a dense subset.  The following are equivalent:
\begin{enumerate}
\item\label{lem:20:1} $A\oplus_1 A / X_A \rightarrow M\oplus_1 M / X_M$
is an isometry;
\item\label{lem:20:2} whenever $a\in A_0, (x,y)\in \mc G(\alpha^M_z)$
are such that $\|a-y\| + \|x\| <1$ there is
$(b,c) \in \mc G(\alpha^A_z)$ with $\|a-c\| + \|b\| <1$.
\end{enumerate}
\end{lemma}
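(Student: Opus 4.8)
The plan is to work entirely with the two quotient (semi)norms on $A$ coming from the two descriptions of the dual balls, and to reduce the full isometry statement to a single-variable one. Unwinding the definition of $X_A$, the quotient norm of a representative $(a_1,a_2)\in A\oplus_1 A$ is
\[ \|(a_1,a_2)+X_A\| = \inf_{b\in D(\alpha^A_z)}\big(\|a_1+\alpha^A_z(b)\| + \|a_2-b\|\big), \]
and similarly with $A$ replaced by $M$ throughout. Condition~(1) is exactly the assertion that these two numbers coincide for every $(a_1,a_2)\in A\oplus A$; since $\mc G(\alpha^A_z)\subseteq\mc G(\alpha^M_z)$, the infimum over the $M$-graph is automatically the smaller, so (1) amounts to the reverse inequality, which is the content of the substantive direction $(2)\Rightarrow(1)$. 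Writing $N_A(a)=\inf_{b\in D(\alpha^A_z)}(\|a-\alpha^A_z(b)\|+\|b\|)$ and $N_M(a)$ for its $M$-analogue, a sign change $b\mapsto -b$ identifies $\|(a,0)+X_A\|=N_A(a)$ and $\|(a,0)+X_M\|=N_M(a)$; both $N_A,N_M$ are homogeneous, are $1$-Lipschitz for the norm of $A$ (take $b=0$), and satisfy $N_A\ge N_M$.

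For $(1)\Rightarrow(2)$: given $a\in A_0$ and $(x,y)\in\mc G(\alpha^M_z)$ with $\|a-y\|+\|x\|<1$, the element $-x$ witnesses $\|(a,0)+X_M\|\le\|a-\alpha^M_z(x)\|+\|x\|<1$, so by (1) also $\|(a,0)+X_A\|=N_A(a)<1$. Hence there is $b\in D(\alpha^A_z)$ with $\|a-\alpha^A_z(b)\|+\|b\|<1$, and $(b,\alpha^A_z(b))\in\mc G(\alpha^A_z)$ is the required pair $(b,c)$.

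For $(2)\Rightarrow(1)$ I would first upgrade (2) to the clean statement $N_A(a)=N_M(a)$ for all $a\in A$. Condition~(2) says precisely that $N_M(a)<1\Rightarrow N_A(a)<1$ for $a\in A_0$. Fix $a\in A$ and $r>N_M(a)$; by homogeneity $N_M(a/r)<1$, so choosing $a_0\in A_0$ with $\|a/r-a_0\|<\epsilon$ for $\epsilon<1-N_M(a/r)$ gives $N_M(a_0)<1$ by the Lipschitz bound, whence $N_A(a_0)<1$ by (2), whence $N_A(a/r)<1+\epsilon$; letting $\epsilon\downarrow0$ and then $r\downarrow N_M(a)$ yields $N_A(a)\le N_M(a)$, so equality. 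Finally I would remove the restriction to representatives $(a,0)$: if $a_2\in D(\alpha^A_z)$ then subtracting $(-\alpha^A_z(a_2),a_2)\in X_A$ shows $(a_1,a_2)+X_A=(a_1+\alpha^A_z(a_2),0)+X_A$, and since $\alpha^M_z$ agrees with $\alpha^A_z$ on $D(\alpha^A_z)$ the same shift, now by an element of $X_M$, gives $(a_1,a_2)+X_M=(a_1+\alpha^A_z(a_2),0)+X_M$; thus both quotient norms of $(a_1,a_2)$ equal $N_A$, respectively $N_M$, of the single element $a_1+\alpha^A_z(a_2)$, and so agree. As $D(\alpha^A_z)$ is dense in $A$ and both quotient norms are $1$-Lipschitz on $A\oplus_1 A$, passing to a limit extends the equality to arbitrary $(a_1,a_2)\in A\oplus A$, which is (1).

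The main obstacle is the $(2)\Rightarrow(1)$ upgrade. Condition~(2) is stated only at the single normalisation ``$<1$'' and only for $a$ in the dense set $A_0$, whereas (1) is a homogeneous equality of norms for all $a\in A$, indeed for all $(a_1,a_2)$. Reconciling these requires simultaneously exploiting homogeneity to rescale to level $1$, the $1$-Lipschitz continuity of $N_A$ and $N_M$ to move from $A_0$ to $A$ without losing the strict inequalities, and the invariance of the quotient picture under the $X_A$/$X_M$-shift to collapse two variables to one; keeping the strict inequalities aligned through these three steps is the delicate point.
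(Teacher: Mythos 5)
Your proposal is correct and takes essentially the same route as the paper: both proofs rest on identifying $\|(a,0)+X_A\|$ and $\|(a,0)+X_M\|$ with the one-variable quantities $N_A(a)$, $N_M(a)$, using homogeneity and $1$-Lipschitz continuity of the quotient norms to upgrade the level-one implication in (2) to a norm equality, collapsing two coordinates to one by shifting by an element of the graph, and finishing with density of analytic elements (the paper produces these explicitly via the smearing operators $\mc R_n(b)\in D(\alpha^A_z)$ with $\mc R_n(b)\to b$, which is exactly your appeal to density of $D(\alpha^A_z)$ in $A$). The only difference is organizational --- the paper first shows the cosets $(a_0,0)+X_A$ with $a_0\in A_0$ are dense in $A\oplus_1 A/X_A$ and then invokes contractivity on a dense set, whereas you reorder these steps --- and this changes nothing of substance.
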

\begin{proof}
Suppose that (\ref{lem:20:1}) holds, and that we have $a,(x,y)$ as in 
(\ref{lem:20:2}).  Then $\|(a,0) + X_M\|<1$ so by (\ref{lem:20:1}), we have that also
$\|(a,0)+X_A\|<1$, and hence there are $(b,c)\in \mc G(\alpha^A_z)$ with
$\|a-c\| + \|b\| <1$, as required.

Conversely, suppose that (\ref{lem:20:2}) holds.  As $A\oplus_1 A / X_A \rightarrow
M\oplus_1 M / X_M$ is always norm-decreasing, it follows easily that (\ref{lem:20:2})
implies that $\|(a,0)+X_M\| = \|(a,0)+X_A\|$ for $a\in A_0$.  It hence suffices to show
that $\{ (a,0)+X_A : a\in A_0 \}$ is norm dense in $A\oplus_1 A/X_A$.  Choose
$(a,b) \in A\oplus A$ and $\epsilon>0$.  There is $n$ with $\|\mc R_n(b)-b\|<\epsilon$.
Then $(\alpha_z\mc R_n(b), -\mc R_n(b)) \in X_A$ and so $(a,b) + X_A =
(a+\alpha_z\mc R_n(b), b-\mc R_n(b)) + X_A$.  As $A_0$ is dense, there is
$a_0 \in A_0$ with $\|a+\alpha_z\mc R_n(b) - a_0\|<\epsilon$.  It follows that
$\| (a,b) - (a_0,0) + X_A \| < 2\epsilon$, as required.
\end{proof}

\begin{proposition}\label{prop:21}
Let $A_0\subseteq A$ be a dense subset.  Suppose that for each $a\in A_0$ and $\epsilon>0$
there are contractive linear maps $T,S:M\rightarrow A$ with $\|S(a)-a\|<\epsilon$ and
with $(T(x),S(y))\in \mc G(\alpha^A_z)$ for each $(x,y)\in \mc G(\alpha^M_z)$.
Then the unit ball of $\mc G(\alpha^{M_*}_z)$ is weak$^*$-dense in the unit ball of
$\mc G(\alpha^{A^*}_z)$.
\end{proposition}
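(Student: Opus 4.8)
The plan is to reduce the statement entirely to the combinatorial criterion supplied by Lemma~\ref{lem:20}. The duality discussion preceding that lemma shows that the weak$^*$-density of the unit ball of $\mc G(\alpha_z^{M_*})$ in the unit ball of $\mc G(\alpha_z^{A^*})$ is equivalent to the map $A\oplus_1 A/X_A \rightarrow M\oplus_1 M/X_M$ being an isometry, which is exactly condition~(\ref{lem:20:1}) of Lemma~\ref{lem:20}. Hence it suffices to verify condition~(\ref{lem:20:2}): given $a\in A_0$ and $(x,y)\in\mc G(\alpha_z^M)$ with $\|a-y\|+\|x\|<1$, I must produce $(b,c)\in\mc G(\alpha_z^A)$ with $\|a-c\|+\|b\|<1$.

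To do this I would first extract the available slack by setting $\eta = 1-(\|a-y\|+\|x\|)>0$, and then invoke the hypothesis with some chosen $\epsilon\in(0,\eta)$ to obtain contractive linear maps $T,S:M\rightarrow A$ satisfying $\|S(a)-a\|<\epsilon$ and $(T(u),S(v))\in\mc G(\alpha_z^A)$ for every $(u,v)\in\mc G(\alpha_z^M)$. Setting $b=T(x)$ and $c=S(y)$, applying this last property to $(u,v)=(x,y)$ immediately gives $(b,c)\in\mc G(\alpha_z^A)$, so the candidate is forced on us by the hypothesis.

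It then remains only to run the norm estimate. Since $T$ is contractive, $\|b\|=\|T(x)\|\leq\|x\|$; and since $S$ is contractive and linear, $\|a-c\|=\|a-S(y)\|\leq\|a-S(a)\|+\|S(a-y)\|\leq\epsilon+\|a-y\|$. Adding these yields $\|a-c\|+\|b\|\leq\epsilon+\|a-y\|+\|x\|=\epsilon+(1-\eta)<1$ by the choice of $\epsilon<\eta$. This verifies condition~(\ref{lem:20:2}), and Lemma~\ref{lem:20} then delivers the conclusion.

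I do not anticipate a genuine obstacle: all the substantive work has already been carried out in the duality reformulation and in Lemma~\ref{lem:20}, and the hypothesis on $T,S$ is engineered precisely to feed condition~(\ref{lem:20:2}). The single point needing care is to spend the strict-inequality slack $\eta$ \emph{before} applying the hypothesis, choosing $\epsilon<\eta$ so that the approximation error $\|a-S(a)\|$ introduced by $S$ does not destroy the strict estimate.
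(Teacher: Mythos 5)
Your proposal is correct and follows essentially the same route as the paper: both verify condition~(2) of Lemma~\ref{lem:20} by taking $(b,c)=(T(x),S(y))$ and running the triangle-inequality estimate $\|a-S(y)\|+\|T(x)\|\leq\|a-S(a)\|+\|S(a-y)\|+\|T(x)\|<\epsilon+\|a-y\|+\|x\|$. Your only (welcome) refinement is quantifying the slack $\eta$ and fixing $\epsilon<\eta$ in advance, where the paper simply says ``for $\epsilon>0$ sufficiently small.''
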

\begin{proof}
We verify condition (\ref{lem:20:2}) in Lemma~\ref{lem:20}.  For $a\in A_0$ and
$(x,y)\in\mc G(\alpha^M_z)$ with $\|a-y\|+\|x\|<1$, choose $\epsilon>0$, and pick
$T,S$ as in the hypothesis.  Then $(T(x),S(y))\in\mc G(\alpha^A_z)$ and
\[ \|a - S(y)\| + \|T(x)\| \leq \|a-S(a)\| + \|S(a-y)\| + \|T(x)\|
< \epsilon + \|a-y\| + \|x\|. \]
For $\epsilon>0$ sufficiently small, we have $\|a-c\|+\|b\|<1$ for $(b,c) = (T(x),S(y))
\in\mc G(\alpha^A_z)$ hence showing condition (\ref{lem:20:2}).
\end{proof}

Let us make links with the machinery developed in Section~\ref{sec:kap}.  Firstly,
another way to prove the main theorem in that section would be to use the central
projection $p\in A^{**}$ to define maps $T,S$ with the properties in
Proposition~\ref{prop:21}.  Secondly, we showed that if $M_*$ is identified with
$pA^*$, so identifying $M$ with $pA^{**}$, then $\mc G(\alpha^M_z)$ can be
identified with $(p\oplus p)\mc G(\alpha^A_z)^{\perp\perp} \subseteq
\mc G(\alpha^A_z)^{\perp\perp}$.  One can easily show that then
\[ \|(a,b)+X_M\| = \| (pb, -pa) + \mc G(\alpha^A_z)^{\perp\perp} \|, \]
the latter norm being on $A^{**}\oplus_1 A^{**}  / \mc G(\alpha^A_z)^{\perp\perp}
= (A\oplus_1 A/\mc G(\alpha^A_z))^{**}$.  Indeed, if $(z,w)\in
\mc G(\alpha^A_z)^{\perp\perp}$ with $\|pb+z\| + \|pa-w\|<1$ then
also $\|pb+pz\| + \|pa-pw\|<1$.  Then $(pz,pw)$ is identified with
$(x,y)\in\mc G(\alpha^M_z)$, so $(-y,x)\in X_M$, and $\|a-y\| + \|b+x\|<1$,
so $\|(a,b)+X_M\|<1$; and one can reverse this argument.

Note that the map $A\rightarrow A^{**}; a\mapsto pa$
is an isometry (as $A\rightarrow M$ is an isometry) but in general this is not the
canonical map $A\rightarrow A^{**}$.  Thus, showing that
$A\oplus_1A / X_A \rightarrow M\oplus_1M/X_M$ is an isometry is equivalent to showing
that $\|(a,b)+\mc G(\alpha^A_z)\| = \|(a,b)+\mc G(\alpha^A_z)^{\perp\perp}\|
= \|(pa,pb) + \mc G(\alpha^A_z)^{\perp\perp}\|$ for all $a,b\in A$.  This in turn
requires us to have knowledge of $\|(p^\perp a,p^\perp b) +
\mc G(\alpha^A_z)^{\perp\perp}\|$ where $p^\perp = 1-p$.  The link with
Proposition~\ref{prop:21} is that the maps $T,S$ there could
be assembled into a net, and then a weak$^*$-limit taken,
thus obtaining $T,S:M=pA^{**}\rightarrow A^{**}$ with $S(pa)=a$ for $a\in A$, and mapping
$\mc G(\alpha^M_z)$ to $\mc G(\alpha^A_z)^{\perp\perp}$.  We do not see a way to push
this line of argument further in general.

\subsection{Implemented automorphism groups}\label{sec:iag}

Let $M$ be a von Neumann algebra.  We recall the notion of a \emph{standard form}
for $M$, \cite{h}, \cite[Chapter~IX]{tak2}, which we shall denote by
$(M, L^2(M), J_M, L^2(M)^+)$.  By \cite[Theorem~3.2]{h} for any (weak$^*$-continuous)
automorphism $\alpha$ of $M$, there is a unique $u$, a unitary on $L^2(M)$, with
$\alpha(x) = uxu^*$ and $J_M = u J_M u^*, u(L^2(M)^+) = L^2(M)^+$.  Furthermore,
if $(\alpha_t)$ is a one-parameter automorphism group of $M$ and $(u_t)$ the resulting
unitaries, then $(u_t)$ is strongly continuous, \cite[Corollary~3.6]{h}.

The following is a generalisation of a similar result of ours, \cite[Lemma~3]{ds};
but that proof is not correct, as it requires taking a linear span.  Indeed, the
following could also be shown by adapting the (corrected) proof of \cite[Lemma~3]{ds}.

\begin{proposition}\label{prop:14}
With $M, (\alpha_t), (u_t)$ as above, consider
\[ D = \lin \{ \omega_{\xi,\eta} : \xi \in D(u_{i/2}), \eta\in D(u_{-i/2}) \}
\subseteq M_*. \]
Then $D$ is a core for $\mc G(\alpha^{M_*}_{-i/2})$.
\end{proposition}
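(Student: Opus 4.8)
The preadjoint group $(\alpha^{M_*}_t)$ is norm-continuous (Section~\ref{sec:duality}), so throughout I would use norm-regular extensions. For $\xi,\eta\in L^2(M)$ write $\omega_{\xi,\eta}$ for the vector functional $x\mapsto\ip{x\xi}{\eta}$. Since $\alpha_t=\mathrm{Ad}(u_t)$, a direct computation gives
\[ \alpha^{M_*}_t(\omega_{\xi,\eta})(x) = \ip{\alpha_t(x)\xi}{\eta} = \ip{xu_{-t}\xi}{u_{-t}\eta}, \]
so that $\alpha^{M_*}_t(\omega_{\xi,\eta})=\omega_{u_{-t}\xi,\,u_{-t}\eta}$. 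The plan is to apply Theorem~\ref{thm:thm1} with $E=M_*$, $z=-i/2$, and the subspace $D$: it then suffices to establish three things, namely that $D\subseteq D(\alpha^{M_*}_{-i/2})$, that $D$ is $(\alpha^{M_*}_t)$-invariant, and that $D$ is norm-dense in $M_*$.

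For membership, fix $\xi\in D(u_{i/2})$ and $\eta\in D(u_{-i/2})$ and define $f:S(-i/2)\to M_*$ by $f(w)=\omega_{u_{-w}\xi,\,u_{-\overline w}\eta}$. Recall $S(-i/2)=\{w:-\tfrac12\le\im w\le 0\}$; hence $\im(-w)\in[0,\tfrac12]$ and $\im(-\overline w)\in[-\tfrac12,0]$, so by property~(2) of analytic extensions the vectors $u_{-w}\xi$ and $u_{-\overline w}\eta$ are defined on the whole strip, these conditions being exactly what is needed. Now the orbit extension $w\mapsto u_{-w}\xi$ is norm-regular, while $w\mapsto u_{-\overline w}\eta$ depends anti-holomorphically on $w$; and since $(\zeta_1,\zeta_2)\mapsto\omega_{\zeta_1,\zeta_2}$ is bounded, linear in $\zeta_1$ and conjugate-linear in $\zeta_2$, the two conjugations cancel. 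I would check from this that $f$ is norm-continuous and that $w\mapsto\ip{x}{f(w)}$ is holomorphic on the interior for every $x\in M=(M_*)^*$, so $f$ is norm-regular. Since $f(t)=\alpha^{M_*}_t(\omega_{\xi,\eta})$, this gives $\omega_{\xi,\eta}\in D(\alpha^{M_*}_{-i/2})$ with
\[ \alpha^{M_*}_{-i/2}(\omega_{\xi,\eta}) = f(-i/2) = \omega_{u_{i/2}\xi,\,u_{-i/2}\eta}. \]

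Invariance is then immediate: by property~(1), $u_{-t}$ commutes with $u_{\pm i/2}$ and so preserves $D(u_{i/2})$ and $D(u_{-i/2})$, whence $\alpha^{M_*}_t(\omega_{\xi,\eta})=\omega_{u_{-t}\xi,\,u_{-t}\eta}\in D$. For density, I would use that $D(u_{i/2})$ and $D(u_{-i/2})$ are norm-dense in $L^2(M)$ together with the estimate $\|\omega_{\xi,\eta}-\omega_{\xi',\eta'}\|\le\|\xi-\xi'\|\,\|\eta\|+\|\xi'\|\,\|\eta-\eta'\|$, so that every vector functional is a norm-limit of members of $D$; since $M$ acts on $L^2(M)$, the linear span of all vector functionals $\omega_{\xi,\eta}$ is norm-dense in $M_*$, and hence so is $D$. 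Theorem~\ref{thm:thm1} then concludes that $D$ is a core for $\alpha^{M_*}_{-i/2}$.

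The one delicate point is the regularity of $f$ in the second step: one must track carefully which dependence on $w$ is holomorphic and which is anti-holomorphic, and confirm that the hypotheses $\xi\in D(u_{i/2})$, $\eta\in D(u_{-i/2})$ are precisely matched to the two boundary heights of $S(-i/2)$. The invariance and density steps, and the appeal to Theorem~\ref{thm:thm1}, are then routine.
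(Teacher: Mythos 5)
Your proof is correct, and its overall skeleton---showing $D\subseteq D(\alpha^{M_*}_{-i/2})$, invariance of $D$ under $(\alpha^{M_*}_t)$, density of $D$ in $M_*$, and then invoking Theorem~\ref{thm:thm1}---is exactly the paper's. Where you genuinely differ is in the key membership step. You construct the analytic extension explicitly, $f(w)=\omega_{u_{-w}\xi,\,u_{-\overline w}\eta}$ on $S(-i/2)$, and verify norm-regularity by observing that the anti-holomorphic dependence of the second entry cancels against the conjugate-linearity of $(\zeta_1,\zeta_2)\mapsto\omega_{\zeta_1,\zeta_2}$; this needs only the definition of analytic extension, property~(2) (so that $u_{-w}\xi$ and $u_{-\overline w}\eta$ are defined on the whole strip---your bookkeeping of the two boundary heights is right), and the fact that weak holomorphy tested against $M=(M_*)^*$ suffices. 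The paper instead argues by duality: it pairs the candidate graph element $(\omega_{\xi,\eta},\omega_{u_{i/2}\xi,u_{-i/2}\eta})$ against elements $(-y,x)$ with $(x,y)\in\mc G(\alpha^M_{-i/2})$ and shows the pairing vanishes, using Proposition~\ref{prop:foura} (so that $y$ is the closure of $u_{-i/2}xu_{i/2}$ with $D(u_{-i/2}xu_{i/2})=D(u_{i/2})$) together with self-adjointness of $u_{-i/2}$; membership in $\mc G(\alpha^{M_*}_{-i/2})$ then follows because that graph is the pre-annihilator of $j\mc G(\alpha^M_{-i/2})$, via Theorem~\ref{thm:two} and the discussion in Section~\ref{sec:duality}. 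The trade-off: the paper's computation is three lines and avoids all holomorphy bookkeeping, but it leans on the implemented-group machinery (Proposition~\ref{prop:foura}, ultimately \cite[Theorem~6.2]{cz}) and on the duality of generators; your construction is more self-contained at the level of definitions and hands you the extension, hence the formula $\alpha^{M_*}_{-i/2}(\omega_{\xi,\eta})=\omega_{u_{i/2}\xi,\,u_{-i/2}\eta}$, explicitly, at the price of the holomorphic/anti-holomorphic matching you rightly flag as the delicate point. Both routes yield the same formula and the same conclusion.
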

\begin{proof}
We first note that $D\subseteq D(\alpha^{M_*}_{-i/2})$.  Indeed, if
$\xi \in D(u_{i/2}), \eta\in D(u_{-i/2})$ and $x\in D(\alpha_{-i/2})$ then by
Proposition~\ref{prop:foura} we have that $D(u_{-i/2} x u_{i/2}) = D(u_{i/2})$
and $u_{-i/2} x u_{i/2}$ is closable with closure $y = \alpha_{-i/2}(x)$.
Then
\[ \ip{(-y,x)}{(\omega_{\xi,\eta}, \omega_{u_{i/2}\xi, u_{-i/2}\eta})}
= (u_{-i/2}\eta|xu_{i/2}\xi) - (\eta|y\xi)
= (\eta|u_{-i/2}xu_{i/2}\xi) - (\eta|y\xi) = 0. \]
This shows that $(\omega_{\xi,\eta}, \omega_{u_{i/2}\xi, u_{-i/2}\eta})
\in \mc G(\alpha^{M_*}_{-i/2})$ as required.

As $u_t u_z = u_z u_t$ for any $t\in\mathbb R, z\in\mathbb C$, it follows that
$D$ is $(u_t)$-invariant.  As $D(u_{-i/2})$ and $D(u_{i/2})$ are dense in $H$,
it follows that $D$ is dense in $M_*$.
The result now follows from Theorem~\ref{thm:thm1}.
\end{proof}

It would be interesting to characterise all of $\mc G(\alpha^{M_*}_{-i/2})$ (in
a similar way) and not just a core.
As $M$ is in standard form, we know that $M_* = \{ \omega_{\xi,\eta}:
\xi,\eta\in L^2(M) \}$, with no linear span required.  It is tempting to believe
that this should allow us to improve the above result by removing the linear span.
However, the following example shows that, naively, this will not work (though in
the special setting of the proposition, the result might still hold-- we have been
unable to decide this).

\begin{example}
We construct a one-parameter isometry group $(\alpha_t)$ on $\ell^1 = (\ell^\infty)_*$
and a dense set $D\subseteq\ell^1$ which is $(\alpha_t)$-invariant, with $D\subseteq
D(\alpha_{-i})$ but such that $D' = \{ (x,\alpha_{-i}(x)) : x\in D \}$ is not dense in
$\mc G(\alpha_{-i})$.  As in Example~\ref{ex:four} we can embed this example into the
predual of an automorphism group on a von Neumann algebra.

Define $\alpha_t(x) = (e^{int} x_n)$ for $x = (x_n)_{n\geq 1}\in\ell^1$.
Thus $D(\alpha_{-i}) = \{ x=(x_n)\in\ell^1 : \sum_n e^n |x_n| < \infty \}$.
Let $(m(k))_{k\geq 1}$ be a strictly increasing sequence with $m(1)>1$.  Define $D
\subseteq\ell^1$ by saying that $x=(x_n)\in D$ if and only if $x\in D(\alpha_{-i})$ and
there exists $N$ so that $|x_1| = e^{1+m(N)}|x_{1+m(N)}|$.
Clearly $D$ is $(\alpha_t)$-invariant.

Given $(x_n)\in\ell^1$ of finite support, that is, there is $K\geq 1$ with $x_n=0$ for
$n>K$, then define $y=(y_n)$ by $y_n = x_n$ for $n\leq K$, $y_{1+m(K)} = e^{-1-m(K)}y_1$,
and $y_n=0$ otherwise.  As $(m(k))$ is strictly increasing and $m(1)>1$, we have that
$1+m(K)>K$ so $(y_n)$ is well-defined.  As $\sum_n |y_n|e^n = \sum_{n\leq K} |x_n|e^n
+ |y_1| < \infty$ we see that $y\in D$.  Clearly $\|x-y\| = e^{-1-m(K)}|x_1|$ which is
arbitrarily small (by choosing $K$ large).  We conclude that $D$ is dense in $\ell^1$.

Let $\delta_1\in\ell^1$ be the sequence which is $1$ at $1$ and $0$ otherwise.
Suppose towards a contradiction that there is $x\in D$ with $\|
(\delta_1, \alpha_{-i}(\delta_1)) - (x, \alpha_{-i}(x)) \|<\epsilon$.
Then $|x_1-1| < \epsilon$ and $\|e^1\delta_1 - \alpha_{-i}(x)\| < \epsilon$.
As $x\in D$ there is $N$ with $|x_{1+m(N)}| = e^{-1-m(N)} |x_1|$ so
$e^{1+m(N)} |x_{1+m(N)}| = |x_1| > 1-\epsilon$ showing that
$\|e^1\delta_1 - \alpha_{-i}(x)\| > 1-\epsilon$ which is a contradiction
if $\epsilon<1/2$.
\end{example}

\subsection{Tensor products}\label{sec:tp}

While not directly related to duality, we wish to briefly consider tensor products.
This was also done in \cite[Section~4]{kus1}, and so we shall just given an overview.
Our aim is to demonstrate how ``uniqueness'' results for analytic generators, compare
\cite[Proposition~F1]{mnw} or \cite[Lemma~4.4]{h1} for example, can be shown using the
smearing technique, instead of Carlson's Lemma from complex analysis.

For ease, we shall simply work with the Banach space projective tensor product,
see \cite[Chapter~2]{ryan} for example; see \cite[Section~4]{kus1} for more general
considerations.  Let $E,F$ be Banach spaces,
and $(\alpha_t), (\beta_t)$ be one-parameter isometry groups on $E,F$ respectively.
Then on the projective tensor product $E\proten F$, it is clear that $\gamma_t=
\alpha_t\otimes\beta_t$ defines a one-parameter isometry group.  By considering analytic
extensions, it follows that if $x\in D(\alpha_z), y\in D(\beta_z)$ then $x\otimes y\in
D(\gamma_z)$ with $\gamma_z(x\otimes y) = \alpha_z(x) \otimes \beta_z(y)$.  It follows
that the algebraic tensor product $D(\alpha_z)\otimes D(\beta_z)$ is a subset of
$D(\gamma_z)$.  As $D(\alpha_z)\otimes D(\beta_z)$ is $(\gamma_t)$-invariant and dense
in $E\proten F$, Theorem~\ref{thm:thm1} shows that $D(\alpha_z)\otimes D(\beta_z)$ is a
core for $\gamma_z$.

We state the following for a linear map, but there is an obvious extension
(following \cite[Proposition~F1]{mnw}) to multi-linear maps.

\begin{proposition}
Let $\theta:E\rightarrow F$ be a bounded linear map with $\theta \alpha_{-i}
\subseteq \beta_{-i}\theta$.  Then $\theta\alpha_t = \beta_t\theta$ for all
$t\in\mathbb R$.
\end{proposition}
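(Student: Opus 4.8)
The plan is to push everything through the smearing operators $\mc R_n$ of Section~\ref{sec:smearing}, so as to replace Carlson's Lemma by the core and invariance machinery already available. First I would reduce to analytic elements. Since $\theta$, $\alpha_t$ and $\beta_t$ are all bounded and $\mc R_n(E)$ is dense in $E$ for each fixed $n$ (Proposition~\ref{prop:prop3} with $D=E$), it suffices to prove $\theta\alpha_t(\mc R_n(x)) = \beta_t\theta(\mc R_n(x))$ for every $x\in E$ and $t\in\mathbb R$. The gain in passing to $p=\mc R_n(x)$ is that $w\mapsto\alpha_w(p)$ is entire, so $F_1(w):=\theta(\alpha_w(p))$ is an entire $F$-valued function with $F_1(t)=\theta\alpha_t(p)$ on $\mathbb R$; the goal is then to identify $F_1$ on $\mathbb R$ with the orbit map $w\mapsto\beta_w(\theta p)$ of the $(\beta_t)$-group.

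Next I would feed in the hypothesis. Packaging $\theta\alpha_{-i}\subseteq\beta_{-i}\theta$ as the graph inclusion $(\theta\oplus\theta)\mc G(\alpha_{-i})\subseteq\mc G(\beta_{-i})$, and using that $\mc G(\beta_{-i})$ is a closed $(\beta_t\oplus\beta_t)$-invariant subspace of $F\oplus F$, the analytic element $p$ satisfies $(\theta p,\theta\alpha_{-i}p)\in\mc G(\beta_{-i})$, that is $\theta p\in D(\beta_{-i})$ with $\beta_{-i}(\theta p)=\theta\alpha_{-i}(p)=F_1(-i)$. Iterating, using $\alpha_{-i}^k=\alpha_{-ik}$ and $\beta_{-i}^k=\beta_{-ik}$ together with the inverses $\alpha_i,\beta_i$, yields $\theta p\in D(\beta_{ik})$ and $\beta_{ik}(\theta p)=F_1(ik)$ for every $k\in\mathbb Z$; hence $F_2(w):=\beta_w(\theta p)$ is regular on all of $\mathbb C$ and agrees with $F_1$ at every point of $i\mathbb Z$. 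Structurally this parallels the proof above that $\overline{T\Delta_H^z}=\Delta_K^z T$: there one pushes a smeared element into a known closed invariant graph and then recovers the original point from its smear by Lemma~\ref{lem:two}, and I would organise the argument the same way, working with the doubled group $\alpha_t\oplus\beta_t$ on $E\oplus F$, the graph $\mc G(\theta)$, and Proposition~\ref{prop:prop2}.

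The hard part will be the final rigidity step: upgrading the agreement of the two regular functions $F_1,F_2$ from the discrete set $i\mathbb Z$ to all of $\mathbb R$. Classically this is precisely where Carlson's Lemma is invoked, and it is genuinely needed in the naive form, since $F_1$ and $F_2$ only have Gaussian (order two) growth in the imaginary direction and vanishing of $F_1-F_2$ on $i\mathbb Z$ alone does not force it to vanish. The intended substitute is the Wiener-theorem mechanism behind Lemma~\ref{lem:two}: instead of matching $F_1$ and $F_2$ pointwise along $i\mathbb Z$, I would show that the relevant smeared orbit point is forced into the closed $(\alpha_t\oplus\beta_t)$-invariant subspace $\mc G(\theta)$, and then apply Lemma~\ref{lem:two} so that the original point lies there too, the nowhere-vanishing Fourier transform of the Gaussian supplying a \emph{continuum} of conditions in place of the discrete ones. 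As in the proof of Theorem~\ref{thm:one}, a damping factor $e^{-w^2}$ would be inserted to control the growth so that the Three-Lines Theorem and the uniqueness of regular extensions apply. Checking that this smearing step genuinely closes the argument, rather than quietly reintroducing the circular strip-by-strip comparison, is the delicate point I expect to be the main obstacle.
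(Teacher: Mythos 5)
Your preparatory steps are fine: $\mc R_n(E)$ is dense in $E$ for fixed $n$ (Proposition~\ref{prop:prop3} with $D=E$), the hypothesis does iterate to $\theta\alpha_{ik}\subseteq\beta_{ik}\theta$ for all $k\in\mathbb Z$ (the positive-imaginary direction using $\alpha_i=\alpha_{-i}^{-1}$, $\beta_i=\beta_{-i}^{-1}$), so $F_1$ and $F_2$ are entire and agree on $i\mathbb Z$; and you are right that this alone is worthless at Gaussian growth ($\sinh(\pi w)$ vanishes exactly on $i\mathbb Z$). The gap is the final step, and it is fatal: both halves of it are circular. To apply Lemma~\ref{lem:two} you need a \emph{closed $(\alpha_t\oplus\beta_t)$-invariant} subspace, and you propose to use $\mc G(\theta)$; but invariance of $\mc G(\theta)$ under $\alpha_t\oplus\beta_t$ says exactly that $\beta_t\theta(x)=\theta\alpha_t(x)$ for all $x,t$, which is the proposition being proved. (Without that invariance, Lemma~\ref{lem:two} only places $(x,\theta x)$ in the smallest closed invariant subspace containing the smeared point, which need not sit inside $\mc G(\theta)$; with it, there is nothing left to prove.) Independently, the precondition of your step --- that the smeared point lies in $\mc G(\theta)$, i.e.\ $\theta\mc R^\alpha_n=\mc R^\beta_n\theta$ --- is asserted but never derived, and it is not derivable the way you hope: it is a Gaussian average over the \emph{real line} of the very identity you want, whereas the hypothesis lives only at $-i$. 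This is precisely where your analogy with the proof of $\overline{T\Delta_H^z}=\Delta_K^z T$ breaks down: there the intertwining $T\Delta_H^{it}=\Delta_K^{it}T$ for all real $t$ was the \emph{hypothesis}, so $R_KT=TR_H$ came for free; here the real-line intertwining is the \emph{conclusion}.

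The paper closes the argument by a mechanism your two-function picture cannot reproduce: it converts the relation into a fixed point of a \emph{single} group, so that boundedness, not Carlson, supplies the rigidity. Let $F_0\subseteq F^*$ be the set of $\mu$ with $t\mapsto\beta^*_t(\mu)$ norm-continuous (weak$^*$-dense by Proposition~\ref{prop:5}), put $\gamma_t=\alpha_t\otimes\beta^0_{-t}$ on $E\proten F_0$ --- the sign reversal is the key trick, turning ``$\theta$ intertwines'' into ``$T$ is invariant'' --- and set $T(x\otimes\mu)=\ip{\mu}{\theta(x)}$. The hypothesis says exactly that $T\circ\gamma_{-i}=T$ on the core $D(\alpha_{-i})\otimes D(\beta^0_i)$, whence $(T,T)\in\mc G(\gamma^*_{-i})$. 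A fixed point of an analytic generator is fixed by the whole group: the extension $f$ of $t\mapsto\gamma^*_t(T)$ on $S(-i)$ satisfies $f(t-i)=\gamma^*_t(f(-i))=f(t)$, so tiling produces a bounded entire function, constant by Liouville, exactly as in Remark~\ref{rem:5}; unwinding $\gamma^*_t(T)=T$ over the weak$^*$-dense $F_0$ yields $\beta_t\theta=\theta\alpha_t$. Your functions $F_1,F_2$ for a single smeared element carry no such periodicity, which is why no choice of kernel or of invariant subspace will close your argument as it stands.
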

\begin{proof}
We consider $(\beta_t^*)$ on $F^*$, so, again, $\mc G(\beta^*_{-i})
= \{ (-\lambda,\mu) : (\mu,\lambda)\in \mc G(\beta_{-i})^\perp \}$.  Let $F_0\subseteq
F^*$ be the collection of $\mu$ such that $t\mapsto \beta_t^*(\mu)$ is norm continuous.
This is readily seen to be a closed subspace, and by Proposition~\ref{prop:5}, $F_0$
is weak$^*$-dense in $F^*$.  Let $(\beta^0_t)$ be the restriction of $(\beta^*_t)$ to
$F_0$, which forms a norm continuous isometry group.

Our hypothesis is that if $x\in D(\alpha_{-i})$ then $\theta(x) \in D(\beta_{-i})$
and $\beta_{-i} \theta(x) = \theta \alpha_{-i}(x)$, that is,
$(\theta(x), \theta\alpha_{-i}(x)) \in \mc G(\beta_{-i})$.  If $\mu\in D(\beta^0_i)$
with $\lambda=\beta^0_i(\mu)$, then $(\lambda,\mu)\in \mc G(\beta^0_{-i})$ and so
$(\lambda,\mu)\in\mc G(\beta^*_{-i})$ so $(-\mu,\lambda)\in\mc G(\beta_{-i})^\perp$.
It follows that $\ip{\mu}{\theta(x)} = \ip{\lambda}{\theta\alpha_{-i}(x)}$.

Let $\gamma_t = \alpha_t \otimes \beta^0_{-t}$ on $E\proten F_0$, so that $D(\alpha_{-i})
\otimes D(\beta^0_i)$ is a core for $\gamma_{-i}$.  Define $T:E\proten F_0\rightarrow
\mathbb C$ by $T(x\otimes\mu) = \ip{\mu}{\theta(x)}$.  Then $T(x\otimes\mu) =
T(\alpha_{-i}(x)\otimes\beta^0_i(\mu))$ for all $x\in D(\alpha_{-i}),\mu\in D(\beta^0_i)$.
Thus $T(u) = T(v)$ for all $(u,v)\in\mc G(\gamma_{-i})$.  As $T\in (E\proten F_0)^*$,
this means that $(T,T)\in\mc G(\gamma^*_{-i})$.  Exactly as in Remark~\ref{rem:5},
this means we can find an entire, bounded, extension of the orbit map $t\mapsto
\gamma^*_t(T)$, and so $\gamma^*_t(T) = T$ for all $t$.

It follows that $\ip{\beta^0_{-t}(\mu)}{\theta\alpha_t(x)} = \ip{\mu}{\theta(x)}$ for
each $x\in E, \mu\in F_0$.  As $F_0$ is weak$^*$-dense in $F^*$, and $\beta^0_{-t} =
\beta^*_{-t}$ on $F_0$, it follows that $\beta_{-t}\theta\alpha_t(x)=\theta(x)$ for each
$x\in E$, that is, $\beta_t\theta =\theta\alpha_t$, as required.
\end{proof}

\begin{corollary}
Let $E=(E_*)^*, F=(F_*)^*$ be dual spaces, and $(\alpha_t), (\beta_t)$ be
weak$^*$-continuous.  If $\theta:E\rightarrow F$ is weak$^*$-continuous with
$\theta\alpha_{-i} \subseteq \beta_{-i}\theta$ then $\theta\alpha_t=\beta_t\theta$.
\end{corollary}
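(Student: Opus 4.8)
The plan is to reduce the Corollary to the preceding Proposition by passing to pre-adjoints, exploiting the duality theory of Section~\ref{sec:duality}. Since $(\alpha_t)$ is weak$^*$-continuous on $E=(E_*)^*$, it has a pre-adjoint group $(\alpha_{*,t})$ on $E_*$ which is a norm-continuous one-parameter isometry group (as recorded in Section~\ref{sec:duality}); similarly $(\beta_{*,t})$ on $F_*$ is norm-continuous. As $\theta$ is weak$^*$-continuous it has a pre-adjoint $\theta_*:F_*\rightarrow E_*$, a bounded linear map, with $(\theta_*)^*=\theta$. My aim is to apply the Proposition to the norm-continuous groups $(\beta_{*,t})$ on $F_*$ and $(\alpha_{*,t})$ on $E_*$, with the map $\theta_*$, and then dualise the conclusion.

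First I would record, using Theorem~\ref{thm:two} together with the adjoint/pre-adjoint reciprocity discussed in Section~\ref{sec:duality}, that the analytic generators satisfy $\alpha_{-i}=(\alpha_{*,-i})^*$ and $\beta_{-i}=(\beta_{*,-i})^*$; equivalently, $\alpha_{*,-i}$ is the pre-adjoint $(\alpha_{-i})_*$ of $\alpha_{-i}$, and likewise for $\beta$. The point of this step is that membership in $D(\alpha_{*,-i})$ may now be tested by a pairing condition against $\mc G(\alpha_{-i})$, rather than directly via analytic continuation.

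The key step is then to transfer the hypothesis $\theta\alpha_{-i}\subseteq\beta_{-i}\theta$ to the statement $\theta_*\beta_{*,-i}\subseteq\alpha_{*,-i}\theta_*$. I would do this by a direct pairing computation: given $\nu\in D(\beta_{*,-i})$ with $\beta_{*,-i}(\nu)=\rho$, I must show $\theta_*(\nu)\in D(\alpha_{*,-i})$ with $\alpha_{*,-i}(\theta_*\nu)=\theta_*(\rho)$, and by the previous paragraph this amounts to checking that $\ip{\alpha_{-i}(x)}{\theta_*\nu}=\ip{x}{\theta_*\rho}$ for every $x\in D(\alpha_{-i})$. Unwinding the pre-adjoint relations, the left-hand side equals $\ip{\theta\alpha_{-i}(x)}{\nu}$; the hypothesis rewrites this as $\ip{\beta_{-i}\theta(x)}{\nu}$ (and guarantees $\theta(x)\in D(\beta_{-i})$); the defining relation for $\beta_{*,-i}=(\beta_{-i})_*$ turns this into $\ip{\theta(x)}{\rho}$; and a final pre-adjoint step returns $\ip{x}{\theta_*\rho}$, as required.

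With the hypothesis in hand, the Proposition yields $\theta_*\beta_{*,t}=\alpha_{*,t}\theta_*$ for all $t\in\mathbb R$. Taking Banach space adjoints, and using $(\theta_*)^*=\theta$, $(\alpha_{*,t})^*=\alpha_t$, $(\beta_{*,t})^*=\beta_t$, gives $\beta_t\theta=\theta\alpha_t$ for all $t$, completing the proof. I expect the main obstacle to be the hypothesis-transfer step, and specifically the need for the duality identity $\alpha_{*,-i}=(\alpha_{-i})_*$: this is what licenses testing membership in the domain of the generator of the pre-adjoint group by a pairing identity, and without it the inclusion of graphs on $E$ and $F$ would not obviously descend to $E_*$ and $F_*$. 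Everything else is routine once the groups and maps are correctly matched up.
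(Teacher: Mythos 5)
Your proposal is correct and follows essentially the same route as the paper: pass to the pre-adjoint $\theta_*:F_*\rightarrow E_*$, use Theorem~\ref{thm:two} (via the identification of $\alpha_{*,-i}$, $\beta_{*,-i}$ with the pre-adjoints of $\alpha_{-i}$, $\beta_{-i}$) to transfer the graph inclusion to $\theta_*\beta_{*,-i}\subseteq\alpha_{*,-i}\theta_*$ by exactly the pairing computation the paper performs with annihilators, then apply the Proposition on the preduals and take adjoints. The only difference is cosmetic: you phrase the transfer element-wise where the paper phrases it in terms of graphs and ${}^\perp\mc G(\beta_{-i})$.
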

\begin{proof}
There is $\theta_*:F_*\rightarrow E_*$ with $\theta = (\theta_*)^*$.  That
$\theta\alpha_{-i} \subseteq \beta_{-i}\theta$ is equivalent to
$(x,y)\in\mc G(\alpha_{-i}) \implies (\theta(x), \theta(y))\in\mc G(\beta_{-i})$.
Let $(\mu,\lambda)\in\mc G(\beta^{F_*}_{-i})$ so that $(-\lambda,\mu)
\in {}^\perp\mc G(\beta_{-i})$.  Thus, for $(x,y)\in\mc G(\alpha_{-i})$, we have
that
\[ \ip{(\theta(x),\theta(y))}{(-\lambda,\mu))} = 0 \implies
\ip{x}{\theta_*(\lambda)} = \ip{y}{\theta_*(\mu)}, \]
and so $(\theta_*(\mu), \theta_*(\lambda))\in\mc G(\alpha^{E_*}_{-i})$.
We have hence shown that $\theta_*\beta^{F_*}_{-i} \subseteq \alpha^{E_*}_{-i}\theta_*$.
Hence $\theta_*\beta^{F_*}_t = \alpha^{E_*}_t \theta_*$ for all $t$, so taking adjoints
gives the required conclusion.
\end{proof}

Applied with $E=F$ and $\theta$ the identity map, this result shows that the generator
$\alpha_{-i}$ uniquely determines $(\alpha_t)$.

\section{Locally compact quantum groups}\label{sec:lcqgs}

We give a brief introduction to locally compact quantum groups, \cite{kus2, kus3,
kv, mnw, vd}.  We write $\G$ for the abstract object thought of as a locally compact
quantum group, which has a concrete operator-algebraic realisation as either the
von Neumann algebra $L^\infty(\G)$ or the $C^*$-algebra $C_0(\G)$.  We write $\Delta$
for the coproduct, either a unital normal injective $*$-homomorphism $L^\infty(\G)
\rightarrow L^\infty(\G)\vnten L^\infty(\G)$, or a non-degenerate $*$-homomorphism $C_0(\G) \rightarrow
M(C_0(\G)\otimes C_0(\G))$.  The left Haar weight, via the GNS construction, gives
rise to a Hilbert space $L^2(\G)$ on which $L^\infty(\G)$ and $C_0(\G)$ act.
We denote the dual quantum group by $\hh\G$, and identify $L^2(\G)$ with $L^2(\hh\G)$.
We recall the fundamental multiplicative unitary $W \in M(C_0(\G)\otimes C_0(\hh\G))
\subseteq L^\infty(\G)\vnten L^\infty(\hh\G)$ which implements the coproduct as
$\Delta(x) = W^*(1\otimes x)W$.  We can recover $C_0(\G)$ as the norm closure of
$\{ (\id\otimes\omega)W : \omega\in \mc B(L^2(\G))_*\}$, and similarly $C_0(\hh\G)$
as the norm closure of $\{ (\omega\otimes\id)W : \omega\in \mc B(L^2(\G))_*\}$.

We write $L^1(\G)$ for the predual of $L^\infty(\G)$, and write $M(\G)$ for the dual
of $C_0(\G)$.  These both become Banach algebras for the ``convolution product'' induced
by the coproduct.  Furthermore, $M(\G)$ is a dual Banach algebra, and the isometric
inclusion $L^1(\G)\rightarrow M(\G)$ is a homomorphism.

The group inverse operation, for a quantum group, is represented by the antipode,
which in general is an unbounded operator $S$.  Two related objects are $R$, the
unitary antipode, which is a $*$-antiautomorphism of $C_0(\G)$ which extends to
a normal map on $L^\infty(\G)$, and $(\tau_t)$ the scaling group, which is a one-parameter
automorphism group on $C_0(\G)$ which extends to a weak$^*$-continuous automorphism
group of $L^\infty(\G)$.  Thus we are in precisely the situation considered elsewhere
in this paper, and furthermore, the scaling group exactly governs the unboundedness of
the antipode, because $S = R\tau_{-i/2}$.  We recall that $R$ and $(\tau_t)$ commute,
from which it follows that $R\tau_{-i/2} = \tau_{-i/2}R$.
Let us think briefly about what exactly we mean by $S = R \tau_{-i/2}$:
\begin{itemize}
\item In \cite[Definition~5.21]{kv}, $S$ is defined to be $R\tau_{-i/2}$,
here acting on $C_0(\G)$.  As we are considering norm-continuous $(\tau_t)$ there is
essentially no risk of ambiguity.
\item In \cite[Page~74]{kus3}, $S$ is defined to be $R\tau_{-i/2}$, and it is not
entirely clear what is meant by $\tau_{-i/2}$.  Part of our motivation for developing
the material in Sections~\ref{sec:ag} and~\ref{sec:duality} was to show that, actually,
the particular definition of $\tau_{-i/2}$ is unimportant.
\item In \cite[Definition~2.23]{vd}, $S$ is defined to be $R\tau_{-i/2}$.
This paper takes \emph{as definition} that $\tau_{-i/2}$ is the adjoint of
$\tau_{*,-i/2}$ where $(\tau_{*,t})$ is the one-parameter isometry group on $L^1(\G)$.
Of course, by Theorem~\ref{thm:two}, this agrees with the usual meaning of $\tau_{-i/2}$.
\end{itemize}

As $S = R\tau_{-i/2}$ and $R$ and $(\tau_t)$ commute, it follows that
$D(S) = D(\tau_{-i/2})$.  
As the inclusion $C_0(\G) \rightarrow L^\infty(\G)$ intertwines $R$,
it follows easily that questions about
$S$ can immediately be reduced to questions about $\tau_{-i/2}$.  For example, the
following is immediate from Theorem~\ref{thm:six}.

\begin{theorem}
The unit ball of $\mc G(S) \subseteq C_0(\G)\oplus_\infty C_0(\G)$ is
weak$^*$-dense in the unit ball of 
$\mc G(S) \subseteq L^\infty(\G)\oplus_\infty L^\infty(\G)$.
\end{theorem}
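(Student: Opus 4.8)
The plan is to reduce everything to Theorem~\ref{thm:six} applied to the scaling group, using the unitary antipode $R$ to identify $\mc G(S)$ with $\mc G(\tau_{-i/2})$. First I would observe that we are precisely in the setting of Theorem~\ref{thm:six}: take $A = C_0(\G)$, $M = L^\infty(\G)$, and $(\alpha_t) = (\tau_t)$ the scaling group, which is norm-continuous on $C_0(\G)$, weak$^*$-continuous on $L^\infty(\G)$, restricts from the latter to the former, and has $C_0(\G)$ generating (so weak$^*$-dense in) $L^\infty(\G)$. Applying Theorem~\ref{thm:six} with $z = -i/2$ yields that the unit ball of $\mc G(\tau_{-i/2}) \subseteq C_0(\G)\oplus_\infty C_0(\G)$ is weak$^*$-dense in the unit ball of $\mc G(\tau_{-i/2}) \subseteq L^\infty(\G)\oplus_\infty L^\infty(\G)$.

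Next I would transport this statement along $R$. Since $R$ and $(\tau_t)$ commute we have $S = R\tau_{-i/2} = \tau_{-i/2}R$ and $D(S) = D(\tau_{-i/2})$, so that
\[ \mc G(S) = \big\{ (a, R\tau_{-i/2}(a)) : a\in D(\tau_{-i/2}) \big\}. \]
Define $\Phi(a,b) = (a, R(b))$. Recall that $R$ is an isometric $*$-antiautomorphism of $C_0(\G)$ extending to a normal $*$-antiautomorphism of $L^\infty(\G)$; being a normal $*$-antiautomorphism of a von Neumann algebra, this extension is isometric and a weak$^*$-homeomorphism, with weak$^*$-continuous inverse $R^{-1}$. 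Hence $\Phi$ is an isometric bijection of $C_0(\G)\oplus_\infty C_0(\G)$ whose extension to $L^\infty(\G)\oplus_\infty L^\infty(\G)$ is an isometric weak$^*$-homeomorphism, with inverse $(a,b)\mapsto (a,R^{-1}(b))$. By the displayed description of $\mc G(S)$, the map $\Phi$ carries $\mc G(\tau_{-i/2})$ bijectively onto $\mc G(S)$, at both the $C_0(\G)$ and the $L^\infty(\G)$ level.

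Finally, since $\Phi$ is isometric it carries unit balls to unit balls, and since its restriction to $L^\infty(\G)\oplus_\infty L^\infty(\G)$ is a weak$^*$-homeomorphism it preserves weak$^*$-closures. Applying $\Phi$ to the density statement of the first paragraph therefore gives that the unit ball of $\mc G(S) \subseteq C_0(\G)\oplus_\infty C_0(\G)$ is weak$^*$-dense in the unit ball of $\mc G(S) \subseteq L^\infty(\G)\oplus_\infty L^\infty(\G)$, as required. There is no genuine obstacle beyond the routine verifications that $R$ is isometric and normal; the entire substance of the result is contained in Theorem~\ref{thm:six}, and the present argument only serves to strip off the harmless unitary antipode $R$.
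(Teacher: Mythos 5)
Your proposal is correct and follows essentially the same route as the paper: the paper likewise notes that $S = R\tau_{-i/2}$ with $R$ commuting with $(\tau_t)$ and intertwined by the inclusion $C_0(\G)\rightarrow L^\infty(\G)$, so that the statement is immediate from Theorem~\ref{thm:six} applied to the scaling group with $z=-i/2$. Your explicit map $\Phi(a,b)=(a,R(b))$ merely spells out the reduction that the paper leaves as ``it follows easily.''
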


As $\Delta \tau_t = (\tau_t\otimes\tau_t)\Delta$ it follows that $(\tau^{L^1(\G)}_t)$
is an automorphism group for $L^1(\G)$, and similarly $(\tau^{M(\G)}_t)$ is a
weak$^*$-continuous automorphism group for $M(\G)$.  The natural way to induce an
involution on $L^1(\G)$ is to use the antipode and the involution on $L^\infty(\G)$,
leading to definition of $L^1_\sharp(\G)$ as those $\omega\in L^1(\G)$ such that
there is $\omega^\sharp\in L^1(\G)$ with $\overline{\ip{S(x)^*}{\omega}} =
\ip{x}{\omega^\sharp}$ for all $x\in D(S)$.  The map $\omega\mapsto\omega^\sharp$
becomes an involution on $L^1_\sharp(\G)$.
It is shown in \cite[Proposition~3.1]{kus4} that then $L^1_\sharp(\G)\rightarrow
C_0(\hh\G); \omega\mapsto (\omega\otimes\id)(W)$ is a $*$-homomorphism.

The following is the natural extension of this definition to $M(\G)$.

\begin{definition}
We define $M_\sharp(\G)$ to be the collection of $\mu\in M(\G)$ such that there is
$\mu^\sharp\in M(\G)$ with $\overline{\ip{\mu}{S(a)^*}} = \ip{\mu^\sharp}{a}$ for
$a\in D(S)$.
\end{definition}

For $\mu\in M(\G)$ we write $\mu^*\in M(\G)$ for the functional $a\mapsto
\overline{\ip{\mu}{a^*}}$.
Given $\mu\in M_\sharp(\G)$ define $\lambda = R^*(\mu^*)$.
For $a\in D(\tau_{-i/2})$ we have
\[ \ip{\mu^\sharp}{a} =
\overline{\ip{\mu}{S(a)^*}} = \ip{\mu^*}{S(a)} = \ip{\lambda}{\tau_{-i/2}(a)}. \]
Thus $(\mu^\sharp, -\lambda) \in \mc G(\tau_{-i/2})^\perp$ so
$(\lambda, \mu^\sharp) \in \mc G(\tau_{-i/2}^{M(\G)})$.  We can reverse this argument,
thus establishing that $\mu\in M_\sharp(\G)$ if and only if $R^*(\mu^*)\in
D(\tau_{-i/2}^{M(\G)})$ and then $\mu^\sharp = \tau_{-i/2}^{M(\G)}(R^*(\mu^*))$.
An analogous argument holds for $L^1_\sharp(\G)$.

The definitions of both $L^1_\sharp(\G)$ and $M_\sharp(\G)$ are both ``graph like'', in
that given, say, $\mu\in M(\G)$, we have that $\mu\in M_\sharp(\G)$ when there exists
$\mu^\sharp\in M(\G)$ with a certain property.  The next two results show that we can
instead impose conditions purely on $\mu$.  The first result is an application of
Hahn-Banach, but the second result is somewhat less obvious.

\begin{proposition}
Let $\mu\in M(\G)$ be such that $\mu^*\circ S: D(S)\subseteq C_0(\G) \rightarrow
\mathbb C$ is bounded.  Then $\mu\in M_\sharp(\G)$.
\end{proposition}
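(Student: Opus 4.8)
The plan is to reduce membership in $M_\sharp(\G)$ to a statement about the Banach-space adjoint of $\tau_{-i/2}$ and then to invoke Theorem~\ref{thm:two}. Recall from the discussion preceding the statement that $\mu\in M_\sharp(\G)$ if and only if $R^*(\mu^*)\in D(\tau_{-i/2}^{M(\G)})$; so it will suffice to show that the boundedness hypothesis forces $R^*(\mu^*)$ into the domain of the analytic generator of the dual group $(\tau_t^{M(\G)})$ on $M(\G) = C_0(\G)^*$.

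First I would rewrite the hypothesis. Since $S = R\tau_{-i/2}$ and $D(S)=D(\tau_{-i/2})$, for $a\in D(\tau_{-i/2})$ we have
\[ \ip{\mu^*}{S(a)} = \ip{\mu^*}{R\tau_{-i/2}(a)} = \ip{R^*(\mu^*)}{\tau_{-i/2}(a)}. \]
Writing $\lambda = R^*(\mu^*)$, the assumption that $\mu^*\circ S$ is bounded on $D(S)$ becomes exactly the statement that $a\mapsto \ip{\lambda}{\tau_{-i/2}(a)}$ is a bounded linear functional on $D(\tau_{-i/2})$. Next, since $D(\tau_{-i/2})$ is norm-dense in $C_0(\G)$ (the smearing operators $\mc R_n$ give this density, compare Section~\ref{sec:smearing}), Hahn-Banach extends this bounded functional to some $\nu\in M(\G)$, so that $\ip{\lambda}{\tau_{-i/2}(a)} = \ip{\nu}{a}$ for every $a\in D(\tau_{-i/2})$. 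By the definition of the Banach-space adjoint recalled in Section~\ref{sec:duality}, this says precisely that $\lambda\in D(\tau_{-i/2}^*)$ with $\tau_{-i/2}^*(\lambda)=\nu$, where $\tau_{-i/2}^*$ is the Banach-space adjoint of the operator $\tau_{-i/2}$ acting on $C_0(\G)$.

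Finally I would apply Theorem~\ref{thm:two}, with $E = C_0(\G)$ and $(\alpha_t)=(\tau_t)$, which identifies $\tau_{-i/2}^*$ with $\tau_{-i/2}^{M(\G)}$, the analytic generator of the adjoint group. Hence $\lambda = R^*(\mu^*)\in D(\tau_{-i/2}^{M(\G)})$, and the characterization quoted above yields $\mu\in M_\sharp(\G)$, as required.

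The only genuine content is the appeal to Theorem~\ref{thm:two}: Hahn-Banach by itself merely places $\lambda$ in the domain of the \emph{Banach-space} adjoint of $\tau_{-i/2}$, and it is Theorem~\ref{thm:two} that upgrades this to membership in the domain of the analytic generator of the dual group, which is how $M_\sharp(\G)$ is phrased. So I do not expect a serious obstacle here; the work lies entirely in lining up the definitions (the rewriting via $S=R\tau_{-i/2}$, the density of $D(\tau_{-i/2})$, and the duality identification) correctly. This is the promised ``application of Hahn-Banach,'' in contrast to the next, less obvious result.
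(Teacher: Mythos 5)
Your proof is correct, and its essential step---the Hahn--Banach (equivalently, by-continuity) extension of a bounded functional defined on the dense subspace $D(S)=D(\tau_{-i/2})$---is exactly the step the paper uses. The difference is the target. The paper's proof is a two-line direct verification of the \emph{definition} of $M_\sharp(\G)$: set $\mu_0=\mu^*\circ S$, extend it to $\mu^\sharp\in M(\G)$, and observe that $\ip{\mu^\sharp}{a}=\ip{\mu^*}{S(a)}=\overline{\ip{\mu}{S(a)^*}}$ for $a\in D(S)$, which is precisely the defining property; no mention of $\tau_{-i/2}^{M(\G)}$, of the Banach-space adjoint, or of Theorem~\ref{thm:two} is required. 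You instead aim at the equivalent criterion $R^*(\mu^*)\in D(\tau_{-i/2}^{M(\G)})$, which forces you to pass from the Banach-space adjoint $(\tau_{-i/2})^*$ to the analytic generator of the dual group via Theorem~\ref{thm:two}. That detour is legitimate (and the equivalence you quote is indeed established in the paper just before the proposition, by essentially this same duality), but your closing assessment that ``the only genuine content is the appeal to Theorem~\ref{thm:two}'' is an artefact of your route: the paper's proof shows the proposition needs nothing beyond Hahn--Banach and the definition. What your formulation buys is that the conclusion emerges directly in the form $R^*(\mu^*)\in D(\tau_{-i/2}^{M(\G)})$, which is exactly how the proposition is consumed in the proof of Theorem~\ref{thm:10}; what the paper's version buys is brevity and independence from the duality machinery, which it reserves for the genuinely harder result that follows.
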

\begin{proof}
Let $\mu_0 = \mu^*\circ S$ and take a Hahn-Banach extension to an element
$\mu^\sharp \in M(\G)$ (or simply extend by continuity, as $D(S)$ is dense in $C_0(\G)$).
Thus, for $a\in D(S)$,
\[ \ip{\mu^\sharp}{a} = \ip{\mu^*}{S(a)} = \overline{\ip{\mu}{S(a)^*}}, \]
so by definition, $\mu\in M_\sharp(\G)$.
\end{proof}

\begin{theorem}\label{thm:10}
Let $\omega\in L^1(\G)$ be such that either:
\begin{enumerate}
\item\label{thm:10:1}
$\omega^*\circ S: D(S)\subseteq C_0(\G) \rightarrow \mathbb C$ is bounded; or
\item\label{thm:10:2}
$\omega^*\circ S: D(S)\subseteq L^\infty(\G) \rightarrow \mathbb C$ is bounded.
\end{enumerate}
Then $\omega\in L^1_\sharp(\G)$.
\end{theorem}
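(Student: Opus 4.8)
The plan is to reduce the statement to the ``automatic normality'' result Theorem~\ref{thm:seven}, applied to the scaling group $(\tau_t)$ at $z=-i/2$, combined with the $M_\sharp$-criterion established just before the statement. Recall from that discussion that $\mu\in M_\sharp(\G)$ exactly when $R^*(\mu^*)\in D(\tau^{M(\G)}_{-i/2})$, and analogously $\omega\in L^1_\sharp(\G)$ exactly when $R_*(\omega^*)\in D(\tau^{L^1(\G)}_{-i/2})$; here $R_*$ and the involution on $L^1(\G)$ restrict $R^*$ and the involution on $M(\G)$, so that under $\iota:L^1(\G)\to M(\G)$ we have $\iota(\omega)^*=\iota(\omega^*)$ and $R^*(\iota(\omega)^*)=\iota(R_*(\omega^*))$.

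Before anything else I would observe that case~(\ref{thm:10:2}) reduces to case~(\ref{thm:10:1}). Since $S=R\tau_{-i/2}$ with $R$ bounded, $D(S)=D(\tau_{-i/2})$ in either algebra, and by Proposition~\ref{prop:4} the $C_0(\G)$-domain is contained in the $L^\infty(\G)$-domain with $S$ agreeing there (a norm-regular extension being in particular weak$^*$-regular). Consequently, boundedness of $\omega^*\circ S$ on the larger domain $D(S)\subseteq L^\infty(\G)$ restricts to boundedness on $D(S)\subseteq C_0(\G)$, so hypothesis~(\ref{thm:10:2}) implies hypothesis~(\ref{thm:10:1}). It therefore suffices to treat case~(\ref{thm:10:1}).

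Assume~(\ref{thm:10:1}) and regard $\omega$ as $\iota(\omega)\in M(\G)$. For $x\in D(S)\subseteq C_0(\G)$ one has $\ip{\iota(\omega)^*}{S(x)}=\ip{S(x)}{\omega^*}=\omega^*(S(x))$, so the boundedness of $\omega^*\circ S$ on $D(S)\subseteq C_0(\G)$ is exactly the hypothesis of the proposition immediately preceding the statement, applied to $\iota(\omega)$. That proposition gives $\iota(\omega)\in M_\sharp(\G)$, and hence, by the criterion above, $R^*(\iota(\omega)^*)\in D(\tau^{M(\G)}_{-i/2})$.

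Finally I would invoke automatic normality. Writing $R^*(\iota(\omega)^*)=\iota(R_*(\omega^*))$ with $R_*(\omega^*)\in L^1(\G)=(L^\infty(\G))_*$, we have an element of the predual whose image under $\iota$ lies in $D(\tau^{M(\G)}_{-i/2})$. Theorem~\ref{thm:seven}, with $M=L^\infty(\G)$, $A=C_0(\G)$, the automorphism group $(\tau_t)$, and $z=-i/2$, then yields $R_*(\omega^*)\in D(\tau^{L^1(\G)}_{-i/2})$, which by the $L^1_\sharp$-criterion says precisely that $\omega\in L^1_\sharp(\G)$. I expect the main obstacle to be purely bookkeeping: matching the two $\sharp$-conditions to the correct scaling-group domains and checking the compatibility of $R$ and the involutions with $\iota$ so that Theorem~\ref{thm:seven} applies verbatim. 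The genuinely nontrivial input is Theorem~\ref{thm:seven} itself, since boundedness alone only places $\omega^\sharp$ in $M(\G)$ (indeed a priori only in $(L^\infty(\G))^*$, as $D(S)$ need not be norm-dense), and the force of the result is that this functional is automatically normal.
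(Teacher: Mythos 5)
Your proposal is correct and follows essentially the same route as the paper: reduce case~(\ref{thm:10:2}) to case~(\ref{thm:10:1}) via the inclusion intertwining the two antipodes, apply the preceding proposition to get $\iota(\omega)\in M_\sharp(\G)$, i.e.\ $R^*(\iota(\omega)^*)\in D(\tau^{M(\G)}_{-i/2})$, and then invoke the automatic normality result Theorem~\ref{thm:seven} together with the compatibility $R^*(\iota(\omega)^*)=\iota(R_*(\omega^*))$ to conclude $\omega\in L^1_\sharp(\G)$. The only cosmetic difference is the order (the paper treats case~(\ref{thm:10:1}) first and then reduces case~(\ref{thm:10:2}) to it at the end), and your closing remark correctly identifies Theorem~\ref{thm:seven} as the genuinely nontrivial input.
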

\begin{proof}
Let us write $S_0 = R_0 \circ
\tau^{0}_{-i/2}$ for $S$ on $C_0(\G)$, and $S_\infty = R_\infty \circ
\tau^\infty_{-i/2}$ for $S$ on $L^\infty(\G)$.  As the inclusion
$C_0(\G)\rightarrow L^\infty(\G)$ intertwines $R_0$ and $R_\infty$, and intertwines
$\tau^0_{-i/2}$ and $\tau^\infty_{-i/2}$, we see that it intertwines $S_0$ and $S_\infty$.

If (\ref{thm:10:1}) holds, then by
the previous proposition, $\omega\in M_\sharp(\G)$, that is, $R_0^*(\omega^*)
\in D(\tau_{-i/2}^{M(\G)})$.  We then apply Theorem~\ref{thm:seven} to see that
$R_0^*(\omega^*) \in D(\tau_{-i/2}^{L^1(\G)})$.  However, $R_0^*(\omega^*)$ is
equal to the image of $(R_\infty)_*(\omega^*) \in L^1(\G)$ in $M(\G)$,
and so $\omega\in L^1_\sharp(\G)$, as required.

Now suppose that (\ref{thm:10:2}) holds.  Then the composition
$D(S_0) \rightarrow D(S_\infty) \rightarrow
\mathbb C$ is bounded, that is, (\ref{thm:10:1}) holds.  The claim follows.
\end{proof}

In \cite[Proposition~14]{ds} the author and Salmi showed the following, via
``Banach algebraic'' techniques.  We wish here to quickly record how to use the more
abstract approach of Section~\ref{sec:kap_dual}.  We recall that $\G$ is coamenable
when $L^1(\G)$ has a bounded approximate identity, \cite{bt}.

\begin{proposition}\label{prop:15}
Let $\G$ be coamenable.  For any $\mu\in D(\tau^{M(\G)}_z)$ there is a net $(\omega_i)$
in $D(\tau^{L^1(\G)}_z)$ with $\omega_i\rightarrow\mu$ weak$^*$ and with
$\|\omega_i\|\leq\|\mu\|$ and $\|\tau^{L^1(\G)}_z(\omega_i)\|\leq\|\tau^{M(\G)}_z(\mu)\|$
for each $i$.
\end{proposition}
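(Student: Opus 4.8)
The plan is to realise the approximants as convolutions of $\mu$ against a smeared contractive approximate identity, and then to invoke the ``automatic normality'' Theorem~\ref{thm:seven} to know that these convolutions genuinely lie in $D(\tau^{L^1(\G)}_z)$. Two structural facts drive everything: first, $L^1(\G)$ is a norm-closed ideal in $M(\G)$; second, since $\Delta\tau_t=(\tau_t\otimes\tau_t)\Delta$ (so that, as already noted, $(\tau^{L^1(\G)}_t)$ and $(\tau^{M(\G)}_t)$ are automorphism groups of the respective convolution algebras), Proposition~\ref{prop:one} and Theorem~\ref{thm:nine} make $\mc G(\tau^{L^1(\G)}_z)$ and $\mc G(\tau^{M(\G)}_z)$ subalgebras, with $\tau_z(\mu\star\nu)=\tau_z(\mu)\star\tau_z(\nu)$ whenever $\mu,\nu\in D(\tau_z)$. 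The crucial point, in contrast to Proposition~\ref{prop:21}, is that convolution is submultiplicative in each coordinate \emph{separately}, which is exactly what delivers the two separate norm bounds demanded here.

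Since $\G$ is coamenable, \cite{bt} furnishes a contractive approximate identity $(e_i)$ for $L^1(\G)$. These need not lie in $D(\tau_z)$, so I would smear: fix $n>0$ and set $b_{n,i}=\mc R_n(e_i)$, formed with $(\tau^{L^1(\G)}_t)$. Then $b_{n,i}\in D(\tau^{L^1(\G)}_z)$ with $\|b_{n,i}\|\le 1$, and from the integral formula for $\tau_z\mc R_n$ together with $\|\tau_t(e_i)\|=\|e_i\|$ one computes $\|\tau^{L^1(\G)}_z(b_{n,i})\|\le e^{n^2(\im z)^2}$, a constant tending to $1$ as $n\to 0$. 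Now put $\nu_{n,i}=\mu\star b_{n,i}$. As a product of two elements of $D(\tau^{M(\G)}_z)$ it lies in $D(\tau^{M(\G)}_z)$ with $\tau^{M(\G)}_z(\nu_{n,i})=\tau^{M(\G)}_z(\mu)\star\tau^{L^1(\G)}_z(b_{n,i})$, and as a product inside the ideal it lies in $L^1(\G)$; Theorem~\ref{thm:seven} then upgrades this to $\nu_{n,i}\in D(\tau^{L^1(\G)}_z)$ with the same value. Submultiplicativity gives $\|\nu_{n,i}\|\le\|\mu\|$ and $\|\tau_z(\nu_{n,i})\|\le e^{n^2(\im z)^2}\|\tau^{M(\G)}_z(\mu)\|$, so rescaling to $\omega_{n,i}=e^{-n^2(\im z)^2}\nu_{n,i}$ yields $\|\omega_{n,i}\|\le\|\mu\|$ and $\|\tau^{L^1(\G)}_z(\omega_{n,i})\|\le\|\tau^{M(\G)}_z(\mu)\|$, exactly as required.

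It remains to produce weak$^*$ convergence $\omega_{n,i}\to\mu$; note the statement asks for nothing in the second coordinate, which is precisely what makes the sharp separate bounds affordable. Pairing against $a\in C_0(\G)$ via $\ip{\mu\star\omega}{a}=\ip{\mu}{(\id\otimes\omega)\Delta(a)}$, the Podle\'s density $[(\id\otimes L^1(\G))\Delta(C_0(\G))]=C_0(\G)$ together with the approximate identity gives $(\id\otimes b_{n,i})\Delta(a)\to a$ in norm for each fixed $n$ as $i$ increases (using norm-continuity of $t\mapsto\tau_{-t}(a)$ to pass the convergence through the Gaussian, uniformly for $t$ in compacta), whence $\mu\star b_{n,i}\to\mu$ weak$^*$ for each fixed $n$. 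The one genuinely delicate point, and the main obstacle, is that I must let $n\to 0$ (to push the rescaling constant up to $1$) while simultaneously running $i$ out along the approximate identity, and a naive product net fails because the ``large enough $i$'' depends on $n$. I would resolve this in the standard way, indexing the net by pairs $\lambda=(\mc F,\epsilon)$ with $\mc F\subseteq C_0(\G)$ finite, directed by reverse inclusion and $\epsilon\downarrow 0$, choosing $n_\lambda$ with $e^{-n_\lambda^2(\im z)^2}>1-\epsilon$ and then $i_\lambda$ large enough that $\|(\id\otimes b_{n_\lambda,i_\lambda})\Delta(a)-a\|<\epsilon$ for all $a\in\mc F$. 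Along this net $\omega_\lambda\to\mu$ weak$^*$ while both norm bounds persist, completing the proof.
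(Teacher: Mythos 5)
Your argument is essentially sound, but it takes a genuinely different route from the paper's. The paper proves this proposition by verifying the hypotheses of the abstract Proposition~\ref{prop:21}: coamenability together with Proposition~\ref{prop:19} yields a contractive approximate identity $(\omega_i,\phi_i)$ in the graph algebra $\mc G(\tau^{L^1(\G)}_z)$ itself, and the maps $T=P_{\phi_i}$, $S=P_{\omega_i}$ (cut down by elements of $\mc G(\tau^{C_0(\G)}_z)$ in the non-compact case), where $P_\omega(x)=(\id\otimes\omega)\Delta(x)$, are contractions $L^\infty(\G)\rightarrow C_0(\G)$ carrying $\mc G(\tau^{L^\infty(\G)}_z)$ into $\mc G(\tau^{C_0(\G)}_z)$; the conclusion on the dual side is then extracted purely by Hahn--Banach duality (Lemma~\ref{lem:20}), so measures are never multiplied. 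You instead work directly inside $M(\G)$: you convolve $\mu$ against the smeared approximate identity $\mc R_n(e_i)$, obtain the two \emph{separate} norm bounds from submultiplicativity together with the estimate $\|\tau_z\mc R_n\|\leq e^{n^2(\im z)^2}$, and remove the constant by a rescaling and a two-parameter diagonal net. Your small-$n$ trick is exactly the device used in the paper's proof of Proposition~\ref{prop:19} (there one takes $n$ small so that $\|\alpha_z\mc R_n(e_i)\|$ stays near $1$), and your verification that $\mu\star\mc R_n(e_i)\rightarrow\mu$ weak$^*$ (via Podle\'s density and $P_{\omega_1\star\omega_2}=P_{\omega_1}P_{\omega_2}$) is the same computation the paper performs on the set $A_0=\{(\id\otimes\phi)(W)\}$. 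What your route buys is concreteness (explicit approximants) and a nice application of Theorem~\ref{thm:seven}; what the paper's route buys is that it needs no input beyond what the paper itself proves.

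That last point is where your proposal has its one real weakness: the statement that $L^1(\G)$ is a norm-closed (two-sided) ideal in $M(\G)$ is load-bearing --- without it, $\mu\star\mc R_n(e_i)$ is merely an element of $D(\tau^{M(\G)}_z)$, and Theorem~\ref{thm:seven} cannot be invoked --- yet you present it as a bald ``structural fact''. It is neither proven in this paper nor elementary: for a general locally compact quantum group one cannot slice $\Delta(x)$, $x\in L^\infty(\G)$, by a non-normal $\mu\in C_0(\G)^*$, so normality of $\mu\star\omega$ requires a genuine argument using the multiplier structure of $W$ (or the universal setting). The fact is true and known --- it underlies the embedding of $M(\G)$ into the (completely bounded) multiplier algebra of $L^1(\G)$, due to Hu, Neufang and Ruan, and to Daws --- so with a citation your proof closes; but as written this step is the gap between a sketch and a proof, and it is precisely the step the paper's own argument is engineered to avoid.
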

\begin{proof}
We will use Proposition~\ref{prop:21}.  Let $A_0 = \{ (\id\otimes\phi)(W) :
\phi\in\mc B(L^2(\G))_* \} \subseteq C_0(\G)$ a dense subset (actually, subalgebra).

For $\omega\in L^1(\G)$ consider the map $P_\omega : L^\infty(\G)\rightarrow
L^\infty(\G)$ given by $P_\omega(x) = (\id\otimes\omega)\Delta(x)$.  This actually
maps into $C^b(\G)$, see for example the proof of \cite[Theorem~2.4]{run1}.
Let $(x,y)\in\mc G(\tau^{L^\infty(\G)}_z)$ and $(\omega,\phi), (\alpha,\beta)\in
\mc G(\tau^{L^1(\G)}_z)$.  Then
\begin{align*} \ip{(P_\phi(x), P_\omega(y))}{(-\beta,\alpha)}
&= \ip{(\id\otimes\omega)\Delta(y)}{\alpha} - \ip{(\id\otimes\phi)\Delta(x)}{\beta} \\
&= \ip{y}{\alpha\omega} - \ip{x}{\beta\phi}
= \ip{(y,-x)}{(\alpha\omega, \beta\phi)} = 0,
\end{align*}
because $(\alpha\omega, \beta\phi) \in \mc G(\tau^{L^1(\G)}_z)$ by
Proposition~\ref{prop:one}.  As $(\alpha,\beta)$ was arbitrary, this shows that
$(P_\phi(x), P_\omega(y)) \in \mc G(\tau^{L^\infty(\G)}_z)$.

As $\G$ is coamenable, $L^1(\G)$ has a contractive approximate identity, and so by
Proposition~\ref{prop:19}, also $\mc G(\tau^{L^1(\G)}_z)$ has a contractive approximate
identity, say $(\omega_i, \phi_i)$.
For the moment, suppose that $\G$ is compact, so $C_0(\G) = C^b(\G) = C(\G)$.
For each $i$, the pair $(P_{\phi_i}, P_{\omega_i})$ are contractive maps $L^\infty(\G)
\rightarrow C(\G)$ which map $\mc G(\tau^{L^\infty(\G)}_z)$ to
$\mc G(\tau^{L^\infty(\G)}_z) \cap (C_0(\G)\oplus C_0(\G)) = 
\mc G(\tau^{C(\G)}_z)$, by Proposition~\ref{prop:4}.  
To invoke Proposition~\ref{prop:21} it hence remains to show that $P_{\phi_i}(a)
\rightarrow a$ in norm, for each $a = (\id\otimes\phi)(W) \in A_0$.  However, then
\begin{align*}
P_{\phi_i}(a) &= (\id\otimes\phi_i)\Delta\big( (\id\otimes\phi)(W) \big)
= (\id\otimes\phi_i\otimes\phi)(W_{13}W_{23}) \\
&= (\id\otimes\phi)(W(1\otimes\lambda(\phi_i)))
= (\id\otimes\lambda(\phi_i)\phi)(W)
\end{align*}
where $\lambda(\phi_i) = (\phi_i\otimes\id)(W) \in C_0(\hh\G)$.
As $(\phi_i)$ is a bounded approximate identity (bai) in $L^1(\G)$, it follows that
$\lambda(\phi_i)$ is a bai for $C_0(\hh\G)$, as
$\lambda(L^1(\G))$ is dense in $C_0(\hh\G)$.  For any bai $(\hh a_i)$ in $C_0(\hh\G)$
where have that $\hh a_i \phi \rightarrow \phi$ in norm, for $\phi\in L^1(\G)$.
Thus $P_{\phi_i}(a) \rightarrow a$ in norm, as required.

To deal with the non-compact case, we apply Proposition~\ref{prop:19} to find
a contractive approximate identity $(e_i,f_i)$ in $\mc G(\tau^{C_0(\G)}_z)$.
Then, for any $i,j$, we have that $x\mapsto e_i P_{\phi_j}(x)$ maps $L^\infty(\G)$
to $C_0(\G)$, and again for $a\in A_0$, for sufficiently large $i,j$ we have that
$e_i P_{\phi_j}(a)$ is close to $a$.  The proof now follows as before.
\end{proof}

Of course, if for example $(\tau_t)$ is trivial (for example, $\G$ is a Kac algebra)
then we certainly do not need $\G$ to be coamenable.  In this case, the conditions
of Lemma~\ref{lem:20} follow immediately from the triangle-inequality.  We continue to
wonder if the result above is really true for any $\G$.

We know that the left Haar weight $\varphi$ is relatively invariant under $(\tau_t)$,
that is, there is $\nu>0$, the scaling constant, such that $\varphi(\tau_t(x)) =
\nu^{-t} \varphi(x)$ for $x\in L^\infty(\G)_+$.  Denote $\mf n_\varphi = \{ x\in
L^\infty(\G) : \varphi(x^*x)<\infty\}$ and let $\Lambda:\mf n_\varphi\rightarrow L^2(\G)$
be the GNS map.  We may hence define a one-parameter unitary group $(P^{it})$ on
$L^2(\G)$ by $P^{it}\Lambda(x) = \nu^{t/2} \Lambda(\tau_t(x))$ for $x\in\mf n_\varphi,
t\in\mathbb R$.  Then $\tau_t(x) = P^{it} x P^{-it}$ and so we are in the setting
of Section~\ref{sec:iag}.  We remark that one can easily adapt the proof of
\cite[Proposition~3.7]{h} to show that $(P^{it})$ is the canonical unitary
implementation of $(\tau_t)$, in the sense of Section~\ref{sec:iag}.  The following
is now immediate from Proposition~\ref{prop:14}, and corrects \cite[Lemma~3]{ds}
by requiring the linear span.

\begin{proposition}
The set $D = \lin\{ \omega_{\xi,\eta} : \xi\in D(P^{1/2}), \eta\in D(P^{-1/2}) \}$
is a core for $L^1_\sharp(\G)$.
\end{proposition}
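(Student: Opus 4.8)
The plan is to deduce this from Proposition~\ref{prop:14} by transporting that result across the identification of $L^1_\sharp(\G)$ with the graph $\mc G(\tau^{L^1(\G)}_{-i/2})$ set up just above. Recall that $\omega\in L^1_\sharp(\G)$ precisely when $\Theta(\omega):=R_*(\omega^*)\in D(\tau^{L^1(\G)}_{-i/2})$, in which case $\omega^\sharp=\tau^{L^1(\G)}_{-i/2}(\Theta(\omega))$. Since both $R_*$ and the $*$-operation are isometric bijections of $L^1(\G)$, the (conjugate-linear) map $\Theta$ is an isometric bijection of $L^1_\sharp(\G)$ onto $D(\tau^{L^1(\G)}_{-i/2})$, and the formula for $\omega^\sharp$ shows it intertwines the two graph norms, $\|\omega\|+\|\omega^\sharp\|=\|\Theta(\omega)\|+\|\tau^{L^1(\G)}_{-i/2}(\Theta(\omega))\|$. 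Consequently $D$ is a core for $L^1_\sharp(\G)$ if and only if $\Theta(D)$ is a core for $\tau^{L^1(\G)}_{-i/2}$.

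I would then apply Proposition~\ref{prop:14} with $M=L^\infty(\G)$, $(\alpha_t)=(\tau_t)$ and $(u_t)=(P^{it})$. As the analytic extension of $t\mapsto P^{it}$ is $z\mapsto P^{iz}$, we have $u_{i/2}=P^{-1/2}$ and $u_{-i/2}=P^{1/2}$, so Proposition~\ref{prop:14} tells us that $D_0:=\lin\{\omega_{\xi,\eta}:\xi\in D(P^{-1/2}),\ \eta\in D(P^{1/2})\}$ is a core for $\tau^{L^1(\G)}_{-i/2}$. By the previous paragraph it therefore suffices to verify that $\Theta(D)=D_0$; note that the roles of the two vectors, and of $D(P^{1/2})$ and $D(P^{-1/2})$, are swapped between the statement of the present proposition and the conclusion of Proposition~\ref{prop:14}, so this step is exactly where that discrepancy must be reconciled.

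For this I would compute $\Theta$ on vector functionals. Writing $R(x)=\hh J x^*\hh J$ for the canonical anti-unitary $\hh J$ implementing the unitary antipode, and using $(\omega_{\xi,\eta})^*=\omega_{\eta,\xi}$, a short calculation with the anti-unitarity of $\hh J$ gives $\Theta(\omega_{\xi,\eta})=R_*(\omega_{\eta,\xi})=\omega_{\hh J\xi,\hh J\eta}$. The standard relation $\hh J P\hh J=P^{-1}$ (equivalently, $\hh J$ commutes with each $P^{it}$, which is forced by $R\tau_t=\tau_t R$) shows that $\hh J$ interchanges $D(P^{1/2})$ and $D(P^{-1/2})$. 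Hence for $\xi\in D(P^{1/2})$ and $\eta\in D(P^{-1/2})$ we obtain $\hh J\xi\in D(P^{-1/2})$ and $\hh J\eta\in D(P^{1/2})$, so $\Theta$ carries the spanning functionals of $D$ bijectively onto those of $D_0$; thus $\Theta(D)=D_0$ and the proof is complete.

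The main obstacle is precisely this last step: it rests on the two quantum-group facts $R(x)=\hh J x^*\hh J$ and $\hh J P\hh J=P^{-1}$, which are not developed in the present paper and must be quoted carefully from the standard theory. Everything else is a formal transport through the isometry $\Theta$ together with a direct application of Proposition~\ref{prop:14}; the only genuine content beyond that proposition is checking that the interchange of $\xi$ and $\eta$ (and of $D(P^{1/2})$ and $D(P^{-1/2})$) is accounted for exactly by the action of $R_*$ and the $*$-operation on vector functionals.
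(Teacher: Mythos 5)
Your overall strategy works and reaches the statement, but one of your justifications is genuinely false, and the route you take is heavier than what the paper intends. The reduction ``$D$ is a core for $L^1_\sharp(\G)$ iff $\Theta(D)$ is a core for $\tau^{L^1(\G)}_{-i/2}$'' is fine, as are the identifications $u_{i/2}=P^{-1/2}$, $u_{-i/2}=P^{1/2}$ and the computation $\Theta(\omega_{\xi,\eta})=\omega_{\hh J\xi,\hh J\eta}$. The two imported facts, $R(x)=\hh Jx^*\hh J$ and $\hh JP\hh J=P^{-1}$, are indeed true and standard (they are proved in \cite{kv}), so citing them makes your argument complete. However, your parenthetical claim that the commutation of $\hh J$ with each $P^{it}$ ``is forced by $R\tau_t=\tau_tR$'' is wrong: since an implementing operator is determined by the automorphism it implements only up to the commutant, the identity $R\tau_t=\tau_tR$, together with $R=\hh J(\cdot)^*\hh J$ and $\tau_t=\mathrm{Ad}(P^{it})$, yields only that the unitary $\hh JP^{it}\hh JP^{-it}$ lies in $L^\infty(\G)'$, not that it equals $1$. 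The relation $P^{it}\hh J=\hh JP^{it}$ is a genuine theorem about the specific operators $P$ and $\hh J$, requiring the full duality machinery of \cite{kv}; derived as you suggest, the proof has a hole, whereas quoted, it stands.

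The comparison with the paper is instructive, because the paper's ``immediate'' derivation never needs $\hh J$ at all: the interchange of $D(P^{1/2})$ and $D(P^{-1/2})$ is produced by the $*$-operation on functionals rather than by a spatial conjugation. Concretely: since $R$ commutes with $(\tau_t)$, the isometry $R_*$ of $L^1(\G)$ commutes with $(\tau^{L^1(\G)}_t)$ and hence with $\tau^{L^1(\G)}_{-i/2}$, preserving its domain; and $\mu\mapsto\mu^*$ carries $\mc G(\tau^{L^1(\G)}_{-i/2})$ isometrically onto $\mc G(\tau^{L^1(\G)}_{i/2})$ (the predual analogue of Proposition~\ref{prop:star}). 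Combining these with the characterisation $\omega\in L^1_\sharp(\G)\iff R_*(\omega^*)\in D(\tau^{L^1(\G)}_{-i/2})$ already established in the paper gives $L^1_\sharp(\G)=D(\tau^{L^1(\G)}_{i/2})$ with $\|\omega^\sharp\|=\|\tau^{L^1(\G)}_{i/2}(\omega)\|$, so a core for $L^1_\sharp(\G)$ is exactly a core for $\tau^{L^1(\G)}_{i/2}$. Now apply $*$ to the core $\lin\{\omega_{\xi,\eta}:\xi\in D(P^{-1/2}),\,\eta\in D(P^{1/2})\}$ that Proposition~\ref{prop:14} provides for $\tau^{L^1(\G)}_{-i/2}$: since $(\omega_{\xi,\eta})^*=\omega_{\eta,\xi}$, its image is precisely the set $D$ of the statement, which is therefore a core for $\tau^{L^1(\G)}_{i/2}$, hence for $L^1_\sharp(\G)$. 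This stays entirely within the paper's toolkit, which is what makes the result ``immediate'' there. Your route buys an explicit spatial description of $\sharp$ on vector functionals, but at the price of two external structure theorems, one of which you mis-justify.
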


We finish with an application of Proposition~\ref{prop:13}.  We recall that $\G$
is \emph{amenable} when there is a state $m\in L^\infty(\G)^*$ with
$\ip{m}{(\id\otimes\omega)\Delta(x)} = \ip{m}{x} \ip{1}{\omega}$ for $\omega\in L^1(\G),
x\in L^\infty(\G)$, see \cite{bt}.  From \cite[Theorem~3.2]{bt} we know that if $\hh\G$
is coamenable then $\G$ is amenable; the converse is a well-known open question.
By \cite[Theorem~3.3]{bt} we know that when $\G$ is amenable, $L^\infty(\hh\G)$
is injective, \cite[Chapter~XVI]{tak3}, that is, there is a contractive projection
$\mc B(L^2(\G)) \rightarrow L^\infty(\hh\G)$.

Define $\tau_t^{\mc B}(x) = P^{it} x P^{-it}$ for $x\in\mc B(L^2(\G))$, which gives
a weak$^*$-continuous automorphism group.
The pre-adjoint of $(\tau^{\mc B}_t)$ gives a norm-continuous one-parameter isometry
group $(\tau^{\mc B_*}_t)$ on $\mc B(L^2(\G))_*$.  Let $K$ be the kernel of the quotient
$\mc B(L^2(\G))_* \rightarrow L^1(\G)$, so that $K^\perp = L^\infty(\G)$ and hence
$K = {}^\perp L^\infty(\G)$ is $(\tau^{\mc B_*}_t)$-invariant.  Thus we are in the
setting of Section~\ref{sec:quot}.  Notice that $(\tau_t^{\mc B_*/K})$ is simply
$(\tau_t^{L^1(\G)})$.  Thus Proposition~\ref{prop:13} shows
that $\mc G(\tau^{\mc B_*}_z) \rightarrow \mc G(\tau^{L^1(\G)}_z)$ is a quotient map, in
the case when $\hh\G$ is amenable (in particular, when $\G$ is coamenable).
This result is interesting, as it parallels the quotient map $\mc B(L^2(\G))_*
\rightarrow L^1(\G)$; we wonder if there is some analogue of a ``standard form'' for
$\mc G(\tau_z^{L^1(\G)})$, compare the comments in Section~\ref{sec:graph_ba}.  We again
remark that if $(\tau_t)$ is trivial, then of course $\mc G(\tau^{\mc B_*}_z) \rightarrow
\mc G(\tau^{L^1(\G)}_z)$ is a quotient map, without any amenability condition.

\appendix
\section{Commentary on the use of the Three--Lines Theorem}

The following is an addendum which has been submitted to the New York Journal of Mathematics.  We thank Stefaan Vaes for valuable comments, and for questions which brought these issues to our attention.

These comments relate to the use of the Three-Lines Theorem in Section~2.  The Three-Lines Theorem says that given a scalar-valued, \emph{bounded}, regular function $f\colon S(z)\to\mathbb C$, the function $M(y) = \sup_t |f(t+iy)|$ satisfies that $\log M$ is convex.  In particular,
\[ |f(w)| \leq \max\Big( \sup_t |f(t)|, \sup_t |f(t+z)| \Big) \qquad (w\in S(z)). \]

\subsection{Remark~2.4}\label{add:1}

Given a norm regular, bounded function $f\colon S(z) \to E$ with values in a Banach space $E$, applying the Three-Lines Theorem to functions of the form $\mu\circ f$, as $\mu$ varies over the unit ball of $E^*$, gives a vector-valued version of the Three-Lines Theorem.  We tacitly used this in Remark~2.4, but failed to note that the weak$^*$-case is more subtle.

Indeed, let $(\alpha_t)$ be a weak$^*$-continuous isometry group on $E$, a dual Banach space with predual $E_*$, again suppose $s>0$, and let $x\in D(\alpha_{is})$.  Then $\|\alpha_t(x)\| = \|x\|$ and $\|\alpha_{t+is}(x)\| = \|\alpha_{is}(x)\|$ for each $t\in\mathbb R$.  However, a priori, we do not know that $[0,s] \to [0,\infty), r \mapsto \| \alpha_{ir}(x) \|$ is bounded, as this map is not assumed to be continuous in the weak$^*$-case.

A standard corollary of the Uniform Boundedness Theorem, \cite[Chapter~III, Corollary~14.4]{Conway_FunctionalAnalysisBook}, says that $X\subseteq E$ is bounded if (and only if) for each $\mu\in E_*$, we have that $\sup\{ |\mu(x)| : x\in X \} < \infty$.  We apply this with $X = \{ \alpha_{ir}(x) : 0\leq r\leq s\}$, as for each $\mu\in E_*$, the map $[0,s]\to \mathbb C; r\mapsto \mu(\alpha_{ir}(x))$ is continuous, and so bounded.  Using that $\|\alpha_{t+ir}(x)\| = \|\alpha_{ir}(x)\|$ for $t\in\mathbb R, 0\leq r\leq s$, it follows that $S(z)\to E; w\mapsto \alpha_w(x)$ is norm bounded, and we can proceed to apply the Three-Lines Theorem to functions of the form $w\mapsto \mu(\alpha_w(x))$, as $\mu$ varies over the unit ball of $E_*$.

\subsection{Lemma~2.14}\label{add:2}

A more serious problem applies to Lemma~2.14, as here we apply the Three-Lines Theorem to a semi-norm, and it is far from clear what this actually means.

We shall repair the proof, at least in the situation we are interested in, namely for the $\sigma$-strong$^*$ topology on a von Neumann algebra $M$.  Similar remarks would apply to the $\sigma$-strong topology.  Recall that the $\sigma$-strong$^*$ topology is defined by seminorms of the form $p'_\omega(x) = \omega(xx^* + x^*x)^{1/2}$, for $\omega \in M_*^+$.
We recall that the space of linear functionals continuous for the strong$^*$-topology is precisely the predual $M_*$, see \cite[Chapter~II, Theorem~2.6]{tak1} for example.  Further, in a general locally convex space, a linear functional $\mu$ is continuous if and only if there are seminorms $p_1,\cdots,p_n$ and $\alpha_1,\cdots,\alpha_n \in [0,\infty)$, with $|\mu(x)| \leq \sum \alpha_i p_i(x)$ for all $x$, see \cite[Chapter~IV, Theorem~3.1]{Conway_FunctionalAnalysisBook} for example.

\begin{lemma}
Let $z\in\mathbb C$, and let $g \colon S(z) \to M$ be bounded and weak$^*$-regular.  Let $p = p'_\omega$ for some $\omega\in M_*^+$, let $\epsilon>0$, and suppose that $p(g(t)) \leq \epsilon$ and $p(g(z+t)) \leq \epsilon$ for each $t\in\mathbb R$.  Then $p(g(w)) \leq \epsilon$ for each $w\in S(z)$.
\end{lemma}
\begin{proof}
Choose $w\in S(z)$ and set $x = g(w) \in M$.  On the one-dimensional subspace $\mathbb Cx$ define $\mu(\lambda x) = \lambda p(x)$ for $\lambda\in\mathbb C$, so $\mu$ is linear, and $|\mu(y)| \leq p(y)$ for each $y$ in this subspace.  By Hahn-Banach, for example \cite[Chapter~III, Corollary~6.4]{Conway_FunctionalAnalysisBook}, we can extend $\mu$ to all of $M$ with $|\mu(y)| \leq p(y)$ for all $y\in M$.  Thus $\mu$ is continuous for the strong$^*$-topology, and so $\mu\in M_*$.  By hypothesis, the map $\mu\circ g\colon S(z)\to\mathbb C$ is bounded and regular, and so the Three-Lines Theorem applies to show that
\begin{align*}
|\mu(g(w))| &\leq \max\Big( \sup_t |\mu(g(t))|, |\mu(g(t+z))| \Big) \\
&\leq \max\Big( \sup_t p(g(t)), p(g(t+z)) \Big)
\leq \epsilon. \end{align*}
Thus $p(g(w)) = p(x) = \mu(x) = \mu(g(w)) \leq \epsilon$ as required.
\end{proof}

This is enough to repair the proof of Lemma~2.14 at least in the case of the strong$^*$-topology, as, in the notation of that lemma, it shows that having $p(g_n-g)\to 0$ uniformly on $\mathbb R$ and $\mathbb R+z$ is enough to give uniform convergence on all of $S(z)$.

\bigskip

\noindent\emph{Author's Address:}

\noindent
Matthew Daws, School of Mathematical Sciences,
Lancaster University,
Lancaster,
LA1 4YF,
United Kingdom

\noindent\emph{Email:} \texttt{matt.daws@cantab.net}


\begin{thebibliography}{99}
\small

% Like NYJM style
\newcommand{\mrev}[1]{MR#1}
\newcommand{\zbl}[1]{Zbl #1}
\newcommand{\arx}[1]{arXiv:#1}
\newcommand{\doi}[1]{doi:#1}

\bibitem{an} {\scshape Arendt, W.; Nikolski, N.} Vector-valued holomorphic functions revisited, \emph{Math. Z.} {\bf 234} (2000), no.~4, 777--805. \mrev{1778409}, \zbl{0976.46030}, \doi{10.1007/s002090050008}

\bibitem{ane} {\scshape Arendt, W.; Nikolski, N.} Addendum: ``Vector-valued holomorphic functions revisited'', \emph{Math. Z.} {\bf 252} (2006), no.~3, 687--689. \mrev{2207764}, \zbl{0976.46030}, \doi{10.1007/s00209-005-0858-x}

\bibitem{arv1} {\scshape Arveson, W.} On groups of automorphisms of operator algebras, \emph{J. Functional Analysis} {\bf 15} (1974), 217--243. \mrev{0348518}, \zbl{0296.46064}, \doi{10.1016/0022-1236(74)90034-2}

\bibitem{arv2} {\scshape Arveson, W.} The harmonic analysis of automorphism groups, in {\it Operator algebras and applications, Part I (Kingston, Ont., 1980)}, 199--269, Proc. Sympos. Pure Math., 38, Amer. Math. Soc., Providence, RI. \mrev{0679706}, \zbl{0506.46047}

\bibitem{arv3} {\scshape Arveson, W.} Analyticity in operator algebras, \emph{Amer. J. Math.} {\bf 89} (1967), 578--642. \mrev{0223899}, \zbl{0183.42501}, \doi{10.2307/2373237}

\bibitem{bcm} {\scshape Bannon, J. P.; Cameron, J.; Mukherjee, K.}, The modular symmetry of Markov maps, \emph{J. Math. Anal. Appl.} {\bf 439} (2016), no.~2, 701--708. \mrev{3475946}, \zbl{1355.46053}, \doi{10.1016/j.jmaa.2016.03.013}

\bibitem{bt} {\scshape B\'{e}dos, E.; Tuset, L.}, Amenability and co-amenability for locally compact quantum groups, Internat. J. Math. {\bf 14} (2003), no.~8, 865--884. \mrev{2013149}, \zbl{1051.46047}, \doi{10.1142/S0129167X03002046}

\bibitem{bds} {\scshape Brannan, M. ; Daws, M.; Samei, E.} Completely bounded representations of convolution algebras of locally compact quantum groups, M\"{u}nster J. Math. {\bf 6} (2013), no.~2, 445--482. \mrev{3148219}, \zbl{1303.46038}

\bibitem{cz} {\scshape Cioranescu, I.; Zsido, L.} Analytic generators for one-parameter groups, T\^ohoku Math. J. (2) {\bf 28} (1976), no.~3, 327--362. \mrev{0430867}, \zbl{0361.47014}, \doi{10.2748/tmj/1178240775}

\bibitem{ds} {\scshape Daws, M.; Salmi, P.} Completely positive definite functions and Bochner's theorem for locally compact quantum groups. J. Funct. Anal. 264 (2013), no. 7, 1525--1546. \mrev{3019722}, \zbl{1320.46056}, \doi{10.1016/j.jfa.2013.01.017}

\bibitem{eym} {\scshape Eymard, P.} L'alg\`ebre de Fourier d'un groupe localement compact, Bull. Soc. Math. France {\bf 92} (1964), 181--236. \mrev{0228628}, \zbl{0169.46403}, \doi{10.24033/bsmf.1607}

\bibitem{fv} {\scshape Fima, P.; Vainerman, L.} Twisting and Rieffel's deformation of locally compact quantum groups: deformation of the Haar measure, Comm. Math. Phys. {\bf 286} (2009), no.~3, 1011--1050. \mrev{2472024}, \zbl{1179.46057}, \doi{10.1007/s00220-008-0559-5}

\bibitem{f} {\scshape Forelli, F.} Analytic and quasi-invariant measures, Acta Math. {\bf 118} (1967), 33--59. \mrev{0209771}, \zbl{0171.34201}, \doi{10.1007/BF02392475}

\bibitem{h1} {\scshape Haagerup, U.} Operator-valued weights in von Neumann algebras. I, J. Functional Analysis {\bf 32} (1979), no.~2, 175--206. \mrev{0534673}, \zbl{0426.46046}, \doi{10.1016/0022-1236(79)90053-3}

\bibitem{h} {\scshape Haagerup, U.} The standard form of von Neumann algebras, Math. Scand. {\bf 37} (1975), no.~2, 271--283. \mrev{0407615}, \zbl{0304.46044}, \doi{10.7146/math.scand.a-11606}

\bibitem{i} {\scshape Izumi, M.} The flow of weights and the Cuntz-Pimsner algebras, Comm. Math. Phys. {\bf 357} (2018), no.~1, 203--229. \mrev{3764567}, \zbl{1397.46045}, \doi{10.1007/s00220-017-2948-0}

\bibitem{j} {\scshape J\o rgensen, P. E. T.} Ergodic properties of one-parameter automorphism groups of operator algebras, J. Math. Anal. Appl. {\bf 87} (1982), no.~2, 354--372. \mrev{0658019}, \zbl{0444.47038}, \doi{10.1016/0022-247X(82)90129-9}

\bibitem{kat} {\scshape Kato, T.} Perturbation theory for linear operators, reprint of the 1980 edition, Classics in Mathematics, \emph{Springer-Verlag, Berlin}, 1995. \mrev{1335452}, \zbl{0836.47009}

\bibitem{kt} {\scshape Kawamura, S.; Tomiyama, J.} On subdiagonal algebras associated with flows in operator algebras, J. Math. Soc. Japan {\bf 29} (1977), no.~1, 73--90. \mrev{0454650}, \zbl{0339.46046}, \doi{10.2969/jmsj/02910073}

\bibitem{kus1} {\scshape Kustermans, J.} One-parameter representations on $C^*$-algebras,
\arx{funct-an/9707009v1}

\bibitem{kus2} {\scshape Kustermans, J.} Locally compact quantum groups, in {\it Quantum independent increment processes. I}, 99--180, Lecture Notes in Math., 1865, \emph{Springer, Berlin}, 2005 \mrev{2132094}, \zbl{1072.46048}

\bibitem{kus3} {\scshape Kustermans, J.; Vaes, S.} Locally compact quantum groups in the von Neumann algebraic setting. Math. Scand. 92 (2003), no. 1, 68--92. \mrev{1951446}, \zbl{1034.46067}, \doi{10.7146/math.scand.a-14394}

\bibitem{kus4} {\scshape Kustermans, J.} Locally compact quantum groups in the universal setting, Internat. J. Math. {\bf 12} (2001), no.~3, 289--338. \mrev{1841517}, \zbl{1111.46311}, \doi{10.1142/S0129167X01000757}

\bibitem{kv} {\scshape Kustermans, J.; Vaes, S.} Locally compact quantum groups, Ann. Sci. \'{E}cole Norm. Sup. (4) {\bf 33} (2000), no.~6, 837--934. \mrev{1832993}, \zbl{1034.46508}, \doi{10.1016/S0012-9593(00)01055-7}

\bibitem{lm} {\scshape Loebl, R. I.; Muhly, P. S.} Analyticity and flows in von Neumann algebras, J. Funct. Anal. {\bf 29} (1978), no.~2, 214--252. \mrev{0504460}, \zbl{0381.46036}, \doi{10.1016/0022-1236(78)90007-1}

\bibitem{mw} {\scshape Marsalli, M.; West, G.} Noncommutative $H^p$ spaces, J. Operator Theory {\bf 40} (1998), no.~2, 339--355. \mrev{1660390}, \zbl{1001.46041}

\bibitem{mnw} {\scshape Masuda, T.; Nakagami, Y.; Woronowicz, S. L.} A $C^\ast$-algebraic framework for quantum groups, Internat. J. Math. {\bf 14} (2003), no.~9, 903--1001. \mrev{2020804}, \zbl{1053.46050}, \doi{10.1142/S0129167X03002071}

\bibitem{rudin} {\scshape Rudin, W.} {\it Functional analysis}, second edition, International Series in Pure and Applied Mathematics, \emph{McGraw-Hill, Inc., New York}, 1991. \mrev{1157815}, \zbl{0867.46001}

\bibitem{run} {\scshape Runde, V.} Amenability for dual Banach algebras, Studia Math. {\bf 148} (2001), no.~1, 47--66. \mrev{1881439}, \zbl{1003.46028}, \doi{10.4064/sm148-1-5}

\bibitem{run1} {\scshape Runde, V.} Uniform continuity over locally compact quantum groups, J. Lond. Math. Soc. (2) {\bf 80} (2009), no.~1, 55--71. \mrev{2520377}, \zbl{1188.46048}, \doi{10.1112/jlms/jdp011}

\bibitem{ryan} {\scshape Ryan, R. A.} {\it Introduction to tensor products of Banach spaces}, Springer Monographs in Mathematics, \emph{Springer-Verlag London, Ltd., London}, 2002. \mrev{1888309}, \zbl{1090.46001}

\bibitem{tak1} {\scshape Takesaki, M.} {\it Theory of operator algebras. I}, reprint of the first (1979) edition, Encyclopaedia of Mathematical Sciences, 124, Springer-Verlag, Berlin, 2002. \mrev{1873025}, \zbl{0990.46034}

\bibitem{tak2} {\scshape Takesaki, M.} {\it Theory of operator algebras. II}, Encyclopaedia of Mathematical Sciences, 125, Springer-Verlag, Berlin, 2003. \mrev{1943006}, \zbl{1059.46031}

\bibitem{tak3} {\scshape Takesaki, M.} {\it Theory of operator algebras. III}, Encyclopaedia of Mathematical Sciences, 127, Springer-Verlag, Berlin, 2003. \mrev{1943007}, \zbl{1059.46032}

\bibitem{vd} {\scshape Van Daele, A.} Locally compact quantum groups. A von Neumann algebra approach, SIGMA Symmetry Integrability Geom. Methods Appl. {\bf 10} (2014), Paper 082, 41 pp. \mrev{3261862}, \zbl{1312.46055}, \doi{10.3842/SIGMA.2014.082}

\bibitem{w} {\scshape Wiener, N.} Tauberian theorems, Ann. of Math. (2) {\bf 33} (1932), no.~1, 1--100. \mrev{1503035}, \zbl{0004.05905}, \doi{10.2307/1968102}

\bibitem{w1} {\scshape Woronowicz, S. L.} On the purification of factor states, Comm. Math. Phys. {\bf 28} (1972), 221--235. \mrev{0310657}, \zbl{0244.46075}, \doi{10.1007/BF01645776}

\bibitem{w2} {\scshape Woronowicz, S. L.} Operator systems and their application to the Tomita-Takesaki theory, in {\it Alg\`ebres d'op\'{e}rateurs et leurs applications en physique math\'{e}matique (Proc. Colloq., Marseille, 1977)}, 459--527, Colloq. Internat. CNRS, 274, \emph{CNRS, Paris}, 1979 \mrev{0560657}, \zbl{0456.46047}

\bibitem{z} {\scshape Zsid\'{o}, L.} Spectral and ergodic properties of the analytic generators, J. Approximation Theory {\bf 20} (1977), no.~1, 77--138. \mrev{0463974}, \zbl{0352.47019}, \doi{10.1016/0021-9045(77)90021-1}
 
\bibitem{z2} {\scshape Zsid\'{o}, L.} The characterization of the analytic generator of *-automorphism groups, in {\it Operator algebras and applications} Proc. Symp. Pure Math. 38, Part 2, Kingston/Ont. 1980, 381--383 (1982).
\zbl{0529.46052}
 
\bibitem{z1} {\scshape Zsid\'{o}, L.} On Woronowicz's approach to the Tomita-Takesaki theory, in {\it Operator algebras and quantum groups}, 455--486, Banach Center Publ., 98, Polish Acad. Sci. Inst. Math., Warsaw. \mrev{3059673}, \zbl{1269.46039}, \doi{10.4064/bc98-0-19}

\bibitem{Conway_FunctionalAnalysisBook}
{\scshape Conway, J.~B.} A course in functional analysis, second edition. Graduate Texts in Mathematics, 96. {\em Springer, New York}, 1990.
xvi+399 pp. ISBN: 978-0-387-97245-9.
\mrev{1070713} \zbl{0706.46003}

\end{thebibliography}
\end{document}